\theoremstyle{definition}
\newcommand{\Real}{\mathbb{R}}							
\newcommand{\Wholes}{\mathbb{Z}}							
\newcommand{\abs}[1]{\left\vert#1\right\vert}			
\newcommand{\sref}[1]{(\ref{#1})}                       
\newtheorem{thm}{Theorem}[section]
\newtheorem{cor}[thm]{Corollary}
\newtheorem{lem}[thm]{Lemma}
\newtheorem{prop}[thm]{Proposition}
\numberwithin{equation}{section}
\title{\sc Bichromatic travelling waves for lattice Nagumo equations}
\author[1]{Hermen Jan Hupkes \thanks{\tt hhupkes@math.leidenuniv.nl}}
\author[1]{Leonardo Morelli \thanks{corresponding author, \tt l.morelli@math.leidenuniv.nl}}
\author[2]{Petr Stehl\'{\i}k\thanks{\tt pstehlik@kma.zcu.cz}}
\affil[1]{\small Mathematisch Instituut, Universiteit Leiden, P.O. Box 9512, 2300 RA Leiden, The Netherlands}
\affil[2]{\small Department of Mathematics and NTIS, Faculty of Applied Sciences, University of West Bohemia,\authorcr Univerzitn\'\i~8, 306 14 Plze\v{n}\\ Czech Republic}
\begin{document}
\maketitle

\begin{abstract}
We discuss bichromatic (two-color) front solutions
to the bistable Nagumo lattice differential equation.
Such fronts connect the stable spatially homogeneous equilibria with spatially heterogeneous 2-periodic equilibria
and hence are not monotonic like the standard monochromatic fronts.
We provide explicit criteria that can determine whether or not these fronts are stationary
and show that the bichromatic fronts can travel in parameter regimes
where the monochromatic fronts are pinned.
The presence of these bichromatic waves allows the two stable homogeneous equlibria to both spread out through the
spatial domain towards each other, buffered by a shrinking intermediate zone in which the periodic pattern is visible.


\smallskip
\noindent\textbf{Keywords:} reaction-diffusion equation; lattice differential equation;
travelling waves; nonlinear algebraic equations.

\smallskip
\noindent\textbf{MSC 2010:} 34A33, 37L60, 39A12

\end{abstract}


\section{Introduction}

In this paper we consider the Nagumo lattice differential equation (LDE)
\begin{equation}\label{eq:int:nagumo:lde}
\dot{u}_j(t)=d \big[ u_{j-1}(t) -2u_j(t) + u_{j+1}(t)  \big] + g\big(u_j(t); a\big),
\end{equation}
posed on the spatial lattice $j \in \mathbb{Z}$, with $t \in \Real$. 
We assume $d > 0$ and use
the standard cubic bistable nonlinearity
$g(u;a) = u(1 -u)(u -a)$ with $a \in (0, 1)$.
This LDE is well-known as a prototypical model that describes the competition
between two stable states $u = 0$ and $u = 1$ in a discrete spatial
environment.
A crucial role is reserved
for so-called travelling front solutions, which have the form
\begin{equation}
\label{eq:int:discrete:wave}
u_j(t) = \Phi(j- ct), \qquad \qquad \Phi(-\infty) = 0, \qquad \Phi(+\infty) = 1 .
\end{equation}
Such solutions are often referred to as invasion waves, as they provide a mechanism by which the energetically preferred state can
invade the spatial domain.

Our work focuses on the case where $c = 0$ holds for these primary invasion waves,
indicating a delicate balance between the two competing states.
In this case \sref{eq:int:nagumo:lde} can admit stable spatially periodic rest-states.
Numerical results indicate that these states can act as a buffer between regions
of space where $u = 0$ and $u = 1$ dominate the dynamics. This buffer shrinks
as these two stable states appear to move towards each other. This latter
process is governed by secondary two-component invasion waves
that we analyze in detail in this paper.

\paragraph{Nagumo PDE}
The LDE \sref{eq:int:nagumo:lde} can be seen as the
nearest-neighbour discretization of the
Nagumo reaction-diffusion PDE  \cite{Nagumo1962Active}
\begin{equation}
\label{eq:int:nagumo:pde}
u_t = u_{xx} + g (u; a), \qquad \qquad x \in \mathbb{R}
\end{equation}
on a spatial grid with size $h = d^{-1/2}$.
This PDE 
has been used as a highly simplified model for the spread of genetic traits
\cite{Aronson1975nonlinear} and the propagation of electrical signals through nerve
fibres \cite{Bell1984}.
In higher space dimensions it also serves as a desingularization of the
standard mean-curvature flow that is often used to describe the evolution of interfaces \cite{Evans1992}.

Fife and McLeod \cite{Fife1977} used phase plane analysis to show that
\sref{eq:int:nagumo:pde} admits
a front solution for each $a \in [0,1]$. Such solutions have the form
\begin{equation}
\label{eq:int:nagumo:front:ansatz}
u(x,t) = \Phi(x- ct), \qquad \Phi(-\infty) = 0, \qquad \Phi(+\infty) = 1,
\end{equation}
for some smooth waveprofile $\Phi$ and wavespeed $c$ that has the same sign as $a - \frac{1}{2}$.
These fronts hence connect the two stable spatially homogeneous equilibria
$u(x,t) \equiv 0$ and $u(x,t) \equiv 1$.

Exploiting the comparison principle, Fife and McLeod were able to show that these
front solutions have a surprisingly large basin of attraction. Indeed, any
solution to \sref{eq:int:nagumo:pde} with an initial condition
$u(x,0) = u_0(x)$
that has $u_0(x)\approx 0$ for $x \ll -1$ and  $u_0(x) \approx 1$ for $x \gg + 1$
will converge to a shifted version 
of this
front as $t \to \infty$.

These front solutions can be used as building blocks to capture the behaviour
of a more general class of solutions to \sref{eq:int:nagumo:pde}.
Consider for example the two-parameter family of functions
\begin{equation}
\label{eq:int:def:u:glue}
u_{\mathrm{plt};\alpha_0, \alpha_1}(x,t) = \Phi(x - c t + \alpha_0) + \Phi( - x - ct + \alpha_1) - 1,
\end{equation}
with $\alpha_1 \ge \alpha_0$. Each of these functions can be interpreted as a shifted version  of the front solution \sref{eq:int:nagumo:front:ansatz}
that is reflected in a vertical line to form a plateau.

If $c < 0$, then any initial configuration that has $u_0(x) \approx 0$
for $\abs{x} \gg L$ and $u_0(x) \approx 1$ for $\abs{x} \le L$
will converge to a member of the family \sref{eq:int:def:u:glue} as $t \to \infty$.
This provides a mechanism by which compact regions where $u \sim 1$ can spread out to fill the entire domain.

On the other hand, when $c > 0$ one can construct entire solutions that converge to
an element of \sref{eq:int:def:u:glue} as $t \to - \infty$ and tend to zero as $t \to + \infty$. These solutions
are stable under small perturbations \cite{Yagisita2003backward}. In particular, they can be viewed
as a robust elimination process whereby compact regions that have $u \sim 1$ are annihilated
by two incoming travelling fronts that collide as $t \to \infty$.

\paragraph{Nagumo LDE}

For many physical phenomena
such as
crystal growth in materials \cite{CAHN},
the formation of fractures
in elastic bodies \cite{Slepyan2012models}
and the motion of dislocations \cite{Celli1970motion}
and domain walls  \cite{Dmitriev2000domain} through crystals,
the discreteness and topology of the underlying spatial domain
have a major impact on the dynamical behaviour.
It is hence important to develop
mathematical modelling tools that
can incorporate such structures effectively.
Indeed, by now it is well known that discrete models can capture dynamical behaviour
that their continuous counterparts can not.

The LDE \sref{eq:int:nagumo:lde} has served as a prototype system
in which such effects can be explored.
It arises as a highly simplified model for the propagation of action potentials through nerve fibers
that have regularly spaced gaps in their myeline coating \cite{Bell1984}.
Two-dimensional versions have been used to describe
phase transitions in Ising models \cite{BatesDiscConv},
to analyze predator-prey interactions 
\cite{Slavik2017}
and to develop pattern recognition algorithms in image processing \cite{CHUA1988b,CHUA1988}. Recently,
an interest has also arisen in Nagumo equations posed on graphs \cite{Stehlik2017},
motivated by the network structure present in many biological systems \cite{Selley2015dynamic}.

Many authors have studied the LDE \sref{eq:int:nagumo:lde},
focusing primarily on the richness of the set of equilibria \cite{VL30}
and the existence of travelling and standing front solutions \cite{MPB, VL50}.
Such solutions have the form \sref{eq:int:discrete:wave},
which leads naturally to the waveprofile equation
\begin{equation}
\label{eq:int:mwv:mfde}
-c \Phi'(\xi) = d \big[ \Phi(\xi - 1) - 2 \Phi(\xi) +  \Phi(\xi + 1) \big] + g\big(\Phi(\xi) ; a \big) .
\end{equation}
Since the behaviour of every lattice point is governed by the same profile $\Phi$, we refer to these
front solutions as monochromatic waves in this paper (in order to distinguish them from the
bichromatic waves we discuss in the sequel). The seminal results by Mallet-Paret \cite{MPB} show that for each $a \in [0,1]$ and $d > 0$ there exists a unique $c = c_{\mathrm{mc}}(a,d)$ for which such monochromatic (mc)
solutions exist.

\paragraph{Pinning}
Upon fixing $a \in (0,1) \setminus \{\frac{1}{2} \}$,
Zinner \cite{VL50} established that $c_{\mathrm{mc}}(a , d) \neq 0$ for $d \gg 1$,
while Keener \cite{VL28} showed that $c_{\mathrm{mc}}(a, d)= 0$ for $0 < d \ll 1$.
Upon fixing $d > 0$, Mallet-Paret established \cite{MPB} that
$c_{\mathrm{mc}}(a, d) \neq 0$ for $a \approx 1 $ and $a \approx 0$.
In addition, again for fixed $d > 0$,
the results in \cite{HOFFMPcrys,MPCP}
strongly suggest 
that 
there exists $\delta > 0$ so that
$c_{\mathrm{mc}}(a ,d) = 0$ whenever $\abs{a - \frac{1}{2}} \le \delta$; see Figure \ref{f:regions}.

This last phenomenon is called pinning and distinguishes the LDE
\sref{eq:int:nagumo:lde} from the PDE \sref{eq:int:nagumo:pde}.
It is a direct consequence of the fact that we have broken the translational invariance
of space. Indeed, \sref{eq:int:mwv:mfde} becomes
singular in the limit $c \to 0$ and the corresponding waveprofiles indeed typically
lose their smoothness.
Many results suggest that this phenomenon is generic for discrete systems
\cite{BatesDiscConv,CMPVV, EVV,EVV2005AppMath,HJHVL2005}.
However, by carefully tuning the nonlinearity $g$ it is possible
to design systems for which this pinning is absent \cite{ELM2006,HJH2011}.
Understanding the pinning phenomenon is an important and challenging mathematical problem
that also has practical ramifications.

\begin{figure}
\begin{center}
\includegraphics[width=.75\textwidth]{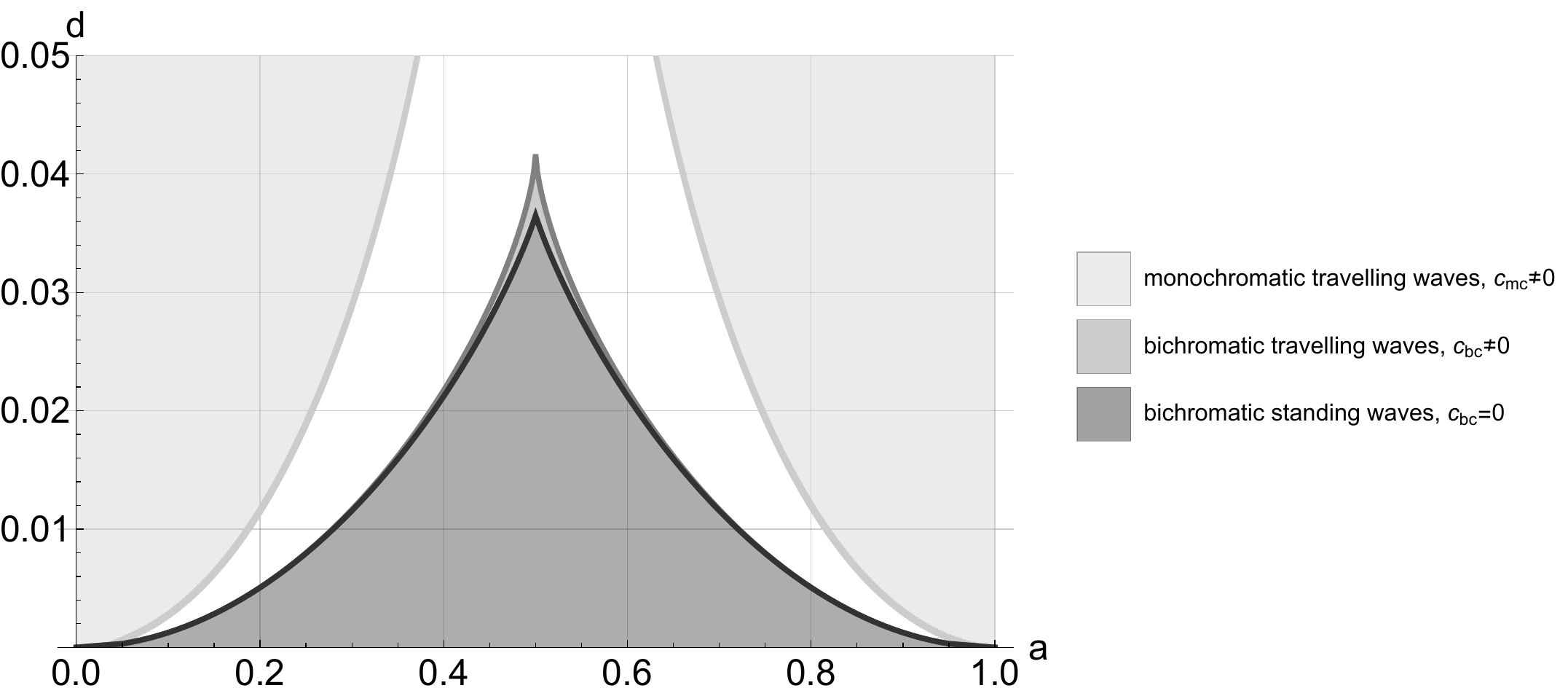}
\caption{Existence regions for monochromatic and bichromatic wave solutions to \eqref{eq:int:nagumo:lde}.} \label{f:regions}
\end{center}
\end{figure}

\paragraph{Periodicity}

In this paper we study waves that connect spatially homogeneous stationary solutions of \sref{eq:int:nagumo:lde}
with spatially heterogeneous 2-periodic stationary solutions. It is well known that many physical systems
exhibit spatially periodic
features \cite{FREID1985,GART1979,SHIG1986}.
Examples that also feature spatial discreteness
include
the presence of twinning microstructures
in shape memory alloys \cite{Bhattacharya2003microstructure}
and the formation of
domain-wall microstructures
in dielectric crystals \cite{Tagantsev2010domains}.

In many cases the underlying periodicity
comes from the spatial
system itself. For example,
in \cite{Faver2017nanopteron,Faver2018exact, Hoffman2017nanopteron} the authors
consider chains of alternating masses connected by identical springs
(and vice versa). The dynamical behaviour of such systems
can be easily modelled by LDEs
with periodic coefficients. In certain limiting cases the authors were able to construct
so-called nanopterons, which are multi-component wave solutions
that have low-amplitude oscillations in their tails.

However, periodic patterns
also arise naturally as solutions to spatially homogeneous discrete systems.
Indeed, we shall see in {\S}\ref{sec:eqlb}
that the LDE \sref{eq:int:nagumo:lde} with $d > 0$
admits many periodic equilibria.
In addition, 
the results in \cite{VL30} explore 
the periodicity and chaos present in the set of equilibria to homogeneous LDEs
with simplified nonlinearities.

It is also possible
to introduce a natural periodicity into the structure
of \sref{eq:int:nagumo:lde} by taking $d < 0$.
This can be seen by introducing new variables $v_j = (-1)^j u_j$,
which restores the applicability of the comparison principle.
This choice essentially
decomposes the lattice sites $\mathbb{Z}$ into two groups
$\mathbb{Z}_{\mathrm{odd}}$ and $\mathbb{Z}_{\mathrm{even}}$ that
each have their own characteristic behaviour.

Such anti-diffusion models have been
used to describe phase transitions
for grids of particles that have visco-elastic interactions
\cite{CAHNNOV1994,CAHNVLECK1999,VAIN2009}.
In \cite{BRUC2011} this problem has been analyzed in considerable detail. The authors
show that the resulting two component system admits co-existing patterns
that can be both monostable and bistable in nature. Similar results
with piecewise linear nonlinearities but more general couplings
between neighbours can be found in \cite{Vainchtein2015propagation}.

\begin{figure}
\begin{center}
\begin{minipage}{0.45\textwidth}
\includegraphics[width=\textwidth]{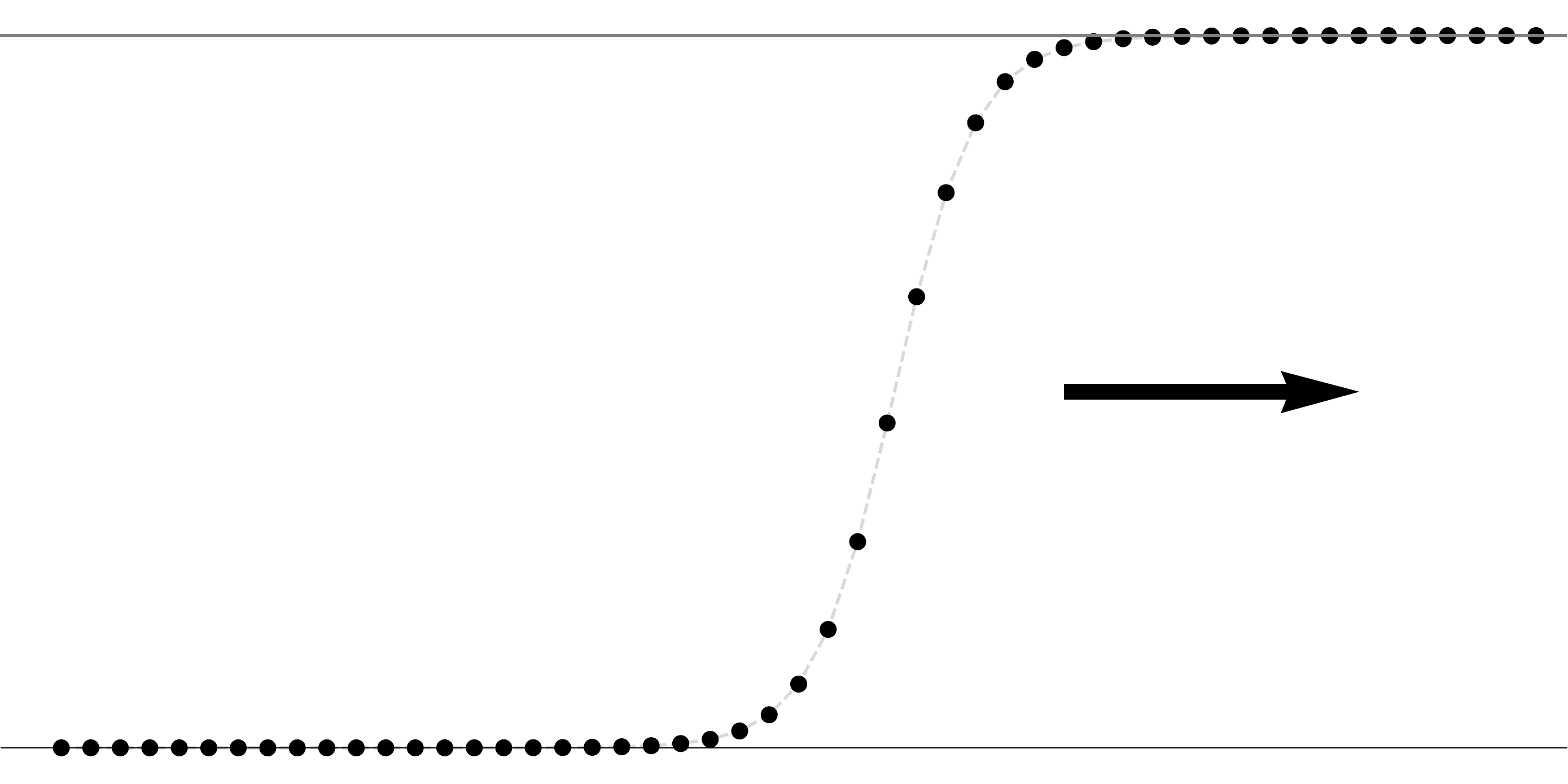}
\end{minipage}\quad
\begin{minipage}{0.45\textwidth}
\includegraphics[width=\textwidth]{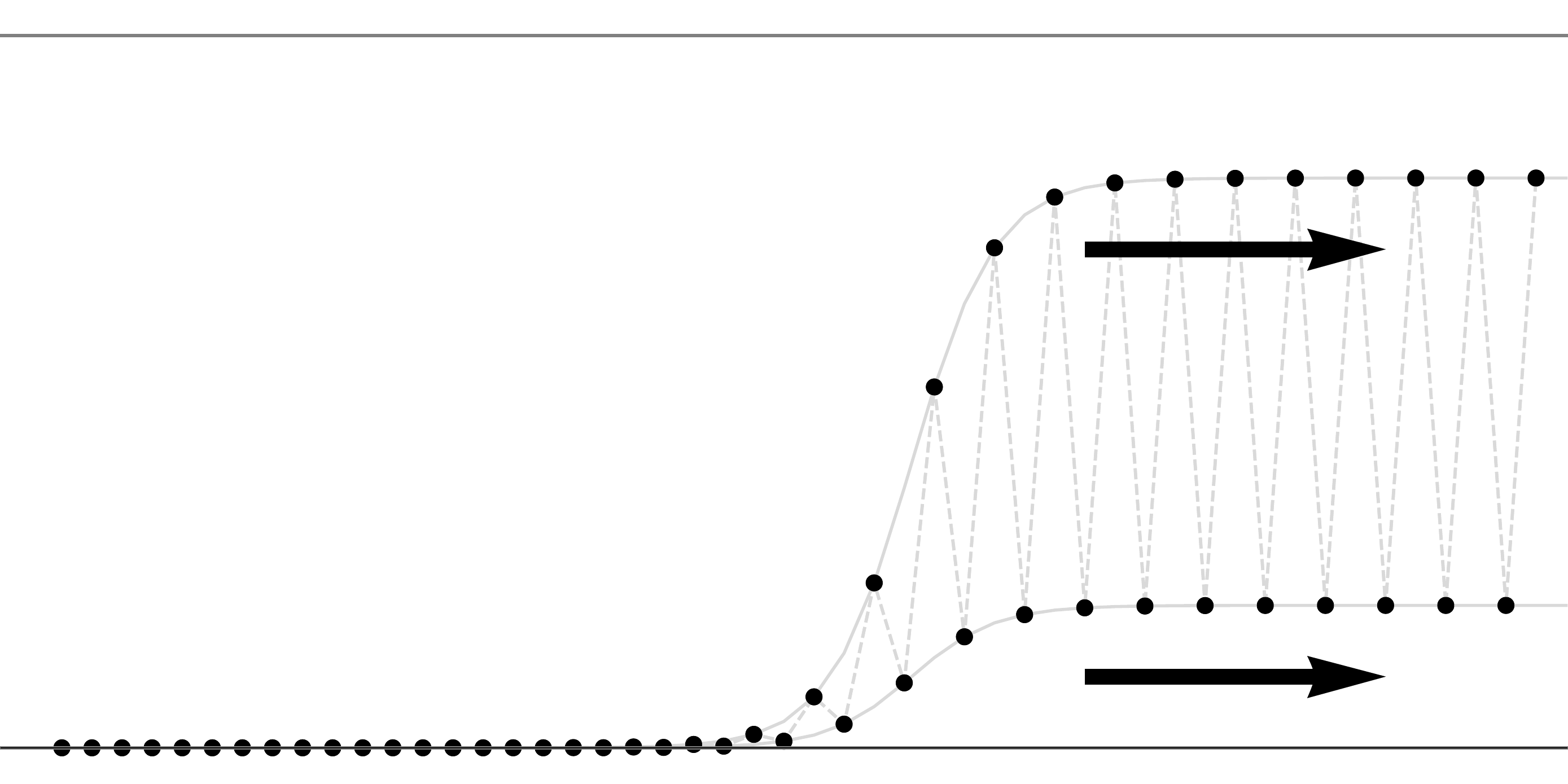}
\end{minipage}
\caption{A monochromatic travelling wave of \eqref{eq:int:nagumo:lde} (left panel) connects two spatially homogeneous stationary solutions.
A bichromatic travelling  wave of \eqref{eq:int:nagumo:lde} (right panel) connects a spatially homogeneous stationary solution with a spatially heterogeneous one.
}\label{f:waves}
\end{center}
\end{figure}

\paragraph{Bichromatic waves}

In this paper we are interested in the parameter region where $c_{\mathrm{mc}}(a,d) = 0$.
In a subset of this region it is possible to show that \sref{eq:int:nagumo:lde} has spatially heterogenous stable equilibria.
We focus on the simplest case and consider so-called bichromatic (two-color) equilibria,
which are spatially periodic with period two. As such, they are closely connected to solutions of the Nagumo equation
posed on a graph with two vertices. 
We set out to construct bichromatic front-solutions to \sref{eq:int:nagumo:lde},
which can be seen as waves that connect the spatially homogeneous equilibrium
$u \equiv 0$ with such a 2-periodic state. We emphasize that these differ
from the traditional front-solutions \sref{eq:int:discrete:wave} in the sense that
the odd and even lattice sites each have their own waveprofile, as illustrated in Figure \ref{f:waves}. Consequently, the bichromatic front-solutions are not monotone.

Our first main contribution is contained in \S\ref{sec:eqlb}, where we
give a detailed description of the set of
parameters $(a,d)$ where such 2-periodic equilibria exist and where they are stable. In contrast to
the setting encountered in \cite{BRUC2011}, the relevant bifurcation curves cannot all be described
explicitly. Besides a global result stating that the number of such equilibria decreases as $d$ is increased,
we also obtain precise asymptotics that describe the boundaries near
the three corners $(a,d) \in \{(0,0), (1/2,1/24), (1,0) \}$ in Figure \ref{f:regions}.

As in \cite{BRUC2011}, these preparations allow the existence of bichromatic fronts
to be established in a straightforward fashion. Indeed, one can apply the general theory
developed by Chen, Guo and Wu in \cite{CHENGUOWU2008}
for discrete periodic systems that admit a comparison-principle.
These results imply that there exists a unique wavespeed $c_{\mathrm{bc}}(a,d)$
for which such bichromatic (bc) fronts exist.
If $c_{\mathrm{bc}}(a,d) \neq 0$, the Fredholm theory developed in \cite{HJHNEGDIF} together with
the techniques from \cite[{\S}3]{HJHSTBFHN} can be used to show that these travelling fronts
depend smoothly on $(a,d)$ and are nonlinearly stable.

However, these general results cannot distinguish between
the cases $c_{\mathrm{bc}}(a,d) = 0$ and $c_{\mathrm{bc}}(a,d) \neq 0$
where we have standing respectively travelling fronts.
This should be contrasted to the situation for the PDE \sref{eq:int:nagumo:pde},
where the sign of the wavespeed is given by the sign of a
simple integral \cite{Fife1977}.
Indeed, there is a large set of
parameters $(a,d)$ for which the discrete bichromatic fronts fail to travel,
even though the analogous integral does not vanish.

Our second main contribution is that we provide explicit
criteria in \S\ref{sec:twv} that can guarantee $c_{\mathrm{bc}} = 0$
or $c_{\mathrm{bc}} > 0$. Together these results cover most
of the parameter region where bichromatic fronts exist.
In any case, they provide a two-component generalization
of the coercivity conditions introduced in \cite{MPB},
which ensure $c_{\mathrm{mc}}(a,d) \neq 0$ for
the boundary regions $a \approx 1$ and $a \approx 0$.

Our arguments to guarantee $c_{\mathrm{bc}} = 0$ are closely related
to the setup used by Keener \cite{VL28} to establish that
monochromatic waves are pinned for $0 < d \ll 1$.
In particular, for small values of $d$ one can neglect
the diffusion term in \S\ref{eq:int:nagumo:lde}
and use properties of the cubic to show that the derivative
of the waveprofile must change signs. This contradicts the fact that waveprofiles
must be strictly monotonic if they travel.

On the other hand, in \S4 we develop an intuitive geometric construction involving reflections to describe a planar
recurrence relation that standing bichromatic fronts must satisfy.
This allows us to rule out the presence of such fronts when a
scalar inequality is violated. This consequently implies the presence of travelling bichromatic fronts.

The parameter regimes where these two arguments apply
both converge towards the corner points $(0,0)$ and $(1, 0)$.
Near these corners we need the delicate asymptotics described above to distinguish
between the two cases.

\paragraph{Colliding fronts}
One of the main reasons for our interest in these bichromatic fronts is 
that they present mechanisms via which the stable homogeneous states
$u = 0$ and $u = 1$  can spread throughout the domain, even though
the primary invasion waves are blocked from propagation.
By using similar techniques as in \cite{Yagisita2003backward},
we believe it should be possible to construct entire solutions
consisting of a right-travelling bichromatic front
connection between the homogeneous equilibrium $u \equiv 0$ and a 2-periodic intermediate state
that collides with a left-travelling bichromatic connection
between 
 2-periodic intermediate state and the homogeneous equilibrium $u \equiv 1$; see Figure \ref{f:colfronts}.
The resulting state after the collision is then a pinned monochromatic front
that connects $0$ with $1$. We have been able to numerically verify the existence of these solutions
in the parameter regions predicted by the theory developed in this paper.

\begin{figure}
\begin{center}
\includegraphics[width=.75\textwidth]{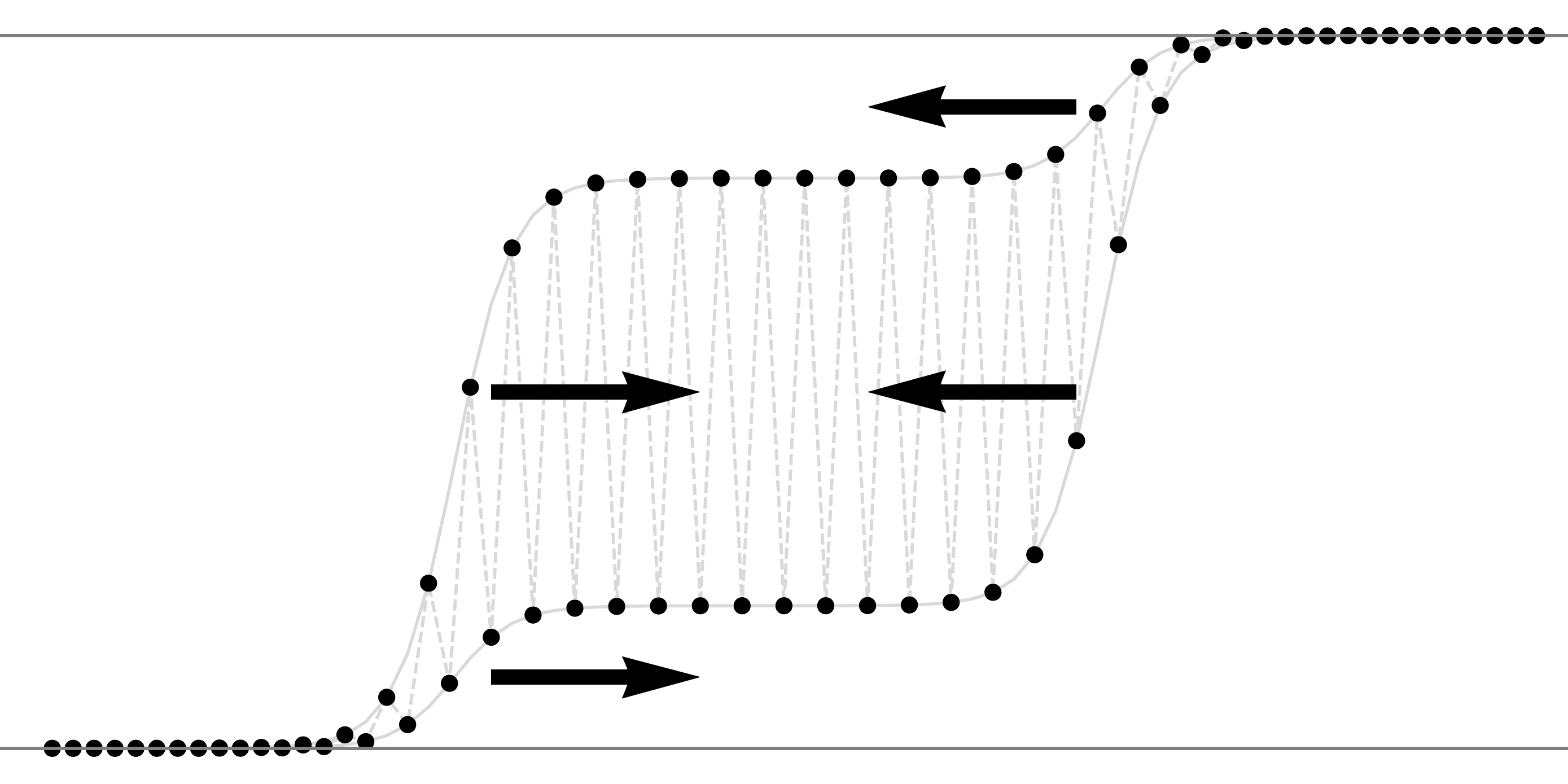}
\caption{Colliding front of \eqref{eq:int:nagumo:lde} consisting of a right-travelling bichromatic front
connection between the homogeneous equilibrium $u \equiv 0$ and a 2-periodic intermediate state
that collides with a left-travelling bichromatic connection
between the 2-periodic state and the homogeneous equilibrium $u \equiv 1$.} \label{f:colfronts}
\end{center}
\end{figure}

\paragraph{Acknowledgements}
HJH acknowledges support from the Netherlands Organization for Scientific Research (NWO)
(grant 639.032.612). LM acknowledges support from the Netherlands Organization for
Scientific Research (NWO) (grant 613.001.304).


\section{Main Results}
\label{sec:mr}
Our interest here is in the lattice differential equation
\begin{equation}
\label{eq:mr:lde:main}
\dot{x}_j(t) = d \big[ x_{j-1}(t) - 2 x_j(t) + x_{j+1}(t) \big] + g\big(x_j(t) ; a\big)
\end{equation}
posed on the one-dimensional lattice, i.e.,  $j \in \Wholes$. The bistable
nonlinearity is explicitly given by
\begin{equation}
g(u;a) = u ( 1 - u) ( u - a ),
\end{equation}
with $a\in (0,1)$. Our results concern so-called bichromatic (two-colour) travelling wave solutions to
the LDE \sref{eq:mr:lde:main}.
Such solutions can be written in the form
\begin{equation} \label{eq:tw:ansatz}
x_j(t) = \left\{ \begin{array}{lcl}
   \Phi_u( j - ct) & & \hbox{if } j \hbox{ is even}, \\[0.2cm]
   \Phi_v( j - ct) & & \hbox{if } j \hbox{ is odd}, \\[0.2cm]
   \end{array}
\right.
\end{equation}
for some wavespeed $c \in \Real$
and $\Real^2$-valued waveprofile
\begin{equation}
\Phi = (\Phi_u, \Phi_v): \Real \to \Real^2.
\end{equation}
Substituting this Ansatz into \sref{eq:mr:lde:main}
we obtain the travelling wave system
\begin{equation}
\label{eq:mr:wave:mfde}
\begin{array}{lcl}
- c  \Phi_u'(\xi)
 &=& d \big[ \Phi_v(\xi - 1) - 2 \Phi_u(\xi) + \Phi_v(\xi + 1) \big]
    + g\big(\Phi_u(\xi); a \big) ,
\\[0.2cm]
- c  \Phi_v'(\xi)
 &=& d \big[  \Phi_u(\xi - 1) - 2 \Phi_v(\xi) + \Phi_u(\xi + 1) \big]
    + g\big( \Phi_v(\xi); a \big) .
\end{array}
\end{equation}
Upon introducing the functions
\begin{equation}
\label{eq:mr:def:G:G12}
G(u,v;a, d)
= \left(
  \begin{array}{ccc}
     G_1(u,v;a,d)  \\[0.2cm]
     G_2(u, v; a, d)  \\[0.2cm]
  \end{array}
  \right)
= \left(
  \begin{array}{ccc}
     2d (v - u ) + g(u;a) \\[0.2cm]
     2d (u - v) + g(v;a) \\[0.2cm]
  \end{array}
  \right),
\end{equation}
we see that any stationary solution
\begin{equation}
(\Phi_u, \Phi_v)(\xi) = \big( \overline{u} , \overline{v} \big)
\end{equation}
to \sref{eq:mr:wave:mfde}
must satisfy the nonlinear algebraic equation
\begin{equation}
\label{eq:mr:eq:id}
G( \overline{u}, \overline{v} ; a , d) = 0.
\end{equation}
The full 
bifurcation
diagram for this equation is described in \S\ref{sec:eqlb}.
For our purposes here however it suffices
to summarize a subset of the conclusions from this analysis,
which we do in our first result below. In particular,
there exists a region $\Omega_{\mathrm{bc}}$ in the $(a,d)$-plane
for which the spatially homogeneous system
$(\dot{u}, \dot{v} ) = G(u, v; a, d)$
has a stable equilibrium
$\big(\overline{u}_{\mathrm{bc}}(a,d), \overline{v}_{\mathrm{bc}}(a,d) \big)$
that can be interpreted as a bichromatic
equilibrium state for the LDE \sref{eq:mr:lde:main}.

\begin{prop}[{see \S\ref{sec:eqlb}}]
There exists a continuous curve $d_{\mathrm{bc}}: [0,1] \to [0, \frac{1}{24}]$
with $d_{\mathrm{bc}}(\frac{1}{2}) = \frac{1}{24}$ and $d_{\mathrm{bc}}(1 - a) = d_{\mathrm{bc}}(a)$
so that for every $0 \le d < d_{\mathrm{bc}}$ and $0 < a < 1$
the system \sref{eq:mr:eq:id}
has nine distinct equilibria $(\overline{u},\overline{v}) \in [0, 1]^2$.
Upon writing
\begin{equation}
\Omega_{\mathrm{bc}} = \{ 0 < d < d_{\mathrm{bc}}(a) \hbox{ and } 0 < a < 1 \},
\end{equation}
there exist $C^\infty$-smooth maps
\begin{equation}
(\overline{u}_{\mathrm{bc}}, \overline{v}_{\mathrm{bc}}): \Omega_{\mathrm{bc}} \to (0, 1)^2
\end{equation}
with $\overline{u}_{\mathrm{bc}} < a < \overline{v}_{\mathrm{bc}}$ so that
for every $(a, d) \in \Omega_{\mathrm{bc}}$ we have
\begin{equation}
\label{eq:mr:ex:uv:ovl}
G\big( \overline{u}_{\mathrm{bc}}(a , d), \overline{v}_{\mathrm{bc}}(a, d) ;a  ,d \big)  = 0
\end{equation}
together with
\begin{equation}
\label{eq:mr:stab:uv:ovl}
\mathrm{det}  \, D_{1,2} G\big( \overline{u}_{\mathrm{bc}}(a , d),
  \overline{v}_{\mathrm{bc}}(a, d) ;a  ,d \big)  >0,
\qquad
\mathrm{Tr} \, D_{1,2} G\big( \overline{u}_{\mathrm{bc}}(a , d),
  \overline{v}_{\mathrm{bc}}(a, d) ;a  ,d \big)   <0 .
\end{equation}
\end{prop}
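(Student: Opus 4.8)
The plan is to reduce \sref{eq:mr:eq:id} to a single scalar relation and then read off smoothness and stability from the resulting algebraic structure. Diagonal equilibria with $\overline{u}=\overline{v}$ exist for \emph{every} $d\ge 0$, since the coupling in \sref{eq:mr:def:G:G12} vanishes on the diagonal and \sref{eq:mr:eq:id} collapses to $g(\overline{u};a)=0$, yielding the three points $(0,0),(a,a),(1,1)$. For off-diagonal equilibria I pass to the symmetric coordinates $s=\overline{u}+\overline{v}$ and $p=\overline{u}\,\overline{v}$: summing the two components of \sref{eq:mr:eq:id} gives $g(\overline{u};a)+g(\overline{v};a)=0$, while dividing their difference by $\overline{u}-\overline{v}$ gives $4d=[g(\overline{u};a)-g(\overline{v};a)]/(\overline{u}-\overline{v})$. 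Expressing both through $s,p$ (using $g(u;a)=-u^3+(1+a)u^2-au$) yields $p=s(s-1)(s-a)/(3s-2-2a)$ and, after eliminating $p$, the single relation $d=\Psi(s;a)$ with
\[
\Psi(s;a)=-\frac{(s-a)(s-1)(s-1-a)}{2\,(3s-2-2a)},
\]
the profiles being recovered as the roots of $t^2-st+p=0$ (real and distinct exactly when $s^2-4p>0$). A short sign argument on $g$ confines every solution of \sref{eq:mr:eq:id} to $[0,1]^2$.

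Counting equilibria now means counting admissible solutions of $\Psi(s;a)=d$. At $d=0$ the three roots $s\in\{a,1,1+a\}$ recover the six off-diagonal points $(0,a),(a,0),(0,1),(1,0),(a,1),(1,a)$, which together with the three diagonal points give nine; all are nondegenerate, since at $d=0$ the Jacobian is diagonal with entries drawn from $g'(0)=-a$, $g'(1)=a-1$, $g'(a)=a(1-a)$, none of which vanishes. As $d$ increases the solutions of $\Psi(\cdot;a)=d$ persist until two of them merge, i.e.\ until $d$ reaches a positive local maximum of $\Psi(\cdot;a)$. I would \emph{define} $d_{\mathrm{bc}}(a)$ to be the least such critical value and then show that exactly the nine equilibria survive for $0\le d<d_{\mathrm{bc}}(a)$. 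The reflection symmetry is automatic from the identity $g(1-u;1-a)=-g(u;a)$, which makes $(u,v,a)\mapsto(1-u,1-v,1-a)$ a symmetry of the solution set and forces $d_{\mathrm{bc}}(1-a)=d_{\mathrm{bc}}(a)$. At $a=\tfrac12$ the pole of $\Psi$ sits at $s=1$ and cancels, leaving $\Psi(s;\tfrac12)=-\tfrac16 s^2+\tfrac13 s-\tfrac18$, a downward parabola with maximum value $\tfrac1{24}$; hence $d_{\mathrm{bc}}(\tfrac12)=\tfrac1{24}$.

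This global count is the principal obstacle: for $a\neq\tfrac12$ the critical points of $\Psi(\cdot;a)$ solve a cubic with no usable closed form, so one must argue qualitatively---tracking the roots, the pole $s=\tfrac23(1+a)$, and the admissibility constraint $s^2-4p>0$---that precisely nine solutions persist below $d_{\mathrm{bc}}(a)$ and that $d_{\mathrm{bc}}$ varies continuously. Granting this, the bichromatic branch is the continuation of the point $(0,1)$ (the solution near $s=1$), whose range of existence one checks extends beyond $d_{\mathrm{bc}}(a)$, so it survives on all of $\Omega_{\mathrm{bc}}$. Since $\det D_{1,2}G\neq0$ along this branch throughout $\Omega_{\mathrm{bc}}$ (this non-merging is exactly what defines $d_{\mathrm{bc}}$), the implicit function theorem produces the $C^\infty$ maps satisfying \sref{eq:mr:ex:uv:ovl}. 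The ordering $\overline{u}_{\mathrm{bc}}<a<\overline{v}_{\mathrm{bc}}$ propagates from $d=0$: were $\overline{u}_{\mathrm{bc}}=a$ at some parameter, then $g(a;a)=0$ would force $\overline{v}_{\mathrm{bc}}=a$ through the first component of \sref{eq:mr:eq:id}, colliding the branch with the diagonal equilibrium $(a,a)$ and violating distinctness of the nine equilibria; the same argument rules out $\overline{v}_{\mathrm{bc}}=a$.

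Finally, the stability claim \sref{eq:mr:stab:uv:ovl} follows cleanly from the symmetry of
\[
D_{1,2}G=\begin{pmatrix} g'(\overline{u})-2d & 2d \\ 2d & g'(\overline{v})-2d \end{pmatrix},
\]
whose equal off-diagonal entries force the eigenvalues to be real. Consequently $\det>0$ forces the two eigenvalues to share a sign, so $\mathrm{Tr}\neq0$ whenever $\det>0$. At $d=0$ the bichromatic point is $(0,1)$, where $\det=g'(0)g'(1)=a(1-a)>0$ and $\mathrm{Tr}=g'(0)+g'(1)=-1<0$. Because $\det D_{1,2}G$ is continuous and nonvanishing on the connected region $\Omega_{\mathrm{bc}}$ and positive at $d=0$, it stays positive throughout; and since $\det>0$ precludes $\mathrm{Tr}=0$, the trace cannot change sign and remains negative on all of $\Omega_{\mathrm{bc}}$. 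This delivers both inequalities in \sref{eq:mr:stab:uv:ovl} at once, so that the stability computation is essentially free once the equilibrium count is in hand.
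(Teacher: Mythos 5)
Your reduction to symmetric coordinates is correct and attractive: summing and differencing the two components of \sref{eq:mr:eq:id} does yield $p=s(s-1)(s-a)/(3s-2-2a)$ and $d=\Psi(s;a)$ with the $\Psi$ you wrote, the $d=0$ root count and the computation $\Psi(s;\tfrac12)=-\tfrac16 s^2+\tfrac13 s-\tfrac18$ (hence the value $\tfrac1{24}$) are right, the symmetry argument is the same one the paper uses (Corollary \ref{cor:bif:1:min:a}), and your trace/determinant argument at the end matches the paper's proof of the stability claim. However, there is a genuine gap, and you have located it yourself: the assertion that exactly nine equilibria persist for \emph{all} $0\le d<d_{\mathrm{bc}}(a)$ and that $d_{\mathrm{bc}}$ is continuous is precisely the content you defer with ``one must argue qualitatively.'' This is not a routine verification. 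The number of roots of $\Psi(\cdot;a)=d$ on the admissible range is not automatically monotone in $d$ (a rational function can have interior local minima that \emph{create} roots as $d$ increases, and the paper explicitly warns that root-count monotonicity fails for general bistable $g$); moreover solutions can be lost not only by two $s$-roots merging but also by the discriminant $s^2-4p$ reaching zero (collision with the diagonal, which is what happens at $d_+(a)$) or by leaving $[0,1]^2$. Ruling out all of these for $d<d_{\mathrm{bc}}(a)$, and establishing continuity and strict monotonicity of $d_{\mathrm{bc}}$ on $[0,\tfrac12]$, is what the paper's entire tangency analysis in \S\ref{sec:bif:tgn}--\S\ref{sec:bif:glb} (showing non-transverse intersections of $v_\pm$ with $v_d$ can only be local minima of the difference, then using $\partial_d v_d<0$) together with the normal-form expansions near $(0,0)$ and $(\tfrac12,\tfrac1{24})$ is for. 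Your proposal replaces the paper's two curves $v_\pm(u)$ by the single function $\Psi(s;a)$, which could in principle streamline that analysis, but the analogue of the tangency lemmas --- a classification of the critical points of $\Psi$ on the admissible $s$-range --- still has to be proved and is absent.

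One smaller but substantive error: the bichromatic branch through $(0,1)$ does \emph{not} extend beyond $d_{\mathrm{bc}}(a)$. By Propositions \ref{prp:st:uv:stb:ex}(iv) and \ref{prp:st:uv:ustb:ex}(iv), the branch $(\overline{u}_B,\overline{v}_B)$ collides with $(\overline{u}_C,\overline{v}_C)$ (for $a\le\tfrac12$) or $(\overline{u}_A,\overline{v}_A)$ (for $a\ge\tfrac12$) in a saddle-node exactly at $d=d_-(a)=d_{\mathrm{bc}}(a)$, where $\det D_{1,2}G$ vanishes on this very branch; $d_{\mathrm{bc}}(a)$ is the fold of the stable branch itself, not of some other family. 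This does not invalidate the conclusion on the open set $\Omega_{\mathrm{bc}}$, but the parenthetical ``this non-merging is exactly what defines $d_{\mathrm{bc}}$'' is doing real work that, as stated, is circular: you must show that the first critical value of $\Psi$ is attained at the merging of the $s$-roots near $1$ and near $1+a$ (or $a$), and that no degeneracy occurs on the $B$-branch before that. Finally, note that the proposition asks for maps into $(0,1)^2$, so you also need $\overline{u}_{\mathrm{bc}}>0$ and $\overline{v}_{\mathrm{bc}}<1$ for $d>0$; this follows by the same one-line argument you used for $\overline{u}_{\mathrm{bc}}\neq a$ (if $\overline{u}=0$ then $G_1=2d\overline{v}=0$ forces $\overline{v}=0$), but it should be said.
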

We note that the statements
\sref{eq:mr:ex:uv:ovl}-\sref{eq:mr:stab:uv:ovl}
are also valid upon replacing
the bichromatic rest-state $(\overline{u}_{\mathrm{bc}} , \overline{v}_{\mathrm{bc}})$
by the monochromatic equilibria $(0, 0)$ and $(1, 1)$.
We will be interested in
waves that connect these mono- and bichromatic equilibria together.
More precisely,
we set out to find solutions to
\sref{eq:mr:wave:mfde}
that satisfy either the 'lower' boundary conditions
\begin{equation}
  \label{eq:mr:bnd:low}
    \lim_{\xi \to - \infty}
      \Phi(\xi) = (0, 0),
      \qquad
    \lim_{\xi \to + \infty}
      \Phi(\xi) = (\overline{u}_{\mathrm{bc}} , \overline{v}_{\mathrm{bc}} ),
\end{equation}
or the 'upper' boundary conditions
\begin{equation}
  \label{eq:mr:bnd:up}
    \lim_{\xi \to - \infty}
      \Phi(\xi) = (\overline{u}_{\mathrm{bc}} , \overline{v}_{\mathrm{bc}} )
      \qquad
    \lim_{\xi \to + \infty}
      \Phi(\xi) = (1,1).
\end{equation}
The result below summarizes
several key facts concerning the existence and uniqueness
of such waves. It introduces subregions of $\Omega_{\mathrm{bc}}$ denoted by $\mathcal{T}_{\mathrm{low}}$ and $\mathcal{T}_{\mathrm{up}}$ where the bichromatic travelling waves \eqref{eq:tw:ansatz} exist with
nonzero speeds $c_{\mathrm{low}} > 0$ and $c_{\mathrm{up}} < 0$; see Figure \ref{f:Tlow:Tup}.
With the exception of the inequalities
$c_{\mathrm{low}} \ge 0$ and $c_{\mathrm{up}} \le 0$,
these properties follow directly from the theory
developed in \cite{CHENGUOWU2008,HJHNEGDIF}.

\begin{thm}[{see \S\ref{sec:twv:ex}}]
\label{thm:mr:twv:ex}
There exist continuous maps
\begin{equation}
c_{\mathrm{low}}: \Omega_{\mathrm{bc}} \to [0, \infty),
\qquad
c_{\mathrm{up}}: \Omega_{\mathrm{bc}} \to (-\infty, 0]
\end{equation}
that satisfy the following properties.
\begin{itemize}
\item[(i)]{
  Upon introducing the open sets
  \begin{equation}
    \begin{array}{lcl}
       \mathcal{T}_{\mathrm{low}} & = & \{ (a ,d) \in \Omega_{\mathrm{bc}} : c_{\mathrm{low}} > 0 \},
       \\[0.2cm]
       \mathcal{T}_{\mathrm{up}} & = & \{ (a ,d) \in \Omega_{\mathrm{bc}} : c_{\mathrm{up}} < 0 \},
    \end{array}
  \end{equation}
  the functions $c_{\mathrm{low}}$ and $c_{\mathrm{up}}$ are $C^\infty$-smooth
  on $\mathcal{T}_{\mathrm{low}}$
  respectively $\mathcal{T}_{\mathrm{up}}$.
}
\item[(ii)]{
  There exist $C^\infty$-smooth functions
  \begin{equation}
    \Phi_{\mathrm{low}} : \mathcal{T}_{\mathrm{low}}
      \to W^{1;\infty}(\Real; \Real^2),
    \qquad
    \Phi_{\mathrm{up}} : \mathcal{T}_{\mathrm{up}}
      \to W^{1;\infty}(\Real; \Real^2),
  \end{equation}
  such that for any $\# \in \{\mathrm{low}, \mathrm{up}\}$
  and any $(a,d) \in \mathcal{T}_{\#}$,
  the pair
  \begin{equation}
    (c, \Phi) = \big(c_{\#}(a,d) , \Phi_{\#}(a,d) \big)
  \end{equation}
  satisfies \sref{eq:mr:wave:mfde}
  together with the boundary condition
  \sref{eq:mr:bnd:low} if $\# = \mathrm{low}$
  or \sref{eq:mr:bnd:up} if $\# = \mathrm{up}$.
  In addition, we have the componentwise inequality $\Phi_{\#}' > (0,0)$.
}
\item[(iii)]{
  For any $\# \in \{\mathrm{low}, \mathrm{up}\}$
  and any $(a,d) \in \Omega_{\mathrm{bc}} \setminus \mathcal{T}_{\#}$,
  there exists a non-decreasing function
  $\Phi: \Real \to \Real^2$ that satisfies
  \sref{eq:mr:wave:mfde} with $c = 0$
  together with the boundary condition
  \sref{eq:mr:bnd:low} if $\# = \mathrm{low}$
  or \sref{eq:mr:bnd:up} if $\# = \mathrm{up}$.
}
\item[(iv)]{
  Pick $\# \in \{\mathrm{low}, \mathrm{up}\}$
  and $(a, d) \in \Omega_{\mathrm{bc}}$
  and consider any $c \neq 0$ together with
  a function $\Phi \in W^{1,\infty}(\Real;\Real^2)$
  that satisfies \sref{eq:mr:wave:mfde}
  together with the boundary condition
  \sref{eq:mr:bnd:low} if $\# = \mathrm{low}$
  or \sref{eq:mr:bnd:up} if $\# = \mathrm{up}$.
  Then we must have
  $c = c_{\#}(a , d)$
  and $\Phi = \Phi_{\#}(a,d)(\cdot - \vartheta)$
  for some $\vartheta > 0$. In particular,
  we have $(a, d) \in \mathcal{T}_{\#}$.
}
\item[(v)]{
  Pick $\# \in \{\mathrm{low}, \mathrm{up}\}$
  and $(a, d) \in \Omega_{\mathrm{bc}}$
  and consider any non-decreasing function
  $\Phi: \Real \to \Real^2$ that satisfies
  \sref{eq:mr:wave:mfde} with $c = 0$
  together with the boundary condition
  \sref{eq:mr:bnd:low} if $\# = \mathrm{low}$
  or \sref{eq:mr:bnd:up} if $\# = \mathrm{up}$.
  Then we must have
  $(a, d) \in \Omega_{\mathrm{bc}}
     \setminus \mathcal{T}_{\#}$.
}
\end{itemize}
\end{thm}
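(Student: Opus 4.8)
The plan is to obtain parts (i)--(v) in two stages: first invoke the general periodic--bistable machinery to produce the speed maps and profiles together with their regularity, monotonicity and uniqueness, and then establish the two sign statements $c_{\mathrm{low}}\ge 0$ and $c_{\mathrm{up}}\le 0$, which are the only genuinely new content. For the first stage I would fix $(a,d)\in\Omega_{\mathrm{bc}}$ and observe that, since $d>0$, the LDE \sref{eq:mr:lde:main} obeys the comparison principle and under the $2$-colouring becomes a spatially $2$-periodic bistable system. The data \sref{eq:mr:bnd:low} connect two ordered equilibria $(0,0)$ and $(\overline u_{\mathrm{bc}},\overline v_{\mathrm{bc}})$ of the reduced field $G$ that are both linearly stable by \sref{eq:mr:stab:uv:ovl}, and likewise for \sref{eq:mr:bnd:up}; this places us exactly in the Chen--Guo--Wu framework, whose results yield for each problem a unique admissible speed and a monotone profile. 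This gives existence and monotonicity in (ii), the uniqueness statement (iv), and the $c=0$ assertions (iii) and (v). Where the speed is nonzero the linearisation about the profile is Fredholm of index zero with kernel spanned by $\Phi'$, so the implicit function theorem, applied through the Fredholm theory of \cite{HJHNEGDIF}, upgrades this to the $C^\infty$ dependence on $(a,d)$ claimed in (i) and (ii). Everything but the signs is then settled.

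The heart of the argument is a variational identity. Testing the two lines of \sref{eq:mr:wave:mfde} against $\Phi_u'$ and $\Phi_v'$, adding and integrating over $\Real$, the nonlinear terms telescope to $\int_0^{\overline u_{\mathrm{bc}}}g+\int_0^{\overline v_{\mathrm{bc}}}g$, while a shifted integration by parts (using the exponential decay of $\Phi'$ at $\pm\infty$ granted by hyperbolicity of the endpoints) collapses the entire coupling term to the single boundary contribution $-d(\overline u_{\mathrm{bc}}-\overline v_{\mathrm{bc}})^2$. Introducing the single--well primitive $F(w)=-\int_0^w g(s;a)\,\rmd s$ and the reduced potential $V(u,v)=d(u-v)^2+F(u)+F(v)$, for which $-\nabla V=G$, the lower problem gives
\begin{equation*}
c_{\mathrm{low}}\int_{\Real}\big(\abs{\Phi_u'}^2+\abs{\Phi_v'}^2\big)\,\rmd\xi
= V(\overline u_{\mathrm{bc}},\overline v_{\mathrm{bc}}) - V(0,0),
\end{equation*}
and an identical computation for the upper problem replaces $V(0,0)$ by $V(1,1)$ and reverses the sign carried by the bichromatic term, so that $c_{\mathrm{up}}\,(\text{positive integral})=V(1,1)-V(\overline u_{\mathrm{bc}},\overline v_{\mathrm{bc}})$. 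Since the integral is strictly positive for any non-constant smooth ($c\neq0$) profile, this pins the sign of $c_{\mathrm{low}}$ to that of the energy gap $V(\overline u_{\mathrm{bc}},\overline v_{\mathrm{bc}})-V(0,0)$, and dually for $c_{\mathrm{up}}$.

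To convert this into the one-sided bounds I would rule out a strictly negative lower speed. By the identity, $c_{\mathrm{low}}<0$ would force $V(\overline u_{\mathrm{bc}},\overline v_{\mathrm{bc}})<V(0,0)$; a leading-order computation (the gap is $\approx\tfrac1{12}(2a-1)+d$ near $d=0$) shows this can occur only in the small-$d$ part of $\Omega_{\mathrm{bc}}$, where $a<\tfrac12$. Precisely there, the Keener-type pinning mechanism of \S\ref{sec:twv} applies: neglecting the $O(d)$ coupling forces the monotone derivative $\Phi'$ to change sign, contradicting the strict monotonicity a travelling profile must have. Hence the wave is pinned throughout $\{V(\overline u_{\mathrm{bc}},\overline v_{\mathrm{bc}})<V(0,0)\}$, so $c_{\mathrm{low}}\ge0$ on all of $\Omega_{\mathrm{bc}}$. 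Finally the involution $u_j\mapsto1-u_j$, $a\mapsto1-a$, composed with the lattice shift $u\leftrightarrow v$ and the reflection $\xi\mapsto-\xi$, carries the lower problem at $a$ onto the upper problem at $1-a$ while reversing the speed, yielding $c_{\mathrm{up}}(a,d)=-c_{\mathrm{low}}(1-a,d)$ and hence $c_{\mathrm{up}}\le0$.

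The step I expect to be delicate is the last containment: one must guarantee that the region where the energy gap is unfavourable lies inside the region where pinning can actually be established. Both boundaries accumulate at the corners $(0,0)$ and $(1,0)$ of Figure \ref{f:regions}, so matching them cannot rely on the crude leading-order estimate above but requires the fine asymptotics of $(\overline u_{\mathrm{bc}},\overline v_{\mathrm{bc}})$ and of $d_{\mathrm{bc}}(a)$ near those corners. This is exactly the delicate corner analysis flagged in the introduction, and it is where the bulk of the real work will sit.
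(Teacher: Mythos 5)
Your first stage (Chen--Guo--Wu for existence, monotonicity, uniqueness of the speed; Fredholm theory plus the implicit function theorem for smoothness on $\mathcal{T}_{\#}$; the involution $u\mapsto 1-u$, $a\mapsto 1-a$, $\xi\mapsto-\xi$ to transfer the lower problem to the upper one) matches the paper's proof in \S\ref{sec:twv:ex}. The discrete energy identity you derive is also correct as stated: the cross terms do pair up under a unit shift, so the coupling contribution collapses to $-d(\overline{u}_{\mathrm{bc}}-\overline{v}_{\mathrm{bc}})^2$ and one gets $c\int(\abs{\Phi_u'}^2+\abs{\Phi_v'}^2)\,\rmd\xi = V(\overline{u}_{\mathrm{bc}},\overline{v}_{\mathrm{bc}})-V(0,0)$ whenever $c\neq 0$.

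The genuine gap is in how you use this identity to conclude $c_{\mathrm{low}}\ge 0$. The identity only constrains the sign of $c$ \emph{conditionally on} $c\neq 0$, and as you yourself compute, the energy gap is negative on a large part of $\Omega_{\mathrm{bc}}$ (at $d=0$ it equals $\frac{1}{12}(2a-1)<0$ for all $a<\frac12$; since $d_{\mathrm{bc}}(a)\sim a^2/8$ near $a=0$, essentially the entire slice of $\Omega_{\mathrm{bc}}$ over small $a$ has unfavourable gap). On that whole set you must therefore prove outright pinning, $c=0$, and you must prove that the unfavourable-gap region is contained in the region where your pinning argument actually closes. Neither is done: the Keener-type mechanism is only gestured at ("neglecting the $O(d)$ coupling forces $\Phi'$ to change sign"), the containment is checked only at leading order, and you explicitly defer the corner asymptotics that would be needed to make it rigorous. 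As written, $c_{\mathrm{low}}\ge 0$ -- which you correctly identify as the only genuinely new content of the theorem -- is not established. The paper avoids all of this with a two-line pointwise argument (Lemma \ref{lem:twv:speed:non:neg}): if $c\neq 0$ the profile is strictly increasing, so there is $\xi_*$ with $\Phi_v(\xi_*)=\overline{u}_B(a,d)$; since $\Phi_u\le\overline{u}_B$ everywhere, the coupling term at $\xi_*$ is $\le 2d[\overline{u}_B-\Phi_v(\xi_*)]=0$, whence $-c\Phi_v'(\xi_*)\le g(\overline{u}_B;a)<0$ and $c>0$. This rules out $c<0$ on all of $\Omega_{\mathrm{bc}}$ with no case analysis, no asymptotics, and no appeal to the later pinning results. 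I would recommend replacing your variational step with an argument of this pointwise comparison type; the energy identity, while true, is the wrong tool here precisely because (as the introduction warns) the sign of the discrete wave speed is not governed by the sign of an integral.
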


\begin{figure}
\centering
\begin{minipage}{0.55\textwidth}
\centering
\includegraphics[width=\textwidth]{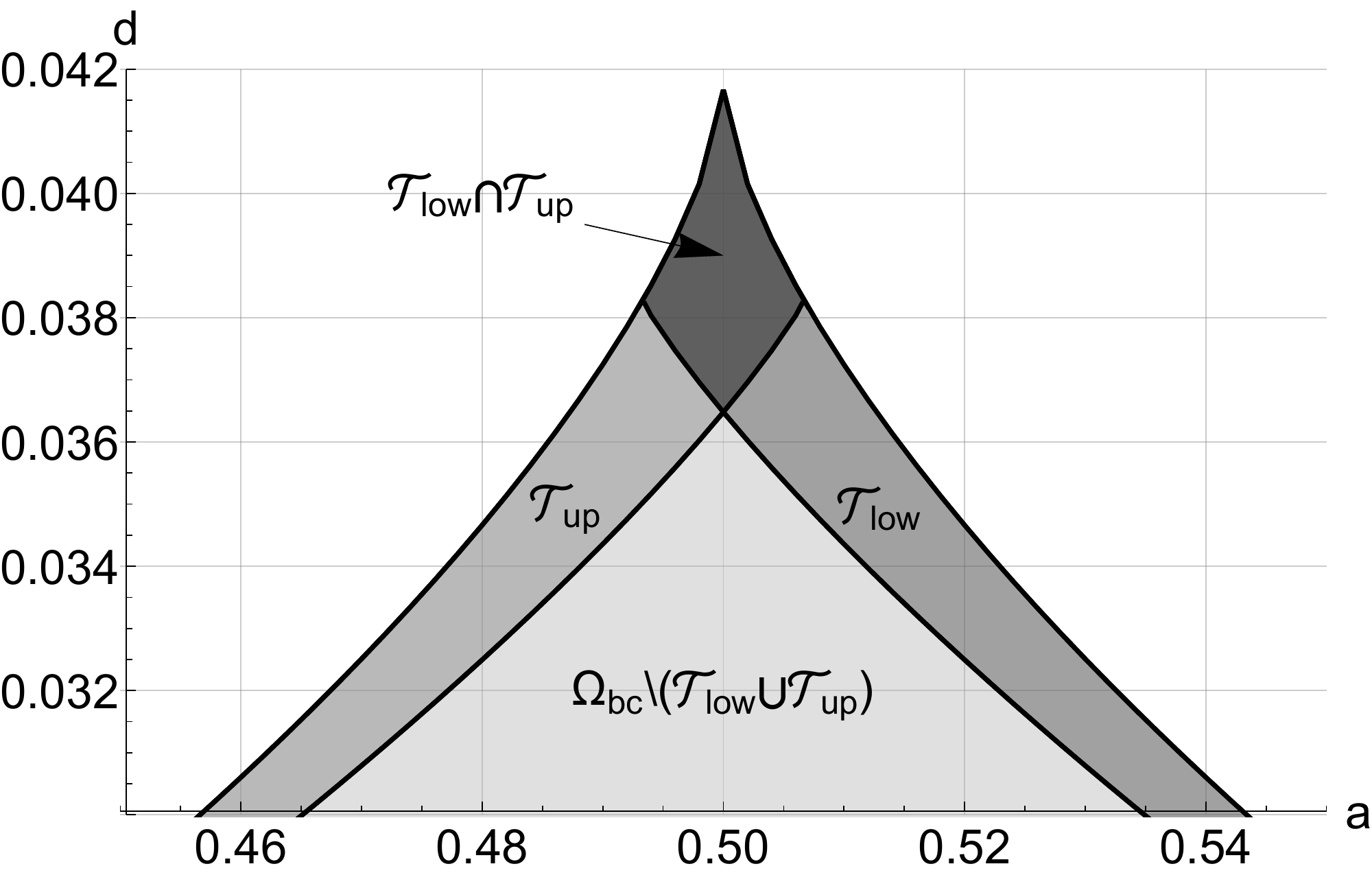}
\end{minipage}
\caption{Numerical bounds for the parameter sets $\Omega_{bc}$,  $\mathcal{T}_{\mathrm{low}}$ and
$\mathcal{T}_{\mathrm{up}}$ introduced in Theorem \ref{thm:mr:twv:ex},
in the neighbourhood of the cusp $(\frac{1}{2}, \frac{1}{24})$. 
}\label{f:Tlow:Tup}
\end{figure}

We numerically determined the locations
of the sets $\mathcal{T}_{\mathrm{low}}$ and $\mathcal{T}_{\mathrm{up}}$
in Figure \ref{f:Tlow:Tup}. In particular,
we simulated \sref{eq:mr:lde:main} with an initial
condition that consists of the stable periodic pattern
multiplied by a hyperbolic tangent.  By checking if
this solution converges to a travelling or stationary pattern
one can decide whether $(a,d) \in \mathcal{T}_{\mathrm{low}}$.

We now introduce the notation
\begin{equation}
\gamma_\pm(a)
= \frac{1}{3}
\Big[ a + 1 \pm \sqrt{ a^2 - a + 1 - 6 d_{\mathrm{bc}}(a) } \Big] .
\end{equation}
Writing $\alpha_a$
for the inverse of the strictly
increasing function
\begin{equation}
[0, \gamma_-(a)] \ni v \mapsto
   v - \frac{g(v;a)}{2 d_{\mathrm{bc}}(a)},
\end{equation}
we formally introduce the quantity
\begin{equation}
\label{eq:mr:expr:for:Gamma}
\begin{array}{lcl}
\Gamma(a)
 & = & 2 \gamma_+(a) - \frac{g'( \gamma_+(a);a)}{d_{\mathrm{bc}}(a)}
   - \overline{u}_{\mathrm{bc}}\big(a, d_{\mathrm{bc}}^-(a) \big)
\\[0.2cm]
& & \qquad
  - \max\big\{ u \in \big[0, \overline{u}_{\mathrm{bc}}\big(a, d_{\mathrm{bc}}^-(a)\big) \big] :
    2 u - \frac{g(u;a)}{d_{\mathrm{bc}}(a)} - \overline{v}_{\mathrm{bc}}\big(a, d_{\mathrm{bc}}^-(a) \big)
    = \alpha_a(u) \big\}
\end{array}
\end{equation}
for any $0 < a < 1$. Here the notation $d_{\mathrm{bc}}^-(a)$ refers
to the limit $d \uparrow d_{\mathrm{bc}}(a)$.
The geometric interpretation of this definition will be clarified in \S\ref{sec:twv:char}.
However,
we wish to emphasize here that one only needs information concerning the
quantities $(d_{\mathrm{bc}} , \overline{u}_{\mathrm{bc}}, \overline{v}_{\mathrm{bc}})$
associated to the two-dimensional algebraic problem $G(u,v;a,d) = 0$
in order to compute $\Gamma(a)$.  In particular, there is an essential
difference\footnote{The first problem is three dimensional, while the second problem is infinite dimensional
and hence involves truncations.} between computing
$\Gamma(a)$ and using the numerical procedure above to check whether $c \neq 0$.

The main contribution of the present paper is contained
in our final result, which provides analytical bounds for
the parameter regions
$\mathcal{T}_{\mathrm{low}}$ and $\mathcal{T}_{\mathrm{up}}$
where the bichromatic waves actually travel (i.e., where $c_{\mathrm{low}} > 0$
respectively $c_{\mathrm{up}} < 0$).
Both regions contain a neighbourhood of the cusp
$(a,d) = (\frac{1}{2} , \frac{1}{24})$. In addition,
the corners $(0,0)$ and $(1,0)$ are accumulation points
for the sets $\mathcal{T}_{\mathrm{up}}$ respectively $\mathcal{T}_{\mathrm{low}}$.


\begin{thm}[{see \S\ref{sec:twv:expl}}]
\label{thm:mr:twv:sets:t}
The sets $\mathcal{T}_{\mathrm{low}}$ and $\mathcal{T}_{\mathrm{up}}$
satisfy the following properties.
\begin{itemize}
\item[(i)]{
  For each $(a,d) \in \mathcal{T}_{\mathrm{up}}$
  we have $d > \frac{1}{8}a^2$,
  while for each $(a,d) \in \mathcal{T}_{\mathrm{low}}$
  we have $d > \frac{1}{8} (1 -a)^2$.
}
\item[(ii)]{
  If $(a,d) \in \mathcal{T}_{\mathrm{low}}$ then
  also $(a', d) \in \mathcal{T}_{\mathrm{low}}$ for all $(a', d) \in \Omega_{\mathrm{bc}}$
  that have $a' \ge a$. On the other hand, if $(a,d) \in \mathcal{T}_{\mathrm{up}}$
  then also $(a',d) \in \mathcal{T}_{\mathrm{up}}$ for all $(a', d) \in \Omega_{\mathrm{bc}}$
  that have $a' \le a$.
}
\item[(iii)]{
  There exists $\epsilon > 0$ so that we have the inclusions
  \begin{equation}
   (a, d) \in \mathcal{T}_{\mathrm{low}} \cap \mathcal{T}_{\mathrm{up}}
  \end{equation}
  for all $(a, d) \in \Omega_{\mathrm{bc}}$ that have
  \begin{equation}
    0 < \abs{a - \frac{1}{2}} + \abs{d - \frac{1}{24} } < \epsilon.
  \end{equation}
}
\item[(iv)]{
  The expression \sref{eq:mr:expr:for:Gamma} is well-defined
  for all $0 < a < 1$.
  If $\Gamma(a_*) > 0$ for some
  $0 < a_* < 1$, then
  there exists $\epsilon > 0$ so that
  $(a,d) \in \mathcal{T}_{\mathrm{low}}$
  for all $(a,d) \in \Omega_{\mathrm{bc}}$ that have
    \begin{equation}
      0 < \abs{a - a_*} + \abs{d - d_{\mathrm{bc}}(a_*)} < \epsilon.
    \end{equation}
}
\item[(v)]{
  The inequality $\Gamma(a) > 0$ holds whenever $1 - a > 0$
  is sufficiently small. In particular,
  we have $(0, 0) \in \overline{\mathcal{T}}_{\mathrm{up}}$
  and $(1, 0) \in \overline{\mathcal{T}}_{\mathrm{low}}$.
}
\end{itemize}
\end{thm}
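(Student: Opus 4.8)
The unifying tool is the equivalence furnished by Theorem~\ref{thm:mr:twv:ex}: combining parts (iii) and (v) shows that $(a,d)\in\mathcal{T}_{\#}$ holds if and only if the $c=0$ form of \sref{eq:mr:wave:mfde} admits \emph{no} non-decreasing solution satisfying \sref{eq:mr:bnd:low} (for $\#=\mathrm{low}$) respectively \sref{eq:mr:bnd:up} (for $\#=\mathrm{up}$). Every assertion thus reduces to producing a monotone standing front (to certify $(a,d)\notin\mathcal{T}_{\#}$) or to excluding one (to certify $(a,d)\in\mathcal{T}_{\#}$); by part (iv) the latter may equivalently be phrased as ruling out any profile with $c\neq0$. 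For (i) I would run a two-component Keener estimate. Assuming $(a,d)\in\mathcal{T}_{\mathrm{up}}$, Theorem~\ref{thm:mr:twv:ex}(ii) supplies a profile with $c_{\mathrm{up}}<0$ and $\Phi'>(0,0)$, so the $u$-equation in \sref{eq:mr:wave:mfde} forces $d\big[\Phi_v(\xi-1)+\Phi_v(\xi+1)-2\Phi_u(\xi)\big]+g\big(\Phi_u(\xi);a\big)=-c_{\mathrm{up}}\,\Phi_u'(\xi)>0$ at every $\xi$. Since $\Phi_u$ increases continuously from $\overline{u}_{\mathrm{bc}}<a$ to $1$ it attains every value $u\in(\overline{u}_{\mathrm{bc}},a)$, where $g(u;a)=-u(1-u)(a-u)<0$; bounding the diffusion bracket above by $2(1-u)$ via $\Phi_v\le1$ yields $d>\tfrac12\,u(a-u)$, and the choice $u=a/2$ maximises the right-hand side to give $d>\tfrac18a^2$. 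The same computation on the $v$-component of the lower front, using $\Phi_u\ge0$ and maximising at $v=\tfrac{1+a}{2}$, delivers $d>\tfrac18(1-a)^2$. The only delicate point is to ensure the optimising value lies in the attained range, which follows from the behaviour $\overline{u}_{\mathrm{bc}}\to0$, $\overline{v}_{\mathrm{bc}}\to1$ established in the small-$d$ regime in \S\ref{sec:eqlb}.

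Part (ii) is a monotonicity-in-$a$ statement that I would obtain from the comparison principle enjoyed by \sref{eq:mr:lde:main}. Because $\partial_a g(u;a)=-u(1-u)\le0$ on $[0,1]$, any monotone standing front $\Psi$ at a parameter $a'$ is, for every $a\le a'$, a subsolution of the $c=0$ form of \sref{eq:mr:wave:mfde} at $a$. Using the monotonicity of $(\overline{u}_{\mathrm{bc}},\overline{v}_{\mathrm{bc}})$ in $a$ from \S\ref{sec:eqlb}, the constant bichromatic equilibrium at $a$ dominates $\Psi$ and serves as a supersolution, and monotone iteration between the two produces a non-decreasing standing front at $a$. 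Read contrapositively through the reduction above, this propagates $\mathcal{T}_{\mathrm{low}}$ towards larger $a$ and $\mathcal{T}_{\mathrm{up}}$ towards smaller $a$. The routine step is identifying the limits at $\pm\infty$ of the iterated solution, which follows from its monotonicity together with the ordering of equilibria.

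The heart of the paper, and of parts (iii)--(v), is the geometric construction of \S\ref{sec:twv:char}. Sampling a standing profile at integers turns the $c=0$ form of \sref{eq:mr:wave:mfde} into the planar recurrence $x_{j+1}=2x_j-x_{j-1}-d^{-1}g(x_j;a)$, an area-preserving map that is reversible under $(x_{j-1},x_j)\mapsto(x_j,x_{j-1})$. A monotone standing front is precisely a monotone heteroclinic from the fixed point $(0,0)$ to the $2$-cycle carried by $(\overline{u}_{\mathrm{bc}},\overline{v}_{\mathrm{bc}})$, and reversibility lets me track it by reflecting across the diagonal: this is where the maps $v\mapsto v-\tfrac{g(v;a)}{2d}$ (symmetric turning points) and $u\mapsto 2u-\tfrac{g(u;a)}{d}$ (one step out of $(0,\cdot)$) enter, and hence where $\alpha_a$, $\gamma_\pm$ and the defining relation of $\Gamma$ in \sref{eq:mr:expr:for:Gamma} originate. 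The quantity $\Gamma(a)$ measures, in the saddle-node limit $d\uparrow d_{\mathrm{bc}}(a)$, the signed amount by which the branch emanating from $(0,0)$ overshoots the incoming branch of the $2$-cycle, so $\Gamma(a)>0$ means the two cannot be joined monotonically, no standing front exists, and $(a,d)\in\mathcal{T}_{\mathrm{low}}$. Propagating this strict inequality to a full neighbourhood by continuity of the construction in $(a,d)$ proves (iv); an asymptotic evaluation of \sref{eq:mr:expr:for:Gamma} as $a\to1$, fed by the corner asymptotics of $(d_{\mathrm{bc}},\overline{u}_{\mathrm{bc}},\overline{v}_{\mathrm{bc}})$ near $(1,0)$ from \S\ref{sec:eqlb}, gives $\Gamma>0$ there and hence (v), with $(0,0)\in\overline{\mathcal{T}}_{\mathrm{up}}$ following from the reflection symmetry $a\leftrightarrow 1-a$. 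The cusp statement (iii) follows the same route at $a=\tfrac12$, where that symmetry makes $\mathcal{T}_{\mathrm{low}}$ and $\mathcal{T}_{\mathrm{up}}$ coincide, once $\Gamma(\tfrac12)>0$ is checked from the cusp asymptotics at $(\tfrac12,\tfrac1{24})$.

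I expect the main obstacle to be the uniform control of this reflection construction in the saddle-node limit $d\uparrow d_{\mathrm{bc}}(a)$, where the $2$-cycle and its stable manifold degenerate: one must show that the finite-dimensional comparison encoded by $\Gamma$ genuinely governs the full monotone heteroclinic of the recurrence, and that the overshoot inequality is stable under the perturbation from $d_{\mathrm{bc}}^-$ downward into $\Omega_{\mathrm{bc}}$. Securing this, together with the delicate corner and cusp asymptotics needed to sign $\Gamma(a)$ near $a=1$ and $a=\tfrac12$, is where the real work lies; by contrast parts (i) and (ii) are comparatively soft consequences of the cubic's geometry and the comparison principle.
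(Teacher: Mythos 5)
Your outline reproduces the paper's architecture item by item — a Keener-type pointwise estimate for (i), monotonicity of the speed in $a$ for (ii), and the two-reflection recurrence with the scalar criterion $\Gamma>0$ for (iii)--(v) — but in each part the step you treat as routine is exactly where the content lies, and in two places your proposed justification would not close. For (i), everything hinges on the optimising value being attained by the profile, i.e.\ on $\tfrac{1+a}{2}<\overline{v}_{\mathrm{bc}}(a,d)$ (equivalently $\gamma_{c;+}(a,d_*)<\overline{v}_B(a,d_*)$ at $d_*=\tfrac18(1-a)^2$, since $\gamma_{c;+}(a,d_*)=\tfrac{1+a}{2}$). You dismiss this via the small-$d$ behaviour of the equilibria, but that is not where the danger is: the expansions of \S\ref{sec:eqlb} give $\overline{v}_B\big(a,d_-(a)\big)=\tfrac{1+a}{2}+O\big((1-a)^4\big)$ as $a\uparrow1$, so the margin vanishes to high order and its sign is not visible at leading order. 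The paper closes this with Lemma \ref{lem:twv:vcrit:ordering}, proving $\gamma_{c;+}<\overline{v}_B$ on all of $\Omega_-$ from the monotonicity of $u_{a,d}$ on $[a,\gamma_{c;+}]$ and the ordering of the $A$ and $B$ equilibria (after first reducing to the critical curve via Corollary \ref{cor:twv:pos:zone}); without this your bound degrades to a non-strict, weaker inequality. For (ii) you take a genuinely different route: the paper differentiates the wave equation in $a$ and pairs against the positive adjoint eigenfunction to get $\partial_a c>0$ on $\mathcal{T}$ (Lemma \ref{lem:twv:smoothness}), then argues by continuity. Your sub/supersolution iteration leaves two real holes: the iterates squeezed between the front at $a'$ and the constant $\big(\overline{u}_{\mathrm{bc}},\overline{v}_{\mathrm{bc}}\big)(a)$ may converge to a connection whose limit at $-\infty$ is an intermediate equilibrium rather than $(0,0)$, and the monotonicity of $\big(\overline{u}_{\mathrm{bc}},\overline{v}_{\mathrm{bc}}\big)$ in $a$ that you invoke is not established anywhere in \S\ref{sec:eqlb}.

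For (iii)--(v) you have the right geometry but propose the wrong mechanism. The paper never compares invariant manifolds of the $2$-cycle — which, as you yourself observe, degenerate in the saddle-node limit $d\uparrow d_{\mathrm{bc}}(a)$ — so the ``uniform control'' obstacle you identify as the main difficulty simply does not arise. Instead, Lemma \ref{lem:twv:stn:wave:crit:point} shows that any monotone standing sequence must jump in a single step from the rectangle $[0,\overline{u}_B]\times[0,\overline{v}_{\mathrm{bot}}]$ into $[\overline{u}_{\mathrm{top}},\overline{u}_B]\times[\overline{v}_{\mathrm{top}},\overline{v}_B]$, because the reflection constraints exclude $v_i\in\big(\overline{v}_{\mathrm{bot}},\overline{v}_{\mathrm{top}}\big)$; Lemma \ref{lem:twv:bot:lt:top:trv} then pins the intermediate value $u_{i_0+1}$ between $\overline{u}_{\mathrm{top}}$ and $\overline{u}_{\mathrm{bot}}$ from opposite sides, which is the contradiction when $\Gamma>0$. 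This is a finite, pointwise trapping argument involving only the algebraic data $\big(d_{\mathrm{bc}},\overline{u}_{\mathrm{bc}},\overline{v}_{\mathrm{bc}}\big)$, hence automatically stable under the continuity needed for (iv). Note also that (iii) is obtained by an exact evaluation at the cusp using the closed-form values such as $\overline{u}_B(\tfrac12,\tfrac1{24})=\tfrac12-\tfrac{\sqrt3}{6}$ (Lemma \ref{lem:twv:cusp:ineq}), not by asymptotics. Your reading of $\Gamma$ as an overshoot and your use of the $a\leftrightarrow1-a$ symmetry to transfer the corner statement to $(0,0)\in\overline{\mathcal{T}}_{\mathrm{up}}$ are both correct.
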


Using numerics we have verified that 
$\Gamma(a) > 0$
holds for $a \in \big[.498, .999]$; see \S\ref{sec:twv:expl}.
Together with (ii) and (v) above, this strongly suggests that $\mathcal{T}_{\mathrm{low}}$ is a connected set
that extends towards the right boundary of $\Omega_{\mathrm{bc}}$.
By symmetry, this is equivalent to the statement that $\mathcal{T}_{\mathrm{up}}$ is a connected set
that extends towards the left boundary of $\Omega_{\mathrm{bc}}$.



\section{Bichromatic stationary solutions}
\label{sec:eqlb}

In this section we uncover the structure of the
solution set to $G(u, v;a, d) = 0$ as a function of
the parameters $(a, d)$. Our first
result shows that for $d \gg 1$ this equation
only has the monochromatic roots $(0, 0)$,
$(a,a)$ and $(1,1)$.
The threshold $d_+(a)$ between this region
and the region with five distinct roots can be explicitly
computed. However, we only have qualitative
and asymptotic results for the boundary $d_-(a)$ where the
root-count increases to the maximal value of nine.
In Figure \ref{f:bnd:curves} we compare these asymptotics
to numerically computed values for $d_-(a)$.
We remark here that the monotonicity of the root count with respect to $d$
does not hold for general bistable nonlinearities $g$.

\begin{prop}[{see \S\ref{sec:bif:glb}}]
\label{prp:st:dpm:ex}
There exist two continuous functions
\begin{equation}
d_\pm: [0, 1] \to [0,\infty)
\end{equation}
that satisfy the following properties.
\begin{itemize}
\item[(i)]{
  For any $0 < a < 1$ we have the explicit expression
  \begin{equation}
    d_+(a) = \frac{g'(a;a)}{4},
  \end{equation}
  together with the identities
  \begin{equation}
    d_-(a) = d_-(1 - a),
    \qquad
    d_+(a) = d_+(1 - a)
  \end{equation}
  and the inequality $d_-(a) < d_+(a)$.
  In addition, we have
  \begin{equation}
    d_-(0) = d_+(0) =  d_-(1) = d_+(1) =  0
  \end{equation}
  together with $d_-(\frac{1}{2}) = \frac{1}{24}$.
}
\item[(ii)]{
  We have $d_- \in C^\infty\big([0, \frac{1}{2}) \big) \cap C^\infty\big( (\frac{1}{2}, 1] \big)$.
  In addition, $d_-$ is strictly increasing on $[0, \frac{1}{2}]$
  and strictly decreasing on $[\frac{1}{2}, 1]$.
}
\item[(iii)]{
  Pick any $a \in (0, 1)$. The equation $G(u,v;a,d) = 0$
  has nine distinct roots   for $0 \le d < d_-(a)$, five distinct roots
  for $d_- < d < d_+(a)$ and three distinct real roots
  for $d \ge d_+(a)$.
}
\item[(iv)]{
  Pick any $a \in (0, 1)$. The equation $G(u,v;a,d) = 0$
  has seven distinct roots for $d = d_-(a)$
  if $a \neq \frac{1}{2}$ and five if $a = \frac{1}{2}$.
}
\item[(v)]{
 We have the expansion
 $d_-(a) = \frac{1}{8} a^2 + \frac{1}{32} a^4 +  O(a^5)$
 for $a \downarrow 0$. In addition, writing
 $a_{-}: [0, \frac{1}{24}] \to [0, \frac{1}{2}]$
 for the inverse  function of $d_-$
 on $[0, \frac{1}{2}]$,
 we have the expansion
 \begin{equation}
   a_-(d) =
   \frac{1}{2} - \sqrt{-1152(d - \frac{1}{24})^3}
     + O \big( ( d - \frac{1}{24})^2 \big)
 \end{equation}
 as $d \uparrow \frac{1}{24}$.
}
\end{itemize}
\end{prop}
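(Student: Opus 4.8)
The plan is to split the planar system $G(u,v;a,d)=0$ from \sref{eq:mr:def:G:G12} into diagonal and off-diagonal parts. Adding the two components gives $g(u;a)+g(v;a)=0$ and subtracting them gives $4d(v-u)=g(v;a)-g(u;a)$. The diagonal locus $u=v$ forces $g(u;a)=0$, hence $u\in\{0,a,1\}$, producing the three monochromatic roots $(0,0),(a,a),(1,1)$ for every $(a,d)$. For the off-diagonal roots I would pass to the symmetric coordinates $s=u+v$ and $q=uv$: the difference equation gives $q=s^2-(1+a)s+a+4d$, and substituting this into the sum equation collapses everything to the single cubic
\[
P(s)=s^3-2(1+a)s^2+\big(1+3a+a^2+6d\big)s-\big(a+a^2+4d+4ad\big)=0 .
\]
Each real root $s$ reconstructs $\{u,v\}$ as the roots of $t^2-st+q=0$, and this pair is genuinely bichromatic (two ordered solutions swapped by $(u,v)\mapsto(v,u)$) exactly when $(u-v)^2=s^2-4q>0$, that is when $Q(s):=3s^2-4(1+a)s+4a+16d<0$. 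Hence the total number of roots equals $3+2N(a,d)$ with $N(a,d)=\#\{s:P(s)=0,\ Q(s)<0\}$, and the whole proposition is a matter of tracking $N$. It is illuminating to record the equivalent dynamical picture: with $\phi(u)=u-g(u;a)/(2d)$ the system reads $v=\phi(u),\,u=\phi(v)$, so bichromatic roots are precisely the $2$-cycles of the cubic map $\phi$, whose fixed points are $0,a,1$.

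For the thresholds I would argue that $N$ can change only when a root of $P$ meets the boundary $Q=0$ (a collision of the pair $\{u,v\}$ into the diagonal) or when two roots of $P$ collide. A root on $\{Q=0\}$ has $u=v=s/2$ solving $g(\cdot;a)=0$, so $s\in\{0,2a,2\}$; since $P(0)<0$ and $P(2)=(2-a)(1-a+4d)>0$ identically, only $s=2a$ is possible, and $P(2a)=0$ holds iff $d=a(1-a)/4=g'(a;a)/4$ (for $a\neq\tfrac12$), with $Q(2a)=0$ there as well. This is the flip bifurcation $\phi'(a)=-1$ of the fixed point $a$ and defines $d_+(a)=g'(a;a)/4$. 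The second threshold is the collision of roots of $P$, i.e. the vanishing of its discriminant $\Delta(a,d)$; I would define $d_-(a)$ by $\Delta=0$. Tracking $N$ as $d$ decreases then gives the count: for $d\ge d_+$ no root of $P$ lies in $\{Q<0\}$ ($N=0$); for $d_-<d<d_+$ the unique real root of $P$ lies in $\{Q<0\}$ ($N=1$); and for $0\le d<d_-$ the three real roots all lie in $\{Q<0\}$ ($N=3$), which one checks directly at $d=0$, where $P$ factors as $(s-a)(s-1)(s-(1+a))$ with $Q<0$ at all three. At $d=d_-$ the double root plus the simple root give $N=2$ (seven roots) when $a\neq\tfrac12$, whereas at $a=\tfrac12$ the factorization $P(s)=(s-1)\big(s^2-2s+\tfrac34+6d\big)$ degenerates to $(s-1)^3$ at $d=\tfrac1{24}$, leaving $N=1$ (five roots); this simultaneously yields $d_-(\tfrac12)=\tfrac1{24}$ and statement (iv).

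The symmetries are immediate from $g(1-u;1-a)=-g(u;a)$: the involution $(u,v)\mapsto(1-u,1-v)$ is a bijection between the solution sets at $a$ and $1-a$, so $d_\pm(a)=d_\pm(1-a)$, while $d_+(0)=d_+(1)=0$ is explicit and $d_-(0)=d_-(1)=0$ follows from the asymptotics below. For the remaining analytic claims I would work with the double-root system $P(\sigma)=P'(\sigma)=0$. Away from $a=\tfrac12$ the double root is nondegenerate, so the implicit function theorem produces $d_-\in C^\infty$ on $[0,\tfrac12)\cup(\tfrac12,1]$ and, by differentiating the defining relations, the sign of $d_-'$ on $(0,\tfrac12)$, giving the strict monotonicity in (ii). The strict inequality $d_-<d_+$ follows by comparing $d_+=a(1-a)/4$ with the double-root locus, where the identity $3a(1-a)-(1-a+a^2)=-(2a-1)^2\le0$ (equality only at $a=\tfrac12$) is the relevant input. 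Finally the two expansions in (v) come from perturbing $P(\sigma)=P'(\sigma)=0$: as $a\downarrow0$ the double root sits near $s=1$ (the collision of the roots $1$ and $1+a$ of $P|_{d=0}$), a regular Taylor expansion in $a$ gives $d_-(a)=\tfrac18a^2+\tfrac1{32}a^4+O(a^5)$, whereas near the cusp one sets $a=\tfrac12+\alpha,\ d=\tfrac1{24}+\delta$ and reads the leading balance of $\Delta$, a cusp $\alpha^2\sim-1152\,\delta^3$, hence $a_-(d)=\tfrac12-\sqrt{-1152\,(d-\tfrac1{24})^3}+O\big((d-\tfrac1{24})^2\big)$.

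The hard part will be the behaviour at the cusp $(\tfrac12,\tfrac1{24})$ together with the global bookkeeping of $N$. At the cusp $P$ has a triple root, the implicit function theorem fails, and extracting the $\tfrac32$-power scaling and the constant $-1152$ requires a careful Newton-polygon/Puiseux analysis; this degeneracy is also what destroys the smoothness of $d_-$ precisely at $a=\tfrac12$. The global bookkeeping needs two verifications that I have so far only confirmed at $a=\tfrac12$ and at the corners: first, that $\Delta(a,\cdot)$, a cubic in $d$, changes sign exactly once on $(0,d_+(a))$, so that $d_-(a)$ is well defined and unique; and second, that the double-root location $\sigma$ at $d=d_-(a)$ satisfies $Q(\sigma)<0$, so that the two roots born there actually enter the admissible region $\{Q<0\}$. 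Both should follow by continuation from the explicit control at $a=\tfrac12$ and the endpoint asymptotics, but this is where the quantitative effort concentrates.
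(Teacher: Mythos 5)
Your reduction is correct and genuinely different from the paper's. Passing to $s=u+v$, $q=uv$ and collapsing the system to the single cubic $P(s)=0$ with the admissibility condition $Q(s)<0$ is a slicker algebraic route than the one taken here: the paper instead parametrizes the level set $g(u;a)+g(v;a)=0$ by two explicit curves $v_\pm(u)$ over $u\in[0,a]$ (Lemmata \ref{lem:bif:setup:bs:props:vpm}--\ref{lem:bif:setup:bs:props:vpm:a:half}) and studies their intersections with $v_d(u)=u-g(u;a)/(2d)$. Your computations of $P$, $Q$, the factorization at $d=0$, the identification of $d_+(a)=a(1-a)/4$ via $P(2a)=Q(2a)=0$, and the triple root at $(\frac12,\frac1{24})$ are all verifiably correct, and your plan for the expansions in (v) (regular perturbation of the double-root system near $a=0$, Puiseux/Newton-polygon analysis at the cusp) is a compressed version of what Propositions \ref{prp:bif:exp:a:zero} and \ref{prp:bif:cusp:exp} carry out via normal-form reductions.

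However, the two verifications you defer are not loose ends: they are the entire content of items (ii)--(iv), and ``continuation from the explicit control at $a=\frac12$ and the endpoints'' does not close them. The number of sign changes of $\Delta(a,\cdot)$ on $(0,d_+(a))$ can only be propagated in $a$ if one already knows that $\Delta(a,\cdot)$ never develops a double zero in $d$ in the interior --- which is equivalent to the non-degeneracy you are trying to establish, so the argument is circular as stated. The paper's substitute for this is the tangency analysis of \S\ref{sec:bif:tgn}: it proves that every non-transverse intersection of $v_\pm$ with $v_d$ (away from the two explicitly handled boundary cases) satisfies $v_\pm''>v_d''$, i.e.\ is a local \emph{minimum} of $v_\pm-v_d$ (Lemma \ref{lem:bif:tgn:option:a:b} and Corollaries \ref{cor:bif:trnsv:v:plus:tangency}--\ref{cor:bif:donward:tangency}), which combined with the strict monotonicity $\partial_d v_d<0$ forces the root count to be non-increasing in $d$ and yields uniqueness of the threshold (Lemmata \ref{lem:bif:str:roots:A}--\ref{lem:bif:str:roots:BC}); in your language this is exactly the statement that the saddle-node root of $P$ lies in $\{Q<0\}$ and that $\Delta$ crosses zero only once. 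This step requires genuine work (the sign analysis of the auxiliary functions $h_l,h_r$), and an analogous effort would be needed for the discriminant of $P$. Likewise, the strict monotonicity of $d_-$ in (ii) does not follow from ``differentiating the defining relations'' alone: one must determine the signs of the relevant cofactors, which the paper does explicitly in Corollary \ref{cor:bif:d:min:str:increasing} via the quantities $\gamma_1,\gamma_2$ and the identity $\gamma_1\gamma_2=2\alpha\beta+\alpha^2+\beta^2\ge 0$. Until you supply substitutes for these two arguments, the proof of (ii)--(iv) is incomplete.
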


\begin{figure}
\centering
\begin{minipage}{0.55\textwidth}
\includegraphics[width=\textwidth]{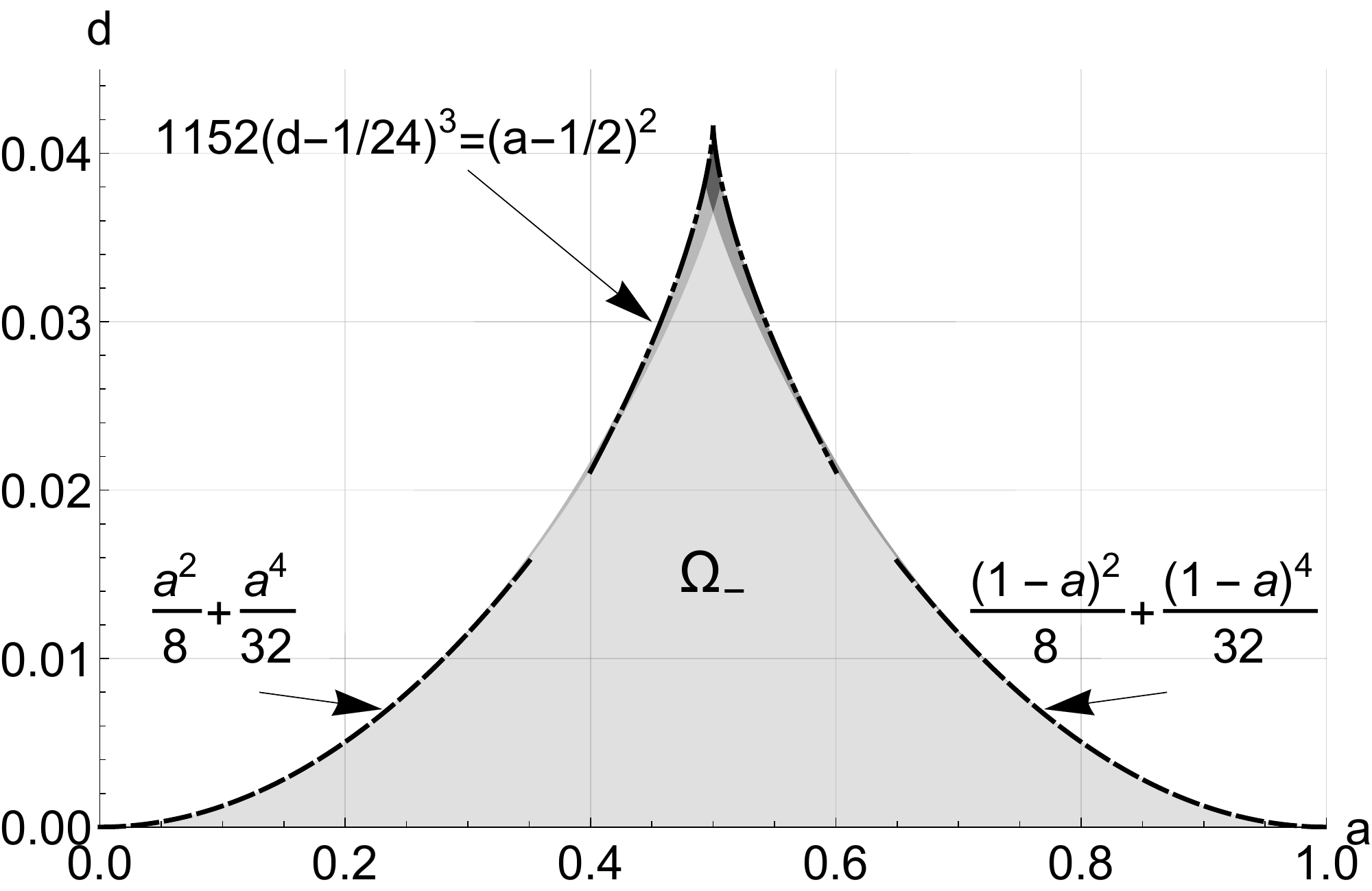}
\end{minipage}
\caption{
Comparison of the asymptotics for $d_{-}$ described in Proposition \ref{prp:st:dpm:ex} (v)
with the numerically computed border of the set $\Omega_-$.
}\label{f:bnd:curves}
\end{figure}

In order to break the symmetry caused by the swap
$u \leftrightarrow v$,
we set out to describe the roots of
$G(u,v;a,d) = 0$ that have $v > u$.
To this end,
we introduce two regions
\begin{equation}
\begin{array}{lcl}
\Omega_- & = & \{ (a , d) : 0 < a < 1 \hbox{ and } 0 < d < d_-(a) \} ,
\\[0.2cm]
\Omega_+ & = & \{ ( a, d) : 0 < a < 1 \hbox{ and } d_-(a) < d < d_+(a) \}
\end{array}
\end{equation}
that are studied separately in the two results below.
In $\Omega_-$ there are three such bichromatic equilibria
with $v > u$.
These equilibria can be ordered and the middle one is
the only stable one.
Two (or three) of these equilibria collide at $d = d_-(a)$
in a saddle node (or pitchfork) bifurcation,
leaving a single unstable bichromatic equilibrium in $\Omega_+$.
This equilibrium in turns collides with
its swapped counterpart
and the monochromatic equilibrium $(a,a)$
on the boundary $d_+(a)$. These processes are illustrated
in Figure \ref{f:ABCD}.
In particular, we see that $\Omega_-$ coincides with
the set $\Omega_{\mathrm{bc}}$ introduced in \S\ref{sec:mr}; cf. Figures \ref{f:Tlow:Tup} and \ref{f:bnd:curves}.

\begin{figure}
\begin{center}
\begin{minipage}{0.4\textwidth}
\includegraphics[width=\textwidth]{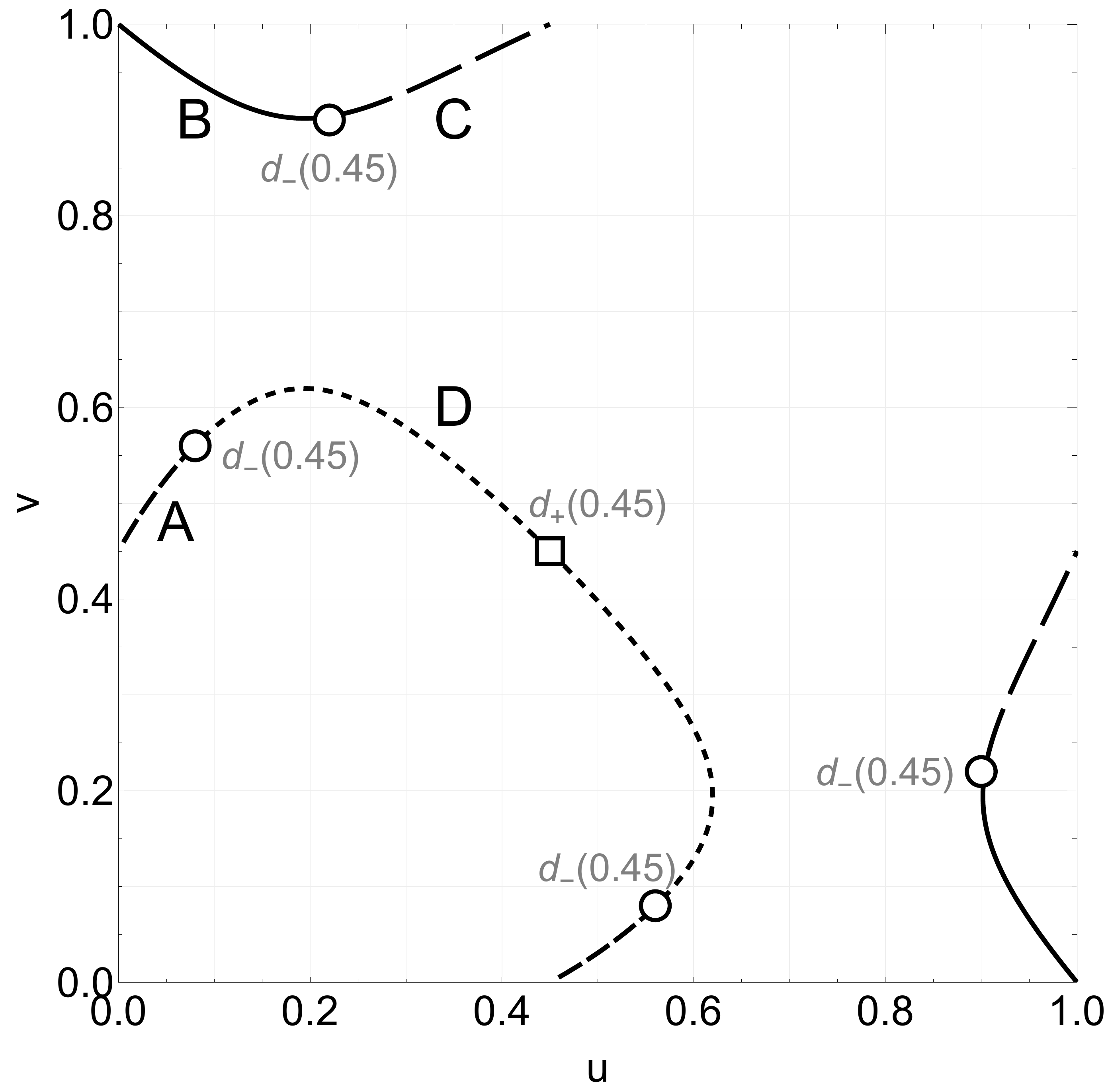}
\end{minipage}\quad
\begin{minipage}{0.4\textwidth}
\includegraphics[width=\textwidth]{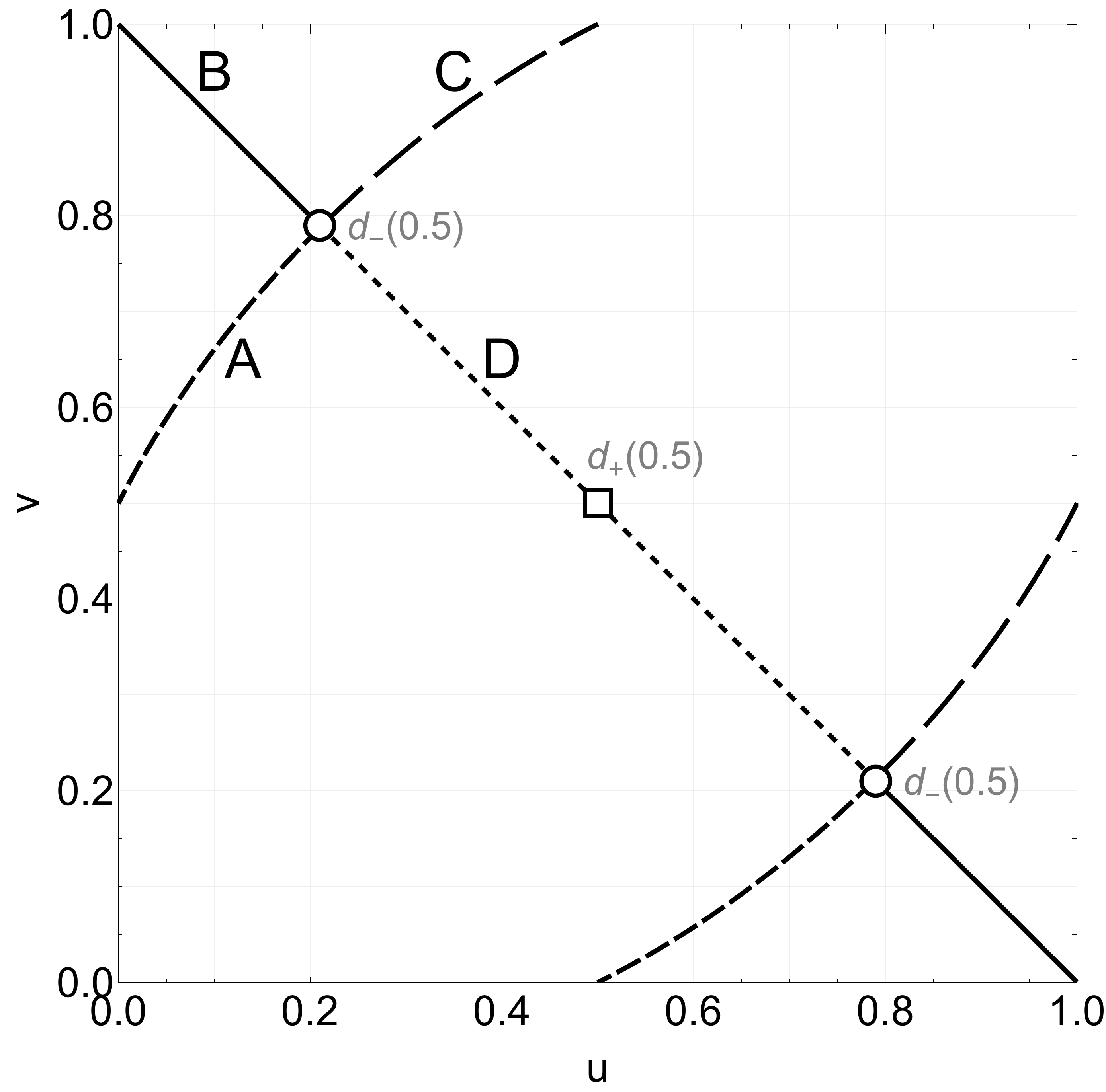}
\end{minipage}
\caption{Illustration of the functions $( \overline{u}_A,\overline{v}_A)$,
$( \overline{u}_B,\overline{v}_B)$, $( \overline{u}_C,\overline{v}_C)$  from Proposition \ref{prp:st:uv:stb:ex}
and the function $( \overline{u}_D,\overline{v}_D)$  from Proposition \ref{prp:st:uv:ustb:ex}
for $a=.45$ (left panel) and $a=.5$ (right panel). The bifurcations occuring at $d_+(a)$ and $d_-(a)$ are
indicated by squares and circles; see Proposition \ref{prp:st:dpm:ex}.}\label{f:ABCD}
\end{center}
\end{figure}

\begin{prop}[{see \S\ref{sec:bif:glb}}]
\label{prp:st:uv:stb:ex}
There exist continuous functions
\begin{equation}
 ( \overline{u}_A,\overline{v}_A): \overline{\Omega}_- \to [0, 1]^2
 \qquad
 ( \overline{u}_B,\overline{v}_B): \overline{\Omega}_- \to [0, 1]^2,
 \qquad
 ( \overline{u}_C,\overline{v}_C ): \overline{\Omega}_- \to [0, 1]^2
\end{equation}
that satisfy the following properties.
\begin{itemize}
\item[(i)]{
  Pick any $(a,d) \in \overline{\Omega}_-$. Then we have
  \begin{equation}
     G(\overline{u}_{\#}(a,d), \overline{v}_{\#}(a,d); a, d)  = 0
  \end{equation}
  for all $\# \in \{A , B, C \}$.
  If also $(a, d) \in \Omega_-$, then
  the matrix
  \begin{equation}
    D_{1,2} G(\overline{u}_{\#}(a,d),\overline{v}_{\#}(a,d) ;a , d)
  \end{equation}
  has two strictly negative eigenvalues if $\# = B$
  or one strictly positive
  and one strictly negative eigenvalue if $\# \in \{A, C \}$.
}
\item[(ii)]{
  For any $0 \le a \le 1$ we have the identities
  \begin{equation}
    (\overline{u}_A,\overline{v}_A)(a, 0) = (0, a),
    \qquad
    (\overline{u}_B,\overline{v}_B)(a, 0) = (0, 1),
    \qquad
    (\overline{u}_C,\overline{v}_C)(a, 0) = (a, 1) .
  \end{equation}
}
\item[(iii)]{
 For any $(a, d) \in \Omega_-$  we have the ordering
 \begin{equation}
 \label{eq:bif:abc:ordering}
 0 < \overline{u}_A(a,d) < \overline{u}_B(a,d) < \overline{u}_C(a,d)
   < a < \overline{v}_A(a,d) < \overline{v}_B(a,d) < \overline{v}_C(a,d) .
 \end{equation}
}

\item[(iv)]{
  For any $a \in [0, \frac{1}{2}]$ we have
  \begin{equation}
  (\overline{u}_B,\overline{v}_B)\big(a, d_-(a) \big) =
    (\overline{u}_C,\overline{v}_C)\big(a, d_-(a) \big) ,
  \end{equation}
  while for any $a \in [\frac{1}{2} , 1]$
  we have
  \begin{equation}
    (\overline{u}_A,\overline{v}_A)\big(a, d_-(a)\big)
     = (\overline{u}_B, \overline{v}_B )\big(a, d_-(a)\big).
  \end{equation}
}
\end{itemize}
\end{prop}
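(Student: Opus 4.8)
The plan is to reduce the two-dimensional algebraic system $G(u,v;a,d)=0$ with $u\neq v$ to a single scalar cubic equation, which makes both the root count and the stability classification transparent. Writing $m=\tfrac12(u+v)$ and $r=\tfrac12(v-u)>0$ and using that $g(\cdot;a)$ is a cubic with $g'''\equiv-6$, the exact identities $g(m+r)+g(m-r)=2g(m)+r^2g''(m)$ and $g(m+r)-g(m-r)=2rg'(m)-2r^3$ turn the sum $G_1+G_2=0$ and the difference $G_2-G_1=0$ into
\begin{equation}
r^2=g'(m)-4d,\qquad \Psi(m;a,d):=g(m)+\big(g'(m)-4d\big)(1+a-3m)=0 .
\end{equation}
Here $\Psi$ is a cubic in $m$ with positive leading coefficient $8$, and the map $m\mapsto(u,v)=(m-r,m+r)$ with $r=\sqrt{g'(m)-4d}$ is a bijection between roots of $\Psi$ with $g'(m)>4d$ and bichromatic equilibria with $v>u$. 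First I would record that at $d=0$ the cubic factors as $\Psi(m;a,0)=8(m-\tfrac{a}{2})(m-\tfrac12)(m-\tfrac{1+a}{2})$, whose roots reproduce exactly the boundary values $(0,a),(0,1),(a,1)$ in (ii) after applying $m\mapsto(m-r,m+r)$.

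Next I would construct the three smooth branches $m_A<m_B<m_C$. Proposition~\ref{prp:st:dpm:ex} already guarantees nine roots of $G=0$ on $\Omega_-$; by the reflection symmetry $(u,v;a)\mapsto(1-v,1-u;1-a)$, which fixes the monochromatic triple and interchanges the $v>u$ and $u>v$ families, exactly three of these have $v>u$, so $\Psi$ has precisely three roots with $g'(m)>4d$ throughout $\Omega_-$. These roots being simple in the interior, the implicit function theorem yields $C^\infty$ branches $(\overline{u}_\#,\overline{v}_\#)$ extending continuously to $\overline{\Omega}_-$, and the labelling by $m_A<m_B<m_C$ persists from $d=0$ since distinct simple roots cannot cross. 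The componentwise ordering (iii) is more delicate: the $m$-ordering gives $\overline{v}_A<\overline{v}_B$ and $\overline{u}_B<\overline{u}_C$ directly, but the pairs $(\overline{u}_A,\overline{u}_B)$ and $(\overline{v}_B,\overline{v}_C)$ coincide on $d=0$, so I would separate them by a local expansion in $d$ at $d=0$ (computing $\partial_d\overline{u}_\#,\partial_d\overline{v}_\#$) and propagate strictness through $\Omega_-$ using that the branches stay distinct and remain trapped in $\{u<a<v\}$.

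For the eigenvalue statement in (i) I would exploit that the Jacobian $D_{1,2}G=\left(\begin{smallmatrix} g'(u)-2d & 2d \\ 2d & g'(v)-2d\end{smallmatrix}\right)$ is symmetric, so its eigenvalues are real and governed by trace and determinant alone. A direct computation in the $(m,r)$ variables, using $r^2=g'(m)-4d$, gives the two key identities
\begin{equation}
\mathrm{Tr}\,D_{1,2}G=4(d-r^2),\qquad \det D_{1,2}G=-2r^2\,\Psi'(m).
\end{equation}
Since $\Psi$ has positive leading coefficient and simple roots $m_A<m_B<m_C$, one has $\Psi'(m_B)<0$ and $\Psi'(m_A),\Psi'(m_C)>0$; hence $\det>0$ at $B$ and $\det<0$ at $A,C$, immediately giving a saddle at $A$ and $C$. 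At $B$, the sign $\det>0$ forces $r^2>2d+(1+a-3m)^2\ge 2d>d$ through the identity $\Psi'(m)=2\big(2d+(1+a-3m)^2-r^2\big)$, so the trace is automatically negative and $B$ has two negative eigenvalues, closing the dichotomy with no case analysis.

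Finally, for the collision pattern (iv) I would locate the double root of $\Psi$ at $d=d_-(a)$. A saddle-node occurs when the value of $\Psi$ at one of its critical points $m_{1,2}=\tfrac16\big(2(1+a)\mp\sqrt{1-a+a^2-18d}\big)$ vanishes: if the local minimum $m_2$ reaches zero the larger pair $m_B=m_C$ merges, while if the local maximum $m_1$ reaches zero the smaller pair $m_A=m_B$ merges. I would show that for $a<\tfrac12$ the former happens first as $d$ increases, for $a>\tfrac12$ the latter, with a simultaneous pitchfork at $a=\tfrac12$; the reflection symmetry reduces this to the case $a<\tfrac12$, and consistency with the counts in Proposition~\ref{prp:st:dpm:ex}(iv) (seven roots for $a\neq\tfrac12$, five for $a=\tfrac12$) confirms that exactly one pair merges off-center. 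I expect this identification of which pair collides — pinning down the sign of the relevant critical value of $\Psi$ as $a$ crosses $\tfrac12$ — to be the main obstacle, since it is the one place where the outer structure of the cubic, rather than a soft symmetry or sign argument, must be computed explicitly.
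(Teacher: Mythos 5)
Your reduction to the scalar cubic is a genuinely different route from the paper, and the parts you actually carry out are correct: I verified that with $m=\tfrac12(u+v)$, $r=\tfrac12(v-u)$ the system is equivalent to $r^2=g'(m)-4d$ together with $\Psi(m)=0$, that $\Psi(\cdot;a,0)$ factors as claimed, and that $\mathrm{Tr}\,D_{1,2}G=4(d-r^2)$ and $\det D_{1,2}G=-2r^2\Psi'(m)$ with $\Psi'(m)=2\bigl(2d+(1+a-3m)^2-r^2\bigr)$ on the solution set. This makes the stability classification in (i) essentially automatic from the interlacing signs of $\Psi'$ at the three simple roots of a cubic with positive leading coefficient, which is cleaner than the paper's route (the paper parametrizes $g(u)+g(v)=0$ by curves $v_\pm(u)$, intersects them with $v_d(u)=u-g(u)/(2d)$, carries out a second-derivative tangency analysis in \S\ref{sec:bif:tgn}--\ref{sec:bif:glb}, and obtains the eigenvalue signs by evaluating at $d=0$ and propagating through the non-vanishing of $\det D_{1,2}G$ on $\Omega_-$, cf.\ Lemma \ref{lem:bif:str:roots:determinant} and Corollary \ref{cor:bif:char:roots:ovl:G}). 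One small repair: the symmetry that pairs the $v>u$ and $u>v$ families at fixed $(a,d)$ is the plain swap $(u,v)\mapsto(v,u)$; the reflection $(u,v;a)\mapsto(1-v,1-u;1-a)$ changes $a$ and preserves the order of the coordinates.

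There are, however, two genuine gaps. First, item (iv) is only a plan: deciding which critical value of $\Psi$ reaches zero first as $d$ increases --- i.e.\ whether the upper pair $m_B=m_C$ or the lower pair $m_A=m_B$ merges, and how this switches at $a=\tfrac12$ --- is precisely the hard content of the statement, and you explicitly defer the sign computation that settles it. The consistency check against the root counts in Proposition \ref{prp:st:dpm:ex}(iv) tells you that exactly one pair merges off-center, but not \emph{which} one; the paper's entire tangency machinery (Corollaries \ref{cor:bif:trnsv:v:plus:tangency}--\ref{cor:bif:trv:inc:aft:u:min} feeding into Lemmata \ref{lem:bif:str:roots:A}--\ref{lem:bif:str:roots:BC}) exists to answer exactly this question, so it cannot be waved through. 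Second, in (iii) the claim that the ordering $m_A<m_B<m_C$ ``directly'' yields $\overline{v}_A<\overline{v}_B$ and $\overline{u}_B<\overline{u}_C$ is unjustified: $v=m+\sqrt{g'(m)-4d}$ is not monotone in $m$ on the relevant range, since $g'$ is a concave parabola and for $a<\tfrac12$ the middle root lies beyond its vertex. The separation of the pairs that coincide at $d=0$ and the subsequent propagation of strictness do work, but only after observing that $G_1=0$ is linear in $v$ and $G_2=0$ is linear in $u$, so two distinct branches can never agree in a single coordinate (this is the one-line argument the paper uses for $\overline{v}_B\neq\overline{v}_C$); without stating this, ``the branches stay distinct'' does not close the argument.
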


\begin{prop}[{see \S\ref{sec:bif:glb}}]
\label{prp:st:uv:ustb:ex}
There exist continuous functions
\begin{equation}
 (\overline{u}_D,\overline{v}_D):
   \overline{\Omega}_+ \to [0, 1]^2
\end{equation}
that satisfy the following properties.
\begin{itemize}
\item[(i)]{
Pick any $(a,d) \in \overline{\Omega}_+$.
Then we have
\begin{equation}
  G(\overline{u}_D(a,d) , \overline{v}_D(a,d) ; a, d) = 0.
\end{equation}
If also $(a, d) \in \Omega_+$, then the matrix
\begin{equation}
D_{1,2} G( \overline{u}_D(a,d) , \overline{v}_D(a,d) ; a, d )
\end{equation}
has one strictly positive
and one strictly negative eigenvalue.
}
\item[(ii)]{
  For any $0 \le a \le 1$ we have the identity
  \begin{equation}
    (\overline{u}_D,\overline{v}_D)\big(a, d_+(a)\big) = (a, a).
  \end{equation}
}
\item[(iii)]{
  For any $(a , d) \in \Omega_+$ we have the ordering
  \begin{equation}
  0 < \overline{u}_D(a,d) < a < \overline{v}_D(a,d) < 1 .
  \end{equation}
}

\item[(iv)]{
  For any  $a \in [0, \frac{1}{2}]$  we have the identity
  \begin{equation}
    (\overline{u}_D,\overline{v}_D)\big(a, d_-(a)\big) =
     (\overline{u}_A,\overline{v}_A)\big(a, d_-(a) \big)
     = (\overline{u}_B,\overline{v}_B)\big(a, d_-(a) \big)
     ,
  \end{equation}
  while for
  any $a \in [\frac{1}{2},  1]$ we have
  \begin{equation}
    (\overline{u}_D,\overline{v}_D)\big(a, d_-(a) \big) =
        (\overline{u}_B,\overline{v}_B)\big(a, d_-(a) \big)
      = (\overline{u}_C,\overline{v}_C)\big(a, d_-(a) \big)
     .
  \end{equation}
}
\end{itemize}
\end{prop}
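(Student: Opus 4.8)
The plan is to reduce the two-dimensional algebraic problem $G(u,v;a,d)=0$ to a single scalar cubic and to read off every stated property from that cubic. Writing $m=\tfrac12(u+v)$ and $r=\tfrac12(v-u)$, the two components of $G=0$ are equivalent to their sum and difference; for $r\neq0$ one divides the difference by $2r$, and a short computation shows that the bichromatic roots with $u<v$ are in bijection with the pairs
\[
(\overline{u},\overline{v})=(m-r,\,m+r),\qquad r=\sqrt{R(m)}>0,
\]
where $R(m)=g'(m;a)-4d$ and $m$ solves the cubic $P(m)=g(m;a)+\tfrac12R(m)R'(m)=0$ (leading coefficient $8$). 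The key computational input, which I would establish first, is $P'(m)=\tfrac12\big[R'(m)^2-4R(m)+8d\big]$ together with
\[
\det D_{1,2}G(m-r,m+r;a,d)=-2\,R(m)\,P'(m).
\]
Since $D_{1,2}G$ is symmetric its eigenvalues are real, so the sign of this determinant alone distinguishes a saddle from a stable or unstable node.

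Next I would track $P$ as $(a,d)$ vary. Depressing $P$, its quadratic coefficient vanishes, its linear coefficient is affine and strictly increasing in $d$, while its constant coefficient $q=\tfrac{1}{216}(1+a)(2a-1)(a-2)$ is independent of $d$. Hence the cubic discriminant is strictly monotone in $d$ and vanishes at a single value; matching with the root counts in Proposition~\ref{prp:st:dpm:ex}(iii) identifies this value with $d_-(a)$. Consequently, on $\Omega_+=\{d_-<d<d_+\}$ the polynomial $P$ has exactly one real root $m$, it is simple, and it carries $R(m)>0$ (the unique bichromatic pair). I define $(\overline{u}_D,\overline{v}_D)=(m-r,m+r)$. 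Smoothness on $\Omega_+$ follows from the implicit function theorem (a simple root depends smoothly on the coefficients), and continuity extends to $\overline{\Omega}_+$ because the real root of a cubic varies continuously with its coefficients. For statement~(i), the lone real root of a cubic with positive leading coefficient is a simple crossing from negative to positive values, so $P'(m)>0$; with $R(m)>0$ this gives $\det D_{1,2}G=-2R(m)P'(m)<0$, i.e.\ one strictly positive and one strictly negative eigenvalue.

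The ordering~(iii) is then immediate: from the defining equations $g(\overline{u}_D;a)=-4dr<0$ and $g(\overline{v}_D;a)=4dr>0$, and since $g(\cdot;a)$ is negative only on $(0,a)\cup(1,\infty)$ and positive only on $(-\infty,0)\cup(a,1)$, the constraint $\overline{u}_D<\overline{v}_D$ leaves no possibility other than $0<\overline{u}_D<a<\overline{v}_D<1$. For~(ii) I evaluate at $d=d_+(a)$, where $R(a)=g'(a;a)-4d_+=0$ and hence $P(a)=g(a;a)=0$; since $d_+>d_-$ makes the discriminant negative there, $a$ is the unique real root of $P$, so $m\to a$, $r=\sqrt{R(m)}\to0$, and $(\overline{u}_D,\overline{v}_D)(a,d_+)=(a,a)$.

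Finally, for the coincidences~(iv) at $d=d_-(a)$ I would use that on the discriminant locus $P$ has one simple and one double real root: the simple root is the one continuing into $\Omega_+$, hence equals the $m$ of $(\overline{u}_D,\overline{v}_D)$, while the double root is the coalescence of two of the three branches $A,B,C$ of Proposition~\ref{prp:st:uv:stb:ex}. Which two coalesce is governed by the location $(q/2)^{1/3}$ of the double root, i.e.\ by $\operatorname{sgn}q$; since $\operatorname{sgn}q=\operatorname{sgn}(\tfrac12-a)$, the colliding pair — and therefore the surviving branch identified with $D$ — switches exactly at $a=\tfrac12$, which is the source of the dichotomy in the statement. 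Matching the limits $d\uparrow d_-$ with the branch values of Proposition~\ref{prp:st:uv:stb:ex}(iv) then yields~(iv). I expect the main obstacle to lie precisely here and at the degenerate corners $(0,0)$ and $(1,0)$ where $\Omega_+$ pinches: one must follow the real root of $P$ continuously across the non-simple boundary $d=d_-$, pair the surviving root with the correct branch on each side of $a=\tfrac12$, and verify that $(\overline{u}_D,\overline{v}_D)$ extends continuously to the closure at the corners, where several roots of $P$ and the parameter region itself collapse simultaneously.
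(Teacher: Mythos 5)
Your proposal is correct in substance but follows a genuinely different route from the paper. The paper never writes down a resolvent cubic: it parametrizes the level set $g(u;a)+g(v;a)=0$ by two curves $v_\pm(u)$ over $u\in[0,a]$ (Lemmata \ref{lem:bif:setup:bs:props:vpm}--\ref{lem:bif:setup:bs:props:vpm:a:half}), intersects them with $v_d(u)=u-g(u;a)/(2d)$, and obtains the $D$-branch as the single intersection $u_{AD}(d)$ of $v_-$ with $v_d$, which Lemma \ref{lem:bif:str:roots:A} shows persists for all $0<d\le g'(a;a)/4=d_+(a)$ and increases monotonically to $a$; the saddle property then comes from Lemma \ref{lem:bif:str:roots:determinant} (the determinant of $D_{1,2}G$ vanishes exactly at tangential intersections) together with Corollary \ref{cor:bif:char:roots:ovl:G}. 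Your reduction to the cubic $P(m)=g(m;a)+\tfrac12 R(m)R'(m)$ with $r^2=R(m)=g'(m;a)-4d$ checks out — I verified $P'=\tfrac12[R'^2-4R+8d]$, $\det D_{1,2}G=-2R(m)P'(m)$, the $d$-independence of the depressed constant term, and $\operatorname{sgn}q=\operatorname{sgn}(\tfrac12-a)$ — and it buys a much more self-contained proof of items (i)--(iii), with everything reduced to signs of $P'$, $R$ and $g$. What it does not buy is the shared machinery that the paper reuses across Propositions \ref{prp:st:dpm:ex}--\ref{prp:st:uv:ustb:ex} and in the travelling-wave analysis of \S\ref{sec:twv}.

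Two caveats. First, your identification of the discriminant locus of $P$ with $d=d_-(a)$, and hence the claim that $P$ has a \emph{unique} real root on $\Omega_+$ (which you need for $P'(m)>0$ in (i)), requires one bridging observation that you omit: as $d$ varies in $(0,d_+(a))$, the number of real roots of $P$ carrying $R>0$ can change only at a multiple root of $P$ with $R>0$ there, or when a simple root crosses $\{R=0\}$ — and the latter forces $g(m;a)=0$ and $g'(m;a)=4d$ simultaneously, i.e.\ $m=a$ and $d=d_+(a)$. Only with this in hand does the root count of Proposition \ref{prp:st:dpm:ex}(iii) pin the (unique, strictly monotone-in-$d$) zero of the discriminant at $d_-(a)$ and rule out spurious real roots with $R\le0$ surviving on $\Omega_+$. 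Second, your argument for (iv) delivers $(\overline{u}_D,\overline{v}_D)=(\overline{u}_A,\overline{v}_A)$ at $d_-(a)$ for $a\le\tfrac12$ and $=(\overline{u}_C,\overline{v}_C)$ for $a\ge\tfrac12$: the simple root is the survivor, while the double root is the coalescence $B=C$ (resp.\ $A=B$). This is the internally consistent version; the three-way equalities printed in the statement, taken together with Proposition \ref{prp:st:uv:stb:ex}(iv) and the seven-root count of Proposition \ref{prp:st:dpm:ex}(iv), would force all four branches to coincide at $d_-(a)$ for $a\neq\tfrac12$, which is impossible, so you should not attempt to recover the literal statement as printed.
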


\begin{cor}
\label{cor:bif:1:min:a}
For any $(a,d) \in \overline{\Omega}_-$,
we have the identities
\begin{equation}
\begin{array}{lcl}
\big(\overline{u}_{A}, \overline{v}_A \big) (  a, d)
 & = &
 \big( 1 - \overline{v}_C , 1 - \overline{u}_C \big)
     ( 1 - a , d),
\\[0.2cm]
\big(\overline{u}_{B}, \overline{v}_B \big) (  a, d)
 & = &
 \big( 1 - \overline{v}_B , 1 - \overline{u}_B \big)
     ( 1 -a , d),
\\[0.2cm]
\big(\overline{u}_{C}, \overline{v}_C \big) (  a, d)
 & = &
 \big( 1 - \overline{v}_A , 1 - \overline{u}_A \big)
     ( 1 -a , d).
\end{array}
\end{equation}
In addition, for any $(a,d) \in \overline{\Omega}_+$
we have the identity
\begin{equation}
\begin{array}{lcl}
\big(\overline{u}_{D}, \overline{v}_D \big) ( a, d)
 & = &
 \big( 1 - \overline{v}_D , 1 - \overline{u}_D \big)
     ( 1 - a , d).
\\[0.2cm]
\end{array}
\end{equation}
\end{cor}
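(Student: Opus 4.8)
The plan is to reduce the entire statement to a single reflection symmetry of the algebraic map $G$. The starting point is the elementary identity $g(1-u;1-a) = (1-u)\,u\,(a-u) = -g(u;a)$, valid for all $u$ and $a$. Substituting this into the definition \sref{eq:mr:def:G:G12} and rearranging, I would verify the two relations $G_1(1-v,1-u;1-a,d) = -G_2(u,v;a,d)$ and $G_2(1-v,1-u;1-a,d) = -G_1(u,v;a,d)$, which together read
\begin{equation}
G\big(1-v,\,1-u;\,1-a,\,d\big) = -\big(G_2(u,v;a,d),\,G_1(u,v;a,d)\big).
\end{equation}
The right-hand side vanishes exactly when $G(u,v;a,d)=0$, so $(u,v)$ solves $G(\cdot;a,d)=0$ if and only if $(1-v,1-u)$ solves $G(\cdot;1-a,d)=0$. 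Since $v>u$ is equivalent to $1-u>1-v$, this involution maps the three bichromatic roots (those with $v>u$) at parameter $1-a$ bijectively onto the three bichromatic roots at parameter $a$.

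Next I would pin down which labelled root is sent to which. Because $d_\pm(a)=d_\pm(1-a)$ by Proposition \ref{prp:st:dpm:ex}(i), both $\Omega_-$ and $\Omega_+$ are invariant under $a\mapsto 1-a$, so the reflected functions $(1-\overline v_\#,\,1-\overline u_\#)(1-a,d)$ are defined on the same domain as the $\overline u_\#$. Fix $(a,d)\in\Omega_-$. Applying the involution to the ordering \sref{eq:bif:abc:ordering} at parameter $1-a$ and reading off first components, the chain $\overline v_A<\overline v_B<\overline v_C$ becomes $1-\overline v_C<1-\overline v_B<1-\overline v_A$; thus the reflection reverses the order of the $u$-components, sending the smallest root to the largest and vice versa. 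Matching these against the strictly ordered roots $\overline u_A<\overline u_B<\overline u_C$ at parameter $a$ forces the assignment $A\leftrightarrow C$ with $B$ fixed, which is exactly the claimed set of identities. Since the three roots are distinct throughout $\Omega_-$ by Proposition \ref{prp:st:uv:stb:ex}(iii), the two continuous selections sharing a common $u$-rank must coincide pointwise.

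For the statement on $\overline\Omega_+$ the argument is shorter still: Proposition \ref{prp:st:uv:ustb:ex}(iii) guarantees a \emph{single} bichromatic root with $v>u$ in $\Omega_+$, so its reflection $(1-\overline v_D,\,1-\overline u_D)(1-a,d)$ can only be $(\overline u_D,\overline v_D)(a,d)$ itself. All the identities then extend from $\Omega_\pm$ to the closures $\overline\Omega_\pm$ by continuity of the functions involved. I do not anticipate a genuine obstacle here; the one point demanding care is the index bookkeeping, namely that the order-reversal built into the reflection produces the swap $A\leftrightarrow C$ rather than the identity. This is settled directly by \sref{eq:bif:abc:ordering}, and as a consistency check one can evaluate both sides at $d=0$ using Proposition \ref{prp:st:uv:stb:ex}(ii), where $(0,a)\leftrightarrow(a,1)$ and $(0,1)\leftrightarrow(0,1)$ confirm the $A\leftrightarrow C$, $B\leftrightarrow B$ pattern.
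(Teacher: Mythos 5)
Your proposal is correct and follows essentially the same route as the paper: the paper combines the symmetry $G(1-u,1-v;1-a,d)=-G(u,v;a,d)$ (from $g(1-u;1-a)=-g(u;a)$) with the swap symmetry $G(u,v;a,d)=0\Leftrightarrow G(v,u;a,d)=0$ and then invokes the ordering \sref{eq:bif:abc:ordering}, exactly as you do after merging the two symmetries into the single involution $(u,v)\mapsto(1-v,1-u)$. Your more detailed bookkeeping of the $A\leftrightarrow C$ swap and the $d=0$ consistency check are just expansions of the paper's terse final sentence.
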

\begin{proof}
The symmetry
$g(1-u, 1-a) = - g(u, a)$
implies that
\begin{equation}
G(1- u, 1 - v; 1-a,d)
= - G( u, v; a,d).
\end{equation}
In addition, we have $G(u,v;a , d) = 0$
if and only if $G(v, u ; a, d) = 0$.
The statements hence follow
from the ordering \sref{eq:bif:abc:ordering}.
\end{proof}

Our final result concerns the special case $a = \frac{1}{2}$,
in which case it is possible to be more explicit.
In particular, the  bichromatic roots
$(\overline{u}_B , \overline{v}_B)$
and $(\overline{u}_D , \overline{v}_D)$
lie on the line $u + v = 1$
and collide precisely when
$g'(u;\frac{1}{2}) = g'(v;\frac{1}{2}) = 0$.
\begin{cor}
For any $0 \le d \le \frac{1}{24}$ we have
\begin{equation}
  \overline{u}_B(\frac{1}{2}, d) = 1 - \overline{v}_B(\frac{1}{2}, d),
\end{equation}
while for any $\frac{1}{24} \le d \le \frac{1}{16} = d_+(1/2)$
we have
\begin{equation}
  \overline{u}_D(\frac{1}{2}, d) = 1 - \overline{v}_D(\frac{1}{2}, d).
\end{equation}
In addition,
we have the identities
\begin{equation}
  \label{eq:bif:ids:for:cusp}
  \begin{array}{lcl}
    \overline{u}_A( \frac{1}{2}, \frac{1}{24})
    = \overline{u}_B( \frac{1}{2}, \frac{1}{24})
    = \overline{u}_C( \frac{1}{2}, \frac{1}{24})
     = \overline{u}_D( \frac{1}{2}, \frac{1}{24})
     & = &
       \frac{1}{2} - \frac{1}{6} \sqrt{3} ,
    \\[0.2cm]
    \overline{v}_A( \frac{1}{2}, \frac{1}{24})
    = \overline{v}_B( \frac{1}{2}, \frac{1}{24})
     = \overline{v}_C( \frac{1}{2}, \frac{1}{24})
      = \overline{u}_D( \frac{1}{2}, \frac{1}{24})
      & = &
       \frac{1}{2} + \frac{1}{6} \sqrt{3} .
  \end{array}
\end{equation}
\end{cor}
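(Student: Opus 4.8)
The plan is to treat the three assertions separately, getting the two line-identities directly from the reflection symmetry already recorded in Corollary \ref{cor:bif:1:min:a} and getting the cusp values from a reduction of $G=0$ to the single line $\{v = 1-u\}$.

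First I would dispatch the two line-identities. Specializing Corollary \ref{cor:bif:1:min:a} to $a = \frac{1}{2}$, so that $1 - a = \frac{1}{2}$, the middle identity there reads
\[
\big(\overline{u}_B, \overline{v}_B\big)(\tfrac{1}{2}, d) = \big(1 - \overline{v}_B,\, 1 - \overline{u}_B\big)(\tfrac{1}{2}, d),
\]
whose first component is exactly $\overline{u}_B(\frac{1}{2}, d) = 1 - \overline{v}_B(\frac{1}{2}, d)$. This holds on all of $\overline{\Omega}_-$, hence for $0 \le d \le \frac{1}{24}$ since $d_-(\frac{1}{2}) = \frac{1}{24}$ by Proposition \ref{prp:st:dpm:ex}. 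The statement for $(\overline{u}_D, \overline{v}_D)$ follows identically from the $D$-part of Corollary \ref{cor:bif:1:min:a}, now valid on $\overline{\Omega}_+$, i.e. for $\frac{1}{24} \le d \le d_+(\frac{1}{2}) = \frac{1}{16}$ (using $d_+(a)=\frac{g'(a;a)}{4}=\frac{a(1-a)}{4}$).

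Next I would establish the coincidence of the four roots at the cusp and then compute their common value. Because $a = \frac{1}{2}$ lies in both $[0, \frac{1}{2}]$ and $[\frac{1}{2}, 1]$, the two cases of Proposition \ref{prp:st:uv:stb:ex}(iv) both apply at $(\frac{1}{2}, \frac{1}{24})$ and jointly give $(\overline{u}_A, \overline{v}_A) = (\overline{u}_B, \overline{v}_B) = (\overline{u}_C, \overline{v}_C)$; Proposition \ref{prp:st:uv:ustb:ex}(iv) then appends $(\overline{u}_D, \overline{v}_D)$ to this chain. It therefore suffices to locate the common bichromatic value, which by the first line-identity lies on $\{v = 1 - u\}$. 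On this line the relation $g(1-u;\frac{1}{2}) = -g(u;\frac{1}{2})$ yields $G_2(u, 1-u; \frac{1}{2}, d) = -G_1(u, 1-u; \frac{1}{2}, d)$, so $G = 0$ collapses to the scalar equation $G_1 = 0$. Setting $u = \frac{1}{2} + \mu$ at $d = \frac{1}{24}$ reduces $G_1$ to $\mu\big(\frac{1}{12} - \mu^2\big)$, with roots $\mu = 0$ (the monochromatic state $(a,a)$) and $\mu = \pm \frac{1}{6}\sqrt{3}$. The strict ordering $\overline{u}_B < a < \overline{v}_B$ from Proposition \ref{prp:st:uv:stb:ex}(iii) selects $\mu = -\frac{1}{6}\sqrt{3}$, giving $\overline{u} = \frac{1}{2} - \frac{1}{6}\sqrt{3}$ and $\overline{v} = 1 - \overline{u} = \frac{1}{2} + \frac{1}{6}\sqrt{3}$, as claimed.

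The argument is essentially bookkeeping layered on the earlier propositions, so I do not expect a serious obstacle; the only step needing genuine care is the on-line reduction. I would make sure to verify the equivalence of $G=0$ and $G_1=0$ on $\{v=1-u\}$ through the relation $G_2 = -G_1$, and to confirm that the simple root $\mu = -\frac{1}{6}\sqrt{3}$ of the on-line cubic is indeed the limit of branch $B$ rather than the monochromatic root $\mu = 0$. The strict inequality $\overline{u}_B < a < \overline{v}_B$, which persists in the limit $d \uparrow \frac{1}{24}$, settles this unambiguously.
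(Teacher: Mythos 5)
Your proposal is correct. The paper actually states this corollary without an explicit proof, leaving it as a consequence of the preceding results; the route it implicitly intends is the same as yours for the two line-identities (specialize Corollary \ref{cor:bif:1:min:a} to $a=\tfrac12$), but for the cusp values it would simply quote Lemma \ref{lem:bif:str:roots:BC}(vii), which already identifies $u_B(\tfrac{1}{24})=u_C(\tfrac{1}{24})=u_{\mathrm{min}}(\tfrac12)=\tfrac12-\tfrac16\sqrt3$, and then use $v=1-u$. Your alternative — collapsing $G=0$ to the scalar cubic $\mu(\tfrac{1}{12}-\mu^2)=0$ on the invariant line $\{v=1-u\}$ via $G_2=-G_1$ — is a clean, self-contained computation and arrives at the same value. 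The one place where your argument is looser than it should be is the selection of $\mu=-\tfrac16\sqrt3$ over $\mu=0$: a strict inequality such as $\overline{u}_B<a$ does not automatically persist under the limit $d\uparrow\tfrac{1}{24}$, so by itself it only yields $\overline{u}_B(\tfrac12,\tfrac{1}{24})\le\tfrac12$. This is immediately repaired by citing either Proposition \ref{prp:st:dpm:ex}(iv) (at $a=\tfrac12$, $d=d_-(\tfrac12)$ there are five distinct roots, which forbids the bichromatic pair from collapsing onto $(\tfrac12,\tfrac12)$) or Lemma \ref{lem:bif:str:roots:BC}(vii) directly. With that citation added, the proof is complete.
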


\subsection{Saddle-nodes around $a = 0$ }

In this section we construct the branches
$(\overline{u}_B, \overline{v}_B)$
and $(\overline{u}_C, \overline{v}_C)$
of solutions to $G(u,v ;a, d) = 0$
in the regime where $(a,d) \approx (0, 0)$.
In particular, we define
\begin{equation}
H(u, v ; a, d) = G(u, 1 + v;  a,d)
\end{equation}
and determine the zeroes of $H$ for which $(u,v,a,d)$ are small.

\begin{prop}
\label{prp:bif:exp:a:zero}
There exist constants $\delta_a > 0$, $\delta_d > 0$ and $\epsilon > 0$
together with a function $d_c : (0, \delta_a) \to (0, \delta_d)$
and a constant $K \ge 1$
so that the following holds true.
\begin{itemize}
\item[(i)]{
  For every $0 < a < \delta_a$ and $0 < d < d_c(a)$
  the equation $H(u,v;a,d) = 0$ has precisely two solutions
  on the set $\{\abs{u} + \abs{v}  < \epsilon \}$.
}
\item[(ii)]{
  For every $0 < a < \delta_a$ and  $d_c(a) < d < \delta_d$
  the equation $H(u,v;a,d) = 0$ has no solutions
  on the set $\{\abs{u} + \abs{v}  < \epsilon \}$.
}
\item[(iii)]{
  For every $0 < a < \delta_a$ we have the estimate
  \begin{equation}
    \abs{ d_c(a) - \frac{1}{8} a^2 - \frac{1}{32} a^4 } \le K a^5 .
  \end{equation}
}
\item[(iv)]{
  For every $0 < a < \delta_a$ and
  the equation $H(u,v;d_c(a),a) = 0$ has precisely
  one solution $\big(u_c(a), v_c(a)\big)$ on the set
  $\{\abs{u} + \abs{v}  < \epsilon \}$.
  We have
  the estimates
  \begin{equation}
    \begin{array}{lcl}
    \abs{u_c(a) - \frac{1}{2} a}
       & \le & K a^4 ,
    \\[0.2cm]
    \abs{v_c(a) + \frac{1}{4} a^2 + \frac{1}{8} a^3}
      & \le & K a^4.
    \end{array}
  \end{equation}
}
\end{itemize}
\end{prop}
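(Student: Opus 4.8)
\section*{Proof proposal}

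The plan is to reduce the two-dimensional system $H(u,v;a,d)=0$ to a single scalar equation by a Lyapunov--Schmidt argument, and then to resolve the resulting fold by an anisotropic blow-up in the parameter $a$. Writing out the components we have $H_1 = 2d(1+v-u) + g(u;a)$ and $H_2 = 2d(u-1-v) + g(1+v;a)$, and a direct computation gives $D_{1,2}H(0,0;0,0) = \mathrm{diag}(0,-1)$, using $g'(0;0)=0$ and $g'(1;0)=-1$. Since $\partial_v H_2(0,0;0,0) = -1 \neq 0$, the implicit function theorem produces a unique smooth root $v = v^*(u,a,d)$ of $H_2 = 0$ near the origin with $v^*(0,0,0)=0$; because $H_2=-v(1+v)(1+v-a)$ when $d=0$ forces $v^*=0$ there, we may factor $v^*(u,a,d) = d\,w(u,a,d)$ with $w$ smooth. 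Substituting yields the scalar reduced equation $\Phi(u,a,d) := H_1\big(u, v^*(u,a,d); a, d\big)=0$, whose zeroes are in one-to-one correspondence with the small zeroes of $H$ on $\{\abs{u}+\abs{v}<\epsilon\}$.

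First I would record the expansion $\Phi(u,a,d) = 2d - au + u^2 + R(u,a,d)$, where $R$ collects $-2du + au^2 - u^3 + 2d\,v^*$, each term of which is of higher order under the scaling $u = a\tilde u$, $d = a^2\tilde d$. The key structural claim is that this scaling desingularizes the fold: after substitution every monomial of $\Phi$ is divisible by $a^2$ (using $v^*=O(d)$ for the cross term), so that $\Phi(a\tilde u, a, a^2\tilde d) = a^2\,\widehat\Phi(\tilde u,\tilde d,a)$ with $\widehat\Phi$ smooth up to and including $a=0$ and limiting profile
\begin{equation}
\widehat\Phi(\tilde u, \tilde d, 0) = \tilde u^2 - \tilde u + 2\tilde d .
\end{equation}
This profile has a non-degenerate fold at $(\tilde u, \tilde d) = (\tfrac12, \tfrac18)$, since there $\partial_{\tilde u}\widehat\Phi = 0$ while $\partial_{\tilde u}^2\widehat\Phi = 2 \neq 0$ and $\partial_{\tilde d}\widehat\Phi = 2 \neq 0$.

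Next I would apply the implicit function theorem to the fold system $\widehat\Phi = 0$, $\partial_{\tilde u}\widehat\Phi = 0$, which for small $a$ yields a unique smooth fold point $(\tilde u_c(a), \tilde d_c(a))$ near $(\tfrac12,\tfrac18)$; setting $d_c(a) = a^2 \tilde d_c(a)$ then defines the threshold curve, and the local quadratic-fold structure in $\tilde u$ gives two zeroes for $\tilde d<\tilde d_c(a)$, one for $\tilde d=\tilde d_c(a)$, and none for $\tilde d>\tilde d_c(a)$, which is the content of (i), (ii) and (iv). To upgrade this from a statement near the fold to the stated global count on the fixed ball $\{\abs{u}+\abs{v}<\epsilon\}$, I would run a sign argument on $\Phi$ directly: for $u\le 0$ one has $\Phi \ge 2d + u^2 + a\abs{u} - \abs{2d\,v^*} > 0$, and once $u$ exceeds a fixed multiple $Ma$ of $a$ the quadratic part $u^2-au$ dominates the remainder $R$; hence $\Phi>0$ outside the window $0<u\le Ma$, which is exactly the range resolved by the blow-up.

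Finally, the asymptotics in (iii) and (iv) follow by Taylor-expanding the fold system in $a$. Writing $\tilde u_c = \tfrac12 + \mu_1 a + \mu_2 a^2 + \cdots$ and $\tilde d_c = \tfrac18 + \delta_1 a + \delta_2 a^2 + \cdots$, matching orders forces $\mu_1=\delta_1=0$ at first order (both the order-$a$ coefficient of $\widehat\Phi$ and its $\tilde u$-derivative vanish at $(\tfrac12,\tfrac18)$) and $\mu_2=0$, $\delta_2=\tfrac{1}{32}$ at second order; hence $d_c(a) = \tfrac18 a^2 + \tfrac{1}{32}a^4 + O(a^5)$ and $u_c(a) = \tfrac12 a + O(a^4)$. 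The bound on $v_c$ comes from evaluating the expansion $\tilde v^* = -2\tilde d + 2\tilde d(\tilde u-1)\,a + O(a^2)$ of the rescaled slaved variable $\tilde v^*=v^*/a^2$ along the fold, which yields $v_c(a) = -\tfrac14 a^2 - \tfrac18 a^3 + O(a^4)$; in each case the explicit constant $K$ is the Taylor remainder of the corresponding smooth function of $a$. I expect the main obstacle to be the rigorous desingularization together with the uniform localization of zeroes: the fold analysis near the critical point is standard, but verifying that the blow-up quotient $\widehat\Phi$ is genuinely smooth at $a=0$ and that no spurious zeroes escape the scaling window as $a\downarrow 0$ requires precisely the divisibility check and the sign control described above.
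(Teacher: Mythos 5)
Your reduction step is identical to the paper's: both eliminate $v$ by solving $H_2=0$ via the implicit function theorem (the paper phrases this as a fixed-point problem for the coefficients $\alpha_0,\alpha_1,R_{\alpha;2}$ of an Ansatz for $v_*$) and then study the scalar function $\mathcal{J}(u;a,d)=H_1(u,v_*(u;a,d);a,d)$. Where you diverge is in how the fold is resolved. The paper stays in the original variables: it shifts $u\mapsto u_*(a,d)+\tilde u$ to kill the linear term, arrives at the saddle-node normal form $\zeta_0(a,d)+\tilde u^2[1+R_{\zeta;2}]$, and obtains $d_c$ by solving $\zeta_0(a,d_c)=0$ as a fixed-point problem, tracking all remainders and their $d$-derivatives by hand. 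You instead perform the anisotropic blow-up $u=a\tilde u$, $d=a^2\tilde d$, check that $\Phi(a\tilde u,a,a^2\tilde d)=a^2\widehat\Phi$ with smooth quotient and limit profile $\tilde u^2-\tilde u+2\tilde d$ (the divisibility hinges on $v^*=O(d)$, which you correctly extract from $H_2|_{d=0}=-v(1+v)(1+v-a)$), and apply the implicit function theorem to the fold system $\widehat\Phi=\partial_{\tilde u}\widehat\Phi=0$, whose Jacobian $\bigl(\begin{smallmatrix}0&2\\2&0\end{smallmatrix}\bigr)$ at $(\tfrac12,\tfrac18)$ is invertible. Your expansions check out: the order-$a$ coefficient $-\tilde u^3+\tilde u^2-2\tilde d\tilde u$ and its $\tilde u$-derivative both vanish at $(\tfrac12,\tfrac18)$, the order-$a^2$ contribution $-4\tilde d^2$ from $2dv^*$ gives $\delta_2=\tfrac1{32}$, and the slaved variable gives $v_c=-\tfrac14a^2-\tfrac18a^3+O(a^4)$, all matching (iii) and (iv). The blow-up buys conceptual clarity (the cancellations at orders $a$ and $a^2$ that the paper's fixed-point computation produces "by accident" are forced here by the fold system), at the price of having to justify smoothness of the quotient and uniformity of the count.

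One point you should tighten before this is complete: item (ii) requires the absence of solutions for \emph{all} $d\in(d_c(a),\delta_d)$ with $\delta_d$ a fixed constant, whereas your fold analysis is uniform only for $\tilde d=d/a^2$ in a compact set. Your sign argument as written excludes $u$ outside the window $(0,Ma]$ but does not address large $\tilde d$. The fix is elementary and in the spirit of what you wrote: for $0<u<\epsilon$ one has $2d(1+v^*-u)\ge d$ and $u^2(1+a)-au-u^3\ge\tfrac12u^2-au\ge-\tfrac12a^2$, so $\Phi>0$ whenever $d\ge a^2$, say; the remaining range $d\le a^2$ corresponds to $\tilde d\in(0,1]$, where the strict convexity $\partial_{\tilde u}^2\widehat\Phi=2+O(a)$ on $[0,M]$, positivity at the endpoints, and monotonicity of the interior minimum in $\tilde d$ give the exact count. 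With that addendum the argument is sound.
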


Writing $H(u, v; a, d) = \big(H_1(u,v;a,d), H_2(u,v;a,d)\big)^T$
we can compute
\begin{equation}
\begin{array}{lcl}
H_1(u,  v ; a, d)
& = &
u^2 - ua - u^3 + u^2 a + 2d + 2dv - 2d u ,
\\[0.2cm]
H_2(u, v; a, d)
& = &
-v - 2 v^2 + v a - v^3
+ v^2 a + 2d u - 2d - 2d v .
\end{array}
\end{equation}
Our strategy is to use the identity $H_2 = 0$
to eliminate $v$ and then recast $H_1 = 0$
into the normal form of a saddle-node bifurcation.

\begin{lem}
Pick $\delta > 0$
sufficiently small. Then
there exist constants $\epsilon > 0$
and $K \ge 1$
together with functions
\begin{equation}
\alpha_0: (-\delta, \delta)^2 \to \Real,
\qquad
\alpha_1: (-\delta, \delta)^2 \to \Real,
\qquad
R_{\alpha;2} : (-\delta, \delta)^3 \to \Real
\end{equation}
that satisfy the following properties.
\begin{itemize}
\item[(i)]{
  For every $(u, a, d) \in  (-\delta, \delta)^3 $
  the equation $H_2(u, v; a, d) = 0$
  has a unique solution $v = v_*$ in the set $\{\abs{v} < \epsilon\}$.
  This solution is given by
  \begin{equation}
    \label{eq:bif:qexp:ansatz:v}
    v_*(u ; a , d) = \alpha_0(a, d)
      + \alpha_1(a, d) u
      + u^2 R_{\alpha;2}(u;a, d) .
  \end{equation}
}
\item[(ii)]{
  Upon writing
  \begin{equation}
    \begin{array}{lcl}
     \alpha_0(a,d)
        & = & -2d - 2ad + S_{\alpha_0}(a, d),
      \\[0.2cm]
      \alpha_1(a,d)
        & = & 2d + S_{\alpha_1} (a, d),
        \\[0.2cm]
     \end{array}
  \end{equation}
  the bounds
  \begin{equation}
    \begin{array}{lcl}
      \abs{S_{\alpha_0}(a,d) }
        & \le & K ( d^2 + \abs{d } a^2 ) ,
      \\[0.2cm]
      \abs{S_{\alpha_1}(a,d) }
        & \le & K \abs{d} ( \abs{a} + \abs{d}),
        \\[0.2cm]
     \end{array}
  \end{equation}
  together with
  \begin{equation}
    \begin{array}{lcl}
      \abs{\partial_d S_{\alpha_0}(a, d)}
        & \le & K ( \abs{d} + a^2) ,
        \\[0.2cm]
      \abs{\partial_d S_{\alpha_1}(a, d)}
        & \le &  K ( \abs{a} + \abs{d})
     \end{array}
  \end{equation}
  hold for all $(a,d) \in (-\delta, \delta)^2$.
}
\item[(iii)]{
  For every $(u,a,d) \in (-\delta, \delta)^3$
  we have the bounds
  \begin{equation}
    \label{eq:bif:sn:bnds:r:alpha}
    \begin{array}{lcl}
      \abs{R_{\alpha;2}(u;a, d) }
      + \abs{R_{\alpha;2}'(u;a, d) }
      + \abs{R_{\alpha;2}''(u;a, d) }
      + \abs{R_{\alpha;2}'''(u;a, d) }
        & \le & K d^2 ,
       \\[0.2cm]
    \end{array}
  \end{equation}
  together with
  \begin{equation}
    \begin{array}{lcl}
      \abs{\partial_d R_{\alpha;2}(u;a, d) }
      + \abs{\partial_d R_{\alpha;2}'(u;a, d) }
      + \abs{\partial_d R_{\alpha;2}''(u;a, d) }
      + \abs{\partial_d R_{\alpha;2}'''(u;a, d) }
        & \le & K \abs{d} .
       \\[0.2cm]
    \end{array}
  \end{equation}
}
\end{itemize}
\end{lem}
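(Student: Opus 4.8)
The plan is to eliminate $v$ from the scalar equation $H_2(u,v;a,d)=0$ by exploiting the fact that $H_2$ is affine in $u$: the only $u$-dependent term is $2du$. Writing $P(v;a,d) := H_2(0,v;a,d)$ so that $H_2(u,v;a,d) = P(v;a,d) + 2du$, a direct computation gives the factorization
\begin{equation}
P(v;a,d) = (1+v)\bigl[ -v^2 + (a-1)v - 2d \bigr].
\end{equation}
Since $\partial_v P(0;0,0) = -1 \neq 0$, after shrinking $\delta$ the map $v \mapsto P(v;a,d)$ is strictly monotone, hence a diffeomorphism onto its image, on a fixed interval $(-\epsilon,\epsilon)$ for all $(a,d)\in(-\delta,\delta)^2$. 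I would then apply the implicit function theorem to obtain a smooth inverse $\Psi(w;a,d)$, characterized by $P(\Psi(w;a,d);a,d)=w$ and $\Psi(-2d;a,d)=0$, whose derivatives are bounded uniformly on the fixed compact domain. Because $H_2=0$ is equivalent to $P(v;a,d)=-2du$, the unique small root is $v_*(u;a,d) = \Psi(-2du;a,d)$, and monotonicity of $P$ gives the uniqueness in $\{\abs{v}<\epsilon\}$ asserted in (i).

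Reading off the Taylor data in $u$ at $u=0$ produces the three objects in the statement, namely $\alpha_0(a,d) = \Psi(0;a,d)$, $\alpha_1(a,d) = -2d\,\Psi_w(0;a,d)$, and, via the integral form of Taylor's theorem,
\begin{equation}
R_{\alpha;2}(u;a,d) = 4d^2 \int_0^1 (1-t)\, \Psi_{ww}(-2dtu;a,d)\, \rmd t .
\end{equation}
The explicit prefactor $4d^2$ is exactly what forces the $d^2$ scaling in (iii): each $u$-derivative of $R_{\alpha;2}$ only brings down further powers of $d$ times a bounded derivative of $\Psi$, while a $\partial_d$ either hits the $4d^2$ (leaving a factor $O(\abs{d})$) or the integrand (leaving $O(d^2)$). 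Thus both families of bounds in (iii) reduce to the uniform bounds on $\Psi, \Psi_w, \Psi_{ww}, \dots$ coming from the inverse function theorem on the fixed neighbourhood.

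It then remains to expand $\alpha_0$ and $\alpha_1$. Since $\Psi(0;a,d)$ is the root of the quadratic factor $v^2 + (1-a)v + 2d = 0$ that vanishes as $d\downarrow 0$, I obtain the closed form
\begin{equation}
\alpha_0(a,d) = \tfrac{1}{2}\Bigl[ -(1-a) + \sqrt{(1-a)^2 - 8d}\, \Bigr] = -\frac{2d}{1-a} - \frac{4d^2}{(1-a)^3} - \cdots ,
\end{equation}
and expanding $(1-a)^{-1} = 1 + a + a^2 + \cdots$ isolates the advertised leading terms $-2d-2ad$, so that $S_{\alpha_0} = \alpha_0 + 2d + 2ad$ collects precisely the contributions $-2a^2 d$ and $-4d^2$ (and higher order), giving $\abs{S_{\alpha_0}} \le K(d^2 + \abs{d}a^2)$. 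For $\alpha_1$ I would use $\Psi_w(0;a,d) = 1/P'(\alpha_0;a,d)$ together with the fact that $\alpha_0$ annihilates the quadratic factor, so that $P'(\alpha_0;a,d) = (1+\alpha_0)(a-1-2\alpha_0)$; this renders $\alpha_1 = -2d\big/\bigl[(1+\alpha_0)(a-1-2\alpha_0)\bigr]$ an explicit smooth function whose expansion begins $2d + 2ad + \cdots$, yielding $\abs{S_{\alpha_1}} \le K\abs{d}(\abs{a}+\abs{d})$. The $\partial_d$ estimates in (ii) follow either by differentiating these explicit expressions or, more cleanly, by implicitly differentiating $P(\alpha_0;a,d)=0$, which gives $\partial_d \alpha_0 = 2(1+\alpha_0)/P'(\alpha_0)$ and a parallel formula for $\partial_d \alpha_1$.

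The main work is not conceptual but organizational: one must keep uniform control of all the derivatives of $\Psi$ on the fixed neighbourhood, and verify that the extracted leading terms $-2d-2ad$ and $2d$ genuinely absorb every contribution not covered by the stated remainder bounds, in particular that no bare $O(ad)$ term survives in $S_{\alpha_0}$. The explicit quadratic for $\alpha_0$ and the explicit formula for $P'(\alpha_0)$ reduce all of this to elementary estimates, so I expect the only real care to be in the bookkeeping of mixed $(a,d)$-orders rather than in any genuine difficulty.
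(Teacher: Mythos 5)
Your proposal is correct, but it takes a genuinely different route from the paper. The paper substitutes the Ansatz \sref{eq:bif:qexp:ansatz:v} directly into $H_2$ and solves a hierarchy of coupled fixed-point problems for $\alpha_0$, $\alpha_1$ and $R_{\alpha;2}$ (together with their differentiated counterparts), extracting the bounds from Taylor expansions of those fixed points. You instead exploit the structural observation that $H_2$ is affine in $u$, which collapses the two-variable elimination to inverting the single explicit cubic $P(v;a,d)=(1+v)\bigl[-v^2+(a-1)v-2d\bigr]$ in $v$ alone; I checked the factorization and it is correct, as is $\partial_v P(0;0,0)=-1$. This buys you closed forms: $\alpha_0$ is the small root of the quadratic factor, $\alpha_1=-2d/\bigl[(1+\alpha_0)(a-1-2\alpha_0)\bigr]$, and the integral-remainder formula $R_{\alpha;2}=4d^2\int_0^1(1-t)\Psi_{ww}(-2dtu)\,\rmd t$ makes the $O(d^2)$ and $O(\abs{d})$ scalings in (iii) transparent rather than the outcome of bookkeeping. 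Your expansions $\alpha_0=-2d-2ad+O(d^2+\abs{d}a^2)$ and $\alpha_1=2d+2ad+O(d^2)$ match the stated leading terms and remainder bounds, and the implicit differentiation $\partial_d\alpha_0=2(1+\alpha_0)/P'(\alpha_0)=2/(a-1-2\alpha_0)$ handles the $\partial_d$ estimates cleanly. The only thing your argument gives up is reusability: the paper's fixed-point scheme is the template that is repeated verbatim in the cusp analysis of the next subsection, where $h_2$ is not affine in the remaining variable and no comparable factorization is available, whereas your shortcut is specific to this lemma. One small point to make explicit if you write this up: uniqueness and existence of $v_*$ in $\{\abs{v}<\epsilon\}$ require not just monotonicity of $P$ but also that $-2du$ lies in the image of $(-\epsilon,\epsilon)$ under $P$, which holds since $\abs{2du}\le 2\delta^2$ while the image contains an interval of width comparable to $\epsilon$; this is implicit in your implicit-function-theorem step but worth a sentence.
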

\begin{proof}
Substituting the Ansatz
\sref{eq:bif:qexp:ansatz:v}
into $H_2$, we
obtain the fixed point problems
\begin{equation}
\begin{array}{lcl}
\alpha_0
  & = & -2d -2d\alpha_0  + \alpha_0 a+ \alpha_0^2 a-2\alpha_0^2-\alpha_0^3,
\\[0.2cm]
\alpha_1
 & = & 2d
  -4\alpha_0\alpha_1+2\alpha_0\alpha_1 a-2d\alpha_1+\alpha_1 a
    -3 \alpha_0^2 \alpha_1 ,
\end{array}
\end{equation}
together with
\begin{equation}
\begin{array}{lcl}
R_2 & = &
 2\alpha_0 R_2 a+R_2 a+\alpha_1^2 a-3 \alpha_0 \alpha_1^2-2 d R_2 -2 \alpha_1^2
  -3 \alpha_0^2 R_2-4 \alpha_0 R_2
\\[0.2cm]
& & \qquad
  +(2 \alpha_1 R_2 a  -6 \alpha_0 \alpha_1 R_2-4 \alpha_1 R_2-\alpha_1^3)u
\\[0.2cm]
& & \qquad
  +(R_2^2 a-3 \alpha_0 R_2^2-2 R_2^2-3 \alpha_1^2 R_2)u^2
\\[0.2cm]
& & \qquad
  -3 \alpha_1 R_2^2u^3   -R_2^3 u^4.
\\[0.2cm]
\end{array}
\end{equation}
These fixed point problems can be successively solved
for small $a$, $d$ and $u$, along with their differentiated counterparts.
The desired estimates
can subsequently be obtained in a standard fashion
by computing Taylor expansions.
\end{proof}

In order to eliminate $v$ from $H_1$, we
define the function
\begin{equation}
\label{eq:bif:sn:def:j}
\begin{array}{lcl}
\mathcal{J}(u;a,d) & = & H_1(u, v_*(u;a,d); a, d)
\\[0.2cm]
& = &\beta_0(a,d) + \beta_1(a,d) u
+ u^2\big( 1 + a - u + R_{\beta;2}(u; a, d)
 \big),
\end{array}
\end{equation}
which forces us to write
\begin{equation}
\begin{array}{lcl}
\beta_0(a,d) & = &
2d + 2d \alpha_0(a,d),
\\[0.2cm]
\beta_1(a,d) & = &
-a - 2d + 2d \alpha_1(a,d),
\\[0.2cm]
R_{\beta,2}(u; a, d) & = &
 2d R_{\alpha;2}(u; a, d) .
\end{array}
\end{equation}
By applying a shift to $u$ the linear term
in \sref{eq:bif:sn:def:j} can be removed,
transforming \sref{eq:bif:sn:def:j} into
a normal form for saddle node bifurcations.

\begin{lem}
Pick $\delta > 0$ sufficiently small.
Then there exist constants $\epsilon > 0$
and $K \ge 1$
together with functions
\begin{equation}
u_*: (-\delta, \delta)^2 \to \Real,
\qquad
\zeta_0: (-\delta, \delta)^2 \to \Real,
\qquad
R_{\zeta;2} : (-\delta, \delta)^3 \to \Real
\end{equation}
that satisfy the following properties.
\begin{itemize}
\item[(i)]{
  For every $(\tilde{u}, a, d) \in (-\delta, \delta)^3$,
  we have the identity
  \begin{equation}
     \mathcal{J}(u_*(a, d) + \tilde{u} ;a,d)
      =    \zeta_0(a , d)
      + \tilde{u}^2 \big[
        1 + R_{\zeta;2}(\tilde{u}; a , d)
        \big] .
  \end{equation}
}
\item[(ii)]{
  Upon writing
  \begin{equation}
    \begin{array}{lcl}
       u_*(a , d) & = &
         \frac{1}{2} a + d
           - \frac{1}{8} a^2
           + \frac{1}{2} ad
           - \frac{1}{16} a^3
         + S_{u_*}(a , d) ,
       \\[0.2cm]
       \zeta_0(a, d) & = &
          2 d - \frac{1}{4} a^2
          - da  - 5 d^2
          + \frac{1}{8} a^3
          + \frac{1}{4} d a^2
          - \frac{1}{64} a^4
          + S_{\zeta_0}(a, d),
    \end{array}
  \end{equation}
  the bounds
  \begin{equation}
    \begin{array}{lcl}
      \abs{S_{u_*}(a,d) }
        & \le & K \big[ d^2 + a^2 \abs{ d } + a^4 \big] ,
      \\[0.2cm]
      \abs{
        S_{\zeta_0}(a, d)
        } & \le & K ( a^5 + \abs{d}^3 + d^2 \abs{a} + \abs{a}^3 \abs{d} ) ,
      \\[0.2cm]
    \end{array}
  \end{equation}
  together with
  \begin{equation}
    \begin{array}{lcl}
      \abs{\partial_d S_{u_*}(a,d) }
        & \le & K \big[ \abs{d} + a^2   \big] ,
      \\[0.2cm]
      \abs{
        \partial_d S_{\zeta_0}(a, d)
        } & \le & K (  d^2 + \abs{d} \abs{a} + \abs{a}^3  )
      \\[0.2cm]
    \end{array}
  \end{equation}
  hold for all $(a,d) \in (-\delta, \delta)^2$.
}
\item[(iii)]{
  For every $(\tilde{u}, a ,d ) \in (-\delta, \delta)^3$ we have the bounds
  \begin{equation}
    \begin{array}{lcl}
      \abs{ R_{\zeta;2}(\tilde{u} ; a, d) }
        & \le & K (\abs{a}+ \abs{d}) ,
      \\[0.2cm]
      \abs{ R_{\zeta;2}'(\tilde{u} ; a, d) }
        & \le & K \abs{d}^3.
      \\[0.2cm]
    \end{array}
  \end{equation}
}
\end{itemize}
\end{lem}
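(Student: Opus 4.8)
\medskip
The plan is to realise $u_*(a,d)$ as the unique small critical point of $u \mapsto \mathcal{J}(u;a,d)$ and then to read off $\zeta_0$ and $R_{\zeta;2}$ from the second-order Taylor expansion of $\mathcal{J}$ about that point. Writing $Q(u;a,d) = u^2 R_{\beta;2}(u;a,d)$, the representation \sref{eq:bif:sn:def:j} reads
\[
\mathcal{J}(u;a,d) = \beta_0 + \beta_1 u + (1+a)u^2 - u^3 + Q(u;a,d),
\]
so that $\mathcal{J}'(u) = \beta_1 + 2(1+a)u - 3u^2 + Q'(u)$ and $\mathcal{J}''(u) = 2(1+a) - 6u + Q''(u)$. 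The previous lemma gives $\abs{Q^{(k)}(u;a,d)} \le K\abs{d}^3$ for $0 \le k \le 3$ and yields $\beta_1 = -a - 2d + 2d\alpha_1$ with $\alpha_1 = 2d + S_{\alpha_1}$, so I would recast $\mathcal{J}'(u_*) = 0$ as the fixed-point problem
\[
u_* = \frac{-\beta_1 + 3u_*^2 - Q'(u_*;a,d)}{2(1+a)} .
\]
Since $\mathcal{J}''(0) = 2(1+a) + O(\abs{d}^3)$ is bounded away from zero, the right-hand side is a contraction on a small ball once $\delta$ is small, producing a unique $u_*(a,d)$ that is $C^\infty$ in $(a,d)$ by the implicit function theorem.

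Next I would extract the asymptotics. Feeding the expansions of $\beta_1$ (hence of $\alpha_1$) and of $Q'$ into the fixed-point relation and iterating from the leading guess $u_* \approx -\tfrac12\beta_1 = \tfrac12 a + d$ reproduces the stated expansion of $u_*$ together with the bound on $S_{u_*}$; the companion bound on $\partial_d S_{u_*}$ follows by differentiating the fixed-point relation in $d$ and reusing the $d$-derivative estimates on $S_{\alpha_1}$ and $R_{\alpha;2}$. For $\zeta_0 = \mathcal{J}(u_*)$ I would avoid resubstitution by exploiting $\mathcal{J}'(u_*) = 0$ to eliminate $\beta_1$, which collapses the constant term to
\[
\zeta_0 = \beta_0 - (1+a)u_*^2 + 2u_*^3 + Q(u_*) - u_* Q'(u_*) .
\]
Inserting the expansions of $\beta_0$ and $u_*$ and discarding the contributions of $Q$, which are of order $\abs{d}^3$ or smaller, then gives the claimed expansion of $\zeta_0$ and the bounds on $S_{\zeta_0}$ and $\partial_d S_{\zeta_0}$.

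Finally, for the normal form I would appeal to Taylor's theorem with integral remainder about $u_*$. Because $\mathcal{J}(u_*) = \zeta_0$ and $\mathcal{J}'(u_*) = 0$, one has the exact identity
\[
\mathcal{J}(u_* + \tilde{u};a,d) - \zeta_0 = \tilde{u}^2 \int_0^1 (1-s)\,\mathcal{J}''(u_* + s\tilde{u};a,d)\,\rmd s ,
\]
so defining $1 + R_{\zeta;2}(\tilde{u};a,d) = \int_0^1 (1-s)\,\mathcal{J}''(u_* + s\tilde{u};a,d)\,\rmd s$ produces a $C^\infty$ function and establishes identity (i) at once. The normalisation $\int_0^1 (1-s)\,\rmd s = \tfrac12$ turns the leading term $2(1+a)\approx 2$ into $1$, and at $\tilde{u}=0$ one finds $R_{\zeta;2}(0) = \tfrac12\mathcal{J}''(u_*) - 1 = a - 3u_* + O(\abs{d}^3)$, which is of order $\abs{a}+\abs{d}$; the $\tilde{u}$-dependence is controlled through $\mathcal{J}'''(u_*+s\tilde{u}) = -6 + Q'''$. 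Differentiating under the integral sign and invoking the $O(\abs{d}^3)$ bounds on $Q''$ and $Q'''$ together with the size of $u_*$ then delivers the estimates of (iii).

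I expect the only genuine obstacle to be bookkeeping. The hard part will be propagating the nested remainder estimates — first for $v_*$, then for $u_*$ and $\zeta_0$ — to exactly the polynomial order demanded: securing the $-\tfrac{1}{16}a^3$ coefficient of $u_*$ and the $\tfrac18 a^3$, $\tfrac14 da^2$ and $-\tfrac{1}{64}a^4$ coefficients of $\zeta_0$ without losing a power of $a$ or $d$ when the denominators $(1+a)$ and $2(1+a)$ are expanded, while simultaneously carrying every $\partial_d$-bound through the two successive fixed-point reductions. There is no conceptual difficulty once the contraction and the integral-remainder representation are in place.
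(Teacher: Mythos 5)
Your construction is essentially the paper's: both arguments shift $u$ to the unique small critical point $u_*$ of $\mathcal{J}(\cdot\,;a,d)$, obtained from a contraction/fixed-point reformulation of $\mathcal{J}'(u_*)=0$, set $\zeta_0=\mathcal{J}(u_*;a,d)$, and define $R_{\zeta;2}$ through the remaining quadratic-and-higher part. Your integral-remainder representation of the quadratic coefficient and your use of $\mathcal{J}'(u_*)=0$ to eliminate $\beta_1$ from $\zeta_0$ are only cosmetic variations on the paper's explicit bookkeeping with the difference quotient $\mathcal{N}_{\beta;2;u_*}$ and the coefficients $\gamma_0,\gamma_1,R_{\gamma;2}$. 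Parts (i) and (ii) go through as you describe, modulo the Taylor bookkeeping you already acknowledge.

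However, your claim that the estimates of (iii) are ``delivered'' by the $O(\abs{d}^3)$ bounds on $Q''$ and $Q'''$ is not correct, and your own formulas show why. Since $\mathcal{J}'''(u)=-6+O(\abs{d}^3)$, your definition gives $R_{\zeta;2}'(\tilde{u})=\int_0^1 s(1-s)\,\mathcal{J}'''(u_*+s\tilde{u})\,\rmd s=-1+O(\abs{d}^3)$, which violates the stated bound $\abs{R_{\zeta;2}'}\le K\abs{d}^3$; likewise $R_{\zeta;2}(\tilde{u})=a-3u_*-\tilde{u}+O(\abs{d}^3)$ is not $O(\abs{a}+\abs{d})$ uniformly for $\tilde{u}\in(-\delta,\delta)$. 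This is not a defect you could have repaired by a cleverer choice: identity (i) determines $R_{\zeta;2}(\tilde{u};a,d)$ uniquely for $\tilde{u}\neq 0$, and at $(a,d)=(0,0)$ one has $\mathcal{J}(u;0,0)=u^2(1-u)$ with $u_*=0$, forcing $R_{\zeta;2}(\tilde{u};0,0)=-\tilde{u}$; hence no function satisfying (i) can satisfy (iii) as printed. The culprit is the cubic term $-\tilde{u}^3$ of \sref{eq:bif:sn:def:j}, which must be absorbed into $\tilde{u}^2R_{\zeta;2}(\tilde{u})$; the paper's own proof silently omits the corresponding $-\tilde{u}$ contribution from its formula for $R_{\gamma;2}$, which is how it arrives at (iii). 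The bounds that actually hold --- and that suffice for the subsequent invertibility of $\tilde{u}\mapsto\tilde{u}\sqrt{1+R_{\zeta;2}(\tilde{u};a,d)}$ and the solution of $\zeta_0(a,d_c)=0$ in the proof of Proposition \ref{prp:bif:exp:a:zero} --- are $\abs{R_{\zeta;2}}\le K(\abs{a}+\abs{d}+\abs{\tilde{u}})$ and $\abs{R_{\zeta;2}'}\le K$. You should have flagged this discrepancy rather than asserting that (iii) follows.
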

\begin{proof}
We first introduce the notation
\begin{equation}
\mathcal{N}_{\beta;2;u_*}(\tilde{u} ; a,d)
= \tilde{u}^{-2}\Big[
  R_{\beta;2}(u_* + \tilde{u})
  - R_{\beta;2}(u_*; a, d)
  - R'_{\beta;2}(u_*;a, d) \tilde{u}
  \Big]
\end{equation}
for $\tilde{u} \neq 0$,
together with $\mathcal{N}_{\beta;2;u_*}(0;a,d) =
\frac{1}{2} R''_{\beta;2}(u_*; a, d)$.
This allows us to compute
\begin{equation}
\begin{array}{lcl}
\mathcal{J}( u_* + \tilde{u}, a, d)
& = &
\gamma_0( a, d , u_*)
+ \gamma_1( a ,d , u_*) \tilde{u}
+ \big(1 + a - 3 u_* + R_{\gamma;2}
   (\tilde{u}; a, d, u_*)  \big) \tilde{u}^2 ,
\end{array}
\end{equation}
in which
\begin{equation}
\begin{array}{lcl}
\gamma_0(a, d, u_*) & = &
  \beta_0 +
  \beta_1 u_*
  + u_*^2[ 1 + a - u_* +  R_{\beta;2}(u_*) ] ,
\\[0.2cm]
\gamma_1(a, d, u_*) & = &
  \beta_1 +
  u_*^2 [ -1 + R_{\beta;2}'(u_*; a, d) ]
  + 2 u_* [1 + a - u_* + R_{\beta;2}(u_*) ] ,
\\[0.2cm]
R_{\gamma;2}(\tilde{u}; a, d, u_*)
& = &
   (u_* + \tilde{u})^2 \mathcal{N}_{\beta;2;u_*}(\tilde{u}; a,d)
  + R_{\beta;2}'(u_*; a, d) (2 u_* \tilde{u} + \tilde{u}^2 )
  + R_{\beta;2}(u_*; a, d).
\end{array}
\end{equation}
On account of \sref{eq:bif:sn:bnds:r:alpha}
we have\footnote{All primed constants in this paper are strictly positive
and do not depend on the variables appearing on the left hand side
of the inequalities where they appear.}
\begin{equation}
\abs{\mathcal{N}_{\beta;2;u_*}(\tilde{u} ; a, d)}
+
\abs{\mathcal{N}'_{\beta;2;u_*}(\tilde{u} ; a, d)}
\le C_1'
\end{equation}
and hence also
\begin{equation}
\label{eq:bif:cusp:bnds:r:gamma:2}
\abs{\mathcal{R}_{\gamma;2}(\tilde{u} ; a , d,u_*) }
+
\abs{\mathcal{R}'_{\gamma;2}(\tilde{u} ; a , d,u_*) }
\le C_2' .
\end{equation}
Setting $\gamma_1 = 0$
leads to the fixed point problem
\begin{equation}
\label{eq:bif:sn:fxp:for:ustar}
u_* = -\frac{1}{2} \beta_1
 - a u_* + \frac{3}{2} u_*^2
 - \frac{1}{2} u_*^2  R'_{\beta;2}(u_*; a, d)
 - u_* R_{\beta;2}(u_*; a,d),
\end{equation}
which has a unique small solution that
we write for the moment as
\begin{equation}
u_*(a, d, \beta_1) = - \frac{1}{2} \beta_1
  + \frac{1}{2} a \beta_1
  + \frac{3}{8} \beta_1^2
  - \frac{1}{2} a^2 \beta_1
  - \frac{9}{8} a \beta_1^2
  - \frac{9}{16} \beta_1^3 + \tilde{S}_{u_*}(a, d, \beta_1).
\end{equation}
In a standard fashion one obtains the bound
\begin{equation}
\abs{\tilde{S}_{u_*}(a, d, \beta_1) }
\le C_3'
\big[
  \beta_1^4 +
  \abs{ a }  \abs{\beta_1}^3 +
  (a^2 + d^2) \beta_1^2
   + (\abs{a}^3 + \abs{d}^3) \abs{\beta_1}
\big] .
\end{equation}
Differentiating \sref{eq:bif:sn:fxp:for:ustar}
we obtain
\begin{equation}
\begin{array}{lcl}
D_d u_*
& = & -\frac{1}{2} \partial_d \beta_1
  - \frac{1}{2} u_*^2
       \partial_d R'_{\beta;2}(u_*;a,d)
  - u_* \partial_d R_{\beta;2}(u_*; a,d)
\\[0.2cm]
& & \qquad
 - a D_d u_*
 + 3 u_* D_d u_*
 - u_* R'_{\beta;2}(u_*; a, d) D_d u_*
 - R_{\beta;2}(u_*; a, d) D_d u_* ,
\end{array}
\end{equation}
which yields the estimate
\begin{equation}
\abs{D_d u_* - \big[
  - \frac{1}{2}
  + \frac{1}{2} a
  + \frac{3}{4} \beta_1
\big] \partial_d \beta_1
} \le C_4' \big[
    d^2 \abs{\beta_1}
    + a^2
    + \abs{a} \abs{ \beta_1}
    + \beta_1^2
\big] \abs{\partial_d \beta_1 }.
\end{equation}

Using the bounds
\begin{equation}
\begin{array}{lcl}
\abs{\beta_0(a,d) - [2d - 4d^2]}
& \le & C_5' d^2 (\abs{a} +  \abs{d}),
\\[0.2cm]
\abs{\beta_1(a,d) + (a + 2d) }
  &\le& C_5' d^2,
\\[0.2cm]
\end{array}
\end{equation}
together with
\begin{equation}
\begin{array}{lcl}
\abs{\partial_d \beta_0(a,d)
 - [2 - 8 d ] }
  & \le & C_5' \abs{d}( \abs{a} + \abs{d}),
\\[0.2cm]
\abs{\partial_d \beta_1(a,d) +2 }
 & \le & C_5' \abs{d}
\end{array}
\end{equation}
and finally
\begin{equation}
\abs{R_{\beta;2}(u_*; a, d )} + \abs{R'_{\beta;2} (u_* ; a, d) }
+ \abs{R''_{\beta;2}(u_* ; a, d) } + \abs{R'''_{\beta;2}(u_* ; a, d) } \le C_6' \abs{d}^3,
\end{equation}
the desired estimates follow
by writing
\begin{equation}
\begin{array}{lcl}
\zeta_0(a, d) & = & \gamma_0\big( a, d, u_*(a , d) \big) ,
\\[0.2cm]
R_{\zeta;2}(\tilde{u}; a, d)
 & = & a - 3 u_*(a,d)
 + R_{\gamma;2}\big(\tilde{u}, a , d,
     u_*(a, d) \big)
\end{array}
\end{equation}
and computing $\partial_d \zeta_0$
directly.
\end{proof}

\begin{proof}
[Proof of Proposition \ref{prp:bif:exp:a:zero}]
We note first that the map
\begin{equation}
\tilde{u} \mapsto \tilde{u}
 \sqrt{1 + R_{\zeta;2}(\tilde{u} ;a ,d ) }
\end{equation}
is invertible for
$(\tilde{u}, a, d) \in (-\delta, \delta)^3$.
In order to find $d_c$
it hence suffices to
solve $\zeta_0(a , d_c) = 0$,
which gives
the fixed-point problem
\begin{equation}
d_c
 = \frac{1}{8} a^2 - \frac{1}{16} a^3 + \frac{1}{128}a^4
  + \frac{1}{2} a d_c - \frac{1}{8} d_c a^2   + \frac{5}{2} d_c^2
  - \frac{1}{2} S_{\zeta_0}(a, d_c).
\end{equation}
Our estimate on $\partial_d S_{\zeta_0}$
guarantees the existence of a unique small solution
for small $a$. The estimate (iii)
now follows in a standard fashion.
Writing $u_c = u_*\big(a , d_c(a)\big)$
and $v_c = v_*\big(u_*(a , d_c(a));a , d_c(a)  \big)$
the expansions in (iv) follow directly
by substitution.
\end{proof}

\subsection{The cusp bifurcation around $a=1/2$}

Our goal here is to unfold the structure of the solution-set
to $G(u,v;a, d)$ near the critical point
$(a, d) = (1/2, 1/24)$
where the branches $(\overline{u}_{\#}, \overline{u}_{\#})$
with $\# \in \{A , B, C, D \}$ all collide in a cusp bifurcation;
see Figures \ref{f:regions} and
\ref{f:ABCD}.
In particular, we introduce the cusp location
\begin{equation}
\begin{array}{lcl}
(u_{\mathrm{cp}}, v_{\mathrm{cp}}, a_{\mathrm{cp}}, d_{\mathrm{cp}})
 & = &
   \big( u_{\mathrm{min}}(\frac{1}{2}), u_{\mathrm{max}}(\frac{1}{2}),
     \frac{1}{12}, \frac{1}{24} \big)
\\[0.2cm]
 & = & \big(\frac{1}{2}-\frac{1}{6}\sqrt{3},
  \frac{1}{2}+\frac{1}{6}\sqrt{3}, \frac{1}{2}, \frac{1}{24} \big)
\end{array}
\end{equation}
together with the function
\begin{equation}
H_{\mathrm{cp}}(u, v; a, d) = G \big(u_{\mathrm{cp}} + u, v_{\mathrm{cp}} + v ;
  a_{\mathrm{cp}} + a, d_{\mathrm{cp}}+ d
    \big)
\end{equation}
and set out to determine the zeroes of $H_{\mathrm{cp}}$
for which $(u,v,a,d)$ are small.

\begin{prop}
\label{prp:bif:cusp:exp}
There exist constants $\delta_a > 0$,
$\delta_d > 0$ and $\epsilon > 0$
together with a function $a_c: (-\delta_d, 0)
\to (0, \delta_a)$ and a constant $K \ge 1$
so that the following holds true.
\begin{itemize}
\item[(i)]{
  For every $0 \le d < \delta_d$
  and any $a \in (-\delta_a, \delta_a)$,
  the equation $H_{\mathrm{cp}}(u, v; a,d ) = 0$
  has precisely one solution
  on the set
  $\{\abs{u} + \abs{v} < \epsilon\}$.
}
\item[(ii)]{
  For every $-\delta_d < d < 0$ and
  $a \in (-\delta_a , \delta_a)$
  with $\abs{a} > a_c(d)$,
  the equation $H_{\mathrm{cp}}(u,v;a,d) = 0$ has precisely one solution
  on the set
  $\{\abs{u} + \abs{v} < \epsilon\}$.
}
\item[(iii)]{
  For every $-\delta_d < d < 0$ and
  $a \in \big(-a_c(d), a_c(d) \big)$,
  the equation $H_{\mathrm{cp}}(u,v;a,d) = 0$ has precisely
  three solutions
  on the set
  $\{\abs{u} + \abs{v} < \epsilon \}$.
}
\item[(iv)]{
  For any $-\delta_d < d < 0$,
  the equation $H_{\mathrm{cp}}(u, v; a_c(d) , d) = 0$
  has precisely two solutions
  on the set
  $\{\abs{u} + \abs{v} < \epsilon\}$.
}
\item[(v)]{
  For any $d \in (-\delta_d, 0)$ we have the estimate
  \begin{equation}
    \abs{a_c(d) - \sqrt{-1152 d^3} }
      \le K d^2 .
  \end{equation}
}
\end{itemize}
\end{prop}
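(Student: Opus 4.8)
The plan is to reduce the two-dimensional system $H_{\mathrm{cp}} = 0$ to a single scalar equation and to recognize its normal form as that of a cusp, in direct analogy with the saddle-node reduction carried out for Proposition~\ref{prp:bif:exp:a:zero}. The only structural difference is that the reduced scalar equation will now be governed by a \emph{cubic} rather than a quadratic, reflecting the higher codimension of the degeneracy at the cusp point. The three preparatory lemmas and the discriminant of this cubic should account for all five items.

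First I would eliminate $v$. Writing $H_{\mathrm{cp}} = (H_{\mathrm{cp},1}, H_{\mathrm{cp},2})^T$ and recalling $G_2(u,v;a,d) = 2d(u-v) + g(v;a)$, a direct computation gives $\partial_v H_{\mathrm{cp},2}(0;0,0) = -2 d_{\mathrm{cp}} + g'(v_{\mathrm{cp}};a_{\mathrm{cp}}) = -\tfrac{1}{12} \neq 0$, since $v_{\mathrm{cp}} = \tfrac{1}{2} + \tfrac{1}{6}\sqrt{3}$ is a root of $u \mapsto g'(u;\tfrac12)$. The implicit function theorem hence produces a smooth solution $v = v_*(u;a,d)$ of $H_{\mathrm{cp},2} = 0$ on a neighbourhood of the origin, exactly as in the first lemma of the preceding subsection. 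Substituting this into the first component defines the scalar map $\mathcal{J}(u;a,d) = H_{\mathrm{cp},1}(u, v_*(u;a,d); a,d)$, whose small zeroes are in bijection with the solutions counted in the proposition.

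The heart of the argument is to verify that $\mathcal{J}$ has a genuine cusp at the origin. Using $\partial_v H_{\mathrm{cp},1} = 2d$ and $\partial_u H_{\mathrm{cp},2} = 2d$ together with the resulting value $v_*'(0;0,0) = 1$ and the fact that $u_{\mathrm{cp}} = \tfrac12 - \tfrac16\sqrt3$ is the \emph{other} root of $g'(\cdot;\tfrac12)$, one finds $\mathcal{J}'(0;0,0) = g'(u_{\mathrm{cp}};\tfrac12) = 0$. A slightly longer computation, which I expect to be routine since $g$ is an explicit cubic, should then yield $\mathcal{J}''(0;0,0) = 0$ and $\mathcal{J}'''(0;0,0) \neq 0$. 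Granting this, a shift $u = u_*(a,d) + \tilde u$ chosen to annihilate the quadratic term (the Tschirnhaus step already used in the second lemma above) recasts $\mathcal{J}$ as $\mathcal{J}(u_* + \tilde u; a,d) = \zeta_0(a,d) + \zeta_1(a,d)\tilde u + \tilde u^3\big[\,c_3 + R_{\zeta;3}(\tilde u;a,d)\big]$ with $c_3 \neq 0$ coming from $\mathcal{J}'''(0;0,0)$ and $R_{\zeta;3}$ small. Absorbing the bracket through the invertible change of variable $\tilde u \mapsto \tilde u\,[c_3 + R_{\zeta;3}(\tilde u;a,d)]^{1/3}$ reduces the root count to that of a depressed cubic $w^3 + p\,w + q$ whose coefficients $p$ and $q$ are proportional to $\zeta_1$ and $\zeta_0$ at leading order. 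The solution count is then dictated entirely by the discriminant $\Delta = -4p^3 - 27q^2$: three small real roots when $\Delta > 0$, one when $\Delta < 0$, and a double root on $\Delta = 0$. To separate the regimes I would compute the leading expansions of $\zeta_0$ and $\zeta_1$; the swap symmetry recorded in Corollary~\ref{cor:bif:1:min:a}, which in the shifted coordinates becomes a reflection $a \mapsto -a$, should force $\zeta_0$ odd and $\zeta_1$ even in $a$, giving $\zeta_0 = \kappa_0\,a + \ldots$ and $\zeta_1 = \kappa_1\,d + \ldots$ with $\kappa_1 > 0$. Consequently $p < 0$, the prerequisite for three real roots, occurs precisely for $d < 0$, which yields items (i)--(iv); setting $\Delta = 0$ and solving for $a$ produces the critical curve with $a_c(d)^2 = -\tfrac{4\kappa_1^3}{27\kappa_0^2}\,d^3 + \ldots$, and matching the claimed coefficient $\sqrt{-1152\,d^3}$ fixes $\kappa_1^3/\kappa_0^2$ and delivers (v).

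The main obstacle is the bookkeeping in the reduction: one must push the expansions of $v_*$, of the shift $u_*$, and hence of $\zeta_0$ and $\zeta_1$, far enough to read off the constants $\kappa_0$ and $\kappa_1$ correctly. Since the target $a_c \sim \sqrt{-1152\,d^3}$ is a $d^{3/2}$ effect, the relevant balance sits at cubic order in $d$ against quadratic order in $a$, so one extra order of Taylor data must be retained throughout compared with the saddle-node case. Simultaneously one must control $R_{\zeta;3}$ and its derivative so that the cubic change of variables is a genuine diffeomorphism and the root counts transfer rigorously. These are exactly the estimates packaged in the two preparatory lemmas of Section~\ref{sec:eqlb}, so I expect the cusp case to follow the same template, only with this single extra order carried along.
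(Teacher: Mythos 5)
Your proposal is essentially the paper's argument: a Lyapunov--Schmidt reduction of $H_{\mathrm{cp}}=0$ to a scalar cusp normal form, followed by locating the fold curve $a_c(d)$. The two places where you deviate are worth recording. First, the paper does not eliminate $v$ from $H_{\mathrm{cp},2}=0$ directly; it first rotates to the coordinates $p=\frac{1}{2}(u+v)$, $q=\frac{1}{2}(u-v)$ aligned with the kernel and cokernel of $D_{u,v}H_{\mathrm{cp}}(0,0;0,0)$, forms $h_{1,2}=H_{\mathrm{cp},1}\pm H_{\mathrm{cp},2}$, and eliminates $q$ from $h_2=0$ (which carries the term $-\frac{1}{3}q$). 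Your asymmetric reduction is equally legitimate: $\partial_v H_{\mathrm{cp},2}(0;0,0)=-\frac{1}{12}\neq 0$ as you say, and the two reductions are contact-equivalent, so the cusp survives. Indeed the computation you defer does check out: with $v_*'(0)=1$, $v_*''(0)=-12\sqrt{3}$, $v_*'''(0)=1224$ one finds $\mathcal{J}'(0)=\mathcal{J}''(0)=0$ and $\mathcal{J}'''(0)=-6+\frac{1}{12}\cdot 1224=96\neq 0$, matching the paper's cubic coefficient $16=96/3!$. Second, for the root count the paper does not pass to a depressed cubic and its discriminant; it keeps the quartic remainder $\tilde p^4 R_{\zeta;4}$ and solves the double-root system $\mathcal{J}=\mathcal{J}'=0$ for the two fold branches $\tilde p_\pm=\pm\sqrt{-d/2}+O(\abs{a}+\abs{d})$, then reads off $\frac{1}{3}a=\pm 16d\sqrt{-d/2}+\dots$, i.e. $a_c(d)^2=-1152d^3$. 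Your discriminant route gives the same answer (with $\zeta_3=16$, $\zeta_1=24d+\dots$, $\zeta_0=-\frac{1}{3}a+\dots$ one gets $a^2=-1152d^3$), but the literal substitution $\tilde u\mapsto \tilde u\,[c_3+R_{\zeta;3}]^{1/3}$ does not produce an exact depressed cubic, since the linear term is distorted by the $\tilde u$-dependence of $R_{\zeta;3}$; you would need either a further near-identity change of variables or, more simply, the paper's double-root argument to make the count rigorous. This is a repairable technicality rather than a gap, and your remark about carrying one extra order of Taylor data (to resolve the $a\sim\abs{d}^{3/2}$ balance) correctly identifies where the real work lies. One last small caution: the swap symmetry acts cleanly on the $(p,q)$ reduction (it is $p\mapsto -p$, $a\mapsto -a$) but maps your curve $\{H_{\mathrm{cp},2}=0\}$ to $\{H_{\mathrm{cp},1}=0\}$, so the parity argument for $\zeta_0$, $\zeta_1$ is only heuristic for your reduction; the direct expansion you propose as a fallback is what is actually needed.
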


In order to recast our equation into an efficient
form, we introduce two functions $(h_1, h_2)$ by writing
\begin{equation}
\left(
   \begin{array}{c}
      h_1(p,q; a, d) \\[0.2cm]
      h_2(p,q; a, d) \\[0.2cm]
        \end{array}
  \right)
= \left(
   \begin{array}{cc} 1 &1 \\[0.2cm] 1 & - 1  \\[0.2cm]
        \end{array}
  \right) H_{\mathrm{cp}} \big( p+ q , p - q ; a , d \big),
\end{equation}
which can be evaluated as
\begin{equation}
\begin{array}{lcl}
h_1(p,q;a,d) & = &
  -\frac{1}{3}a+2 a p^2-6pq^2+2 a q^2
    -\frac{2}{3}\sqrt{3}a q-2p^3+2\sqrt{3}pq ,
\\[0.2cm]
h_2(p, q; a, d) & = &
-\frac{2}{3}\sqrt{3}a p-\frac{1}{3}q+\frac{4}{3}\sqrt{3}d
  +\sqrt{3}p^2+\sqrt{3}q^2-6p^2q-8dq+4a pq-2q^3 .
\\[0.2cm]
\end{array}
\end{equation}
Since $h_2$ contains a term that is linear in $q$,
we set out to eliminate this variable
by demanding $h_2 = 0$.
\begin{lem}
Pick $\delta > 0$ sufficiently small.
Then there exist constants $\epsilon > 0$
and $K \ge 1$, together with functions
\begin{equation}
\alpha_0: (-\delta, \delta)^2 \to \Real,
\qquad
\alpha_1: (-\delta, \delta)^2 \to \Real,
\qquad
\alpha_2: (-\delta, \delta)^2 \to \Real,
\qquad
R_{\alpha;3} : (-\delta, \delta)^3 \to \Real
\end{equation}
that satisfy the following properties.
\begin{itemize}
\item[(i)]{
For every $(p, a, d) \in (-\delta, \delta)^3$
the
equation $h_2(p, q; a, d) = 0$
has a unique solution $q = q_*$ in the set
$\{\abs{q} < \epsilon\}$.
This solution is given by
\begin{equation}
\label{eq:bif:cp:ans:q:star}
q_*(p; a, d)
 =
 \alpha_0(a, d)
 + \alpha_1(a, d)p
 + \alpha_2(a, d) p^2
 + p^3 R_{\alpha;3}(p; a, d).
\end{equation}
}
\item[(ii)]{
  Upon writing
  \begin{equation}
    \begin{array}{lcl}
    \alpha_0(a,d) & = &
        4 \sqrt{3} d
        + S_{\alpha_0}(a, d) ,
    \\[0.2cm]
    \alpha_1(a,d) & = &
        - 2 \sqrt{3} a
         + S_{\alpha_1}(a,d) ,
    \\[0.2cm]
    \alpha_2(a,d) & = &
       3\sqrt{3} + S_{\alpha_2}(a,d) ,
    \\[0.2cm]
    \end{array}
  \end{equation}
  the bounds
  \begin{equation}
    \begin{array}{lclclcl}
      \abs{S_{\alpha_0}(a, d) }
        & \le &  K d^2 ,
        & &
          \abs{ D S_{\alpha_0}(a,d) }
           & \le & K \abs{d} ,
      \\[0.2cm]
      \abs{S_{\alpha_1}(a, d) }
         & \le & K \abs{a} \abs{d} ,
         & &
          \abs{ D S_{\alpha_1}(a,d) }
           & \le & K ( \abs{a} + \abs{d} ) ,
      \\[0.2cm]
      \abs{S_{\alpha_2}(a, d) }
        & \le &  K ( \abs{d} + a^2) ,
        & &
          \abs{ D S_{\alpha_2}(a,d) }
           & \le & K
      \\[0.2cm]
    \end{array}
  \end{equation}
  hold for all $(a,d) \in (-\delta, \delta)^2$.
}
\item[(iii)]{
  For every
  $(p, a, d) \in (-\delta, \delta)^3$
  we have the bounds
  \begin{equation}
      \abs{R_{\alpha;3}(p;a, d)} \le K ( \abs{a}
      + \abs{p} ),
      \qquad
      \qquad
      \abs{D_{a,d}R_{\alpha;3}(p;a, d)} \le
       K ,
  \end{equation}
  together with
  \begin{equation}
    \begin{array}{lcl}
    \abs{R^{(i)}_{\alpha;3}(p;a, d)}
    +
    \abs{D_{a,d}R^{(i)}_{\alpha;3}(p;a, d)}
    & \le & K
    \\[0.2cm]
    \end{array}
 \end{equation}
 for all $1 \le i \le 6$.
}
\end{itemize}
\end{lem}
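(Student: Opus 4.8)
The plan is to mirror the elimination strategy used in the analogous lemma of the previous subsection, now applied to $h_2$. First I would check that the implicit function theorem applies at the base point: since $h_2(0,0;0,0) = 0$ and
\begin{equation}
\partial_q h_2(p,q;a,d) = -\tfrac{1}{3} + 2\sqrt{3}\,q - 6 p^2 - 8 d + 4 a p - 6 q^2
\end{equation}
evaluates to $-\tfrac{1}{3} \neq 0$ at the base point, there is a unique small solution $q = q_*(p;a,d)$ of $h_2 = 0$ on $\{\abs{q} < \epsilon\}$ for $(p,a,d) \in (-\delta,\delta)^3$, and it is $C^\infty$ in all variables. The decomposition \sref{eq:bif:cp:ans:q:star} is then just the second-order Taylor expansion of $q_*(\,\cdot\,;a,d)$ in $p$ about $p=0$, so that $\alpha_0 = q_*(0;a,d)$, $\alpha_1 = \partial_p q_*(0;a,d)$, $\alpha_2 = \tfrac{1}{2}\partial_p^2 q_*(0;a,d)$, with $R_{\alpha;3}$ the corresponding integral remainder. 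This already delivers the smoothness of the coefficients and reduces the lemma to the quantitative estimates.

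To pin down the leading terms and bounds I would substitute the Ansatz \sref{eq:bif:cp:ans:q:star} into $h_2 = 0$ and collect powers of $p$. Because the coefficient of $q$ in $h_2$ equals $-\tfrac{1}{3}$, the $p^0$, $p^1$ and $p^2$ balances yield fixed-point problems for $\alpha_0$, $\alpha_1$ and $\alpha_2$ in turn, each solvable for small $(a,d)$ with the governing denominator staying close to $-\tfrac{1}{3}$. Concretely, the $p^0$-balance reads
\begin{equation}
\alpha_0 = 4\sqrt{3}\,d + 3\sqrt{3}\,\alpha_0^2 - 24\,d\,\alpha_0 - 6\,\alpha_0^3 ,
\end{equation}
so contraction gives $\alpha_0 = 4\sqrt{3}\,d + S_{\alpha_0}$ with $\abs{S_{\alpha_0}} \le K d^2$; the $p^1$-balance is linear in $\alpha_1$ and, once $\alpha_0 = O(d)$ is inserted, produces $\alpha_1 = -2\sqrt{3}\,a + S_{\alpha_1}$ with $\abs{S_{\alpha_1}} \le K\abs{a}\abs{d}$; the $p^2$-balance is linear in $\alpha_2$ and yields $\alpha_2 = 3\sqrt{3} + S_{\alpha_2}$, the correction being controlled by the terms $-6\alpha_0$, $\sqrt{3}\alpha_1^2$ and $4a\alpha_1$ and hence bounded by $K(\abs{d}+a^2)$. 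All remaining contributions of order $p^3$ and higher collapse into a single fixed-point equation for $R_{\alpha;3}$, whose leading behaviour is governed by the cubic term $-2q^3$ in $h_2$ and gives $\abs{R_{\alpha;3}} \le K(\abs{a}+\abs{p})$.

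The derivative estimates would follow by differentiating each fixed-point relation with respect to $(a,d)$ and running the same contraction argument on the differentiated equation, exactly as indicated after the analogous lemma; Taylor remainders then supply the stated bounds on $DS_{\alpha_i}$ and on $R_{\alpha;3}^{(i)}$ and $D_{a,d}R_{\alpha;3}^{(i)}$ for $1 \le i \le 6$. I expect the only genuine difficulty to be organisational rather than conceptual: tracking the many mixed $p$- and $(a,d)$-derivatives of $R_{\alpha;3}$ up to sixth order, and verifying that each differentiation costs at most the advertised power of $\abs{a}+\abs{d}$ while the denominators never degenerate. No individual estimate is subtle, but the combined bookkeeping, ensuring that the successive substitutions propagate the correct orders, is where the care must be concentrated.
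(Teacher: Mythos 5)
Your proposal follows essentially the same route as the paper: substituting the Ansatz into $h_2=0$, reading off successive fixed-point problems for $\alpha_0,\alpha_1,\alpha_2$ and $R_{\alpha;3}$ (your $p^0$-balance matches the paper's after multiplying by $3$), and solving them by contraction for small $(a,d,p)$, with the preliminary implicit-function-theorem step making explicit what the paper leaves implicit. The only quibble is that the $O(\abs{p})$ part of the bound on $R_{\alpha;3}$ is driven by the $\sqrt{3}q^2$ and $-6p^2q$ terms (via $\sqrt{3}\alpha_2^2 - 6\alpha_2 \neq 0$) rather than by $-2q^3$, but this does not affect the stated estimate.
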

\begin{proof}
Substituting the Ansatz \sref{eq:bif:cp:ans:q:star}
into $h_2$,
we obtain the fixed point problems
\begin{equation}
\begin{array}{lcl}
\frac{1}{3}\alpha_0 & = &
  \frac{4}{3} \sqrt{3} d
  -8 d \alpha_0 + \sqrt{3} \alpha_0^2
  -2 \alpha_0^3 ,
\\[0.2cm]
\frac{1}{3}\alpha_1 & = &
  - \frac{2}{3} \sqrt{3} a
  + 4 \alpha_0 a
  + 2 \sqrt{3} \alpha_0 \alpha_1
  -6 \alpha_0^2 \alpha_1
  -8 \alpha_1 d ,
\\[0.2cm]
\frac{1}{3}\alpha_2 & = &
  \sqrt{3} - 6 \alpha_0
  + \sqrt{3} \alpha_1^2
  + 4 a \alpha_1
  -6 \alpha_1^2 \alpha_0
  + 2 \sqrt{3} \alpha_0 \alpha_2
  -6 \alpha_0^2 \alpha_2
  -8 d \alpha_2 ,
\end{array}
\end{equation}
together with
\begin{equation}
\begin{array}{lcl}
\frac{1}{3} R_{\alpha;3} & = &
 -2 \alpha_1^3
 -6 \alpha_1
  + 2 \sqrt{3} \alpha_1 \alpha_2
   -12\alpha_2 \alpha_1\alpha_0
   +4a\alpha_2
    + 2 \sqrt{3} \alpha_0 R_{\alpha;3}
 -6 \alpha_0^2 R_{\alpha;3}
 -8d R_{\alpha;3}
\\[0.2cm]
&& \qquad
 + \Big( \sqrt{3}\alpha_2^2
  - 6 \alpha_2
  -6 \alpha_1^2 \alpha_2
 -6 \alpha_0 \alpha_2^2
 + 2 \sqrt{3} \alpha_1  R_{\alpha;3}
 -12 \alpha_0\alpha_1 R_{\alpha;3}
 +4  a R_{\alpha;3} \Big)p
\\[0.2cm]
& & \qquad
 + \Big(
  2 \sqrt{3} \alpha_2  R_{\alpha;3}
 -6 \alpha_1 \alpha_2^2
  -6 \alpha_1^2  R_{\alpha;3}
   -6  R_3
   -12 \alpha_0 \alpha_2 R_{\alpha;3}
  \Big) p^2
\\[0.2cm]
& & \qquad
 + \Big(
   \sqrt{3}  R_{\alpha;3}^2
 -2 \alpha_2^3
 -12 \alpha_2  \alpha_1 R_{\alpha;3}
 -6 \alpha_0 R_{\alpha;3}^2 \Big) p^3
\\[0.2cm]
& & \qquad
 + \Big(
 -6 \alpha_2^2  R_{\alpha;3}
 -6\alpha_1 R_{\alpha;3}^2 \Big) p^4
 -6 \alpha_2 R_{\alpha;3}^2 p^5
 - 2  R_{\alpha;3}^3 p^6 .
\\[0.2cm]
\end{array}
\end{equation}
These fixed point problems can be successively solved
for small $a$, $d$ and $p$,
which yields the desired
estimates.
\end{proof}

In order to eliminate $q$ from $h_1$,
we define the function
\begin{equation}
\mathcal{J}(p; a, d)
= \tilde{h}_1(p, q_*(p ; a, d), a ,d )
\end{equation}
and obtain the following
representation.
\begin{cor}
Pick $\delta > 0$ sufficiently small.
Then there exist a constant
$K \ge 1$, together with functions
\begin{equation}
\beta_0: (-\delta, \delta)^2 \to \Real,
\qquad
\beta_1: (-\delta, \delta)^2 \to \Real,
\qquad
\beta_2: (-\delta, \delta)^2 \to \Real,
\qquad
R_{\beta;3} : (-\delta, \delta)^3 \to \Real
\end{equation}
that satisfy the following properties.
\begin{itemize}
\item[(i)]{
For every $(p, a, d) \in (-\delta,\delta)^3$
we have
\begin{equation}
\label{eq:bif:cp:exp:j}
\mathcal{J}(p; a, d)
 =
 \beta_0(a, d)
 + \beta_1(a, d) p
 + \beta_2(a, d) p^2
 + p^3 [ 16 + R_{\beta;3}(p; a, d) ].
\end{equation}
}
\item[(ii)]{
  Upon writing
  \begin{equation}
    \begin{array}{lcl}
    \beta_0(a,d) & = &
        - \frac{1}{3} a
        + S_{\beta_0}(a, d),
    \\[0.2cm]
    \beta_1(a,d) & = &
        24d
         + S_{\beta_1}(a,d),
    \\[0.2cm]
    \beta_2(a,d) & = &
       -16a + S_{\beta_2}(a,d),
    \\[0.2cm]
    \end{array}
  \end{equation}
  the bounds
  \begin{equation}
    \begin{array}{lclclcl}
      \abs{S_{\beta_0}(a, d) }
        & \le &  K \abs{a} \abs{d}  ,
        & &
          \abs{ D S_{\alpha_0}(a,d) }
           & \le & K (\abs{a} + \abs{d}) ,
      \\[0.2cm]
      \abs{S_{\beta_1}(a, d) }
         & \le & K (a^2 + d^2) ,
         & &
          \abs{ D S_{\alpha_1}(a,d) }
           & \le & K (\abs{a} + \abs{d}) ,
      \\[0.2cm]
      \abs{S_{\beta_2}(a, d) }
        & \le &  K \abs{a}( \abs{d} + a^2) ,
        & &
          \abs{ D S_{\alpha_2}(a,d) }
           & \le & K ( \abs{a} + \abs{d})
      \\[0.2cm]
    \end{array}
  \end{equation}
  hold for all $(a, d) \in (-\delta, \delta)^2$.
}
\item[(iii)]{
  For every
  $(p, a, d) \in (-\delta, \delta)^3$
  we have the bounds
  \begin{equation}
    \begin{array}{lcl}
    \abs{R_{\beta;3}(p;a,d)} & \le &
      K ( \abs{d} + a^2 + \abs{a} \abs{p} + p^2 ) ,
    \\[0.2cm]
    \abs{R'_{\beta;3}(p;a,d)} & \le & K ( \abs{a}
        + \abs{p} ) ,
    \\[0.2cm]
    \abs{D_{a,d}R_{\beta;3}(p;a,d)}
    + \abs{D_{a,d} R'_{\beta;3} (p;a,d)} & \le &
      K,
    \end{array}
  \end{equation}
  together with
  \begin{equation}
    \label{eq:bif:cp:bnd:r:i:beta:3}
    \begin{array}{lcl}
      \abs{ R^{(i)}_{\beta;3}(p; a, d)}
       + \abs{D_{ a, d} R^{(i)}_{\beta;3}(p; a, d)}
        & \le & K
     \end{array}
  \end{equation}
  for all $2 \le i \le 6$.
  %
}
\end{itemize}
\end{cor}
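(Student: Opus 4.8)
The plan is to substitute the expansion \sref{eq:bif:cp:ans:q:star} for $q_*$ directly into the reduced component $h_1$ and to collect powers of $p$, which is exactly the operation defining $\mathcal{J}(p;a,d) = h_1\big(p, q_*(p;a,d); a, d\big)$. Writing $q_* = \alpha_0 + \alpha_1 p + \alpha_2 p^2 + p^3 R_{\alpha;3}$ and using the expansions
\begin{equation}
q_*^2 = \alpha_0^2 + 2 \alpha_0 \alpha_1 p + (\alpha_1^2 + 2 \alpha_0 \alpha_2) p^2 + O(p^3),
\qquad
p q_* = \alpha_0 p + \alpha_1 p^2 + \alpha_2 p^3 + O(p^4),
\end{equation}
a direct match of the $p^0$, $p^1$ and $p^2$ coefficients in $h_1$ yields the closed expressions
\begin{equation}
\begin{array}{lcl}
\beta_0 & = & -\tfrac{1}{3} a + 2 a \alpha_0^2 - \tfrac{2}{3}\sqrt{3}\, a \alpha_0,
\\[0.2cm]
\beta_1 & = & 2 \sqrt{3}\, \alpha_0 - 6 \alpha_0^2 + 4 a \alpha_0 \alpha_1 - \tfrac{2}{3}\sqrt{3}\, a \alpha_1,
\\[0.2cm]
\beta_2 & = & 2 a + 2 \sqrt{3}\, \alpha_1 - 12 \alpha_0 \alpha_1 + 2 a ( \alpha_1^2 + 2 \alpha_0 \alpha_2 ) - \tfrac{2}{3}\sqrt{3}\, a \alpha_2 ,
\end{array}
\end{equation}
while all contributions of order $p^3$ and higher are gathered into the factor $16 + R_{\beta;3}(p;a,d)$.

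Next I would insert the leading-order identities $\alpha_0 = 4 \sqrt{3}\, d + S_{\alpha_0}$, $\alpha_1 = -2 \sqrt{3}\, a + S_{\alpha_1}$ and $\alpha_2 = 3 \sqrt{3} + S_{\alpha_2}$ supplied by the previous lemma. In $\beta_0$ only the product $-\tfrac{2}{3}\sqrt{3}\,a \alpha_0$ is not already higher order, and its leading value $-8 a d$ is absorbed into $S_{\beta_0}$, leaving $\beta_0 = -\tfrac{1}{3} a + S_{\beta_0}$ with $\abs{S_{\beta_0}} \le K \abs{a}\abs{d}$. In $\beta_1$ the single term $2 \sqrt{3}\, \alpha_0$ furnishes the leading $24 d$, the remaining terms being of order $a^2 + d^2$. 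In $\beta_2$ the three order-$\abs{a}$ terms combine as $2 a - 6 a - 12 a = -16 a$, with a residual of order $\abs{a}(\abs{d} + a^2)$. For the cubic coefficient the decisive combination is $-2 + 2 \sqrt{3}\, \alpha_2$: the explicit $-2$ from the $-2 p^3$ term of $h_1$ together with the leading $2 \sqrt{3} \cdot 3 \sqrt{3} = 18$ coming from $2 \sqrt{3}\, p q_*$ produce exactly $16$, whereas the contribution $-6(\alpha_1^2 + 2 \alpha_0 \alpha_2)$ is of order $a^2 + \abs{d}$ and hence feeds into $R_{\beta;3}$.

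The remaining estimates on $(S_{\beta_0}, S_{\beta_1}, S_{\beta_2})$ and on $R_{\beta;3}$, together with their $D_{a,d}$-counterparts, then follow by applying the product and triangle inequalities to the bounds on $(S_{\alpha_0}, S_{\alpha_1}, S_{\alpha_2}, R_{\alpha;3})$ and their derivatives, uniformly on $(-\delta, \delta)^3$; the powers $\abs{a}\abs{p}$ and $p^2$ in the bound on $R_{\beta;3}$ arise precisely from the $p$-dependent tail $p^3 R_{\alpha;3}$ of $q_*$ once multiplied by the surviving factors of $p$ and $q_*$. I expect the main obstacle to be organizational rather than conceptual: one must keep careful track of which among the many cross-products of the $\alpha_i$ feed into the leading coefficients and which are absorbed into the remainders, and in particular verify that the leading cubic coefficient equals exactly $16$ rather than merely being $O(1)$. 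The derivative bounds \sref{eq:bif:cp:bnd:r:i:beta:3} require differentiating these product expansions and invoking the derivative estimates on the $\alpha_i$, which becomes routine once the algebra above is in place.
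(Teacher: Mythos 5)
Your proposal is correct and follows essentially the same route as the paper: substitute the expansion for $q_*$ into $h_1$, collect powers of $p$, and read off $\beta_0,\beta_1,\beta_2$ and the remainder; your explicit formulas for the $\beta_i$ and your identification of the leading cubic coefficient $-2 + 2\sqrt{3}\cdot 3\sqrt{3} = 16$ agree exactly with the expressions the paper obtains by the same substitution, after which both arguments conclude that the stated bounds follow directly from the estimates on the $\alpha_i$, $S_{\alpha_i}$ and $R_{\alpha;3}$.
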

\begin{proof}
Substitution yields
\begin{equation}
\begin{array}{lcl}
\beta_0 & = &
 - \frac{1}{3} a
 - \frac{2}{3} \sqrt{3} a \alpha_0
 + 2 a \alpha_0^2,
\\[0.2cm]
\beta_1 & = &
 4 a \alpha_0 \alpha_1
  - \frac{2}{3} \sqrt{3} a \alpha_1
  -6 \alpha_0^2 + 2 \sqrt{3} \alpha_0,
\\[0.2cm]
\beta_2 & = &
  4 a \alpha_0 \alpha_2
  - \frac{2}{3} \sqrt{3} a \alpha_2
  + 2 a - 12 \alpha_0 \alpha_1
  + 2 \sqrt{3} \alpha_1
  + 2 a \alpha_1^2,
\\[0.2cm]
\end{array}
\end{equation}
together with
\begin{equation}
\begin{array}{lcl}
R_{\beta;3} & = &
  4 a \alpha_1 \alpha_2
  -6 \alpha_1^2
  -12 \alpha_0 \alpha_2
  + 2 \sqrt{3} (\alpha_2 - 3 \sqrt{3} )
  - \frac{2}{3} \sqrt{3} a R_{\alpha;3}
  + 4 a \alpha_0 R_{\alpha;3}
\\[0.2cm]
& & \qquad
+\Big(  2 a \alpha_2^2
-12 \alpha_1 \alpha_2
+ 2 \sqrt{3}  R_{\alpha;3}
+ 4 a \alpha_1 R_{\alpha;3}
-12 \alpha_0 R_{\alpha;3} \Big) p
\\[0.2cm]
& & \qquad
+ \Big(
4 a \alpha_2 R_{\alpha;3}
-6 \alpha_2^2
-12 \alpha_1 R_{\alpha;3} \Big) p^2
\\[0.2cm]
& & \qquad
+ \Big( 2 a R_{\alpha;3}^2
- 12 \alpha_2 R_{\alpha;3} \Big)p^3
-6 R_{\alpha;3}^2 p^4 .
\end{array}
\end{equation}
The desired estimates can be determined
directly from these expressions.
\end{proof}

By applying a (small) shift to $p$
the undesired quadratic term
in \sref{eq:bif:cp:exp:j}
can be eliminated. The bifurcation curve
in $(a,d)$ space can subsequently be found
by determing when the remaining equation
has  roots of order two or higher.

\begin{lem}
Pick $\delta > 0$ sufficiently small.
Then there exists a constant $K \ge 1$
together with functions
\begin{equation}
\zeta_0: (-\delta, \delta)^2 \to \Real,
\qquad
\zeta_1: (-\delta, \delta)^2 \to \Real,
\qquad
\zeta_3: (-\delta, \delta)^2 \to \Real
\end{equation}
and
\begin{equation}
p_*: (-\delta, \delta)^2 \to \Real,
\qquad
R_{\zeta;4} : (-\delta, \delta)^3 \to \Real
\end{equation}
that satisfy the following properties.
\begin{itemize}
\item[(i)]{
  For every $(\tilde{p} , a, d) \in (-\delta,\delta)^3$,
  we have the identity
  \begin{equation}
     \mathcal{J}(p_*(a, d) + \tilde{p} ;a,d)
      =    \zeta_0(a , d)
        + \zeta_1(a,d) \tilde{p}
        + \zeta_3(a, d) \tilde{p}^3
      + \tilde{p}^4
           R_{\zeta;4}(\tilde{p}; a , d).
  \end{equation}
}
\item[(ii)]{
  Upon writing
  \begin{equation}
    \begin{array}{lcl}
       \zeta_0(a, d) & = &
          -\frac{1}{3} a
          + S_{\zeta_0}(a, d),
       \\[0.2cm]
       \zeta_1(a,d) & = &
       24 d + S_{\zeta_1}(a,d),
       \\[0.2cm]
       \zeta_3(a,d) & = &
         16 + S_{\zeta_3}(a,d),
    \end{array}
  \end{equation}
  the bounds
  \begin{equation}
    \begin{array}{lclclcl}
      \abs{
        S_{\zeta_0}(a, d)
        } & \le & K (\abs{a} \abs{d} + \abs{a}^3) ,
        & &
          \abs{D S_{\zeta_0}(a,d) }
           & \le & K ( \abs{a} + \abs{d} ) ,
      \\[0.2cm]
        \abs{
        S_{\zeta_1}(a, d)
        } & \le & K (a^2 + d^2 ) ,
        & &
          \abs{D S_{\zeta_1}(a,d) }
           & \le & K ( \abs{a} + \abs{d} ) ,
      \\[0.2cm]
      \abs{
        S_{\zeta_3}(a, d)
        } & \le & K ( \abs{a} + \abs{d} ) ,
        & &
          \abs{D S_{\zeta_3}(a,d) }
           & \le & K
    \end{array}
  \end{equation}
  hold for all $(a, d) \in (-\delta, \delta)^2$.
}
\item[(iii)]{
  For every $(\tilde{p}, a, d) \in (-\delta, \delta)^3$
  we have the bounds
  \begin{equation}
    \begin{array}{lcl}
      \abs{ R_{\zeta;4}(\tilde{p} ; a, d) }
      + \abs{ R_{\zeta;4}'(\tilde{p} ; a, d) }
      + \abs{ R_{\zeta;4}''(\tilde{p} ; a, d) }
        & \le & K ,
      \\[0.2cm]
      \abs{ D_{a,d}R_{\zeta;4}(\tilde{p} ; a, d) }
      + \abs{ D_{a,d}R_{\zeta;4}'(\tilde{p} ; a, d) }
        & \le & K .
      \\[0.2cm]
    \end{array}
  \end{equation}
}
\end{itemize}
\end{lem}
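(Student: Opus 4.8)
The plan is to perform a Tschirnhaus-type shift $p = p_*(a,d) + \tilde{p}$ that annihilates the quadratic term in \sref{eq:bif:cp:exp:j}. Since $\mathcal{J}$ is $C^\infty$ in $p$ — the remainder $R_{\beta;3}$ has bounded derivatives up to order six by the preceding corollary — Taylor's theorem with integral remainder gives, for any candidate shift $p_*$,
\begin{equation}
\mathcal{J}(p_* + \tilde{p}; a, d)
= \sum_{k=0}^{3} \frac{1}{k!}\mathcal{J}^{(k)}(p_*; a, d)\,\tilde{p}^k
 + \tilde{p}^4 \int_0^1 \frac{(1-s)^3}{6}\,\mathcal{J}^{(4)}(p_* + s\tilde{p}; a, d)\,\rmd s,
\end{equation}
where primes and superscripts denote $\partial_p$. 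The representation in (i) then follows once $p_*$ is chosen so that the coefficient $\tfrac12\mathcal{J}''(p_*)$ of $\tilde{p}^2$ vanishes, upon setting $\zeta_0 = \mathcal{J}(p_*)$, $\zeta_1 = \mathcal{J}'(p_*)$, $\zeta_3 = \tfrac16\mathcal{J}'''(p_*)$ and letting $R_{\zeta;4}$ be the displayed integral.

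First I would solve the shift equation $\mathcal{J}''(p_*;a,d) = 0$. Differentiating \sref{eq:bif:cp:exp:j} twice yields $\mathcal{J}''(p) = 2\beta_2 + 96 p + 6 p R_{\beta;3} + 6 p^2 R_{\beta;3}' + p^3 R_{\beta;3}''$, so $\mathcal{J}'' = 0$ is equivalent to the fixed-point problem
\begin{equation}
p_* = -\frac{\beta_2(a,d)}{48} - \frac{1}{96}\Big[ 6 p_* R_{\beta;3}(p_*) + 6 p_*^2 R_{\beta;3}'(p_*) + p_*^3 R_{\beta;3}''(p_*) \Big].
\end{equation}
The bracketed terms are of higher order, so for $\delta$ small this map is a contraction on a small ball and admits a unique solution with leading behaviour $p_* = -\beta_2/48 + \cdots = \tfrac13 a + \cdots$; in particular $p_* = O(\abs{a} + \abs{d})$. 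Because $\mathcal{J}'''(p_*) \approx 96 \neq 0$, the implicit function theorem also produces the bound $\partial_d p_* = -\partial_d\beta_2/48 + \cdots = O(\abs{a}+\abs{d})$, using the derivative estimates on $\beta_2$ from the corollary.

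Next I would read off the stated expansions by substituting the $\beta$-expansions together with $p_* = \tfrac13 a + O(\abs{a}(\abs{d}+a^2))$. For $\zeta_0 = \mathcal{J}(p_*)$ the dominant term is $\beta_0 = -\tfrac13 a$, while $\beta_1 p_* = O(\abs{a}\abs{d})$ and the quadratic and cubic contributions are $O(\abs{a}^3)$, giving $\abs{S_{\zeta_0}} \le K(\abs{a}\abs{d} + \abs{a}^3)$. For $\zeta_1 = \mathcal{J}'(p_*) = \beta_1 + O(a^2)$ the dominant term is $24d$ with remainder $O(a^2 + d^2)$. For $\zeta_3 = \tfrac16\mathcal{J}'''(p_*) = 16 + R_{\beta;3}(p_*) + 3 p_* R_{\beta;3}'(p_*) + \cdots$ the correction is controlled by $R_{\beta;3}(p_*) = O(\abs{d} + a^2)$, yielding $\abs{S_{\zeta_3}} \le K(\abs{a} + \abs{d})$. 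The remainder bounds in (iii) follow from the integral formula: $R_{\zeta;4}$ and its first two $\tilde{p}$-derivatives are controlled by $\mathcal{J}^{(4)}, \mathcal{J}^{(5)}, \mathcal{J}^{(6)}$, which are bounded precisely because the corollary supplies bounds on $R_{\beta;3}^{(i)}$ for $2 \le i \le 6$; the $D_{a,d}$-bounds follow by differentiating under the integral sign and invoking the $D_{a,d}R_{\beta;3}^{(i)}$ bounds together with the estimate on $\partial_d p_*$.

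The main obstacle is that the leading coefficient $16 + R_{\beta;3}(p)$ of the cubic depends on $p$, so the shift is not the naive algebraic substitution $p_* = -\beta_2/(3\cdot 16)$; it must be determined self-consistently through the fixed-point problem above. The delicate bookkeeping lies in verifying that each correction $S_{\zeta_i}$ has exactly the claimed order — this requires tracking how the $O(\abs{a})$-sized shift $p_*$ couples with the $\beta$-remainders — and in propagating $\partial_d p_*$ through the chain rule to obtain the derivative bounds, for which the sixth-order control on $R_{\beta;3}$ is indispensable.
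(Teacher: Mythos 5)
Your proposal is correct and follows essentially the same route as the paper: determine the shift $p_*$ by forcing the quadratic coefficient (equivalently $\mathcal{J}''(p_*)$) to vanish via a contraction/fixed-point argument, then read off $\zeta_0,\zeta_1,\zeta_3$ from the $\beta$-expansions and control the quartic remainder through the sixth-order bounds on $R_{\beta;3}$. The only cosmetic difference is that you package the remainder via Taylor's theorem with integral remainder, whereas the paper uses the normalized difference quotient $\mathcal{N}_{\beta;3;p_*}$; these are interchangeable here.
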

\begin{proof}
We first introduce the notation
\begin{equation}
\mathcal{N}_{\beta;3;p_*}(\tilde{p};a,d)
= \tilde{p}^{-3}\Big[
  R_{\beta;3}(p_* + \tilde{p};a,d)
  - R_{\beta;3}(p_*; a, d)
  - R'_{\beta;3}(p_*;a, d) \tilde{p}
  - \frac{1}{2} R''_{\beta;3}(p_*; a, d) \tilde{p}^2
  \Big]
\end{equation}
for $\tilde{p} \neq 0$,
with $\mathcal{N}_{\beta;3;p_*}(0;a,d) =
\frac{1}{6} R'''_{\beta;3}(p_*; a, d)$.
This allows us to compute
\begin{equation}
\begin{array}{lcl}
\mathcal{J}(p_* + \tilde{p} ;
a,d) & = & \gamma_0(a,d,p_*)
  + \gamma_1(a, d, p_*) \tilde{p}
  + \gamma_2(a, d, p_*) \tilde{p}^2
\\[0.2cm]
& & \qquad
  + \big[16 + R_{\gamma;3}(\tilde{p}; a, d, p_*) \big]
       \tilde{p}^3 ,
\end{array}
\end{equation}
in which
\begin{equation}
\begin{array}{lcl}
\gamma_0(a,d,p_*) & = &
 \beta_0(a,d)
 + \beta_1(a,d)p_*(a,d)
 + \beta_2(a,d) p_*(a,d)^2
\\[0.2cm]
& & \qquad
  +p_*^3 \big[ 16 + R_{\beta;3}(p_*; a, d) \big] ,
\\[0.2cm]
\gamma_1(a,d,p_*) & = &
 \beta_1(a,d)
 + 2\beta_2(a,d)p_*
\\[0.2cm]
& & \qquad
 + p_*^3 R_{\beta;3}'(p_*; a, d)
  + 3 p_*^2 \big[16 + R_{\beta;3}(p_*;a,d) \big] ,
\\[0.2cm]
\gamma_2(a, d,p_*) & = &
  \beta_2(a,d)
  +\frac{1}{2} p_*^3 R_{\beta;3}''(p_*;a,d)
  + 3 p_*^2 R_{\beta;3}'(p_*;a,d)
  + 3 p_* \big[ 16 + R_{\beta;3}(p_*;a,d) \big] ,
\\[0.2cm]
\end{array}
\end{equation}
together with
\begin{equation}
\begin{array}{lcl}
R_{\gamma;3}(\tilde{p}; a,d,p_*)
& = &
(p_* + \tilde{p} )^3 
  \mathcal{N}_{\beta;3;p_*}(\tilde{p} ; a, d)
+ \frac{1}{2} (3 p_*^2 + 3 p_* \tilde{p} + \tilde{p}^2)
       R_{\beta;3}''(p_* ; a, d)
\\[0.2cm]
& & \qquad
+ (3p_* + \tilde{p} ) R_{\beta;3}'(p_*;a,d)
+ R_{\beta;3}(p_*;a,d) .
\end{array}
\end{equation}
On account of \sref{eq:bif:cp:bnd:r:i:beta:3}
we have
\begin{equation}
\abs{\mathcal{N}_{\beta;3;p_*}(\tilde{p};a,d)}
+
\abs{\mathcal{N}'_{\beta;3;p_*}(\tilde{p};a,d)}
+\abs{\mathcal{N}''_{\beta;3;p_*}(\tilde{p};a,d)}
+ \abs{\mathcal{N}'''_{\beta;3;p_*}(\tilde{p};a,d)}
\le C_1'
\end{equation}
and hence also
\begin{equation}
\label{eq:bif:cusp:bnds:r:gamma:3}
\abs{\mathcal{R}_{\gamma;3}(\tilde{p} ; a , d,p_*) }
+
\abs{\mathcal{R}'_{\gamma;3}(\tilde{p} ; a , d,p_*) }
+ \abs{\mathcal{R}''_{\gamma;3}(\tilde{p} ; a , d,p_*) }
+ \abs{\mathcal{R}'''_{\gamma;3}(\tilde{p} ; a , d,p_*) }
\le C_2'
\end{equation}
for small $\abs{\tilde{p}}$.

Setting $\gamma_2(a,d,p_*) = 0$
leads to a fixed-point equation for $p_*$,
which has a unique small solution
$p_*(a,d)$ that admits the bounds
\begin{equation}
\abs{p_*(a,d) } \le C_3' \abs{\beta_2(a,d)} \le C_4' a ,
\qquad
\abs{ D_{a,d} p_*(a,d) } \le C_4'.
\end{equation}
The results hence follow by
writing
\begin{equation}
\begin{array}{lcl}
\zeta_0(a,d) & = &
 \gamma_0\big( a, d, p_*(a,d) \big) ,
\\[0.2cm]
\zeta_1(a, d) & = &
 \gamma_1\big( a, d, p_*(a, d) \big) ,
\\[0.2cm]
\zeta_3(a, d) & = &
  16 + R_{\gamma;3}\big(0 ;a,d,p_*(a,d) \big) ,
\\[0.2cm]
\end{array}
\end{equation}
together with
\begin{equation}
\begin{array}{lcl}
R_{\zeta;4}(\tilde{p};a,d)
 & = &  \tilde{p}^{-1}
             \big[ R_{\gamma;3}\big(\tilde{p} ; a, d, p_*(a, d)\big)
                   - R_{\gamma;3}\big(0 ; a, d, p_*(a, d)\big)
             \big]
%
\end{array}
\end{equation}
for $\tilde{p} \neq 0$ and
\begin{equation}
R_{\zeta;4}(0;a,d) = R'_{\gamma;3}\big(0 ; a ,d , p_*(a,d) \big) .
\end{equation}
Here we use \sref{eq:bif:cusp:bnds:r:gamma:3}
to get bounds on $R_{\zeta;4}$ and its derivatives.
%
\end{proof}

\begin{proof}[Proof of Proposition \ref{prp:bif:cusp:exp}]
In order to find roots of order two or higher
we need to solve the system
\begin{equation}
\label{eq:bif:cp:2nd:order:eqn:to:solve}
\begin{array}{lcl}
\zeta_0(a, d)
 + \zeta_1(a,d)\tilde{p}
 + \zeta_3(a,d)\tilde{p}^3
 + \tilde{p}^4 R_{\zeta;4}(\tilde{p}, a, d)
& = & 0 ,
\\[0.2cm]
\zeta_1(a, d)
+ 3 \zeta_3(a, d) \tilde{p}^2
+ 4 \tilde{p}^3 R_{\zeta;4}(\tilde{p}, a, d)
+ \tilde{p}^4 R'_{\zeta;4}(\tilde{p}, a, d)
& = & 0  .
\end{array}
\end{equation}
Solving the second equation we
find two branches
\begin{equation}
\tilde{p}_\pm(a,d)
 = \pm \sqrt{ -d /2 } + S_{\tilde{p}_{\pm}}(a, d) ,
\end{equation}
in which we have
\begin{equation}
\begin{array}{lcl}
S_{\tilde{p}_{\pm} }(a, d)
  & \le & C_1' (\abs{a} + \abs{d} ) ,
\\[0.2cm]
D_{a,d} S_{\tilde{p}_{\pm}}(a, d) & \le & C_1' .
\\[0.2cm]
\end{array}
\end{equation}
Plugging this into the first line of \sref{eq:bif:cp:2nd:order:eqn:to:solve} we obtain
\begin{equation}
\frac{1}{3} a
=
 \pm 16 d \sqrt{-d/2}
 + O( d^2 + \abs{ad} + \abs{a}^3 + a^2 \sqrt{\abs{d}} ),
\end{equation}
from which the desired expression for $a_c$ follows.
\end{proof}

\subsection{Geometry of the cubic}
\label{sec:bif:geom}

Our strategy to establish
Propositions
\ref{prp:st:dpm:ex}-\ref{prp:st:uv:ustb:ex}
hinges upon geometric
properties of the cubic $g(u;a)$.
As a preparation,
we introduce the notation
\begin{equation}
u_{\mathrm{infl}}(a) = \frac{1}{3}(a + 1 )
\end{equation}
together with
\begin{equation}
u_{\mathrm{min}}(a) =
u_{\mathrm{infl}}(a) - \frac{1}{3} \sqrt{1 - a(1 - a) } ,
\qquad
u_{\mathrm{max}}(a) = u_{\mathrm{infl}}(a) +  \frac{1}{3} \sqrt{1 - a(1 - a)}
\end{equation}
and note that
\begin{equation}
g'\big(u_{\mathrm{min}}(a);a\big) = 0,
\qquad
g''\big(u_{\mathrm{infl}}(a);a \big) = 0,
\qquad
g'\big(u_{\mathrm{max}}(a) ; a \big) = 0.
\end{equation}
In addition, we note that $g''(u;a) > 0$
for $u < u_{\mathrm{infl}}(a)$ and $g''(u;a) < 0$ for $u > u_{\mathrm{infl}}(a)$.
When $0 < a < \frac{1}{2}$ we have the ordering
\begin{equation}
0 < u_{\mathrm{min}}(a) < a < u_{\mathrm{infl}}(a) < u_{\mathrm{max}}(a) < 1.
\end{equation}

\begin{figure}
\begin{center}
\begin{minipage}{0.45\textwidth}
\includegraphics[width=\textwidth]{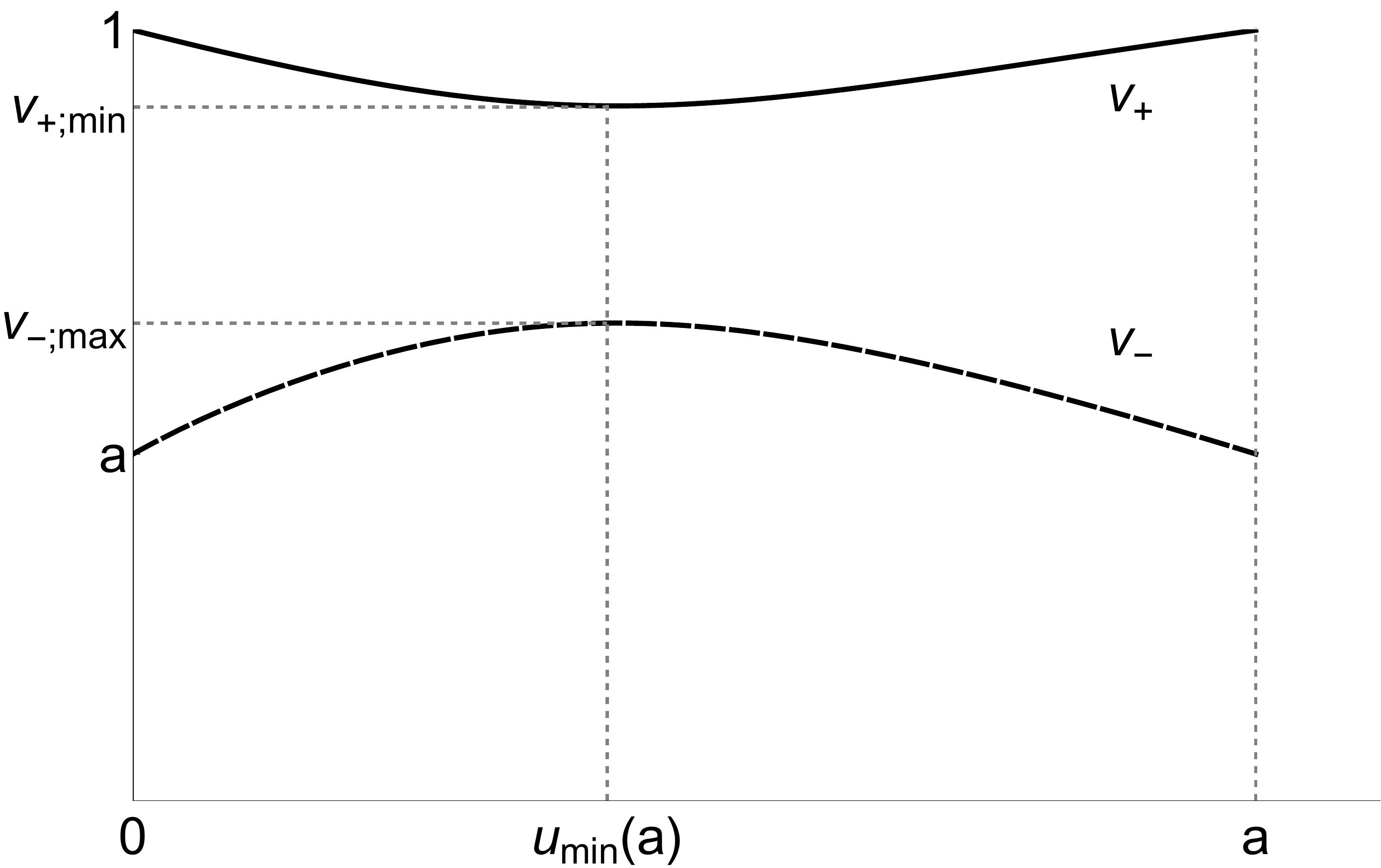}
\end{minipage}\quad
\begin{minipage}{0.45\textwidth}
\includegraphics[width=\textwidth]{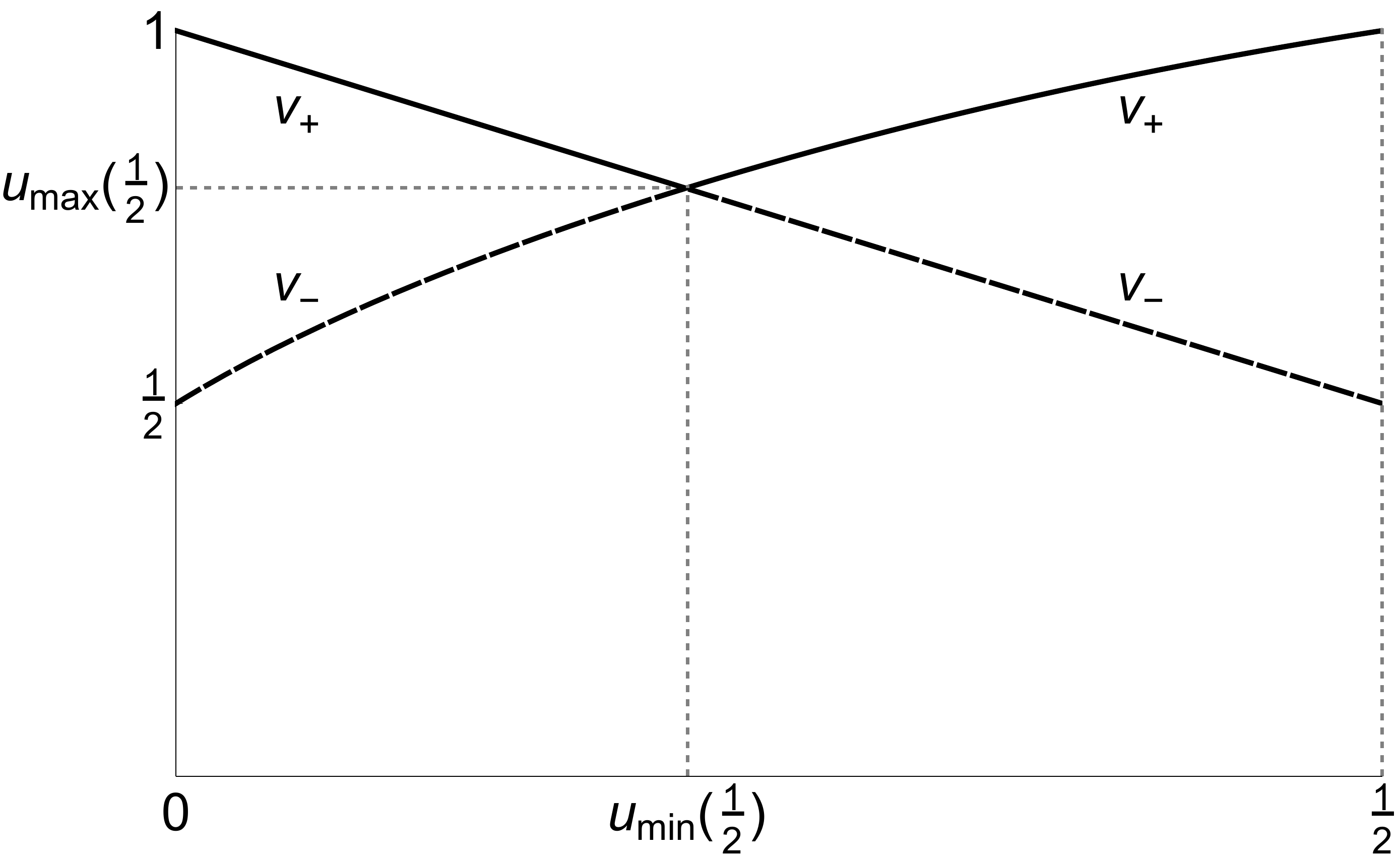}
\end{minipage}
\caption{The functions $v_\pm$ defined in Lemmata
\ref{lem:bif:setup:bs:props:vpm}-\ref{lem:bif:setup:bs:props:vpm:a:half}
for $a = 0.45$ (left) and $a = 0.5$ (right). In the latter case the derivatives
have a discontinuity at $u_{\mathrm{min}}(\frac{1}{2})$.
}\label{f:vpvm}
\end{center}
\end{figure}

Recalling the definition
\sref{eq:mr:def:G:G12},
one readily sees that
\begin{equation}
G_1(u, v; a, d) + G_2(u, v ; a, d)
 = g(u;a) + g(v;a).
\end{equation}
In order to exploit the fact that this
identity does not
depend on $d$,
we first use basic properties of the cubic
to parametrize solutions to
$g(u;a) + g(v ;a) = 0$.
Restricting ourselves to $a \in [0, \frac{1}{2}]$,
it is possible to construct
two solution curves $v_\pm(u)$
that are defined for
$u \in [0, a]$; see Figure \ref{f:vpvm}.

\begin{lem}
\label{lem:bif:setup:bs:props:vpm}
Fix $0 < a < \frac{1}{2}$. Then there are two constants
\begin{equation}
a < v_{-;\mathrm{max}} < v_{+;\mathrm{min}} < 1
\end{equation}
together with two
$C^\infty$-smooth
functions
\begin{equation}
v_-: [0, a] \to [a ,v_{-;\mathrm{max}}],
\qquad
v_+ : [0, a] \to [v_{+;\mathrm{min}},1]
\end{equation}
that satisfy the following properties.
\begin{itemize}
\item[(i)]{
  We have $g(u;a) = - g(v_-(u);a) = - g(v_+(u);a)$ for all $0 \le u \le a$.
}
\item[(ii)]{
  If $g(u;a) = - g(v;a)$ for some pair $u \in (0 ,a)$ and $v \in [0,1]$,
  then $v = v_-(u)$ or $v = v_+(u)$.
}
\item[(iii)]{
  We have $v_-(0) = v_-(a) = a$ and $v_+(0) = v_+(a) = 1$.
}
\item[(iv)]{
  We have the identities
  \begin{equation}
    \label{eq:bif:prlm:deriv:vpm}
    v_\pm'(u) = -[ g'( v_\pm(u);a )]^{-1} g'(u ;a)
  \end{equation}
  for all $0 \le u \le a$.
}
\item[(v)]{
  We have $v_-'(u) > -1 $ for all $0 \le u < a$,
  together with $v_-'(a) = -1$.
}
\end{itemize}
\end{lem}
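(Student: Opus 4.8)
The plan is to realise $v_\pm$ as inverse branches of the cubic and to reduce every assertion to elementary monotonicity properties of $g(\cdot;a)$ on the three intervals determined by its critical points. Recall that on $[0,a]$ the cubic decreases from $g(0;a)=0$ to its local minimum at $u_{\mathrm{min}}(a)$ and then increases back to $g(a;a)=0$, so $g(\cdot;a)\le 0$ there; on $[a,u_{\mathrm{max}}(a)]$ it increases strictly from $0$ to $g(u_{\mathrm{max}}(a);a)$, and on $[u_{\mathrm{max}}(a),1]$ it decreases strictly back to $g(1;a)=0$. Thus $g(\cdot;a)$ restricts to an increasing bijection $[a,u_{\mathrm{max}}(a)]\to[0,g(u_{\mathrm{max}}(a);a)]$ and to a decreasing bijection $[u_{\mathrm{max}}(a),1]\to[0,g(u_{\mathrm{max}}(a);a)]$. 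I would then define $v_-(u)$ and $v_+(u)$ as the preimages of $-g(u;a)$ under these two bijections, which is exactly the requirement $g(u;a)=-g(v_\pm(u);a)$ in (i).

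For this to be legitimate on the whole interval $[0,a]$ I must check that $-g(u;a)$ never exceeds the common maximum $g(u_{\mathrm{max}}(a);a)$. Since $\max_{[0,a]}(-g(\cdot;a))=-g(u_{\mathrm{min}}(a);a)$, the decisive inequality is $g(u_{\mathrm{min}}(a);a)+g(u_{\mathrm{max}}(a);a)>0$. Because $u_{\mathrm{min}}(a)$ and $u_{\mathrm{max}}(a)$ are symmetric about the inflection point $u_{\mathrm{infl}}(a)$, this sum equals $2g(u_{\mathrm{infl}}(a);a)$, and a direct computation gives the factorisation
\begin{equation}
g(u_{\mathrm{min}}(a);a)+g(u_{\mathrm{max}}(a);a)=\tfrac{2}{27}(1+a)(1-2a)(2-a),
\end{equation}
which is strictly positive for $0<a<\tfrac12$. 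This yields $-g(u;a)<g(u_{\mathrm{max}}(a);a)$ for every $u\in[0,a]$, so that $v_-(u)<u_{\mathrm{max}}(a)<v_+(u)$ strictly; setting $v_{-;\mathrm{max}}=v_-(u_{\mathrm{min}}(a))$ and $v_{+;\mathrm{min}}=v_+(u_{\mathrm{min}}(a))$ then produces the ordering $a<v_{-;\mathrm{max}}<v_{+;\mathrm{min}}<1$. Because $g'(v_\pm(u);a)\neq0$ throughout (the branch values avoid $u_{\mathrm{max}}(a)$), the implicit function theorem makes $v_\pm$ smooth on all of $[0,a]$ --- including $u=u_{\mathrm{min}}(a)$, where $g'(u;a)=0$ causes no difficulty, since solving for $v$ only requires $g'(v;a)\neq0$ --- and differentiating $g(u;a)+g(v_\pm(u);a)=0$ gives the formula in (iv). Parts (ii) and (iii) are then immediate: for $u\in(0,a)$ we have $-g(u;a)>0$, so any solution of $g(v;a)=-g(u;a)$ with $v\in[0,1]$ must lie in $(a,1)$ where $g(\cdot;a)>0$, and there are exactly two such roots, namely $v_-(u)$ and $v_+(u)$; the endpoint identities follow by evaluating at the roots $0,a,1$ of $g(\cdot;a)$.

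The substantive claim is (v), which I expect to be the main obstacle. Using (iv) and $g'(v_-(u);a)>0$, the inequality $v_-'(u)>-1$ is equivalent to $g'(v_-(u);a)>g'(u;a)$. On $[0,u_{\mathrm{min}}(a)]$ this is free, since there $g'(u;a)\le0\le g'(v_-(u);a)$ forces $v_-'(u)\ge0$. For $u\in(u_{\mathrm{min}}(a),a)$ I would exploit that $g'(\cdot;a)$ is a downward parabola with vertex at $u_{\mathrm{infl}}(a)$, so that $g'(v_-(u);a)>g'(u;a)$ is equivalent to $u+v_-(u)<2u_{\mathrm{infl}}(a)$ (using $v_-(u)>u$). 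To establish this last inequality I would invoke the point symmetry of the cubic about its inflection, namely $g(2u_{\mathrm{infl}}(a)-u;a)=2g(u_{\mathrm{infl}}(a);a)-g(u;a)$. Writing $\hat u=2u_{\mathrm{infl}}(a)-u\in(u_{\mathrm{infl}}(a),u_{\mathrm{max}}(a))$ and using $g(u_{\mathrm{infl}}(a);a)>0$, this gives $g(\hat u;a)=2g(u_{\mathrm{infl}}(a);a)-g(u;a)>-g(u;a)=g(v_-(u);a)$; since both $\hat u$ and $v_-(u)$ lie in the interval $(a,u_{\mathrm{max}}(a))$ on which $g(\cdot;a)$ is strictly increasing, I conclude $\hat u>v_-(u)$, i.e. $u+v_-(u)<2u_{\mathrm{infl}}(a)$, as required. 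Finally, at $u=a$ the two points coalesce ($v_-(a)=a$) and the formula in (iv) gives $v_-'(a)=-g'(a;a)/g'(a;a)=-1$, completing (v).
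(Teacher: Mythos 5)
Your proposal is correct and follows essentially the same route as the paper: items (i)--(iv) come from the implicit function theorem once the key inequality $g(u_{\mathrm{max}}(a);a) > -g(u_{\mathrm{min}}(a);a)$ (equivalently $g(u_{\mathrm{infl}}(a);a)>0$, which you make explicit via the factorisation $\tfrac{2}{27}(1+a)(1-2a)(2-a)$) keeps the branches away from $u_{\mathrm{max}}(a)$, and item (v) is proved by the same reflection of $u$ through the inflection point combined with the point symmetry of the cubic. Your reformulation of (v) as the single inequality $u+v_-(u)<2u_{\mathrm{infl}}(a)$ merely unifies the paper's two cases ($v_-(u)\le u_{\mathrm{infl}}(a)$ versus $v_-(u)>u_{\mathrm{infl}}(a)$) without changing the underlying argument.
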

\begin{proof}
Since $0 < a < \frac{1}{2}$ we
have $g(u_{\mathrm{max}}(a);a) > - g(u_{\mathrm{min}}(a);a)$,
which implies that $v_\pm(u) \neq u_{\mathrm{max}}(a)$.
Properties (i)-(iv) hence follow immediately
from the implicit function theorem.

To obtain (v), we take $u_{\mathrm{min}}(a) <u < a$
and recall $a < v_-(u) < u_{\mathrm{max}}$.
If $v_-(u) \le u_{\mathrm{infl}}(a)$ then clearly
$g'(v_-(u);a) > g'(u;a) > 0$, as desired.
In order to hande the remaining case $v_-(u) > u_{\mathrm{infl}}(a)$,
we introduce the reflection
$u_{\mathrm{refl}} = 2 u_{\mathrm{infl}}(a) - u$.
Exploiting the point symmetry of the graph of $g(\cdot ;a)$
around its inflection
point $\big(u_{\mathrm{infl}}(a), g(u_{\mathrm{infl}}(a) ;a) \big)$,
the inequality $g(u_{\mathrm{infl}}(a) ;a) > 0$ implies that
\begin{equation}
u_{\mathrm{infl}}(a) < v_-(u) < u_{\mathrm{refl}}.
\end{equation}
Since $g''(\tilde{u};a) < 0$ for $\tilde{u} > u_{\mathrm{infl}}(a)$,
we obtain
\begin{equation}
g'\big(v_-(u) ;a \big) > g'( u_{\mathrm{refl}} ;a ) = g'( u;a) > 0,
\end{equation}
which implies $v_-'(u) > -1$.
\end{proof}

\begin{lem}
\label{lem:bif:setup:bs:props:vpm:a:half}
Fix $a =  \frac{1}{2}$. Then there are two
functions
\begin{equation}
v_-: [0, a] \to [a ,u_{\mathrm{max}}(a)],
\qquad
v_+ : [0, a] \to [u_{\mathrm{max}}(a),1]
\end{equation}
that satisfy items (i) - (iii)
from Lemma \ref{lem:bif:setup:bs:props:vpm}
together with the
following additional properties.
\begin{itemize}
\item[(i)]{
  We have
  \begin{equation}
    v_-\big(u_{\mathrm{min}}(a) \big) = v_+\big(u_{\mathrm{min}}(a)\big)
     = u_{\mathrm{max}}(a).
  \end{equation}
}
\item[(ii)]{
  For any $u \in [0, a] \setminus \{ u_{\mathrm{min}}(a) \}$
  we have the identities
  \begin{equation}
    \label{eq:bif:prlm:deriv:vpm:a:half}
    v_\pm'(u) = -[ g'( v_\pm(u) ;a )]^{-1} g'(u ;a).
  \end{equation}
}
\item[(iii)]{
  We have $v_-(u) = 1 - u$ for all $ u_{\mathrm{min}}(a) \le u \le 1$,
  while $v_+(u) = 1 - u$ for all $0 \le u \le u_{\mathrm{min}}(a)$.
}
\item[(iv)]{
  We have the limits
  \begin{equation}
    \lim_{u \uparrow u_{\mathrm{min}}(a)} v_-'(u) =
     \lim_{u \downarrow u_{\mathrm{min}}(a) } v_+'(u) =  + 1,
  \end{equation}
  together with
  \begin{equation}
    \lim_{u \uparrow u_{\mathrm{min}}(a)} v_-''(u) =
      \lim_{u \downarrow u_{\mathrm{min}}(a)} v_+''(u)
      = - \frac{4}{3} \sqrt{3}.
  \end{equation}
}
\end{itemize}
\end{lem}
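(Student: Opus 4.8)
The plan is to exploit the point symmetry of the cubic at $a=\tfrac12$, namely $g(1-u;\tfrac12) = -g(u;\tfrac12)$, which gives $g(u;\tfrac12)+g(1-u;\tfrac12)=0$ for every $u$. Thus the anti-diagonal $v=1-u$ is always an exact solution of $g(u;a)+g(v;a)=0$; I will call it the \emph{trivial} branch. The essential departure from Lemma \ref{lem:bif:setup:bs:props:vpm} is that at $a=\tfrac12$ one has the \emph{equality} $g(u_{\mathrm{max}}(a);a)=-g(u_{\mathrm{min}}(a);a)$ rather than a strict inequality, so the solution curves are now allowed to pass through the degenerate point $u_{\mathrm{max}}(a)$, where $g'=0$. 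First I would define $v_-(u)$ and $v_+(u)$, for $u\in[0,a]$, as the unique solutions of $g(v;a)=-g(u;a)$ lying in $[a,u_{\mathrm{max}}(a)]$ and in $[u_{\mathrm{max}}(a),1]$ respectively; these exist, are unique, and depend continuously on $u$ because $-g(u;a)\in[0,g(u_{\mathrm{max}}(a);a)]$ while $g$ is strictly increasing on $[a,u_{\mathrm{max}}(a)]$ and strictly decreasing on $[u_{\mathrm{max}}(a),1]$.

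With this definition, items (i)--(iii) of Lemma \ref{lem:bif:setup:bs:props:vpm} are immediate. Property (i) holds by construction, and (ii) follows since for $u\in(0,a)$ we have $-g(u;a)>0$, forcing any solution $v$ into $(\tfrac12,1)$, where the two monotone pieces of $g$ recover exactly $v_-$ and $v_+$. The boundary values $v_-(0)=v_-(a)=a$ and $v_+(0)=v_+(a)=1$ of (iii) come from solving $g(v;a)=0$ in each interval. The new item (i) is just that at $u=u_{\mathrm{min}}(a)$ one has $-g(u_{\mathrm{min}}(a);a)=g(u_{\mathrm{max}}(a);a)$, so both defining intervals collapse to the single endpoint $u_{\mathrm{max}}(a)$. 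The new item (iii) is obtained by comparing the trivial branch with these definitions: for $u\in[u_{\mathrm{min}}(a),a]$ one has $1-u\in[a,u_{\mathrm{max}}(a)]$, whence $v_-(u)=1-u$ by uniqueness, while for $u\in[0,u_{\mathrm{min}}(a)]$ one has $1-u\in[u_{\mathrm{max}}(a),1]$, whence $v_+(u)=1-u$. Item (ii) then follows from the implicit function theorem exactly as before, once one notes that $v_\pm(u)=u_{\mathrm{max}}(a)$ occurs only at $u=u_{\mathrm{min}}(a)$ (by strict monotonicity), so that $g'(v_\pm(u);a)\neq0$ for all $u\neq u_{\mathrm{min}}(a)$ and the curves are in fact $C^\infty$ there.

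The heart of the matter is item (iv), where the derivative formula \sref{eq:bif:prlm:deriv:vpm:a:half} degenerates into the indeterminate form $0/0$ as $u\to u_{\mathrm{min}}(a)$, since both $g'(u;a)$ and $g'(v_\pm(u);a)$ vanish there. My plan is to resolve this using the exact local expansions of the cubic. Writing $u=u_{\mathrm{min}}(a)+s$ and $v=u_{\mathrm{max}}(a)+t$ and using $g'(u_{\mathrm{min}})=g'(u_{\mathrm{max}})=0$, $g''(u_{\mathrm{min}})=-g''(u_{\mathrm{max}})=\sqrt3$ and $g'''\equiv-6$, the relation $g(u;a)+g(v;a)=0$ becomes exactly
\[
  \tfrac{\sqrt3}{2}(s^2-t^2) - (s^3+t^3) = 0,
\]
which factors as $(s+t)\big[\tfrac{\sqrt3}{2}(s-t)-(s^2-st+t^2)\big]=0$. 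The first factor $s+t=0$ recovers the trivial branch $v=1-u$; the degenerate curves $v_-$ (for $u<u_{\mathrm{min}}(a)$) and $v_+$ (for $u>u_{\mathrm{min}}(a)$) must therefore lie on the second factor, which is smooth through the origin. Differentiating $\tfrac{\sqrt3}{2}(s-t)=s^2-st+t^2$ implicitly at $(s,t)=(0,0)$ yields $t'(0)=1$, and a second differentiation yields $t''(0)=-\tfrac43\sqrt3$; these are precisely the claimed one-sided limits of $v_\pm'$ and $v_\pm''$.

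The main obstacle is exactly this final step: the naive derivative identity is useless at $u_{\mathrm{min}}(a)$, and the key idea is that factoring out the trivial branch $(s+t)$ desingularizes the problem and isolates the smooth nontrivial curve on which the one-sided limits can be read off by ordinary implicit differentiation. The remaining work is bookkeeping — verifying that it is $v_-$ that sits on the nontrivial factor for $u<u_{\mathrm{min}}(a)$ and $v_+$ for $u>u_{\mathrm{min}}(a)$, which follows from $t\approx s$ near the origin, and noting that the exactness of the expansions is guaranteed by $g$ being a cubic.
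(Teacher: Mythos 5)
Your proposal is correct and follows essentially the same route as the paper: items (i)--(iii) from the symmetry of $g$ and the implicit function theorem, and item (iv) from the exact local expansion of the cubic about $u_{\mathrm{min}}(a)$ and $u_{\mathrm{max}}(a)$, which reduces the problem to the relation $\tfrac{\sqrt 3}{2}(s^2-t^2)=s^3+t^3$. The only (cosmetic) difference is that you isolate the nontrivial branch by explicitly factoring out $(s+t)$ and applying implicit differentiation to the remaining smooth factor, whereas the paper recasts the relation as a fixed-point problem for $v^2$ and expands; both yield $t'(0)=1$ and $t''(0)=-\tfrac{4}{3}\sqrt3$.
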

\begin{proof}
Items (i) and (iii) follow
directly from the symmetry of $g(\cdot; a)$,
while (ii) follows from the implicit function
theorem. To obtain (iv),
we first compute
\begin{equation}
g(u_{\mathrm{min}} + u ;a)
= g(u_{\mathrm{min}};a) + \frac{1}{2} \sqrt{3} u^2
 - u^3,
\qquad
g(u_{\mathrm{max}} +v ;a)
= - g(u_{\mathrm{min}} ;a) - \frac{1}{2} \sqrt{3} v^2
- v^3.
\end{equation}
In particular, the identity
\begin{equation}
g(u_{\mathrm{min}}+ u; a) = - g(u_{\mathrm{max}} + v ; a)
\end{equation}
can be rewritten as
\begin{equation}
v^2
 =
u^2
- \frac{2}{3} \sqrt{3} \big[
    u^3 + v^3
  \big] ,
\end{equation}
which can be interpreted
as a fixed point problem for $v^2$
upon assuming that $v$ and $u$
have the same sign.
For small $\abs{u}$ this problem
has a solution
that can be expanded as
\begin{equation}
v^2 = u^2 - \frac{4}{3} \sqrt{3} u^3
  + O (u^4) ,
\end{equation}
which yields
\begin{equation}
v = u - \frac{2}{3} \sqrt{3} u^2
 + O(u^3).
\end{equation}
The desired limits follow directly
from this expansion.
\end{proof}

\subsection{Tangencies}
\label{sec:bif:tgn}

Let us now fix $a \in [0, \frac{1}{2}]$.
In order to find solutions
to $G(u,v ;a , d)= 0$ with $d > 0$,
we introduce the function
\begin{equation}
v_d(u) = u - \frac{g(u ;a)}{2d}
\end{equation}
and note that the results above
show that it
suffices to find $u \in [0,a]$
for which one of the equations
\begin{equation}
\label{eq:bif:id:v:to:solve}
v_{\pm}(u) = v_d(u)
\end{equation}
holds.

Our goal here is to show that
non-transverse intersections
of this type can only
occur at local minima
of $v_\pm - v_d$.
Together with the strict monotonicity
\begin{equation}
\partial_d v_d(u) < 0
\end{equation}
that holds for $u \in (0, a)$,
this will allow us to obtain global results
in \S\ref{sec:bif:glb}.

For the moment, we note that
\begin{equation}
\label{eq:bif:tng:a}
v_-(a) = v_d(a) = a
\end{equation}
for every $d> 0$.
In addition, we may compute
\begin{equation}
v_d'(a) = 1 - \frac{g'(a;a)}{2d} ,
\end{equation}
together with
\begin{equation}
v_-'(a) = - [g'(a;a)]^{-1} g'(a;a) = -1.
\end{equation}
In particular, when $g'(a) = 4d$ the intersection
\sref{eq:bif:tng:a} is tangential. In the sequel we show that
in fact $v_- > v_d$ on $[0, a)$ for this critical value of $d$.

For intersections with $u \in (0, a)$ such
explicit computations are significantly harder to carry out,
which is why we pursue a more indirect approach here.
As a preparation, we compute
\begin{equation}
\label{eq:preps:sec:deriv:v:pm}
v''_\pm(u) = - \frac{g''(u)}{g'\big(v_\pm(u)\big)} - \frac{g'(u)^2}{g'\big(v_\pm(u)\big)^3} g''\big(v_\pm(u)\big)
\end{equation}
together with
\begin{equation}
\label{eq:preps:sc:deriv:v:star}
v_d''(u) =- \frac{g''(u)}{2d}.
\end{equation}
In addition,
for any $\kappa \le g'(u_{\mathrm{infl}}(a);a) =  \frac{1}{3}(a^2 - a + 1)$,
we introduce the expressions
\begin{equation}
\begin{array}{lcl}
u_{l}(\kappa) & = &  u_{\mathrm{infl}}(a) - \sqrt{\frac{1}{3}(g'(u_{\mathrm{infl}}(a);a) - \kappa)} ,
\\[0.2cm]
u_r(\kappa) & = & u_{\mathrm{infl}}(a) + \sqrt{\frac{1}{3}(g'(u_{\mathrm{infl}}(a);a) - \kappa)} .
\end{array}
\end{equation}
It is easy to verify that $u_l(\kappa)$ and $u_r(\kappa)$
are the two solutions to the
quadratic equation $g'(u;a) = \kappa$.

\begin{lem}
\label{lem:bif:trnvs:basic:props:ints}
Fix $0 < a \le \frac{1}{2}$ and $d > 0$
and suppose that
\begin{equation}
v_{\#}'(u) = v_d'(u) = \beta
\end{equation}
for some $\# \in \{-, +\}$ and $0 \le u \le a$,
with $u \neq u_{\mathrm{min}}(a)$ in case $a = \frac{1}{2}$.
Then the following statements hold.
\begin{itemize}
\item[(i)]{
We have
\begin{equation}
\label{eq:bif:tgn:id:for:delta:sec}
2d[v_{\#}''(u) - v_d''(u) ]
= \frac{1}{1 - \beta} g''(u;a)
  + \frac{\beta^3}{1 - \beta} g''\big(v_{\#}(u) ;a\big).
\end{equation}
}
\item[(ii)]{
If $\# = -$ then we have $ \beta \in [-1, 0) \cup (1, \infty)$.
On the other hand, the inclusion $\beta \in (0, 1)$ holds  if $\# = +$.
}
\item[(iii)]{
  We have $u = u_l\big( 2d ( 1 - \beta) \big)$.
}
\item[(iv)]{
  Suppose that $\# = -$. Then the identity
  \begin{equation}
    \label{eq:bif:tgn:id:for:v:l}
    v_{\#}(u) = u_l\big( 2d (1 - \beta^{-1}) \big)
  \end{equation}
  holds if $v_-(u) \le u_{\mathrm{infl}}(a)$. On the other hand,
  we have
  \begin{equation}
    \label{eq:bif:tgn:id:for:v:r}
    v_{\#}(u) = u_r\big(2d(1 - \beta^{-1}) \big)
  \end{equation}
  if $v_-(u) > u_{\mathrm{infl}}(a)$.
}
\item[(v)]{
  The identity
  \begin{equation}
    v_{\#}(u) = u_r\big( 2d (1 - \beta^{-1}) \big)
  \end{equation}
  holds if $\# =  +$.
}
\end{itemize}
\end{lem}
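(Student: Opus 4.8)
The plan is to reduce every assertion to two algebraic relations obtained by combining the two derivative conditions. First I would use the derivative formula \sref{eq:bif:prlm:deriv:vpm} (or \sref{eq:bif:prlm:deriv:vpm:a:half} when $a=\frac12$, which is precisely why the value $u = u_{\mathrm{min}}(a)$ must be excluded there) to write $\beta = v_{\#}'(u) = -[g'(v_{\#}(u);a)]^{-1}\,g'(u;a)$, and the elementary identity $v_d'(u) = 1 - \frac{g'(u;a)}{2d}$ together with $v_d'(u)=\beta$ to deduce
\[
 g'(u;a) = 2d(1-\beta).
\]
Inserting this back into the definition of $\beta$ then gives $g'(v_{\#}(u);a) = -g'(u;a)/\beta = 2d(1-\beta^{-1})$, which is legitimate once part (ii) guarantees $\beta \neq 0$. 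These two identities express both $g'(u;a)$ and $g'(v_{\#}(u);a)$ purely in terms of $d$ and $\beta$, and all five claims descend from them.

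For (i) I would simply substitute the formulas \sref{eq:preps:sec:deriv:v:pm} and \sref{eq:preps:sc:deriv:v:star} for $v_{\#}''$ and $v_d''$ into $2d[v_{\#}''(u)-v_d''(u)]$, using $g'(v_{\#}(u);a) = -g'(u;a)/\beta$ and $g'(u;a)=2d(1-\beta)$ to eliminate every first derivative. Clearing the powers of $g'(v_{\#}(u);a)$, the coefficient of $g''(u;a)$ collapses to $\frac{\beta}{1-\beta}+1 = \frac{1}{1-\beta}$ and the coefficient of $g''(v_{\#}(u);a)$ to $\frac{\beta^3}{1-\beta}$, which is exactly \sref{eq:bif:tgn:id:for:delta:sec}. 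This step is routine bookkeeping.

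The heart of the matter is (ii), for which I would first record the geometry of $v_\pm$. For $u\in(0,a)$ one has $g(u;a)<0$, hence $g(v_{\#}(u);a) = -g(u;a) > 0$, so $v_{\#}(u)\in(a,1)$; the smaller root $v_-(u)$ lies on the increasing branch $(a,u_{\mathrm{max}}(a))$, giving $g'(v_-(u);a)>0$, while $v_+(u)$ lies on the decreasing branch $(u_{\mathrm{max}}(a),1)$, giving $g'(v_+(u);a)<0$. Since $u\le a\le u_{\mathrm{infl}}(a)$, the sign of $g'(u;a)$ is positive, zero or negative according as $u$ exceeds, equals, or lies below $u_{\mathrm{min}}(a)$. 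A short case split using $g'(u;a)=2d(1-\beta)$ and $d>0$ then fixes the sign of $1-\beta$: for $\#=+$ the sign pattern forces $\beta\in(0,1)$; for $\#=-$, the range $u_{\mathrm{min}}(a)<u\le a$ gives $\beta<0$ and combines with the a priori bound $v_-'\ge -1$ from Lemma \ref{lem:bif:setup:bs:props:vpm}(v) to yield $\beta\in[-1,0)$, the range $0\le u<u_{\mathrm{min}}(a)$ gives $\beta>1$, and the value $u=u_{\mathrm{min}}(a)$ is impossible because there $v_-'(u)=0$ while $v_d'(u)=1$. This is the step I expect to demand the most care, as it is where the sign information, the tangency relation and the external bound on $v_-'$ must all be invoked simultaneously.

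Finally, (iii)--(v) reinterpret the two identities for $g'$ through the level-set description $u_l,u_r$. Since $u\le u_{\mathrm{infl}}(a)$ solves $g'(u;a)=2d(1-\beta)$, it must be the smaller preimage $u=u_l\big(2d(1-\beta)\big)$, which is (iii). Likewise $v_{\#}(u)$ solves $g'(w;a)=2d(1-\beta^{-1})$, so it equals $u_l$ or $u_r$ of that argument according to whether $v_{\#}(u)\le u_{\mathrm{infl}}(a)$ or $v_{\#}(u)>u_{\mathrm{infl}}(a)$; for $\#=-$ both alternatives can occur, giving \sref{eq:bif:tgn:id:for:v:l}--\sref{eq:bif:tgn:id:for:v:r}, while for $\#=+$ we always have $v_+(u)>u_{\mathrm{max}}(a)>u_{\mathrm{infl}}(a)$, forcing the $u_r$ branch and hence (v). The admissibility bounds $2d(1-\beta)\le g'(u_{\mathrm{infl}}(a);a)$ and $2d(1-\beta^{-1})\le g'(u_{\mathrm{infl}}(a);a)$ needed for $u_l,u_r$ to be defined hold automatically, since $u$ and $v_{\#}(u)$ are genuine real solutions and $g'$ attains its maximum at $u_{\mathrm{infl}}(a)$.
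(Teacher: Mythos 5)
Your proposal is correct and follows essentially the same route as the paper: you extract the two identities $g'(u;a)=2d(1-\beta)$ and $g'(v_{\#}(u);a)=2d(1-\beta^{-1})$ from the tangency condition, obtain (ii) from the signs of $g'$ on the branches carrying $v_\pm$ together with Lemma \ref{lem:bif:setup:bs:props:vpm}(v), read off (iii)--(v) from the level-set description via $u_l,u_r$, and verify (i) by direct substitution of \sref{eq:preps:sec:deriv:v:pm} and \sref{eq:preps:sc:deriv:v:star}. The only cosmetic difference is the order of the items and your explicit remark that $u=u_{\mathrm{min}}(a)$ is self-excluded when $\#=-$; the paper reaches the same exclusions by noting $v_d'=1$ iff $g'(u;a)=v_\pm'(u)=0$.
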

\begin{proof}
We first consider the case $0 < a< \frac{1}{2}$.
For any $0 \le \tilde{u} \le a$,
one sees that $v_d'(\tilde{u}) = 1$
holds if and only if $g'(\tilde{u};a) = v_\pm'(\tilde{u}) = 0$,
which shows that $\beta \notin \{0 ,1\}$.

For any $0 \le \tilde{u} \le a$
we have $g'(v_+(\tilde{u});a) < 0$ and hence
\begin{equation}
\mathrm{sign} \big[ v'_+(\tilde{u}) \big] = \mathrm{sign} \big[ g'(\tilde{u} ;a) \big].
\end{equation}
If $\# = +$, this shows that $g'(u;a) > 0$ and $v'_d(u) > 0$,
hence $\beta \in (0, 1)$.

On the other hand, we have $g'(v_-(\tilde{u}) ; a) > 0$ for all
$0 \le \tilde{u} \le a$. If $g'(u;a) < 0$ then we have $v_d'(u) = \beta > 1$,
while if $g'(u;a) > 0$ and $\# = -$ we may use item (v)
from Lemma \ref{lem:bif:setup:bs:props:vpm} to conclude
\begin{equation}
  -1 \le v_-'(u) = \beta < 0,
\end{equation}
which establishes (ii).

In order to obtain (iii),
it suffices to recall the bound
$u \le a \le u_{\mathrm{infl}}(a)$
and note that the identity $v_*'(u) = \beta$ implies that
\begin{equation}
g'(u) = 2d ( 1 - \beta).
\end{equation}
On the other hand, the identity $v_\pm'(u) = \beta$ implies
\begin{equation}
g'\big(v_\pm(u) \big) = -2d \beta^{-1} ( 1 - \beta).
\end{equation}
Items (iv) and (v) now follow directly,
remembering that $v_+(u) \ge u_{\mathrm{max}}(a) \ge u_{\mathrm{infl}}(a)$.

Exploiting \sref{eq:preps:sec:deriv:v:pm} we may now compute
\begin{equation}
\begin{array}{lcl}
v''_\pm(u) & = & \frac{\beta}{2d(1 - \beta)} g''(u;a)
+ \frac{(2d)^2 (1 - \beta)^2 \beta^3}{(2d)^3 (1 - \beta)^3}
   g''(v_\pm(u);a )
\\[0.2cm]
& = &
\frac{\beta}{2d(1 - \beta)} g''(u;a)
+ \frac{ \beta^3}{ 2d (1 - \beta)} g''(v_\pm(u);a).
\end{array}
\end{equation}
The desired identity in (i) hence follows
directly from \sref{eq:preps:sc:deriv:v:star}.
In order to conclude the proof,
it suffices to note that
the arguments above remain valid
when $a = \frac{1}{2}$.
Indeed,  the critical cases $g'(u;a) = 0$
and $g'(v_\pm;a) = 0$ are excluded
by the requirement that $u \neq u_{\mathrm{min}}(a)$.

\end{proof}

\begin{lem}
\label{lem:bif:trnvs:vminus:large:u}
Fix $0 < a \le \frac{1}{2}$
and suppose that
\begin{equation}
v'_-(u) = v_d'(u)
\end{equation}
for some $u_{\mathrm{min}}(a) \le u < a$ and $d > 0$,
with $u \neq u_{\mathrm{min}}(a)$ if $a = \frac{1}{2}$.
Then we have
\begin{equation}
v_-''(u) > v_d''(u).
\end{equation}
\end{lem}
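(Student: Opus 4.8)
The plan is to substitute the tangency hypothesis into the identity \sref{eq:bif:tgn:id:for:delta:sec} supplied by Lemma \ref{lem:bif:trnvs:basic:props:ints}(i). Writing $\beta = v_-'(u) = v_d'(u)$ for the common slope and collecting the prefactor, that identity becomes
\begin{equation}
2d\big[v_-''(u) - v_d''(u)\big]
= \frac{1}{1-\beta}\Big[ g''(u;a) + \beta^3 g''\big(v_-(u);a\big) \Big].
\end{equation}
Since $d > 0$, the whole statement reduces to showing that $1 - \beta > 0$ and that the bracket $g''(u;a) + \beta^3 g''(v_-(u);a)$ is strictly positive. My first step is to locate $\beta$. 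I note that $v_-'(u_{\mathrm{min}}(a)) = 0$ while $v_d'(u_{\mathrm{min}}(a)) = 1$, so the tangency cannot occur at $u_{\mathrm{min}}(a)$ and I may assume $u \in (u_{\mathrm{min}}(a), a)$. On this interval $g'(u;a) > 0$, so by \sref{eq:bif:prlm:deriv:vpm} together with $g'(v_-(u);a) > 0$ I obtain $\beta < 0$; combined with Lemma \ref{lem:bif:trnvs:basic:props:ints}(ii) this places $\beta \in [-1, 0)$, which gives $1 - \beta \ge 2$ and $|\beta|^3 \le 1$.

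Next I would record the two convexity facts about the cubic that drive the estimate. Since $u < a \le u_{\mathrm{infl}}(a)$ we have $g''(u;a) = 6\big(u_{\mathrm{infl}}(a) - u\big) > 0$. Moreover $v_-(u) \in (a, 1)$ lies strictly to the right of $u$, so the fact that $w \mapsto g''(w;a)$ is affine and strictly decreasing yields $g''(u;a) > g''(v_-(u);a)$. These are exactly the ingredients needed to control the bracket.

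Finally I would split on the sign of $g''(v_-(u);a)$, i.e.\ on whether $v_-(u)$ lies above or below the inflection point. If $v_-(u) > u_{\mathrm{infl}}(a)$ then $g''(v_-(u);a) < 0$, so $\beta^3 g''(v_-(u);a) > 0$ and the bracket is a sum of two positive terms. If instead $v_-(u) \le u_{\mathrm{infl}}(a)$, then $g''(v_-(u);a) \ge 0$ and, using $|\beta|^3 \le 1$,
\begin{equation}
g''(u;a) + \beta^3 g''\big(v_-(u);a\big)
= g''(u;a) - |\beta|^3 g''\big(v_-(u);a\big)
\ge g''(u;a) - g''\big(v_-(u);a\big) > 0 .
\end{equation}
I expect this second case to be the main obstacle, since there the two terms in the bracket carry opposite signs and one must show the $g''(u;a)$ contribution dominates; the point is precisely that $|\beta| < 1$ together with $v_-(u) > u$. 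The only remaining bookkeeping concerns $a = \frac{1}{2}$, where $v_-' \equiv -1$ on $[u_{\mathrm{min}}(\frac{1}{2}), \frac{1}{2}]$ forces the boundary value $\beta = -1$; since the argument uses only $\beta \in [-1, 0)$ and $|\beta|^3 \le 1$ it covers this endpoint without modification, and the excluded point $u = u_{\mathrm{min}}(\frac{1}{2})$ never enters.
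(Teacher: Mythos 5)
Your proof is correct and takes essentially the same route as the paper: substitute the tangency into the identity of Lemma \ref{lem:bif:trnvs:basic:props:ints}(i), pin down $\beta \in [-1,0)$, and exploit $g''(u;a)>0$ together with $g''\big(v_-(u);a\big) \le g''(u;a)$ (from $v_-(u)>u$ and $g'''<0$); the paper merely compresses your two cases into the single factorization $\tfrac{1+\beta^3}{1-\beta}\,g''(u;a)>0$. One inconsequential slip: for $\beta\in[-1,0)$ one has $1 < 1-\beta \le 2$ rather than $1-\beta \ge 2$, but you only ever use $1-\beta>0$, so nothing breaks.
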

\begin{proof}
Exploiting (v) of Lemma
\ref{lem:bif:setup:bs:props:vpm}
and (ii) of Lemma \ref{lem:bif:trnvs:basic:props:ints}
we have $ v_-'(u) \in (-1, 0)$.
In addition, the inequalities $v_-(u) > u$ and $g''' < 0$
imply that
$g''(v_-(u);a) \le g''(u;a)$.
Writing $\beta = v_-'(u)$
we may hence estimate
\begin{equation}
\begin{array}{lcl}
2d[v_{-}''(u) - v_d''(u) ]
& \ge & \frac{1}{1 - \beta} g''(u;a)
+ \frac{\beta^3}{1 - \beta} g''\big(u  ;a\big)
\\[0.2cm]
& = &
\frac{\beta^3 + 1}{1 - \beta} g''(u ;a)
\\[0.2cm]
& > & 0,
\end{array}
\end{equation}
in which we used $g''(u ;a) > 0$.
\end{proof}

Intersections with $v_\pm'(u) > 0$ are more delicate to analyze.
Items (i), (iii) and (iv) of Lemma \ref{lem:bif:trnvs:basic:props:ints}
suggest that it is worthwhile to consider the
two functions
\begin{equation}
\begin{array}{lcl}
h_l(\beta) & = &
\frac{1}{2d(1 - \beta)}
\Big[
   g''\Big(u_l\big( 2d(1 - \beta) \big) ;a \Big)
   +
   \beta^3
   g''\Big(u_l\big(  2d (1 - \beta^{-1}) \big) ; a \Big)
\Big]  ,
\\[0.2cm]
h_r(\beta)
&  = &
\frac{1}{2d(1 - \beta)}
\Big[
     g''\Big(u_l\big( 2d(1 - \beta) \big) ;a \Big)
   +
   \beta^3
   g''\Big(u_r\big(  2d (1 - \beta^{-1}) \big) ;a \Big)
\Big] .
\end{array}
\end{equation}

\begin{lem}
Pick $0 < a \le \frac{1}{2}$
and  $0 < d \le \frac{g'(a;a)}{4}$.
Then for any $\beta > 1$ the inequality
\begin{equation}
h_l(\beta) < 0
\end{equation}
holds, while for any $\beta \in (0, \infty) \setminus \{1\}$
we have
\begin{equation}
\label{eq:trnvs:hr:pos}
h_r(\beta) > 0 .
\end{equation}
\end{lem}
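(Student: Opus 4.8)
The plan is to exploit the fact that $g''(\,\cdot\,;a)$ is affine in order to collapse the two second-derivative terms in $h_l$ and $h_r$ into elementary square roots. Writing $M = a^2 - a + 1 = 3\,g'(u_{\mathrm{infl}}(a);a)$ and using $g''(u;a) = 6\big(u_{\mathrm{infl}}(a) - u\big)$ together with the explicit formulas for $u_l$ and $u_r$, I would first record the identities $g''\big(u_l(\kappa);a\big) = 2\sqrt{M - 3\kappa}$ and $g''\big(u_r(\kappa);a\big) = -2\sqrt{M-3\kappa}$, valid whenever $\kappa \le g'(u_{\mathrm{infl}}(a);a)$. Abbreviating $\phi(\kappa) = \sqrt{M-3\kappa}$ and setting $\kappa_1 = 2d(1-\beta)$, $\kappa_2 = 2d(1-\beta^{-1})$, this recasts the two functions as
\begin{equation}
h_l(\beta) = \frac{\phi(\kappa_1) + \beta^3 \phi(\kappa_2)}{d(1-\beta)},
\qquad
h_r(\beta) = \frac{\phi(\kappa_1) - \beta^3 \phi(\kappa_2)}{d(1-\beta)} .
\end{equation}

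The next step is to certify that both square roots are strictly positive on $\beta > 0$, which also guarantees $u_l(\kappa_1)$, $u_l(\kappa_2)$ and $u_r(\kappa_2)$ are defined. I would compute $\phi(\kappa_1)^2 = (M - 6d) + 6d\beta$ and $\phi(\kappa_2)^2 = (M-6d) + 6d\beta^{-1}$, so it suffices to show $M - 6d > 0$. This is where the hypothesis $d \le \tfrac14 g'(a;a) = \tfrac14 a(1-a)$ enters: it yields $M - 6d \ge M - \tfrac32 a(1-a) = \tfrac12(5a^2 - 5a + 2)$, and the quadratic $5a^2 - 5a + 2$ has negative discriminant, hence is strictly positive. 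Thus $\phi(\kappa_1), \phi(\kappa_2) > 0$ for every $\beta > 0$.

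With positivity in hand the claim for $h_l$ is immediate: when $\beta > 1$ the numerator $\phi(\kappa_1) + \beta^3\phi(\kappa_2)$ is a sum of positive terms while the prefactor $1/\big(d(1-\beta)\big)$ is negative, so $h_l(\beta) < 0$. For $h_r$ the sign of the numerator is not free, so I would instead compare $\phi(\kappa_1)$ with $\beta^3\phi(\kappa_2)$ by squaring (both being nonnegative). The key computation is the factorisation
\begin{equation}
\phi(\kappa_1)^2 - \beta^6\phi(\kappa_2)^2 = (1-\beta)\Big[ (M-6d)\big(1+\beta+\cdots+\beta^5\big) + 6d\beta\big(1+\beta+\beta^2+\beta^3\big)\Big],
\end{equation}
whose bracketed factor $P(\beta)$ is manifestly positive for $\beta > 0$ since $M - 6d > 0$. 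Consequently $\phi(\kappa_1) - \beta^3\phi(\kappa_2)$ carries the same sign as $1-\beta$, which exactly cancels the sign of the prefactor $1/\big(d(1-\beta)\big)$; hence $h_r(\beta) > 0$ for every $\beta \in (0,\infty)\setminus\{1\}$, treating $\beta \in (0,1)$ and $\beta > 1$ uniformly.

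I expect the only real obstacle to be organisational rather than conceptual: spotting that the $g''$ terms linearise into $\pm 2\phi(\kappa)$ so that everything reduces to comparing two square roots, and then finding the clean factorisation $(1-\beta)P(\beta)$ that makes the sign of $\phi(\kappa_1)^2 - \beta^6\phi(\kappa_2)^2$ transparent. The verification $M - 6d > 0$ and the bookkeeping of the prefactor sign across the two regimes of $\beta$ are routine once this structure is exposed.
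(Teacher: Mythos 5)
Your proof is correct and follows essentially the same route as the paper: rewrite the $g''$ terms at $u_l$, $u_r$ as explicit square roots (the paper normalises by $g'(u_{\mathrm{infl}}(a);a)=M/3$ and pulls out a $\sqrt{3}$, you use $M-3\kappa$ directly), read off the sign of $h_l$ from the positive numerator, and decide the sign of $h_r$'s numerator by comparing the squares, which in both treatments reduces to the positivity of $(1-\beta^6)(M-6d)+6d\beta(1-\beta^4)$ divided by $(1-\beta)$, guaranteed by $M-6d>0$ under the hypothesis $d\le \tfrac14 g'(a;a)$.
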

\begin{proof}
Observe first that for $ \beta > 0$ we have
\begin{equation}
\max\{ 2d ( 1 - \beta) ,
 2d (1 - \beta^{-1} ) \}
 \le 2d \le \frac{g'(a;a)}{2} \le \frac{g'(u_{\mathrm{infl}}(a); a)}{2},
\end{equation}
which implies that $h_l(\beta)$
and $h_r(\beta)$ are well-defined.

A little algebra yields
\begin{equation}
\begin{array}{lcl}
h_l(\beta) & = &
 \frac{\sqrt{3}}{d(1 - \beta)}
 \Big[
   \sqrt{g'(u_{\mathrm{infl}}(a); a) + 2d(\beta - 1)}
   + \beta^3 \sqrt{ g'(u_{\mathrm{infl}}(a);a)
   + 2d \beta^{-1}(1 - \beta)  }
 \Big] ,
\\[0.2cm]
h_r(\beta) & = &
 \frac{\sqrt{3}}{d(1 - \beta)}
 \Big[
   \sqrt{g'(u_{\mathrm{infl}}(a);a) + 2d(\beta - 1)}
   - \beta^3 \sqrt{ g'(u_{\mathrm{infl}}(a);a)
   + 2d \beta^{-1}(1 - \beta)  }
 \Big] .
\end{array}
\end{equation}
It is clear that $h_l(\beta) < 0$ for $\beta > 1$.
Upon writing
\begin{equation}
\begin{array}{lcl}
\Delta(\beta)
& = &
 g'(u_{\mathrm{infl}}(a);a) + 2d(\beta - 1)
- \big(
     \beta^6 g'(u_{\mathrm{infl}}(a);a) + 2d \beta^5(1 - \beta)
  \big)
\\[0.2cm]
& = &
  (1 - \beta^6) \big( g'(u_{\mathrm{infl}}(a);a) - 2d \big)
  + 2d \beta (1 - \beta^4) ,
\end{array}
\end{equation}
it is easy to verify  that $\Delta(\beta) < 0$
for $\beta > 1$
and $\Delta(\beta) > 0$ for $0 < \beta < 1$.
This yields the final inequality
\sref{eq:trnvs:hr:pos}.
\end{proof}

\begin{lem}
\label{lem:bif:tgn:option:a:b}
Pick $0 < a \le \frac{1}{2}$
and $0 < d \le \frac{g'(a;a)}{4}$.
Then we have
\begin{equation}
\label{eq:bif:tgn:deriv:at:zero}
v_-'(0) < v_d'(0).
\end{equation}
In addition, suppose that
\begin{equation}
\label{lem:bif:tgn:derivs:eq}
v_-'(u) = v_*'(u)
\end{equation}
for some $0  \le u \le a$,
with $u \neq u_{\mathrm{min}}(a)$ if $a = \frac{1}{2}$.
Then one of the following two statements must hold.
\begin{itemize}
\item[(a)]{
  We have the inequality
  \begin{equation}
    v_-''(u) > v_d''(u).
  \end{equation}
}
\item[(b)]{
  We have the identities
  \begin{equation}
    u = a, \qquad d = \frac{g'(a)}{4},
    \qquad v_-''(u) =  v_d''(u).
  \end{equation}
}
\end{itemize}
\end{lem}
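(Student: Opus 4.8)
\emph{Plan.} The plan is to dispose of the value at $u=0$ by direct computation, and then to study an arbitrary point $u\in[0,a]$ at which $v_-'(u)=v_d'(u)=:\beta$. By Lemma~\ref{lem:bif:trnvs:basic:props:ints}(ii) we have $\beta\in[-1,0)\cup(1,\infty)$, and the sign of $g'(u;a)$ — hence the position of $u$ relative to $u_{\mathrm{min}}(a)$ — is pinned down by which interval contains $\beta$. I would therefore split into the regimes $u\in(u_{\mathrm{min}}(a),a)$, $u=a$, and $u\in[0,u_{\mathrm{min}}(a))$. For \eqref{eq:bif:tgn:deriv:at:zero} itself, using $v_-(0)=a$ from Lemma~\ref{lem:bif:setup:bs:props:vpm}(iii), the identity \eqref{eq:bif:prlm:deriv:vpm}, and the values $g'(0;a)=-a$, $g'(a;a)=a(1-a)$, one computes $v_-'(0)=\tfrac{1}{1-a}$ and $v_d'(0)=1+\tfrac{a}{2d}$. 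The inequality $v_-'(0)<v_d'(0)$ is then equivalent to $2d<1-a$, which holds because $2d\le\tfrac12 g'(a;a)=\tfrac12 a(1-a)<1-a$.

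Now assume $v_-'(u)=v_d'(u)=\beta$. If $\beta\in(-1,0)$ then $g'(u;a)>0$, so $u_{\mathrm{min}}(a)<u<a$ (the endpoint $u=u_{\mathrm{min}}(a)$ is excluded since there $\beta=0$ is impossible), and Lemma~\ref{lem:bif:trnvs:vminus:large:u} yields $v_-''(u)>v_d''(u)$, i.e.\ alternative (a). The boundary value $\beta=-1$ can only occur at $u=a$, because $v_-'(a)=-1$ while $v_-'(u)>-1$ for $u<a$ by Lemma~\ref{lem:bif:setup:bs:props:vpm}(v); the tangency condition $v_d'(a)=-1$ then forces $d=\tfrac14 g'(a;a)$, and evaluating \eqref{eq:bif:tgn:id:for:delta:sec} with $\beta=-1$ and $v_-(a)=a$ gives $\tfrac12 g''(a;a)-\tfrac12 g''(a;a)=0$, hence $v_-''(a)=v_d''(a)$. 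This is precisely alternative (b).

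The remaining and essential case is $\beta>1$, equivalently $g'(u;a)<0$ and $0\le u<u_{\mathrm{min}}(a)$. By Lemma~\ref{lem:bif:trnvs:basic:props:ints}(i),(iii),(iv) the quantity $v_-''(u)-v_d''(u)$ equals $h_l(\beta)$ when $v_-(u)\le u_{\mathrm{infl}}(a)$ and $h_r(\beta)$ when $v_-(u)>u_{\mathrm{infl}}(a)$. Since the lemma established just above gives $h_r(\beta)>0$ for $\beta>1$, alternative (a) would follow at once if one knew $v_-(u)>u_{\mathrm{infl}}(a)$; the danger is that $h_l(\beta)<0$, so a tangency with $v_-(u)\le u_{\mathrm{infl}}(a)$ would contradict the claim. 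The key observation that removes this danger is that no tangency can occur there at all. Indeed, if $0\le u<u_{\mathrm{min}}(a)$ and $a\le v_-(u)\le u_{\mathrm{infl}}(a)$, then $g'$ is increasing on $[u_{\mathrm{min}}(a),u_{\mathrm{infl}}(a)]\supseteq[a,u_{\mathrm{infl}}(a)]$, so $g'(v_-(u);a)\ge g'(a;a)$, and therefore
\begin{equation}
v_-'(u)=\frac{-g'(u;a)}{g'(v_-(u);a)}\le\frac{-g'(u;a)}{2d}<1+\frac{-g'(u;a)}{2d}=v_d'(u),
\end{equation}
where the first inequality uses $g'(v_-(u);a)\ge g'(a;a)\ge 2d$ (a consequence of $d\le\tfrac14 g'(a;a)$) together with $-g'(u;a)>0$. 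This contradicts $v_-'(u)=v_d'(u)$, so at a tangency with $\beta>1$ we must have $v_-(u)>u_{\mathrm{infl}}(a)$, placing us in the $h_r$-regime and giving $v_-''(u)-v_d''(u)=h_r(\beta)>0$, i.e.\ alternative (a).

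The step I expect to be the main obstacle is exactly this last one: controlling the sign of $v_-''-v_d''$ in the region $g'(u;a)<0$, where the representation through $h_l$ genuinely carries the wrong sign. The slope comparison above is what rescues the argument, by showing that a non-transverse intersection simply never materialises in the dangerous subregion $v_-(u)\le u_{\mathrm{infl}}(a)$; once that is in hand, all three regimes reduce to the previously established Lemmata~\ref{lem:bif:trnvs:basic:props:ints}--\ref{lem:bif:trnvs:vminus:large:u} and the bounds on $h_l$ and $h_r$.
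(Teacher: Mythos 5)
Your proof is correct, and it reaches the conclusion by a genuinely different route in the one regime where the argument is delicate, namely $0\le u<u_{\mathrm{min}}(a)$ (equivalently $\beta>1$), where the representation of $v_-''-v_d''$ through $h_l$ has the wrong sign. The paper handles this by a continuation argument: it introduces $u_c$ (the last point up to which $v_-\le u_{\mathrm{infl}}(a)$) and $u_I$, propagates the strict inequality $v_d'>v_-'$ from $u=0$ along $[0,\min\{u_c,u_{\mathrm{min}}(a)\}]$, and uses the negativity of $h_l$ to derive a contradiction at a first touching point; only for $u\ge \min\{u_c,u_{\mathrm{min}}(a)\}$ does it then invoke $h_r>0$ or Lemma \ref{lem:bif:trnvs:vminus:large:u}. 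You instead rule out any tangency with $v_-(u)\le u_{\mathrm{infl}}(a)$ by a direct pointwise estimate: since $v_-(u)\in[a,u_{\mathrm{infl}}(a)]$ and $g'$ is increasing there, $g'(v_-(u);a)\ge g'(a;a)\ge 4d>2d$, whence $v_-'(u)\le -g'(u;a)/(2d)<v_d'(u)$. This is more elementary (it never needs $h_l<0$, nor the auxiliary quantities $u_c$, $u_I$), at the cost of being a purely local observation rather than the global monotonicity information the paper's continuation argument incidentally records. Your remaining cases match the paper: $\beta\in(-1,0)$ forces $u\in(u_{\mathrm{min}}(a),a)$ and Lemma \ref{lem:bif:trnvs:vminus:large:u} gives (a); $\beta=-1$ forces $u=a$ and $d=g'(a;a)/4$ by Lemma \ref{lem:bif:setup:bs:props:vpm}(v), and your evaluation of \sref{eq:bif:tgn:id:for:delta:sec} at $\beta=-1$, $v_-(a)=a$ is a slightly slicker way to obtain $v_-''(a)=v_d''(a)$ than the paper's direct computation; and the inequality \sref{eq:bif:tgn:deriv:at:zero} follows from the same explicit values $v_-'(0)=1/(1-a)$, $v_d'(0)=1+a/(2d)$.
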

\begin{proof}
An easy computation yields
\begin{equation}
v_d'(0) = 1 - \frac{g'(0 ; a)}{2d}
\ge 1 - \frac{2g'(0 ;a)}{g'(a ; a)}
> - \frac{g'(0 ;a)}{g'(a ; a)} = v_-'(0).
\end{equation}
We introduce the critical value
\begin{equation}
u_c = \mathrm{sup} \{ 0 \le u \le a : v_-(\tilde{u}) \le u_{\mathrm{infl}}(a)
\hbox{ for all } 0 \le \tilde{u} \le u \}
\end{equation}
and remark that $u_c = 0$ when $a = \frac{1}{2}$.
This allows us to define the value
\begin{equation}
u_I = \mathrm{sup}\big\{0 \le u \le \min\{ u_c , u_{\mathrm{min}}(a) \}:
  v_d'(u) > v_-'(u)
\big\},
\end{equation}
which again satisfies $u_I = 0$ when $a = \frac{1}{2}$.

We claim that also $v_d'(u_I)  > v_-'(u_I)$.
Indeed, assuming this is false we can
define $\beta = v_d'(u_I) = v_-'(u_I) \ge 0$.
Item (ii) of Lemma \ref{lem:bif:trnvs:basic:props:ints}
the implies $\beta >1$. Since $h_l(\beta) < 0$
we must have
\begin{equation}
v_-''(u_I) < v_d''(u_I) ,
\end{equation}
which yields a contradiction.

In particular, if \sref{lem:bif:tgn:derivs:eq} holds
then we must have $u \ge \min\{ u_{\mathrm{min}}(a), u_c \}$.
If $u_{\mathrm{min}}(a) \le u < a$, Lemma
\ref{lem:bif:trnvs:vminus:large:u}
shows that (a) must hold.
On the other hand,
if $u_c \le u < u_{\mathrm{min}}(a)$,
then we can define $\beta = v_d'(u) = v_-'(u) \ge 0$
and conclude as above that $\beta > 1$. In addition,
we have $v_-(u) \ge u_{\mathrm{infl}}(a)$,
which allows us to use
$h_r(\beta)  > 0$ and item (iv) of
Lemma \ref{lem:bif:trnvs:basic:props:ints}
to show that (a) must hold.
In the final case $u = a$, the remarks
at the start of this section together with a direct
computation of $v_d''(a)$ and $v_-''(a)$ imply the identities in (b).
\end{proof}

In the remainder of this section we
collect several consequences of these computations.
In each case, we either rule out
non-transverse intersections of $v_\pm$ with $v_d$
or show that they must occur at local minima
of $v_\pm - v_d$.

\begin{cor}
\label{cor:bif:trnsv:v:plus:tangency}
Fix $0 < a \le \frac{1}{2}$
together with $0 < d \le \frac{g'(a;a)}{4}$
and suppose that
\begin{equation}
v_+'(u) = v_d'(u)
\end{equation}
for some $0  \le u \le a$,
with $u \neq u_{\mathrm{min}}(a)$ if $a = \frac{1}{2}$.
Then we have
\begin{equation}
v_+''(u) > v_d''(u)  .
\end{equation}
\end{cor}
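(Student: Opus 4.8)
The plan is to recognize this statement as an immediate corollary of the tangency identity in Lemma~\ref{lem:bif:trnvs:basic:props:ints}(i) together with the positivity of the auxiliary function $h_r$ established in the preceding lemma. The entire argument reduces to identifying the right-hand side of that identity with $2d\,h_r(\beta)$, so no new computation is really needed.

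First I would set $\beta = v_+'(u) = v_d'(u)$ and invoke part~(ii) of Lemma~\ref{lem:bif:trnvs:basic:props:ints}, which gives $\beta \in (0,1)$ precisely because $\# = +$. In particular $\beta \neq 1$, so the factor $(1-\beta)^{-1}$ and the quantity $\beta^{-1}$ appearing below are well-defined, and $\beta$ lands inside the admissible range $(0,\infty)\setminus\{1\}$ for the $h_r$ lemma. Next I would use parts~(iii) and~(v) of the same lemma to write
\begin{equation}
u = u_l\big(2d(1-\beta)\big), \qquad v_+(u) = u_r\big(2d(1-\beta^{-1})\big),
\end{equation}
which are exactly the arguments occurring inside the second derivatives $g''$ in the definition of $h_r$.

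Then I would substitute these two expressions into the identity (i) of Lemma~\ref{lem:bif:trnvs:basic:props:ints}, namely
\begin{equation}
2d\big[v_+''(u) - v_d''(u)\big] = \frac{1}{1-\beta}\, g''(u;a) + \frac{\beta^3}{1-\beta}\, g''\big(v_+(u);a\big).
\end{equation}
Reading off the definition of $h_r$, the right-hand side equals exactly $2d\,h_r(\beta)$, so after dividing by $2d$ one obtains the clean relation $v_+''(u) - v_d''(u) = h_r(\beta)$. Since the hypotheses $0 < a \le \frac{1}{2}$ and $0 < d \le \frac{g'(a;a)}{4}$ are precisely those required by the preceding lemma, and $\beta \in (0,1)$, that lemma yields $h_r(\beta) > 0$, which is the desired conclusion.

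I do not expect a genuine obstacle here: essentially all the analytic content has already been packaged into the definition of $h_r$ and the proof of its sign. The only point requiring a moment's care is the bookkeeping of matching the normalizing factor $2d$ and the denominators $1-\beta$ between the tangency identity and the definition of $h_r$, and checking that the excluded case $a = \frac{1}{2}$, $u = u_{\mathrm{min}}(a)$ is exactly the one ruled out by hypothesis, which is what permits the application of Lemma~\ref{lem:bif:trnvs:basic:props:ints} in the first place.
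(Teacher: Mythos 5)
Your proposal is correct and follows essentially the same route as the paper, which likewise deduces the corollary directly from items (i), (ii) and (v) of Lemma \ref{lem:bif:trnvs:basic:props:ints} together with the positivity of $h_r$ on $(0,1)$. The only difference is that you explicitly invoke item (iii) to identify $u = u_l\big(2d(1-\beta)\big)$, which the paper leaves implicit; this is a harmless (indeed slightly more careful) piece of bookkeeping.
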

\begin{proof}
Using the fact that
$h_r(\beta ) > 0$
for $\beta \in (0, 1)$,
this follows directly
from items (i), (ii) and (v) of Lemma \ref{lem:bif:trnvs:basic:props:ints}.
\end{proof}

\begin{cor}
\label{cor:bif:donward:tangency}
Fix $0 < a \le \frac{1}{2}$
together with $0 < d < \frac{g'(a;a)}{4}$
and suppose that
\begin{equation}
v_-'(u) = v_*'(u)
\end{equation}
for some $0  \le u \le a$,
with $u\neq u_{\mathrm{min}}(a) $ if $a = \frac{1}{2}$.
Then we have
\begin{equation}
v_-''(u) > v_d''(u).
\end{equation}
In addition, we have
\begin{equation}
\label{eq:cor:vminus:subcrit:ineq:v:star:minus}
v_-'(a) > v_d'(a).
\end{equation}
\end{cor}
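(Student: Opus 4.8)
The plan is to treat this statement as a direct specialization of the preceding Lemma~\ref{lem:bif:tgn:option:a:b}, combined with a short explicit computation at the right endpoint $u = a$. The heavy analytical lifting has already been done, so the task is essentially one of correctly matching hypotheses.

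First I would establish the inequality $v_-''(u) > v_d''(u)$. Lemma~\ref{lem:bif:tgn:option:a:b} already guarantees that, whenever $v_-'(u) = v_d'(u)$ holds for some $0 \le u \le a$ (with $u \neq u_{\mathrm{min}}(a)$ when $a = \frac{1}{2}$) and $0 < d \le \frac{g'(a;a)}{4}$, exactly one of two mutually exclusive alternatives occurs: either (a) the desired strict inequality $v_-''(u) > v_d''(u)$, or (b) the degenerate situation $u = a$, $d = \frac{g'(a;a)}{4}$, $v_-''(u) = v_d''(u)$. Since our hypothesis imposes the \emph{strict} bound $d < \frac{g'(a;a)}{4}$, alternative (b) is ruled out, so (a) must hold. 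This immediately yields the first claim.

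Next I would prove the boundary inequality $v_-'(a) > v_d'(a)$ using the explicit formulas recorded at the start of \S\ref{sec:bif:tgn}. There it is noted that $v_-'(a) = -1$ and $v_d'(a) = 1 - \frac{g'(a;a)}{2d}$ for every $d > 0$. The hypothesis $d < \frac{g'(a;a)}{4}$ gives $\frac{g'(a;a)}{2d} > 2$, whence $v_d'(a) = 1 - \frac{g'(a;a)}{2d} < -1 = v_-'(a)$, which is exactly the asserted strict inequality.

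Since the analytical work has already been carried out in Lemma~\ref{lem:bif:tgn:option:a:b} and in the preparatory computations of this section, there is no genuine computational obstacle. The only point requiring care is bookkeeping on the hypotheses: I would verify that the passage from the non-strict bound $d \le \frac{g'(a;a)}{4}$ of Lemma~\ref{lem:bif:tgn:option:a:b} to the strict bound $d < \frac{g'(a;a)}{4}$ of the corollary is precisely what excludes the degenerate alternative (b) and simultaneously renders the endpoint inequality strict — the same strictness that will be needed to drive the global monotonicity argument in \S\ref{sec:bif:glb}.
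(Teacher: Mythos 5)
Your proposal is correct and follows essentially the same route as the paper: the first inequality is obtained by noting that the strict hypothesis $d < \frac{g'(a;a)}{4}$ excludes alternative (b) of Lemma~\ref{lem:bif:tgn:option:a:b}, and the endpoint inequality follows from the explicit computation $v_d'(a) = 1 - \frac{g'(a;a)}{2d} < -1 = v_-'(a)$. No gaps.
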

\begin{proof}
The first inequality follows directly from
the fact that option (b) in Lemma \ref{lem:bif:tgn:option:a:b}
cannot hold because of the restriction on $d$.
The final inequality can be verified directly
by noting that
\begin{equation}
v_d'(a) =  1 - \frac{g'(a;a)}{2d}
< 1 - \frac{2 g'(a;a)}{g'(a;a)}
= -1 = v_-'(a).
\end{equation}
\end{proof}

\begin{cor}
\label{cor:bif:tgn:crit:case:deriv:minus:smaller}
Fix $0 < a \le \frac{1}{2}$
together with $d = \frac{g'(a;a)}{4}$.
Then we have
\begin{equation}
\label{eq:cor:critical:ineq:for:v:minus:star:i}
v_-'(u) < v_d'(u)
\end{equation}
for all $0 \le u < a$,
with the exception of $u = u_{\mathrm{min}}(a)$
in the special case $a = \frac{1}{2}$.
\end{cor}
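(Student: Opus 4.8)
The plan is to track the sign of $\phi(u) := v_-'(u) - v_d'(u)$ on $[0,a)$. Estimate \sref{eq:bif:tgn:deriv:at:zero} already gives $\phi(0) < 0$, so it remains to exclude a sign change. The engine of the argument is Lemma \ref{lem:bif:tgn:option:a:b}: at any interior point $u \in (0,a)$ (with $u \neq u_{\mathrm{min}}(a)$ when $a = \tfrac{1}{2}$) at which $\phi(u) = 0$, alternative (b) is unavailable since it requires $u=a$, so alternative (a) forces $\phi'(u) = v_-''(u) - v_d''(u) > 0$. Hence every interior zero of $\phi$ is a strict up-crossing, which means $\phi$ can vanish at most once on $(0,a)$; were it to vanish at a point $u_0$, it would be strictly positive on all of $(u_0,a)$. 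To reach a contradiction it then suffices to show that $\phi$ is strictly negative just to the left of the right endpoint of the relevant interval.

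For $0 < a < \tfrac{1}{2}$ the profile $v_-$ is smooth up to $u=a$, where $v_-(a)=a$ and $v_-'(a)=-1$. Since $d = \tfrac{1}{4}g'(a;a)$, evaluating \sref{eq:preps:sc:deriv:v:star} and \sref{eq:preps:sec:deriv:v:pm} gives $v_d'(a) = -1$ and $v_-''(a) = v_d''(a) = -2\,g''(a;a)/g'(a;a)$, so that $\phi(a) = \phi'(a) = 0$; this is exactly the degenerate tangency of alternative (b). The crucial point is therefore the second-order behaviour of $\phi$ at $a$. Differentiating the defining relation $g(u;a) = -g(v_-(u);a)$ of Lemma \ref{lem:bif:setup:bs:props:vpm}(i) three times and using $g''' \equiv -6$, I expect to obtain
\begin{equation}
\phi''(a) = v_-'''(a) - v_d'''(a)
 = -\frac{6\, g''(a;a)^2}{g'(a;a)^2} - \frac{12}{g'(a;a)} < 0 ,
\end{equation}
where the positivity $g'(a;a) = a(1-a) > 0$ makes both terms negative. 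Consequently $\phi(u) \approx \tfrac{1}{2}\phi''(a)(u-a)^2 < 0$ just to the left of $a$, and the at-most-one-zero property rules out an interior up-crossing: a single zero $u_0$ would force $\phi > 0$ on $(u_0,a)$, contradicting $\phi<0$ near $a$. Thus $\phi < 0$ on all of $[0,a)$.

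The case $a = \tfrac{1}{2}$ is handled by splitting $[0,\tfrac{1}{2})$ at $u_{\mathrm{min}}(\tfrac{1}{2})$, where $v_-$ loses smoothness. On $(u_{\mathrm{min}}(\tfrac{1}{2}),\tfrac{1}{2})$ Lemma \ref{lem:bif:setup:bs:props:vpm:a:half}(iii) gives $v_-(u) = 1-u$, hence $v_-'(u) = -1$, while $v_d'(u) = 1 - 8\,g'(u;\tfrac{1}{2}) > -1$ because $g'(u;\tfrac{1}{2}) < \tfrac{1}{4}$ strictly for $u \neq \tfrac{1}{2}$; this yields $\phi < 0$ directly. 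On $[0,u_{\mathrm{min}}(\tfrac{1}{2}))$ I rerun the first-crossing argument with $u_{\mathrm{min}}(\tfrac{1}{2})$ in the role of right endpoint: using the one-sided limits of Lemma \ref{lem:bif:setup:bs:props:vpm:a:half}(iv) together with $g'(u_{\mathrm{min}}(\tfrac{1}{2});\tfrac{1}{2}) = 0$ and $g''(u_{\mathrm{min}}(\tfrac{1}{2});\tfrac{1}{2}) = \sqrt{3}$, one finds $\lim_{u \uparrow u_{\mathrm{min}}}\phi(u) = 0$ and $\lim_{u \uparrow u_{\mathrm{min}}}\phi'(u) = \tfrac{20}{3}\sqrt{3} > 0$, so $\phi < 0$ just to the left of $u_{\mathrm{min}}(\tfrac{1}{2})$. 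The same contradiction as before then forbids an interior zero, giving $\phi < 0$ on $[0,u_{\mathrm{min}}(\tfrac{1}{2}))$.

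The only genuinely computational step, and the main obstacle, is the degenerate endpoint analysis for $0 < a < \tfrac{1}{2}$: because the tangency at $u=a$ is of second order ($\phi(a) = \phi'(a) = 0$), one must push the implicit differentiation of $g(u;a) = -g(v_-(u);a)$ to third order to extract the sign of $\phi''(a)$. Everything else is a soft consequence of the up-crossing dichotomy in Lemma \ref{lem:bif:tgn:option:a:b} together with the boundary data recorded in Lemmata \ref{lem:bif:setup:bs:props:vpm} and \ref{lem:bif:setup:bs:props:vpm:a:half}.
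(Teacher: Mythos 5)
Your proof is correct and follows essentially the same route as the paper: establish $v_-'-v_d'<0$ at the left endpoint via \sref{eq:bif:tgn:deriv:at:zero}, use the up-crossing dichotomy of Lemma \ref{lem:bif:tgn:option:a:b}(a) to show any interior zero is a strict up-crossing, and then kill the possibility of such a crossing by a third-order Taylor expansion at the degenerate tangency $u=a$ (respectively the one-sided limits of Lemma \ref{lem:bif:setup:bs:props:vpm:a:half}(iv) at $u_{\mathrm{min}}(\tfrac12)$, plus the direct check $v_-'\equiv -1 < v_d'$ on the right piece). The sign computation $v_-'''(a)-v_d'''(a) = -6g''(a;a)^2/g'(a;a)^2 - 12/g'(a;a) < 0$ is exactly the paper's, so no gap remains.
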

\begin{proof}
It is easy to verify that $v_-'(a) = v_d'(a)$
and $v_-''(a) = v_d''(a)$.
We also compute
\begin{equation}
v_d'''(a) = -2 \frac{g'''(a;a)}{g'(a;a)}
  = \frac{12}{a ( 1-a)} > 0
\end{equation}
together with
\begin{equation}
\begin{array}{lcl}
v_-'''(u)
& = & - \frac{g'''(u;a)}{g'(v_-(u);a)}
 - 3\frac{g''(u;a)g'(u;a)}{g'(v_-(u);a)^3}g''(v_-(u);a)
\\[0.2cm]
& & \qquad
 - 3 \frac{g'(u;a)^3}{g'(v_-(u);a)^5}
   g''(v_-(u);a)^2
 + \frac{g'(u;a)^3}{g'(v_-(u);a)^4}
    g'''(v_-(u);a),
\end{array}
\end{equation}
which gives
\begin{equation}
v'''_-(a)
= -6 \frac{g''(a;a)^2}{g'(a;a)^2 } \le 0.
\end{equation}
In particular, we see that
\begin{equation}
v_-'(a - \epsilon) < v_d'(a - \epsilon)
\end{equation}
for all sufficiently small $\epsilon > 0$.
If $a \neq \frac{1}{2}$,
the conclusion now follows
from \sref{eq:bif:tgn:deriv:at:zero}
together with (a)
from Lemma \ref{lem:bif:tgn:option:a:b}.

For $a = \frac{1}{2}$,
one also needs to use the identities
\begin{equation}
v_d'\big(u_{\mathrm{min}}(a)\big) = 1,
\qquad
v_d''\big(u_{\mathrm{min}}(a) \big) = - 12 \sqrt{3}
\end{equation}
together with the limits
in item (iv) of Lemma \ref{lem:bif:setup:bs:props:vpm:a:half}
to conclude that
\begin{equation}
v_-'(u_{\mathrm{min}}(a) - \epsilon) < v_d'( u_{\mathrm{min}}(a) - \epsilon)
\end{equation}
for all sufficiently small $\epsilon > 0$. The arguments
above allow us to extend this to $\epsilon \in (0, u_{\mathrm{min}}(a)]$.
In addition, we have
$v_-'(\tilde{u}) = -1 < v_d'(\tilde{u})$
for $u_{\mathrm{min}}(a) < \tilde{u} < a$.
\end{proof}

\begin{cor}
\label{cor:bif:trv:inc:bef:u:min}
Pick $0 < a \le \frac{1}{2}$
and $0 < d \le \frac{g'(a;a)}{4}$.
Then we have
\begin{equation}
\label{eq:cor:critical:ineq:for:v:minus:star:ii}
v_-'(u) < v_d'(u)
\end{equation}
for all $0 \le u < u_{\mathrm{min}}(a)$.
\end{cor}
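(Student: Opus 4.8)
The plan is to examine the sign of $w := v_-' - v_d'$ on $[0, u_{\mathrm{min}}(a))$ and to show it is strictly negative there, which is exactly \sref{eq:cor:critical:ineq:for:v:minus:star:ii}. The argument combines three facts: the left endpoint value $w(0) < 0$, which is \sref{eq:bif:tgn:deriv:at:zero}; a right endpoint statement that $w < 0$ on a left-neighbourhood of $u_{\mathrm{min}}(a)$; and the observation, already available from Lemma~\ref{lem:bif:tgn:option:a:b}, that every interior zero of $w$ is an upward crossing. For the last point, note that if $w(u) = 0$ for some $0 \le u < u_{\mathrm{min}}(a)$, then $u < a$, so option (b) of Lemma~\ref{lem:bif:tgn:option:a:b} (which forces $u = a$) cannot occur and option (a) yields $w'(u) = v_-''(u) - v_d''(u) > 0$. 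Hence $w$ is strictly increasing at each of its zeros; in particular these zeros are isolated and $w$ can pass only from negative to positive values.

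The delicate ingredient is the behaviour near $u_{\mathrm{min}}(a)$. For $0 < a < \tfrac12$ it is immediate: since $g'(u_{\mathrm{min}}(a);a) = 0$, the identity \sref{eq:bif:prlm:deriv:vpm} gives $v_-'(u_{\mathrm{min}}(a)) = 0$, while $v_d'(u_{\mathrm{min}}(a)) = 1 - g'(u_{\mathrm{min}}(a);a)/(2d) = 1$, so $w(u_{\mathrm{min}}(a)) = -1 < 0$ and $w < 0$ on a left-neighbourhood of $u_{\mathrm{min}}(a)$. For $a = \tfrac12$ the formula \sref{eq:bif:prlm:deriv:vpm} degenerates at $u_{\mathrm{min}}(a)$, so I would instead use the one-sided limits in item~(iv) of Lemma~\ref{lem:bif:setup:bs:props:vpm:a:half}. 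These give $v_-'(u) \to 1 = v_d'(u_{\mathrm{min}})$ as $u \uparrow u_{\mathrm{min}}(\tfrac12)$, so $w \to 0$, together with $v_-''(u) \to -\tfrac43\sqrt3$; since $v_d''(u_{\mathrm{min}}) = -g''(u_{\mathrm{min}};\tfrac12)/(2d) = -\sqrt3/(2d) \le -8\sqrt3$ on account of $d \le g'(\tfrac12;\tfrac12)/4 = \tfrac1{16}$, the one-sided limit of $w'$ equals $-\tfrac43\sqrt3 + \sqrt3/(2d) > 0$. Thus $w$ approaches $0$ from below, and again $w < 0$ on a left-neighbourhood of $u_{\mathrm{min}}(\tfrac12)$.

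It then remains to run a sign argument. Suppose, for contradiction, that $w(u) \ge 0$ for some $u \in (0, u_{\mathrm{min}}(a))$. Since $w(0) < 0$, the intermediate value theorem produces a zero of $w$, so the zero set of $w$ in $(0, u_{\mathrm{min}}(a))$ is non-empty; it is closed, and bounded away from both endpoints because $w < 0$ near each. Let $u^\dagger$ be its largest element. By the upward-crossing property $w'(u^\dagger) > 0$, so $w > 0$ just to the right of $u^\dagger$, and since $u^\dagger$ is the last zero we obtain $w > 0$ on all of $(u^\dagger, u_{\mathrm{min}}(a))$, contradicting $w < 0$ near $u_{\mathrm{min}}(a)$. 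Hence $w$ has no zero in $(0, u_{\mathrm{min}}(a))$, and as $w$ is continuous with $w(0) < 0$ we conclude $w < 0$ throughout. I expect the main obstacle to be precisely the degenerate endpoint at $a = \tfrac12$: there the clean evaluation $v_-'(u_{\mathrm{min}}) = 0$ is unavailable, and one must argue from the second-order one-sided limits that $w$ tends to $0$ strictly from below rather than from above.
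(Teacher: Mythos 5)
Your argument is correct, but it is a genuinely different (and considerably longer) route than the one the paper takes. The paper's proof is a two-line monotonicity reduction: for $u < u_{\mathrm{min}}(a)$ one has $g'(u;a) < 0$, so $v_d'(u) = 1 - \tfrac{1}{2d}g'(u;a)$ is non-increasing in $d$ on this range, whence $v_d'(u) \ge v_{d_c}'(u)$ with $d_c = \tfrac{g'(a;a)}{4}$, and the critical-case inequality $v_{d_c}'(u) > v_-'(u)$ is exactly Corollary \ref{cor:bif:tgn:crit:case:deriv:minus:smaller} restricted to $[0,u_{\mathrm{min}}(a))$ (where the exceptional point $u_{\mathrm{min}}(\tfrac12)$ is harmlessly excluded). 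You instead rerun, for each fixed $d$, the endpoint-plus-upward-crossing continuation argument that the paper only carries out once, at $d = d_c$, in the proof of that corollary: you check $w(0)<0$ via \sref{eq:bif:tgn:deriv:at:zero}, verify $w<0$ near $u_{\mathrm{min}}(a)$ (trivially for $a<\tfrac12$ since $v_-'(u_{\mathrm{min}})=0$ while $v_d'(u_{\mathrm{min}})=1$, and via the second-order one-sided limits of Lemma \ref{lem:bif:setup:bs:props:vpm:a:half}(iv) for $a=\tfrac12$), and use option (a) of Lemma \ref{lem:bif:tgn:option:a:b} to exclude interior zeros. All of these steps are legitimate — in particular option (b) is correctly ruled out since any zero satisfies $u < u_{\mathrm{min}}(a) < a$, and your bound $v_d''(u_{\mathrm{min}}(\tfrac12)) = -\sqrt3/(2d) \le -8\sqrt3 < -\tfrac43\sqrt3$ does force the left limit of $w'$ to be positive. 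What the paper's approach buys is economy: the delicate degenerate analysis at $u_{\mathrm{min}}(\tfrac12)$ is done once and then propagated to all $0 < d \le d_c$ by the elementary sign observation on $g'$; what yours buys is that it needs only the restriction of the critical-case machinery to $[0,u_{\mathrm{min}}(a))$ rather than the full statement of Corollary \ref{cor:bif:tgn:crit:case:deriv:minus:smaller} on $[0,a)$.
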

\begin{proof}
Writing $d_c = \frac{g'(a;a)}{4}$,
we may use
Corollary \ref{cor:bif:tgn:crit:case:deriv:minus:smaller}
to compute
\begin{equation}
v_d'(u) = 1 - \frac{1}{2d} g'(u;a) \ge 1 - \frac{2}{g'(a;a)} g'(u;a)
 =  v_{d_c}'(u) > v_-'(u)
\end{equation}
for $u \in [0, u_{\mathrm{min}}(a))$.
\end{proof}

\begin{cor}
\label{cor:bif:trv:inc:aft:u:min}
Fix $a = \frac{1}{2}$
and $0 < d \le \frac{g'(a;a)}{4}$.
Then we have
\begin{equation}
\label{eq:cor:critical:ineq:for:v:plus:star}
v_+'(u) > v_d'(u)
\end{equation}
for all $u_{\mathrm{min}}(a) < u \le a$.
\end{cor}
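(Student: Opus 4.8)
The plan is to reduce the claim to a statement about the scalar function $\phi(u) = v_+'(u) - v_d'(u)$ on the half-open interval $(u_{\mathrm{min}}(a), a]$ with $a = \frac12$. By Lemma~\ref{lem:bif:setup:bs:props:vpm:a:half} the branch $v_+$ is smooth on $(u_{\mathrm{min}}(a), a]$ and possesses one-sided limits of its first two derivatives at $u_{\mathrm{min}}(a)$, while $v_d$ is smooth throughout; hence $\phi$ is smooth on $(u_{\mathrm{min}}(a), a]$ and extends continuously, together with its derivative, to $u_{\mathrm{min}}(a)$ from the right. The goal is to show $\phi > 0$ on the entire interval.

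First I would evaluate $\phi$ at the two endpoints. Since $v_+(a) = 1$ by Lemma~\ref{lem:bif:setup:bs:props:vpm:a:half}, the derivative formula there gives $v_+'(a) = -g'(a;a)/g'(1;a) = a$, while the standing hypothesis $d \le g'(a;a)/4$ yields $v_d'(a) = 1 - g'(a;a)/(2d) \le -1$; hence $\phi(a) \ge a + 1 > 0$. At the left endpoint, $g'(u_{\mathrm{min}}(a);a) = 0$ gives $v_d'(u_{\mathrm{min}}(a)) = 1$, which coincides with the limit $\lim_{u \downarrow u_{\mathrm{min}}(a)} v_+'(u) = 1$ from Lemma~\ref{lem:bif:setup:bs:props:vpm:a:half}(iv), so that $\phi \to 0$ as $u \downarrow u_{\mathrm{min}}(a)$.

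The decisive step is to fix the sign of $\phi$ just to the right of $u_{\mathrm{min}}(a)$ by a second-order comparison. Lemma~\ref{lem:bif:setup:bs:props:vpm:a:half}(iv) provides $\lim_{u \downarrow u_{\mathrm{min}}(a)} v_+''(u) = -\frac43\sqrt3$, while $v_d''(u_{\mathrm{min}}(a)) = -g''(u_{\mathrm{min}}(a);a)/(2d) = -\sqrt3/(2d)$. As $d \le g'(a;a)/4 = \frac{1}{16}$, we obtain $v_d''(u_{\mathrm{min}}(a)) \le -8\sqrt3 < -\frac43\sqrt3$, so $\phi'(u_{\mathrm{min}}(a)^+) > 0$; combined with $\phi(u_{\mathrm{min}}(a)^+) = 0$ this makes $\phi$ strictly positive on a right neighbourhood of $u_{\mathrm{min}}(a)$. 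I expect this endpoint estimate to be the main obstacle, since it is the only place where the precise constants from the cusp asymptotics in Lemma~\ref{lem:bif:setup:bs:props:vpm:a:half}(iv) must be weighed against the bound on $d$.

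Finally I would exclude interior sign changes. If $\phi$ vanished somewhere in $(u_{\mathrm{min}}(a), a)$, let $u_0$ be the smallest such point; then $\phi > 0$ on $(u_{\mathrm{min}}(a), u_0)$ forces $\phi'(u_0) \le 0$. But $u_0 \neq u_{\mathrm{min}}(a)$, so Corollary~\ref{cor:bif:trnsv:v:plus:tangency} applies at $u_0$ and gives $v_+''(u_0) > v_d''(u_0)$, i.e.\ $\phi'(u_0) > 0$, a contradiction. Thus $\phi$ has no zero in $(u_{\mathrm{min}}(a), a)$; being positive near $u_{\mathrm{min}}(a)$ and at $a$, it is positive on all of $(u_{\mathrm{min}}(a), a]$, which is exactly $v_+'(u) > v_d'(u)$ for $u_{\mathrm{min}}(a) < u \le a$.
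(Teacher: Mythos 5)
Your proposal is correct and follows essentially the same route as the paper: match $v_+'$ and $v_d'$ at $u_{\mathrm{min}}(a)$ using the one-sided limits from Lemma \ref{lem:bif:setup:bs:props:vpm:a:half}(iv), break the tie at second order to get strict positivity of $v_+'-v_d'$ just to the right of $u_{\mathrm{min}}(a)$, and then propagate this over the whole interval via Corollary \ref{cor:bif:trnsv:v:plus:tangency}, which forbids a downward crossing. The explicit constants you supply ($v_d''(u_{\mathrm{min}}(a))\le -8\sqrt{3}<-\tfrac{4}{3}\sqrt{3}$ and the endpoint check $v_+'(a)=a$) are accurate and merely flesh out what the paper leaves implicit.
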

\begin{proof}
Item (iv) of Lemma \ref{lem:bif:setup:bs:props:vpm:a:half}
allow us to compute
\begin{equation}
v_d'\big(u_{\mathrm{min}}(a) \big) = 1 = \lim_{u \downarrow u_{\mathrm{min}}(a) } v_+'(u)
\end{equation}
together with
\begin{equation}
v_d''\big(u_{\mathrm{min}}(a) \big) < \lim_{u \downarrow u_{\mathrm{min}}(a) } v''_+(u),
\end{equation}
which allows us to conclude that
\begin{equation}
v_+'(u_{\mathrm{min}}(a)+\epsilon) > v'_d( u_{\mathrm{min}}(a)+ \epsilon)
\end{equation}
for all sufficiently small $\epsilon > 0$.
Corollary \ref{cor:bif:trnsv:v:plus:tangency}
allows us to extend this conclusion
to the desired interval $\epsilon \in \big(0, a - u_{\mathrm{min}}(a) \big)$.
\end{proof}

\subsection{Structure}
\label{sec:bif:glb}

We are now ready to analyze the global structure
of the solution set to $G(u, v; a, d) = 0$.
Our first two results fix $a \in (0, \frac{1}{2}]$
and track the intersections of the curves
$v_\pm$ that were introduced in \S\ref{sec:bif:geom}
with the curve $v_d$ introduced in \S\ref{sec:bif:tgn}.
These intersections disappear as the parameter $d$ is increased;
see Figure \ref{f:bif:ints:vdpm}.

\begin{figure}
\begin{center}
\begin{minipage}{0.45\textwidth}
\includegraphics[width=\textwidth]{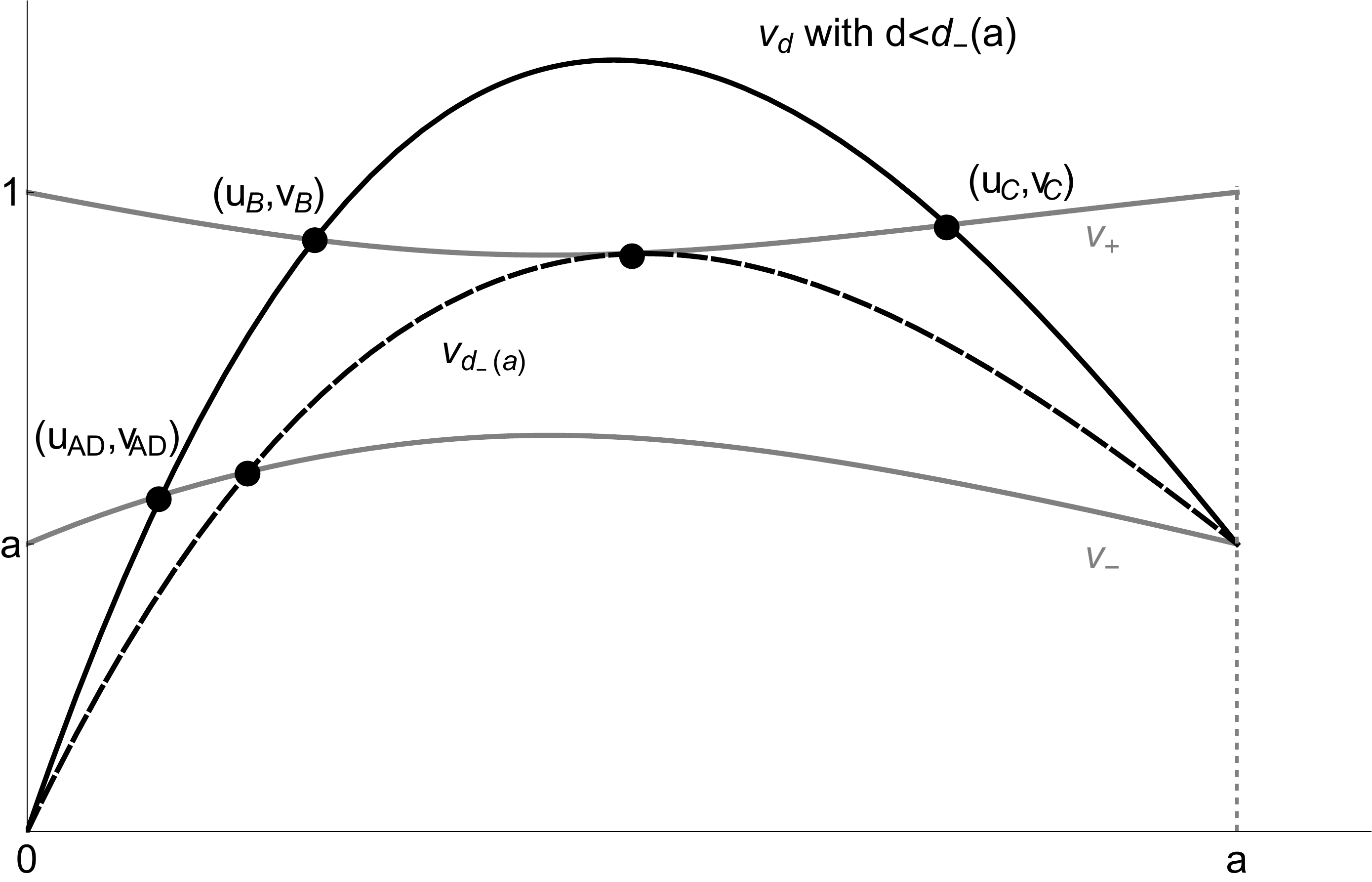}
\end{minipage}\quad
\begin{minipage}{0.45\textwidth}
\includegraphics[width=\textwidth]{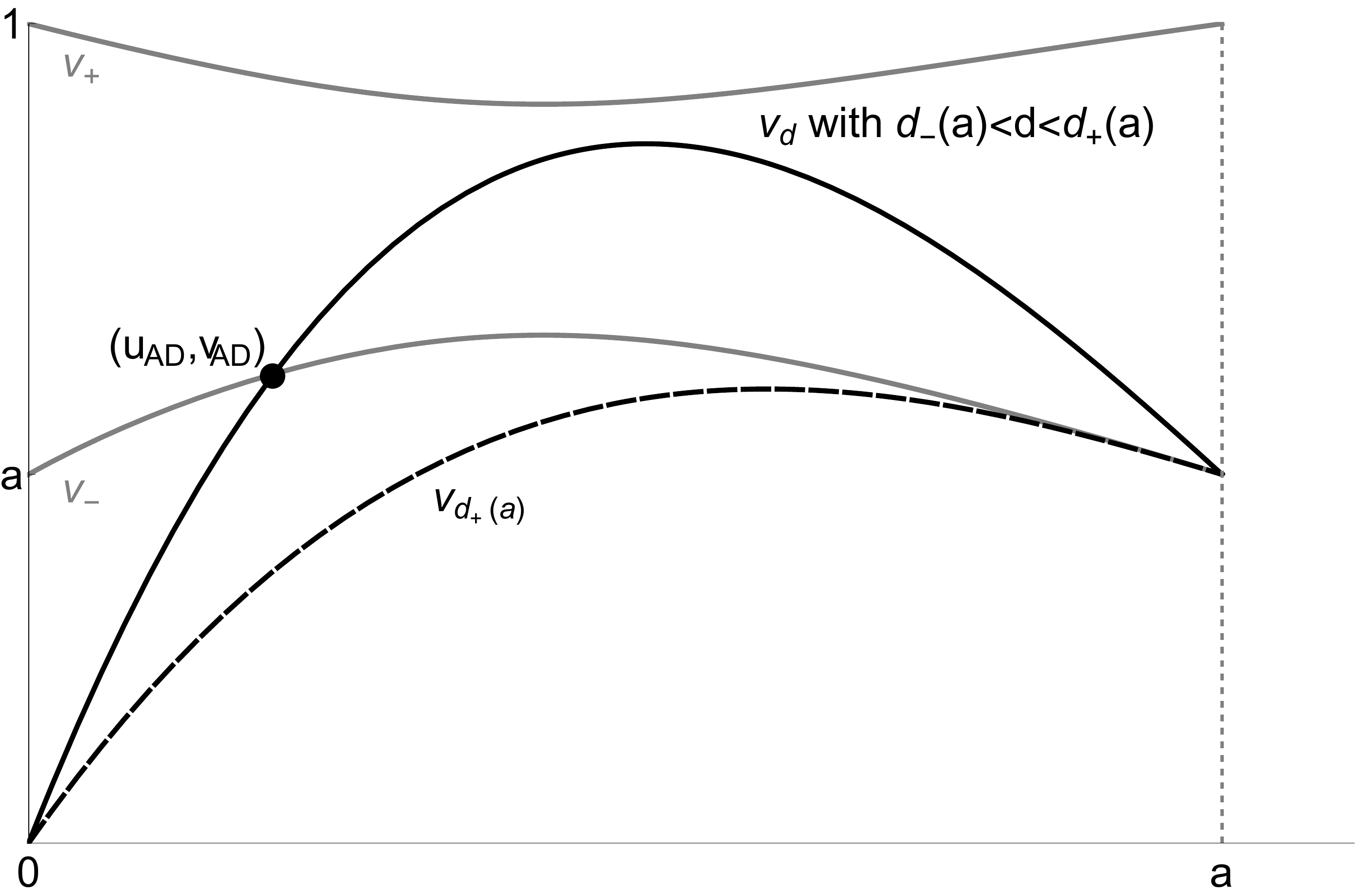}
\end{minipage}
\caption{Fix $0 < a < \frac{1}{2}$.
The branches $u_B$ and $u_C$ described in Lemma \ref{lem:bif:str:roots:BC}
arise as the two intersections of $v_+$ and $v_d$ on $[0, a]$, which collide as $d \uparrow d_-(a)$ (left).
On the other hand, the branch $u_{AD}$ described in Lemma \ref{lem:bif:str:roots:A} arises as the
unique intersection of the curves $v_-$ and $v_d$ on $[0, a)$, which converges to $a$ as $d \uparrow d_+(a)$
(right).
}\label{f:bif:ints:vdpm}
\end{center}
\end{figure}

\begin{lem}
\label{lem:bif:str:roots:A}
Fix $0 < a \le \frac{1}{2}$.
Then there exists a continuous strictly increasing function
\begin{equation}
u_{AD}: [0,\frac{g'(a;a)}{4}] \to [0,a]
\end{equation}
that satisfies the following properties.
\begin{itemize}
\item[(i)]{
  We have $u_{AD}(0) = 0$
  and $u_{AD}(\frac{g'(a;a)}{4}) = a$.
}
\item[(ii)]{
  The identity $v_-\big(u_{AD}(d) \big) = v_d\big( u_{AD}(d) \big) $
  holds for any $0 < d \le \frac{g'(a;a)}{4}$.
}
\item[(iii)]{
  Suppose that $v_-(u) = v_d(u) $
   for some $0 < d \le \frac{g'(a;a)}{4}$
  and $0 \le u \le a$.
  Then in fact $u \in \{ u_{AD}(d), a \}$.
}
\item[(iv)]{
  Consider any $0 < d < \frac{g'(a;a)}{4}$
  for which $(a, d) \neq (\frac{1}{2} , \frac{1}{24})$.
  Then we have the inequality
  \begin{equation}
    \label{eq:lem:bif:str:roots:a:transverse}
    v_-'\big(u_{AD}(d) \big) < v_d'\big( u_{AD}(d) \big).
  \end{equation}
}
\item[(v)]{
  For any $d > \frac{g'(a;a)}{4}$
  and $0 \le u < a$ we have
  $v_-(u) > v_d(u)$.
}
\end{itemize}
\end{lem}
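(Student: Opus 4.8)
The plan is to reduce everything to the sign behaviour of the single scalar function $\phi_d(u) = v_-(u) - v_d(u)$ on $[0,a]$, whose zeros are exactly the intersections to be counted; throughout I abbreviate $d_c = g'(a;a)/4$. Two boundary facts are immediate: since $g(0;a) = 0$ and $v_-(0) = a$ we have $\phi_d(0) = a > 0$, whereas \eqref{eq:bif:tng:a} gives $\phi_d(a) = 0$. Moreover $g(u;a) < 0$ on $(0,a)$ forces $\partial_d v_d(u) = g(u;a)/(2d^2) < 0$ there, so $\phi_d(u)$ is strictly increasing in $d$ for each fixed $u \in (0,a)$. This monotonicity in $d$ is what will drive both the ordering of the intersections and part (v).

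For a fixed $d \in (0,d_c)$ I would first determine the shape of $\phi_d$. Corollary~\ref{cor:bif:trv:inc:bef:u:min} yields $\phi_d' < 0$ on $[0,u_{\mathrm{min}}(a))$ (so $\phi_d$ starts off strictly decreasing from the positive value $a$), and \eqref{eq:bif:tgn:deriv:at:zero} records $\phi_d'(0) < 0$. The decisive ingredient is Corollary~\ref{cor:bif:donward:tangency}: at any interior point with $\phi_d' = 0$ one has $\phi_d'' > 0$, so every interior critical point of $\phi_d$ is a strict local minimum. Since a $C^2$ function cannot possess two strict local minima without an interior local maximum in between (which would be a critical point that is not a minimum), $\phi_d$ has at most one interior critical point; together with $\phi_d'(0) < 0$ and the boundary slope $\phi_d'(a) > 0$ from \eqref{eq:cor:vminus:subcrit:ineq:v:star:minus}, there is exactly one, a minimum $u_*$. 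Hence $\phi_d$ decreases strictly on $[0,u_*]$ and increases strictly on $[u_*,a]$, with $\phi_d(u_*) < \phi_d(a) = 0$. Consequently $\phi_d$ has a single zero $u_{AD}(d) \in (0,u_*)$ on $[0,u_*]$ and only the zero $a$ on $[u_*,a]$; this gives the existence of $u_{AD}(d)$ and statements (ii) and (iii), while $\phi_d'(u_{AD}(d)) < 0$ gives the transversality (iv).

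The case $a = \tfrac12$ must be handled separately because $v_-$ has a corner at $u_{\mathrm{min}}(\tfrac12)$, so Corollary~\ref{cor:bif:donward:tangency} does not cover that point. Here I would invoke Lemma~\ref{lem:bif:setup:bs:props:vpm:a:half}: on $[u_{\mathrm{min}}(\tfrac12),a]$ one has $v_-(u) = 1-u$, hence $\phi_d(u) = 1 - 2u + g(u;\tfrac12)/(2d)$ with $\phi_d'' = g''(u;\tfrac12)/(2d) > 0$ strictly convex, and this convexity replaces the local-minimum argument on that sub-interval; combined with the strict decrease on $[0,u_{\mathrm{min}}(\tfrac12)]$ the same decrease-then-increase picture results. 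The excluded point in (iv) is precisely the cusp: using $v_-(u_{\mathrm{min}}) = u_{\mathrm{max}}$, together with $g(u_{\mathrm{min}};\tfrac12) = -\tfrac{\sqrt3}{36}$ and $u_{\mathrm{max}} - u_{\mathrm{min}} = \tfrac{\sqrt3}{3}$, the equation $v_-(u_{\mathrm{min}}) = v_d(u_{\mathrm{min}})$ holds iff $d = \tfrac1{24}$, so $u_{AD}(\tfrac1{24}) = u_{\mathrm{min}}(\tfrac12)$; at that corner $v_-'$ jumps and its left value equals $v_d'$, so the crossing is non-transverse and (iv) must omit $(a,d) = (\tfrac12,\tfrac1{24})$. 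I expect this exceptional-point bookkeeping, rather than the monotonicity argument itself, to be the most delicate part of the proof.

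It remains to treat the endpoints and the two global claims. For $d = d_c$, Corollary~\ref{cor:bif:tgn:crit:case:deriv:minus:smaller} gives $\phi_{d_c}' < 0$ on $[0,a)$, so $\phi_{d_c}$ is strictly decreasing and positive on $[0,a)$ with its unique zero at $a$; setting $u_{AD}(d_c) = a$ yields (i). For $d > d_c$, monotonicity in $d$ gives $\phi_d(u) > \phi_{d_c}(u) > 0$ for all $u \in (0,a)$, and $\phi_d(0) = a > 0$, which is exactly (v). For the regularity of $u_{AD}$, the characterization ``$\phi_d(u) > 0$ precisely on $[0,u_{AD}(d))$'' combined with $\phi_{d_2}(u_{AD}(d_1)) > \phi_{d_1}(u_{AD}(d_1)) = 0$ for $d_1 < d_2$ shows $u_{AD}(d_1) < u_{AD}(d_2)$, so $u_{AD}$ is strictly increasing. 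Surjectivity onto $[0,a]$ follows by solving $\phi_d(u^\ast) = 0$ in $d$ for each fixed $u^\ast \in (0,a)$, since $d \mapsto \phi_d(u^\ast)$ increases from $-\infty$ as $d \downarrow 0$ to the positive value $\phi_{d_c}(u^\ast)$ at $d = d_c$; a strictly increasing surjection onto an interval is automatically continuous. Finally $v_d(u) \to +\infty$ pointwise on $(0,a)$ as $d \downarrow 0$ drives the zero to $0$, so $u_{AD}(0^+) = 0$ and the assignment $u_{AD}(0) = 0$ completes the construction.
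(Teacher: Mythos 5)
Your proof is correct, and it reaches the central zero count by a genuinely different route than the paper. Both arguments count the zeroes of $h_d = v_- - v_d$ on $[0,a]$ and both lean on the same tangency machinery from \S\ref{sec:bif:tgn} (Corollaries \ref{cor:bif:trv:inc:bef:u:min}, \ref{cor:bif:donward:tangency} and \ref{cor:bif:tgn:crit:case:deriv:minus:smaller}, plus the endpoint slopes from Lemma \ref{lem:bif:tgn:option:a:b}), as well as the monotonicity $\partial_d v_d < 0$ on $(0,a)$. The difference lies in how the count ``exactly two zeroes'' is obtained for $0 < d < \frac{g'(a;a)}{4}$: the paper first establishes the count for $0 < d \le d_1$ via an explicit small-$d$ estimate, then defines the supremum of the $d$-values admitting two distinct zeroes and derives a contradiction from this supremum being subcritical, using the tangency-implies-local-minimum property together with $\partial_d v_d < 0$ to show that a tangency would spawn extra zeroes as $d$ decreases. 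You instead argue, for each fixed $d$ separately, that $h_d$ is strictly unimodal --- $h_d'(0)<0$, $h_d'(a)>0$, and every interior critical point is a strict local minimum, hence there is exactly one critical point --- after which the count follows immediately from $h_d(0)=a>0$ and $h_d(a)=0$. This eliminates both the small-$d$ seeding estimate and the continuation-in-$d$ argument, at the price of treating the corner of $v_-$ at $u_{\mathrm{min}}(\frac12)$ by the separate convexity computation you give (which is fine, since $v_-(u)=1-u$ there and $g''>0$ on that subinterval). Your explicit surjectivity argument for the continuity and strict monotonicity of $u_{AD}$, and your identification of the excluded point $(\frac12,\frac1{24})$ in (iv) as the place where the one-sided derivative of $v_-$ matches $v_d'$, supply details that the paper compresses into ``the statements follow readily from the observations above.''
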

\begin{proof}
For convenience, we introduce the function
$h_d(u) = v_-(u) - v_d(u) $ and set out to count the zeroes
of $h_d$ on the interval $[0, a]$.
We first note that $h_d(a) = 0$ for all $d > 0$.
When $d = \frac{g'(a;a)}{4}$ this is in fact the only zero,
which can be seen by using \sref{eq:cor:critical:ineq:for:v:minus:star:i}
and explicitly verifying
the inequality
\begin{equation}
v_-\big(u_{\mathrm{min}}(a) \big)
 > v_d\big( u_{\mathrm{min}}(a) \big)
\end{equation}
for $(a,d) = (\frac{1}{2}, \frac{g'(\frac{1}{2}; \frac{1}{2})}{4} )$.

On the other hand,
\sref{eq:cor:vminus:subcrit:ineq:v:star:minus}
implies that $h_d$ has at least two zeroes for $0 < d < \frac{g'(a;a)}{4}$.
Furthermore, we claim that
$h_d$ has precisely two zeroes for $0 < d \le d_1$
upon choosing $d_1 > 0$ to be sufficiently small.
Indeed, for any $u \in (0, a)$ we have $v_d''(u) < 0$
and we can enforce
\begin{equation}
\abs{v_d'(u)} + v_d(u) \ge 1 + \max_{0 \le \tilde{u} \le a }\{ \abs{v_-'(\tilde{u})} \}
\end{equation}
by restricting the size of $d > 0$.

We now pick $\epsilon > 0$
in such a way that
the function $h_d$
is strictly decreasing on $[0, u_{\min}(a)+ \epsilon]$
for all $d_1 \le d \le \frac{g'(a;a)}{4}$.
This is possible because Corollary \ref{cor:bif:trv:inc:bef:u:min}
allows us to enforce $h_d' < 0$ on this interval,
with the exception of the single point  $u = u_{\mathrm{min}}(a)$ when $a = \frac{1}{2}$.

Let us now define the critical value
\begin{equation}
d_c = \mathrm{sup} \{ d_1 \le d \le \frac{g'(a;a)}{4}:
  h_d =0 \hbox{ has two distinct solutions on } [0, a] \}
\end{equation}
and assume for the moment that $d_c < g'(a;a)/4$. The preparations above
show that there exists $ u_{\mathrm{min}}(a) < u < a$
with $h_{d_c}(u) = 0$ and $h_{d_c}'(u) = 0$.
Corollary \ref{cor:bif:donward:tangency} now implies
$h''_{d_c}(u) > 0$.
As a consequence of the monotonicity $\partial_d v_d(u) < 0$,
this means that for all sufficiently small $\delta > 0$,
the function $h_d$ with $d = d_c - \delta$
must have at least three zeroes.
This yields a contradiction, which implies that $d_c = g'(a;a)/4$.

We may hence define $u_{AD}(d) \in [0, a]$ to be the left-most root of
$h_d(u) = 0$ for $0 < d \le d_c$. The statements (i)-(v) follow readily
from the observations above together with the monotonicity
$\partial_d v_d(u) < 0$ that holds for $u \in (0, a)$.
\end{proof}

\begin{lem}
\label{lem:bif:str:roots:BC}
Fix $0 < a \le \frac{1}{2}$.
Then there exists a constant $0 < d_- < \frac{g'(a;a)}{4}$
together with two continuous functions
\begin{equation}
(u_B, u_C): [0,d_-] \to [0,a] \times [0,a]
\end{equation}
that satisfy the following properties.
\begin{itemize}
\item[(i)]{
  We have $(u_B, u_C)(0) = (0, a)$
  and $u_B(d_-) = u_C(d_-)$.
}
\item[(ii)]{
  The function $u_B$ is strictly increasing,
  while the function $u_C$ is strictly decreasing.
}
\item[(iii)]{
  For any $0 < d \le d_-$ the identity
  $v_+(u_{\#}) = v_d(u_{\#} ) $
  holds for $\# \in \{B, C\}$.
}
\item[(iv)]{
  If $v_+(u) = v_d(u)$ for some $0 < d \le d_-$
  and $0 \le u \le a$
  then in fact $u \in \{ u_B(d), u_C(d) \}$.
}
\item[(v)]{
  For any $0 < d < d_-$ we have
  \begin{equation}
    \label{eq:bif:str:ineqs:star:vs:plus}
    v_+'\big(u_B(d)\big) < v_d'\big(u_B(d)\big) ,
    \qquad
    v_+'\big(u_C(d)\big) > v_d'\big(u_C(d)\big).
  \end{equation}
  If $a \neq \frac{1}{2}$, then we also have
  \begin{equation}
    \label{eq:lem:bif:str:roots:bc:tangency}
     v_+'\big(u_C(d_-)\big) = v_+'\big(u_B(d_-)\big)
    = v_d'\big(u_B(d_-) \big) = v_d'\big(u_C(d_-)\big)
     .
  \end{equation}
}
\item[(vi)]{
  For any $d > d_-$ the inequality
  $v_+(u) > v_d(u)  $ holds for all $0 \le u \le a$.
}
\item[(vii)]{
  If $a = \frac{1}{2}$ then we have
  $d_-(\frac{1}{2}) = \frac{1}{24}$
  together with $u_B(\frac{1}{24}) = u_C(\frac{1}{24}) = u_{\mathrm{min}}(a)$.
}
\end{itemize}
\end{lem}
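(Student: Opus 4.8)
The plan is to reduce everything to the sign analysis of the single scalar function $h_d(u) = v_+(u) - v_d(u)$ on $[0,a]$, whose zeroes are exactly the roots of $v_+(u) = v_d(u)$ that we must count. First I would record the boundary data: since $v_+(0) = v_+(a) = 1$ by Lemma \ref{lem:bif:setup:bs:props:vpm}(iii) (and Lemma \ref{lem:bif:setup:bs:props:vpm:a:half} when $a = \tfrac12$), while $g(0;a) = g(a;a) = 0$ forces $v_d(0) = 0$ and $v_d(a) = a$, we get $h_d(0) = 1 > 0$ and $h_d(a) = 1 - a > 0$ for every $d > 0$. Moreover $\partial_d v_d(u) = g(u;a)/(2d^2) < 0$ on $(0,a)$, so $h_d(u)$ is strictly increasing in $d$ at each interior point, and $v_d(u) \to +\infty$ on $(0,a)$ as $d \downarrow 0$ makes $h_d$ strictly negative on a large interior subinterval for small $d$; together with the positive endpoint values this produces at least two zeroes for small $d$.

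The crucial structural input is Corollary \ref{cor:bif:trnsv:v:plus:tangency}: wherever $h_d'(u) = 0$ in the interior one has $h_d''(u) = v_+''(u) - v_d''(u) > 0$, so every critical point of $h_d$ is a strict local minimum. Two such minima would sandwich an interior maximum, which the corollary forbids; hence $h_d$ has at most one critical point and is quasiconvex (either strictly monotone, or strictly decreasing then strictly increasing). Consequently $h_d$ has at most two zeroes, and exactly two precisely when $\min h_d < 0$, in which case the minimiser is interior and $h_d < 0$ strictly between the zeroes. For $a = \tfrac12$ the corollary excludes $u = u_{\mathrm{min}}(a)$, so there I would argue directly: the bound $g'(u;a) \le g'(u_{\mathrm{min}}(a);a) = 0$ on $[0,u_{\mathrm{min}}(a)]$ gives $h_d' = -1 - v_d' < 0$ there, while Corollary \ref{cor:bif:trv:inc:aft:u:min} gives $h_d' > 0$ on $(u_{\mathrm{min}}(a),a]$; thus for $a = \tfrac12$ the minimum of $h_d$ always sits at the corner $u_{\mathrm{min}}(a)$.

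Next I would set $m(d) = \min_{[0,a]} h_d$ and define $d_-$ by $m(d_-) = 0$. Whenever $m(d) < 1 - a$ the minimiser is interior, so the strict monotonicity $\partial_d h_d > 0$ makes $m$ strictly increasing across the relevant range; hence $d_-$ is unique, with $m(d) < 0$ (two zeroes) for $d < d_-$ and $m(d) > 0$ (no zeroes, giving (vi)) for $d > d_-$. To obtain the strict bound $d_- < \tfrac14 g'(a;a)$ I would invoke Corollary \ref{cor:bif:tgn:crit:case:deriv:minus:smaller}: at $d = \tfrac14 g'(a;a)$ it yields $v_-'(u) < v_d'(u)$ for $u < a$, which together with $v_-(a) = v_d(a) = a$ gives $v_-(u) > v_d(u)$ on $[0,a)$; combined with $v_+(u) \ge v_{+;\mathrm{min}} > v_{-;\mathrm{max}} \ge v_-(u)$ from Lemma \ref{lem:bif:setup:bs:props:vpm} this forces $h_d > 0$ on all of $[0,a]$, so $m(\tfrac14 g'(a;a)) > 0$. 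Declaring $u_B(d) < u_C(d)$ to be the two zeroes for $0 < d < d_-$ then gives (iii) and (iv); the behaviour $v_d(u) \approx u(1 + a/(2d))$ near $u = 0$ and its mirror near $u = a$ pin down the limits $(u_B,u_C)(0) = (0,a)$ in (i).

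Finally, the transversality of these zeroes yields the remaining items. Since $h_d$ crosses from $+$ to $-$ at $u_B$ and from $-$ to $+$ at $u_C$, we have $h_d'(u_B) < 0$ and $h_d'(u_C) > 0$, i.e.\ the inequalities \eqref{eq:bif:str:ineqs:star:vs:plus}; the implicit function theorem then gives smoothness and, through $u_{\#}'(d) = -\partial_d h_d(u_{\#})/h_d'(u_{\#})$ with $\partial_d h_d > 0$, the monotonicity (ii) ($u_B$ increasing, $u_C$ decreasing). At $d = d_-$ the zeroes collide at the interior minimiser $u_*$, where $h_{d_-}(u_*) = h_{d_-}'(u_*) = 0$. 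For $a \neq \tfrac12$ one has $u_* \neq u_{\mathrm{min}}(a)$, so Corollary \ref{cor:bif:trnsv:v:plus:tangency} applies at $u_*$ and delivers the derivative identity \eqref{eq:lem:bif:str:roots:bc:tangency}. For $a = \tfrac12$, the corner structure of the previous paragraph forces $u_* = u_{\mathrm{min}}(a)$, and solving $m(d) = u_{\mathrm{max}}(a) - v_d(u_{\mathrm{min}}(a)) = 0$ explicitly gives $d_-(\tfrac12) = \tfrac{1}{24}$ and $u_B = u_C = u_{\mathrm{min}}(a)$, which is (vii). The step I expect to be most delicate is the bookkeeping at $u_{\mathrm{min}}(a)$ for $a = \tfrac12$: there $v_+$ has a corner and Corollary \ref{cor:bif:trnsv:v:plus:tangency} is unavailable, so one must stitch the convex left piece of $h_d$ to the increasing right piece to preserve the global two-zero picture and to locate the collision exactly at the corner.
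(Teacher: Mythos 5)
Your proposal is correct and follows essentially the same route as the paper: both reduce the lemma to counting the zeroes of $h_d = v_+ - v_d$ on $[0,a]$, use Corollaries \ref{cor:bif:trnsv:v:plus:tangency} and \ref{cor:bif:trv:inc:aft:u:min} to force quasiconvexity of $h_d$ (with the corner at $u_{\mathrm{min}}(\tfrac12)$ treated separately and the explicit computation $d_-(\tfrac12)=\tfrac{1}{24}$), and exploit the monotonicity $\partial_d v_d < 0$ together with the gap $v_+ > v_-$ at $d = \tfrac14 g'(a;a)$ to locate $d_-$ strictly below $\tfrac14 g'(a;a)$. The only difference is organizational: you define $d_-$ through the minimum value $m(d)$ and the intermediate value theorem, whereas the paper takes a supremum over $d$ admitting two distinct zeroes and derives a contradiction via a perturbation argument; your version has the minor merit of making the ``at most two zeroes'' count fully explicit.
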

\begin{proof}
Writing $h_d(u) = v_+(u) - v_d(u)$,
we observe first that $h_d$ is strictly decreasing
on $[0, u_{\mathrm{min}}(a)]$ because $v_d' > 0$ and $v_+' < 0$
on the interior of this interval.

In addition, in the special case $a =\frac{1}{2}$
we can use Corollary \ref{cor:bif:trv:inc:aft:u:min}
to conclude that $h_d$ is strictly increasing
on $[u_{\mathrm{min}}(a), a]$.
Since $h_d\big(u_{\mathrm{min}}(a)\big) = 0$ occurs precisely when
$d = \frac{1}{24}$, all the desired statements
can be easily verified.

Throughout the remainder of this proof we therefore
assume that $0 < a < \frac{1}{2}$.
Arguing as in the proof of Lemma
\ref{lem:bif:str:roots:A},
we may pick $d_1 > 0$ in such a way that
$h_d$ has precisely two zeroes on $[0, a]$
for every $0 < d \le d_1$.
This allows us to define the critical value
\begin{equation}
d_- = \mathrm{sup} \{ d_1 \le d :
  h_d = 0 \hbox{ has two distinct solutions on } [0, a] \}.
\end{equation}
Since $v_- < v_+$ it is clear that $d_- < \frac{g'(a;a)}{4}$.

Let us assume for the moment that $h_{d_-}$ has two or more
zeroes on $[0, a]$. This implies that there exists
at least one $u \in (0, a)$ for which $h_{d_-}(u) = 0$
and $h_{d_-}'(u) = 0$. Using Corollary \ref{cor:bif:donward:tangency}
it follows
that $h''_{d_-}(u) > 0$ must hold  for all such zeroes.
As a consequence of the monotonicity $\partial_d v_d(u) < 0$,
this means that for all sufficiently small $\delta > 0$,
the function $h_d$ with $d = d_- - \delta$
must have at least three zeroes.
This yields a contradiction, which by continuity shows
that $h_{d_-}(u) = 0$ has precisely one root on $[0, a]$.
Upon defining $u_B(d)$ and $u_C(d)$ to be the left-most
respectively right-most root of $h_d(u) = 0$,
the desired properties (i) - (vii) can be easily verified.
\end{proof}

Recalling the function $G$ introduced
in \sref{eq:mr:def:G:G12},
we see that
\begin{equation}
D_{1,2}G(u, v; a, d) = \left(
  \begin{array}{cc}
    g'(u) - 2d & 2d
     \\[0.2cm]
     2d & g'(v) - 2d
     \\[0.2cm]
  \end{array}
\right) .
\end{equation}
In order to study the stability
and parameter-dependence of the roots
constructed in Lemmata
\ref{lem:bif:str:roots:A}-\ref{lem:bif:str:roots:BC},
it is crucial
to understand when the determinant of $D_{1,2}G$ vanishes.
The result below states that this happens
at tangential intersections of $v_\pm$ and $v_d$.

\begin{lem}
\label{lem:bif:str:roots:determinant}
Fix $0 < a < \frac{1}{2}$
together with $d > 0$
and suppose that
\begin{equation}
v_{\#}(u) = v_d(u)
\end{equation}
for some $\# \in \{-, +\}$ and $u \in [0, a]$.
Then we have $\mathrm{det} \, D_{1,2}G(u, v_d(u) ; a, d ) = 0$
if and only if $v_{\#}'(u) = v_d'(u)$.
\end{lem}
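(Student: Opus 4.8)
The plan is to reduce both sides of the claimed equivalence to one and the same algebraic relation between $g'(u)$ and $g'(v)$, where I abbreviate $v = v_{\#}(u) = v_d(u)$. Starting from the explicit Jacobian displayed immediately before the lemma, a direct expansion gives
\[
  \det D_{1,2}G(u, v; a, d)
   = \big(g'(u) - 2d\big)\big(g'(v) - 2d\big) - (2d)^2
   = g'(u) g'(v) - 2d \big( g'(u) + g'(v) \big),
\]
the $4d^2$ contributions cancelling. On the other hand, differentiating the defining identity $g(u;a) + g(v_{\#}(u);a) = 0$---or simply quoting \sref{eq:bif:prlm:deriv:vpm}---yields $v_{\#}'(u) = -g'(u)/g'(v)$, while differentiating $v_d(u) = u - g(u;a)/(2d)$ gives $v_d'(u) = 1 - g'(u)/(2d)$.

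Before equating these two slopes I would check that $g'(v) \neq 0$, which is needed both for the quotient defining $v_{\#}'(u)$ to make sense and for the division in the final step to be reversible. This is the only genuine obstacle, and it is dispatched by the range information in Lemma \ref{lem:bif:setup:bs:props:vpm}: for $0 < a < \tfrac{1}{2}$ one has $v_{\#}(u) \in (u_{\mathrm{min}}(a), 1]$ together with $v_{\#}(u) \neq u_{\mathrm{max}}(a)$, and since $u_{\mathrm{min}}(a)$ and $u_{\mathrm{max}}(a)$ are the only two zeroes of $g'(\cdot; a)$, we conclude $g'(v) \neq 0$ at every intersection point under consideration.

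With this in hand the equivalence becomes a single reversible manipulation. The condition $v_{\#}'(u) = v_d'(u)$ reads
\[
  -\frac{g'(u)}{g'(v)} = 1 - \frac{g'(u)}{2d},
\]
and multiplying through by the nonzero quantity $2d\, g'(v)$ turns this into
\[
  -2d\, g'(u) = 2d\, g'(v) - g'(u) g'(v),
\]
i.e.\ $g'(u) g'(v) - 2d\big(g'(u) + g'(v)\big) = 0$, which is exactly $\det D_{1,2}G(u, v_d(u); a, d) = 0$ by the first computation. Since the multiplier $2d\, g'(v)$ is nonzero, the implication runs in both directions, and the proof is complete.
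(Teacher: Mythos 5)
Your proof is correct and follows essentially the same route as the paper: both reduce the equivalence to the identity $\det D_{1,2}G = g'(u)g'(v) - 2d\big(g'(u)+g'(v)\big)$ combined with the slope formulas $v_{\#}'(u) = -g'(u)/g'(v)$ and $v_d'(u) = 1 - g'(u)/(2d)$. The one organizational difference is that the paper factors the determinant as $2d\,g'(u)[v_{\#}'(u)]^{-1}\big[v_d'(u)-v_{\#}'(u)\big]$ and must therefore treat the case $g'(u)=0$ (i.e.\ $u=u_{\mathrm{min}}(a)$) separately, whereas you divide only by $g'(v)$, which you correctly verify is uniformly nonzero for $0<a<\tfrac12$, so your single reversible manipulation covers all cases at once.
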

\begin{proof}
We first consider the case $v_{\#}'(u) = 0$,
which occurs when $g'(u ;a ) = 0$
and hence $u = u_{\mathrm{min}}(a)$.
Using
$0 < a < \frac{1}{2}$
we see that
\begin{equation}
v_{-}(u) < u_{\mathrm{max}}(a) < v_+(u).
\end{equation}
Writing $v = v_d(u) = v_{\pm}(u)$,
we hence obtain
$g'(v ; a) \neq 0$
and hence
\begin{equation}
  \mathrm{det} \, D_{1,2}G(u, v ;a , d)
     = -2d g'(v; a) \neq 0.
\end{equation}
Since $v_d'(u) = 1$, the desired equivalence indeed holds
for this case.

Assuming now that $v_{\#}'(u) \neq 0$
and hence $g'(u ; a) \neq 0$,
we again write $v = v_d(u) = v_{\#}(u)$
and use \sref{eq:bif:prlm:deriv:vpm}
to compute
\begin{equation}
g'(v ;a) = - [v'_{\#}(u) ]^{-1} g'(u; a) .
\end{equation}
In particular, we find
\begin{equation}
\begin{array}{lcl}
\mathrm{det} \, D_{1,2}G(u, v ; a, d)  & = &
(g'(u;a) - 2d) \big(-  [v'_{\#}(u) ]^{-1} g'(u;a)   - 2d\big) - 4 d^2
\\[0.2cm]
& = &
- [v'_{\#}(u)]^{-1} g'(u;a)^2
- 2d g'(u;a) + 2d [v'_{\#}(u)]^{-1} g'(u;a)
\\[0.2cm]
& = &
2d g'(u;a) [v'_{\#}(u)]^{-1}
\Big[
  1 - \frac{g'(u;a)}{2d} -  v'_{\#}(u)
\Big]
\\[0.2cm]
& = &
2d g'(u;a) [v'_{\#}(u)]^{-1}
\Big[
  v_d'(u) - v'_{\#}(u)
\Big],
\end{array}
\end{equation}
from which the statement follows.
\end{proof}

In order to characterize the dependence
of $d_-(a)$ on $a$,
we introduce the function
\begin{equation}
G_{\mathrm{sn}}(u, v, d ; a)
 = \big(G_1(u,v;a,d) , G_2(u,v;a,d), \det D_{1,2}G(u, v;a,d) \big)^T
\end{equation}
and symbolically write
\begin{equation}
[D_{1,2,3}G_{\mathrm{sn}}(u,v,d;a)]^{-1}
= \big[ \det D_{1,2,3} G_{\mathrm{sn}}(u, v, d;a)\big]^{-1}
\left(
  \begin{array}{ccc}
     * & * & *
     \\[0.2cm]
     * &* & *
     \\[0.2cm]
     \gamma_1(u,v,d;a)
     & \gamma_2(u,v,d;a)
     & \gamma_3(u,v,d;a)
  \end{array}
  \right) .
\end{equation}
For any $0 < a \le \frac{1}{2}$,
we use the functions defined in Lemma
\ref{lem:bif:str:roots:BC} to
introduce the notation
\begin{equation}
\omega(a) =
\Big( u_{B}\big(d_-(a)\big), v_+\big(u_{B}(d_-(a))\big), d_-(a) \Big)
=
\Big( u_{C}\big(d_-(a)\big), v_+\big(u_{C}(d_-(a))\big), d_-(a) \Big),
\end{equation}
which corresponds to the critical point
where the branches $u_B$ and $u_C$ collide.

\begin{cor}
\label{cor:bif:char:roots:ovl:G}
Upon fixing $0 < a < \frac{1}{2}$,
the following two statements are equivalent.
\begin{itemize}
\item[(a)]{
  The identity $G_{\mathrm{sn}}(u, v, d; a) = 0$
  holds for some $d > 0$ and some pair $(u,v) \in [0,1]^2$
  that has $v \ge u$.
}
\item[(b)]{
  We have $(u, v, d) = \omega(a)$
  or $(u, v, d) =
  (a , a, \frac{g'(a;a)}{4} )$ .
}
\end{itemize}
\end{cor}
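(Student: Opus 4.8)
The plan is to establish the two implications separately, with (b)$\Rightarrow$(a) a direct verification and (a)$\Rightarrow$(b) an exhaustive classification. Throughout I would use the correspondence recorded in \S\ref{sec:bif:tgn}: a pair $(u,v)\in[0,1]^2$ with $v\ge u$ solves $G(u,v;a,d)=0$ precisely when $v=v_d(u)$ and $g(u;a)+g(v;a)=0$, so that by Lemma~\ref{lem:bif:setup:bs:props:vpm} the genuinely bichromatic roots correspond to intersections $v_\pm(u)=v_d(u)$ with $u\in(0,a)$, while the only other roots with $v\ge u$ are the diagonal points $(0,0)$, $(a,a)$, $(1,1)$.

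For (b)$\Rightarrow$(a) I would simply compute. Since $g(a;a)=0$ we have $G(a,a;a,d)=0$ for every $d$, and
\begin{equation}
\det D_{1,2}G(a,a;a,d)=g'(a;a)\big(g'(a;a)-4d\big),
\end{equation}
which vanishes exactly at $d=\tfrac{g'(a;a)}{4}$ because $g'(a;a)=a(1-a)>0$; hence $(a,a,\tfrac{g'(a;a)}{4})$ is a zero of $G_{\mathrm{sn}}$. For $\omega(a)$ I would invoke Lemma~\ref{lem:bif:str:roots:BC}: at $d=d_-(a)$ the branches $u_B$ and $u_C$ collide, so both $v_+=v_d$ and $v_+'=v_d'$ hold at the common root. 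The intersection yields $G=0$, and the tangency yields $\det D_{1,2}G=0$ through Lemma~\ref{lem:bif:str:roots:determinant}.

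For (a)$\Rightarrow$(b) the first step is enumeration. Exploiting the sign pattern of $g(\cdot;a)$ on $[0,1]$, I would show that any root with $v\ge u$ is either diagonal or bichromatic with $u\in(0,a)$ and $v=v_\pm(u)\in(a,1)$, and that the off-diagonal endpoints $(0,a)$ and $(a,1)$ fail $G_1=0$ once $d>0$. The determinant at the diagonal points is then read off directly: at $(0,0)$ it equals $(-a)(-a-4d)>0$ and at $(1,1)$ it equals $(a-1)(a-1-4d)>0$, so neither is a saddle-node, while $(a,a)$ is one only at $d=\tfrac{g'(a;a)}{4}$. For the bichromatic roots, Lemma~\ref{lem:bif:str:roots:determinant} reduces $\det D_{1,2}G=0$ to the tangency $v_\pm'(u)=v_d'(u)$, and the remaining task is to locate all such tangencies. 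Corollary~\ref{cor:bif:donward:tangency} together with Lemma~\ref{lem:bif:str:roots:A}(iv) shows the $v_-$-intersection $u_{AD}(d)$ stays transverse for $0<d<\tfrac{g'(a;a)}{4}$, leaving the endpoint $u=a$ at $d=\tfrac{g'(a;a)}{4}$ as the only $v_-$-tangency (again $(a,a)$); Corollary~\ref{cor:bif:trnsv:v:plus:tangency} and Lemma~\ref{lem:bif:str:roots:BC}(v)--(vi) confine the $v_+$-tangency to $d=d_-(a)$, producing $\omega(a)$.

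The main obstacle is making the tangency count exhaustive over all $d>0$ rather than only in the subcritical regimes. The cleanest remedy is to use Lemma~\ref{lem:bif:str:roots:A}(v) and Lemma~\ref{lem:bif:str:roots:BC}(vi): for $d>\tfrac{g'(a;a)}{4}$ the curve $v_-$ meets $v_d$ only at the transverse endpoint $u=a$, and for $d>d_-(a)$ the curve $v_+$ does not meet $v_d$ at all, so every tangential intersection is forced into the range $d\le\tfrac{g'(a;a)}{4}$ where the transversality corollaries apply. Care is also needed at the boundary $u=a$, so that the $v_-$-tangency there is identified with the diagonal saddle-node $(a,a)$ rather than counted as a separate bichromatic branch.
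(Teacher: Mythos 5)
Your proposal is correct and follows essentially the same route as the paper: reduce to the diagonal roots (handled by the explicit determinant formula) and the genuinely bichromatic roots $v=v_\pm(u)$ with $0<u<a<v<1$, then use Lemma~\ref{lem:bif:str:roots:determinant} to convert $\det D_{1,2}G=0$ into a tangency $v_\pm'(u)=v_d'(u)$ and rule these out via the transversality statements \sref{eq:lem:bif:str:roots:a:transverse} and \sref{eq:bif:str:ineqs:star:vs:plus}. You are somewhat more explicit than the paper about exhausting the full range $d>0$ via Lemma~\ref{lem:bif:str:roots:A}(v) and Lemma~\ref{lem:bif:str:roots:BC}(vi), which the paper leaves implicit.
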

\begin{proof}
As a preparation, we note that $\det D_{1,2}G(a,a;a,d) =0$
if and only if $d = \frac{g'(a;a)}{4}$.
In addition, it is easy to check that
$\det D_{1,2}G(0, 0;a,d) > 0$
and $\det D_{1,2}G(1, 1;a,d) > 0$
for all $d \ge 0$.

The implication $(b)\rightarrow(a)$
can now be verified directly
using Lemma \ref{lem:bif:str:roots:determinant}
and \sref{eq:lem:bif:str:roots:bc:tangency}.
In addition, we only need to establish
the reverse implication under the additional assumption
that $v > u$.

Let us therefore assume that $(a)$ holds
with $g(u) = -g(v) \neq 0$, which allows us to write
\begin{equation}
0 < u < a < v < 1.
\end{equation}
In particular, Lemma \ref{lem:bif:setup:bs:props:vpm}
implies that $v = v_-(u)$
or $v = v_+(u)$.
Lemma \ref{lem:bif:str:roots:determinant}
together with \sref{eq:bif:str:ineqs:star:vs:plus}
and \sref{eq:lem:bif:str:roots:a:transverse}
now imply that (b) must hold.
\end{proof}

\begin{lem}
We have the identities
\begin{equation}
\label{eq:bif:ids:for:gamma:1:2}
\begin{array}{lcl}
\gamma_1( 0, 1 , 0 ; 0) & = & -2,
\\[0.2cm]
\gamma_2( 0, 1 , 0 ; 0) & = & 0.
\end{array}
\end{equation}
In addition, the identity
\begin{equation}
\begin{array}{lcl}
\gamma_3(
 \omega(a) ; a) & = & 0
\end{array}
\end{equation}
holds for any $0 < a < \frac{1}{2}$,
together with the inequalities
\begin{equation}
\label{eq:bif:ineq:for:det:d:ovl:G}
\begin{array}{lcl}
\det D_{1,2,3} G_{\mathrm{sn}}
( \omega(a) ; a)
 & < & 0 ,
\\[0.2cm]
\gamma_1
( \omega(a) ; a)
- \gamma_2(\omega(a);a)
 & < &
 0 ,
\\[0.2cm]
\gamma_1
(\omega(a);a)
\gamma_2
( \omega(a);a)
& \ge & 0 .
\end{array}
\end{equation}
\end{lem}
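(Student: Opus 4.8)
The plan is to recognise $(\gamma_1,\gamma_2,\gamma_3)$ as the bottom row of the adjugate of the Jacobian $M = D_{1,2,3}G_{\mathrm{sn}}$, i.e.\ as the cofactors of the third column of
\begin{equation}
M = \begin{pmatrix} g'(u)-2d & 2d & 2(v-u) \\ 2d & g'(v)-2d & 2(u-v) \\ g''(u)\big(g'(v)-2d\big) & \big(g'(u)-2d\big)g''(v) & -2\big(g'(u)+g'(v)\big) \end{pmatrix},
\end{equation}
whose upper-left $2\times2$ block is exactly $D_{1,2}G$. Expanding the three relevant cofactors gives, with the shorthand $p=g'(u)-2d$ and $q=g'(v)-2d$,
\begin{equation}
\gamma_1 = 2d\,p\,g''(v) - q^2 g''(u), \qquad \gamma_2 = 2d\,q\,g''(u) - p^2 g''(v), \qquad \gamma_3 = pq - 4d^2 = \det D_{1,2}G.
\end{equation}
The identity $\gamma_3(\omega(a);a)=0$ is then immediate, since $\omega(a)$ is by construction a saddle-node point and so satisfies $\det D_{1,2}G=0$. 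The two identities at $(0,1,0;0)$ follow by substituting $g'(0;0)=0$, $g''(0;0)=2$, $g'(1;0)=-1$, $g''(1;0)=-4$ and $d=0$ into the formulas for $\gamma_1,\gamma_2$.

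For the three inequalities I would exploit the tangential nature of $\omega(a)=(u_*,v_*,d_*)$. Setting $\beta = v_+'(u_*)=v_d'(u_*)$, item (ii) of Lemma~\ref{lem:bif:trnvs:basic:props:ints} gives $\beta\in(0,1)$, and the two slope relations $v_d'(u_*)=\beta$ and $-g'(u_*)/g'(v_*)=\beta$ produce the clean parametrisation
\begin{equation}
g'(u_*) = 2d_*(1-\beta), \qquad g'(v_*) = -\tfrac{2d_*(1-\beta)}{\beta}, \qquad p = -2d_*\beta, \qquad q = -\tfrac{2d_*}{\beta},
\end{equation}
which automatically satisfies the saddle-node relation $pq=4d_*^2$. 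The crucial input is the strict sign
\begin{equation}
g''(u_*) + \beta^3 g''(v_*) > 0.
\end{equation}
This is precisely the combination appearing in the tangency identity of Lemma~\ref{lem:bif:trnvs:basic:props:ints}(i): because $\omega(a)$ is a tangency of $v_+$ with $v_d$, Corollary~\ref{cor:bif:trnsv:v:plus:tangency} yields $v_+''(u_*)>v_d''(u_*)$, and the relation $2d_*[v_+''(u_*)-v_d''(u_*)] = (1-\beta)^{-1}\big(g''(u_*)+\beta^3 g''(v_*)\big)$, together with $d_*>0$ and $\beta\in(0,1)$, forces the displayed positivity.

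Substituting the parametrisation into the cofactors collapses both onto this common factor,
\begin{equation}
\gamma_1 = -\tfrac{4d_*^2}{\beta^2}\big(g''(u_*)+\beta^3 g''(v_*)\big), \qquad \gamma_2 = -\tfrac{4d_*^2}{\beta}\big(g''(u_*)+\beta^3 g''(v_*)\big),
\end{equation}
so that $\gamma_2 = \beta\gamma_1$ with $\gamma_1<0$. This at once gives $\gamma_1\gamma_2 = \beta\gamma_1^2 \ge 0$ and $\gamma_1-\gamma_2 = (1-\beta)\gamma_1 < 0$. For the determinant I would expand $\det M$ along the first row, use $pq=4d_*^2$ to cancel the two terms carrying $g'(u_*)+g'(v_*)$, and regroup via $p+2d_*=g'(u_*)$, $q+2d_*=g'(v_*)$ to arrive at
\begin{equation}
\det D_{1,2,3}G_{\mathrm{sn}} = 2(v_*-u_*)\big[g''(v_*)\,p\,g'(u_*) - g''(u_*)\,q\,g'(v_*)\big] = -\tfrac{8(v_*-u_*)d_*^2(1-\beta)}{\beta^2}\big(g''(u_*)+\beta^3 g''(v_*)\big),
\end{equation}
which is strictly negative since $v_*>u_*$ and every remaining factor is positive. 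The only genuinely non-routine step is recognising that the positivity powering all three inequalities is exactly the second-order tangency information already recorded in Corollary~\ref{cor:bif:trnsv:v:plus:tangency}; once that is in hand, the rest is cofactor and determinant bookkeeping in which the factor $g''(u_*)+\beta^3 g''(v_*)$ repeatedly emerges.
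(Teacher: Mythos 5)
Your proposal is correct, and while it rests on the same two pillars as the paper's argument (the explicit cofactor expansion of the third column of $D_{1,2,3}G_{\mathrm{sn}}$, and the second-order tangency information from Corollary \ref{cor:bif:trnsv:v:plus:tangency}), your treatment of the three inequalities is organised quite differently. The paper handles them one at a time: it writes $\det D_{1,2,3}G_{\mathrm{sn}} = -4d(v-u)g'(v;a)^2 h(u,v,d)$ for an auxiliary function $h$ satisfying $h(u,v_+(u),d) = v_+''(u)-v_d''(u)$, deduces $\gamma_1-\gamma_2<0$ by recognising it as $\tfrac{1}{2}(v-u)^{-1}\det D_{1,2,3}G_{\mathrm{sn}}$, and proves $\gamma_1\gamma_2\ge 0$ by a separate sum-of-squares identity $\gamma_1\gamma_2 = (\alpha+\beta)^2$ that does not need the tangency corollary at all. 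You instead parametrise the saddle-node point by the common slope $\beta = v_+'(u_*)=v_d'(u_*)\in(0,1)$, which collapses $\gamma_1$, $\gamma_2$ and the determinant onto the single factor $g''(u_*)+\beta^3 g''(v_*)$ whose positivity is exactly the content of Lemma \ref{lem:bif:trnvs:basic:props:ints}(i) combined with Corollary \ref{cor:bif:trnsv:v:plus:tangency}. This buys you the sharper structural relation $\gamma_2=\beta\gamma_1$ with $\gamma_1<0$, from which all three inequalities (in fact with $\gamma_1\gamma_2>0$ strictly) drop out in one line, at the cost of funnelling everything through the tangency corollary; the paper's sum-of-squares route for $\gamma_1\gamma_2\ge0$ is more robust in that it only needs $g'(u_*)<2d_*$ and $g'(v_*)<2d_*$. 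Your computations (the values of $p$, $q$, the identity $pq=4d_*^2$, the evaluation at $(0,1,0;0)$ using $g'(0;0)=0$, $g''(0;0)=2$, $g'(1;0)=-1$, $g''(1;0)=-4$, and the final expression for the determinant) all check out against the paper's formulas.
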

\begin{proof}
Let us assume for the moment that
$G_{\mathrm{sn}}(u, v, d; a) = 0$,
which directly implies
\begin{equation}
\label{eq:bif:d:123:g3:is:zero:consq}
(g'(u;a) - 2d ) g'(v;a)  = 2d g'(u;a).
\end{equation}
In view of the identity
\begin{equation}
\label{eq:bif:d:ovl:g}
D_{1,2,3}G_{\mathrm{sn}}(u,v,d;a)
= \left(
  \begin{array}{ccc}
     g'(u;a) - 2d & 2d & 2 (v - u)
     \\[0.2cm]
     2d & g'(v;a) - 2d & 2 (u - v)
     \\[0.2cm]
     (g'(v;a) - 2d) g''(u;a)
     & (g'(u;a) - 2d) g''(v;a)
     & -2 (g'(u;a) + g'(v;a) )
  \end{array}
  \right),
\end{equation}
we can use \sref{eq:bif:d:123:g3:is:zero:consq}
to compute
\begin{equation}
\label{eq:bif:d:123:ovl:g:dir:expr}
\begin{array}{lcl}
\mathrm{det} \, D_{1,2,3} G_{\mathrm{sn}}(u,v,d;a)
& = & 2(v-u)
\Big[
 g'(u;a) (g'(u;a) - 2d) g''(v;a)
 - g'(v;a) (g'(v;a) - 2d) g''(u;a)
\Big]
\\[0.2cm]
& =&
  2 (v - u)
  \Big[
  2d g'(u;a)^2 \frac{1}{g'(v;a)} g''(v;a)
 - g'(v;a) (g'(v;a) - 2d) g''(u;a)
\Big].
\\[0.2cm]
\end{array}
\end{equation}
Expanding the subdeterminants
of \sref{eq:bif:d:ovl:g}
and reusing \sref{eq:bif:d:123:g3:is:zero:consq},
we also find
\begin{equation}
\begin{array}{lcl}
\gamma_1(u,v,d ;a) & = & 2d (g'(u;a) - 2d) g''(v;a)
 - (g'(v;a) - 2d)^2 g''(u;a) ,
\\[0.2cm]
\gamma_2(u,v,d;a) & = &
2d (g'(v;a) -2 d) g''(u;a)
  - (g'(u;a) - 2d)^2 g''(v;a) ,
\\[0.2cm]
\gamma_3(u,v,d;a) & = & 0,
\end{array}
\end{equation}
which allows us to explicitly
verify \sref{eq:bif:ids:for:gamma:1:2}.

Let us now fix $0 < a< \frac{1}{2}$.
For any $(u, v) \in [0,1]^2$ and $d > 0$ we introduce the function
\begin{equation}
\begin{array}{lcl}
h\big(u,v , d \big)
 & = &
 \frac{1}{2d} g''(u; a)
- \frac{g''(u ; a)}{g'(v; a)}
- \frac{g'(u ; a)^2}{g'(v ; a)^3} g''(v ; a)
\\[0.2cm]
& = &\frac{1}{2d g'(v ; a)^2}
\Big[
  g'(v ; a)(g'(v ; a) - 2d) g''(u ; a)
  - 2d \frac{g'(u ; a)^2}{g'(v; a)} g''(v ; a)
\Big] ,
\end{array}
\end{equation}
which allows us to rewrite
\sref{eq:bif:d:123:ovl:g:dir:expr}
in the form
\begin{equation}
\begin{array}{lcl}
\mathrm{det} \, D_{1,2,3} G_{\mathrm{sn}}(u,v,d;a)
& = &
-4d (v -u) g'(v;a)^2 h(u, v, d).
\end{array}
\end{equation}
Using \sref{eq:preps:sec:deriv:v:pm} and \sref{eq:preps:sc:deriv:v:star}
we observe that
\begin{equation}
v_+''(u) - v_d''(u)
= h\big(u, v_+(u) ,d \big).
\end{equation}
In particular, Corollary \ref{cor:bif:trnsv:v:plus:tangency}
and \sref{eq:lem:bif:str:roots:bc:tangency}
imply that
\begin{equation}
h\big( \omega(a)  \big) > 0,
\end{equation}
which shows that
\begin{equation}
\mathrm{det} \, D_{1,2,3} G_{\mathrm{sn}}(\omega(a);a)
< 0,
\end{equation}
as desired.

Let us now write $(u, v, d) = \omega(a)$
and compute
\begin{equation}
\begin{array}{lcl}
\gamma_1\big( \omega(a) ; a \big)
 - \gamma_2\big(\omega(a)  ; a\big)
& = & g'(u) \big(g'(u) - 2d) g''(v)
  - g'(v) \big(g'(v) - 2d) g''(u)
\\[0.2cm]
& = &
 \frac{1}{2} (v - u)^{-1}
   \mathrm{det} \, D G_{\mathrm{sn}}(u, v, d;a)
\\[0.2cm]
& \le & 0.
\end{array}
\end{equation}
In addition, since
$u > u_{\mathrm{min}}(a)$
and hence $v_d'(u)
 = v_+'(u) > 0$,
we have $g'(u) < 2d$.
This allows us to define
\begin{equation}
\begin{array}{lcl}
\alpha & = &
  \sqrt{2d}  \big(2d - g'(u;a)\big)^{3/2} g''(v;a) ,
\\[0.2cm]
\beta & = &
   \sqrt{2d}\big(2d - g'(v;a ) \big)^{3/2} g''(u ; a)
\end{array}
\end{equation}
and compute
\begin{equation}
\begin{array}{lcl}
\gamma_1(\omega(a);a) \gamma_2(\omega(a);a)
& = &
  (4d^2) (4 d^2) g''(u;a) g''(v;a)
  + (4d^2)^2 g''(u;a) g''(v;a)
\\[0.2cm]
& & \qquad
  - 2d (g'(u;a) - 2d)^3 g''(v;a)^2
  - 2d (g'(v;a) - 2d)^3 g''(u;a)^2
\\[0.2cm]
& = &
  2 \alpha \beta  + \alpha^2 + \beta^2
\\[0.2cm]
& \ge & 0,
\end{array}
\end{equation}
as desired.
\end{proof}

\begin{cor}
\label{cor:bif:d:min:str:increasing}
The map $a \mapsto \omega(a)$ is $C^\infty$-smooth
on $(0, \frac{1}{2})$ and we have
$ d_-'(a) > 0$.
\end{cor}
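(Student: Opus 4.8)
The plan is to read off both assertions from the implicit function theorem applied to the augmented map $G_{\mathrm{sn}}$, using the three sign facts just established. First I would fix $a_0 \in (0,\tfrac12)$ and recall from Corollary \ref{cor:bif:char:roots:ovl:G} that $G_{\mathrm{sn}}(\omega(a_0);a_0) = 0$, while the preceding lemma gives $\det D_{1,2,3} G_{\mathrm{sn}}(\omega(a_0);a_0) < 0$, in particular nonzero. The implicit function theorem then produces a unique $C^\infty$ branch $a \mapsto (u,v,d)(a)$ of solutions to $G_{\mathrm{sn}} = 0$ near $a_0$ passing through $\omega(a_0)$. Since $\omega(a_0)$ satisfies $v = v_+(u) \ge u_{\mathrm{max}}(a_0) > a_0 > u$, this branch has $v > u$ for $a$ close to $a_0$; by the uniqueness clause of Corollary \ref{cor:bif:char:roots:ovl:G} (the only root with $v > u$ is $\omega(a)$, as the point $(a,a,\tfrac14 g'(a;a))$ has $v = u$) it must coincide with $\omega$. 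This shows $\omega \in C^\infty\big((0,\tfrac12)\big)$ and hence $d_- = \omega_3 \in C^\infty$.

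For the derivative I would differentiate the identity $G_{\mathrm{sn}}(\omega(a);a) = 0$ in $a$, which gives $D_{1,2,3}G_{\mathrm{sn}}\,\omega'(a) = -\partial_a G_{\mathrm{sn}}$. Reading off the third component through the cofactor representation of $[D_{1,2,3}G_{\mathrm{sn}}]^{-1}$ recorded before the preceding lemma, and using $\gamma_3(\omega(a);a) = 0$ together with the elementary identities $\partial_a G_1 = -u(1-u)$ and $\partial_a G_2 = -v(1-v)$, I obtain
\begin{equation}
d_-'(a) = \frac{\gamma_1(\omega(a);a)\, u(1-u) + \gamma_2(\omega(a);a)\, v(1-v)}{\det D_{1,2,3}G_{\mathrm{sn}}(\omega(a);a)},
\end{equation}
where $(u,v) = (\omega_1(a),\omega_2(a))$ obey $0 < u < a < v < 1$, so that the weights $u(1-u)$ and $v(1-v)$ are strictly positive.

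The crucial observation is then that the numerator never vanishes. Indeed, $\gamma_1\gamma_2 \ge 0$ means that $\gamma_1$ and $\gamma_2$ are either both nonnegative or both nonpositive, while $\gamma_1 - \gamma_2 < 0$ prevents them from both vanishing. In the first case the positively weighted combination in the numerator is strictly positive, in the second it is strictly negative; either way it is nonzero. Combined with $\det D_{1,2,3}G_{\mathrm{sn}} < 0$, this yields $d_-'(a) \neq 0$ for every $a \in (0,\tfrac12)$.

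Finally, since $d_-'$ is continuous and nowhere zero on the connected interval $(0,\tfrac12)$, it has constant sign there and $d_-$ is strictly monotonic. The boundary behaviour pins down this sign: Lemma \ref{lem:bif:str:roots:BC}(vii) gives $d_-(\tfrac12) = \tfrac1{24}$, while the bound $d_-(a) < \tfrac14 g'(a;a) = \tfrac14 a(1-a)$ from Lemma \ref{lem:bif:str:roots:BC} forces $\lim_{a\downarrow 0} d_-(a) = 0$. As $0 < \tfrac1{24}$, the monotonicity must be increasing, whence $d_-'(a) > 0$. The main obstacle is precisely this sign bookkeeping: the three facts from the preceding lemma only guarantee $d_-' \neq 0$, so positivity cannot be read off $\gamma_1,\gamma_2$ directly and genuinely requires combining the nonvanishing of the derivative with the endpoint values of $d_-$.
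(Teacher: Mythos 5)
Your proposal is correct, and its first half (smoothness via the implicit function theorem anchored by $\det D_{1,2,3}G_{\mathrm{sn}}(\omega(a);a) < 0$, followed by the derivative formula obtained from the third row of the cofactor representation together with $\gamma_3(\omega(a);a)=0$) coincides with the paper's argument. Where you genuinely diverge is in how the sign of $d_-'$ is extracted. The paper pins down the individual signs of $\gamma_1$ and $\gamma_2$: it first shows by a squeezing argument that $\omega(a) \to (0,1,0)$ as $a \downarrow 0$, uses the explicit corner values $\gamma_1(0,1,0;0) = -2$ and $\gamma_2(0,1,0;0) = 0$ from \sref{eq:bif:ids:for:gamma:1:2}, and then propagates the sign over all of $(0,\frac{1}{2})$ by continuity together with $\gamma_1\gamma_2 \ge 0$ and $\gamma_1 - \gamma_2 < 0$ (the pair cannot switch quadrant without passing through the excluded origin), concluding $\gamma_1 < \gamma_2 \le 0$ and reading $d_-'>0$ directly off the formula. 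You instead use the same two inequalities only to show that the numerator never vanishes, hence $d_-' \neq 0$, and then fix the sign globally from the boundary behaviour of $d_-$ itself. This is a legitimate and arguably more economical alternative: it avoids both the limit $\omega(a) \to (0,1,0)$ and the explicit corner evaluation of $\gamma_{1,2}$, at the price of not recovering the individual signs of $\gamma_1,\gamma_2$ (which the paper's route yields as a by-product, though they are not used elsewhere). One small repair: your final step invokes $d_-(\frac{1}{2}) = \frac{1}{24}$, but continuity of $d_-$ at the endpoint $a=\frac{1}{2}$ is not yet available at this stage, so that value cannot be used directly. This is harmless: since $d_-(a) > 0$ on $(0,\frac{1}{2})$ by Lemma \ref{lem:bif:str:roots:BC} while the squeeze $0 < d_-(a) < \frac{1}{4}a(1-a)$ forces $\lim_{a\downarrow 0}d_-(a) = 0$, a strictly decreasing $d_-$ is already impossible, and the value at $a=\frac{1}{2}$ is not needed.
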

\begin{proof}
Since $d_- < \frac{g'(a;a)}{4}$,
we may use
Corollary \ref{cor:bif:char:roots:ovl:G},
together with the first inequality in
\sref{eq:bif:ineq:for:det:d:ovl:G}
to apply the implicit function
and establish the $C^\infty$-smoothness of $\omega$.

Since $g'(a;a)/4 \downarrow 0$ as $a \downarrow 0$,
a squeezing argument shows that
$\lim_{a \downarrow 0} \omega(a) = (0 , 1 , 0)$.
Exploiting \sref{eq:bif:ids:for:gamma:1:2},
and \sref{eq:bif:ineq:for:det:d:ovl:G},
the continuity of  $\gamma_{1}$
and $\gamma_2$
implies that
\begin{equation}
\gamma_1(\omega(a);a) < \gamma_2(\omega(a);a) \le 0
\end{equation}
for all $0 < a < \frac{1}{2}$.
In particlar,
writing $(u,v,d) = \omega(a)$,
we may use the inequalities
\begin{equation}
D_2 g (u ;a ) < 0,
\qquad
D_2  g(v ;a ) < 0
\end{equation}
to compute
\begin{equation}
d_-'(a) = -
 [\mathrm{det} \, D_{1,2,3} G_{\mathrm{sn}}(\omega(a);a)]^{-1}
\big[\gamma_1(\omega(a);a) D_2 g(u ; a)
   + \gamma_2(\omega(a);a) D_2 g( v; a) \big]
 > 0 .
\end{equation}
\end{proof}

\begin{proof}[Proof of Proposition \ref{prp:st:dpm:ex}]
Items (i), (iii), (iv)
follow from Lemmata
\ref{lem:bif:str:roots:A}-\ref{lem:bif:str:roots:BC} and
the observations
in the proof of
Corollary \ref{cor:bif:char:roots:ovl:G}.
Item (ii) follows from
Corollary \ref{cor:bif:d:min:str:increasing},
while the expansions in (v) follow from Propositions \ref{prp:bif:exp:a:zero}
and \ref{prp:bif:cusp:exp}.
\end{proof}

\begin{proof}[Proof of Proposition \ref{prp:st:uv:stb:ex}]
The identity $g(1-u, 1-a) = - g(u, a)$
implies that
\begin{equation}
G(1- u, 1 - v; 1-a , d)
= - G( u, v; a, d).
\end{equation}
This allows us to extend the solutions
constructed in Lemmata
\ref{lem:bif:str:roots:A}-\ref{lem:bif:str:roots:BC}
to the entire interval $0 \le a \le 1$
in the fashion outlined in Corollary \ref{cor:bif:1:min:a},
which yields (ii) and (iv).

To establish (i), we note that
for each $\# \in \{A , B, C \}$ the sign of
$\det D_{1,2} G(\overline{u}_{\#} , \overline{v}_{\#} ; a, d)$
is constant on $\Omega_-$
on account of Corollary \ref{cor:bif:char:roots:ovl:G}.
Using (ii) the eigenvalues of these matrices
can be explicitly computed at $d = 0$, which yields the desired
stability properties.

To obtain the strict ordering in (iii),
we fix $0 < a \le \frac{1}{2}$
and $0 < d < d_-(a)$.
We observe  that $v_+(u) \ge v_-(u)$
for $u \in [0,a]$, with
equality only at $ u = u_{\mathrm{min}}(a)$
for $a = \frac{1}{2}$. Since $v_d(0) = 0$
this implies that
$\overline{u}_A(a,d) < \overline{u}_B(a,d)$
and $\overline{v}_A(a,d) < \overline{v}_B(a,d)$.
By construction, we also have
$\overline{u}_B(a,d) < \overline{u}_C(a,d)$.
In addition, it cannot be the case
that $\overline{v}_B(a,\tilde{d}) = \overline{v}_{C}(a,\tilde{d})$
for any $0 < \tilde{d} < d_-(a)$
since then $G_2 = 0$ implies that also
$\overline{u}_B(a,\tilde{d}) = \overline{u}_C(a,\tilde{d})$.
Using the general identity
\begin{equation}
D_{a,d} (\overline{u}_{\#} , \overline{v}_{\#} )
 = -[D_{1,2}G(\overline{u}_{\#} , \overline{v}_{\#} ; a, d)]^{-1}
  D_{3,4} G(\overline{u}_{\#} , \overline{v}_{\#} ; a, d)
\end{equation}
we see that
\begin{equation}
\partial_d \overline{v}_{\#} (a, 0)
 =  - \frac{1}{g'(\overline{v}_{\#}(a, 0) ;a) }
\big(\overline{u}_{\#}(a, 0) - \overline{v}_{\#}(a, 0) \big),
\end{equation}
which yields
\begin{equation}
\partial_d \overline{v}_{B} (a, 0)
 =  \frac{1}{g'(1 ; a)}
<
   \frac{1 - a}{g'(1;a)}
 = \partial_d \overline{v}_{C} (a, 0).
\end{equation}
These observations allow us to conclude
$\overline{v}_B(a,d) < \overline{v}_{C}(a,d)$,
as desired.
\end{proof}

\begin{proof}[Proof of Proposition \ref{prp:st:uv:ustb:ex}]
Arguing in a similar fashion as the proof of
Proposition \ref{prp:st:uv:stb:ex},
the statements follow from
Lemma \ref{lem:bif:str:roots:A}
and Corollary \ref{cor:bif:char:roots:ovl:G}.
\end{proof}



\section{Travelling waves}
\label{sec:twv}

Our goal here is to establish the existence
of bichromatic wave solutions
to the Nagumo LDE \sref{eq:mr:lde:main}
and to obtain detailed results concerning their speed.
In particular, we establish
Theorems \ref{thm:mr:twv:ex} and \ref{thm:mr:twv:sets:t},
which are the main results of this paper.

Upon introducing
the standard discrete Laplacian
\begin{equation}
[\Delta^+ u](\xi) = u(\xi + 1) + u(\xi - 1) - 2 u(\xi)
\end{equation}
together with the
off-diagonal matrix
\begin{equation}
\mathcal{J} =
\left( \begin{array}{cc} 0 & 1 \\ 1 & 0 \end{array} \right),
\end{equation}
the travelling wave system
\sref{eq:mr:wave:mfde}
can be rewritten as
\begin{equation}
\label{eq:twv:wave:mfde:comp}
-c \Phi' = d \mathcal{J} \Delta^+ \Phi + G(\Phi ; a, d).
\end{equation}
For any $(a, d) \in \Omega_-$, we set out to
seek solutions to \sref{eq:twv:wave:mfde:comp}
that satisfy the boundary conditions
\begin{equation}
\label{eq:twv:wave:bnd:conds}
\lim_{\xi \to -\infty}
  \Phi(\xi) = (0, 0),
\qquad
\lim_{\xi \to + \infty}
  \Phi(\xi) = \big(\overline{u}_B(a,d) , \overline{v}_B(a,d) \big) .
\end{equation}
The existence of such solutions
will be established in \S\ref{sec:twv:ex},
where we also show $c \ge 0$ and establish Theorem \ref{thm:mr:twv:ex}.
In \S\ref{sec:twv:char} we subsequently set out
to derive criteria that distinguish between the two
cases $c = 0$ and $c > 0$. We verify these
criteria in \S\ref{sec:twv:expl} for parameters $(a,d) \in \Omega_-$
that are close to the cusp $(\frac{1}{2}, \frac{1}{24})$
and the corner $(1, 0)$. This allows us to establish
our main results contained in Theorem \ref{thm:mr:twv:sets:t}.

\subsection{Existence of waves}
\label{sec:twv:ex}

Our preparatory work in \S\ref{sec:eqlb} allows us to invoke
the theory developed in \cite{CHENGUOWU2008} to establish a general existence
result for bichromatic waves. Indeed, the equilibria $(0, 0)$ and $(\overline{u}_B, \overline{v}_B)$
are both stable under the dynamics of $(\dot{u}, \dot{v}) = G(u, v; a, d)$
and all intermediate equilibria are unstable.
Using a straightforward estimate
based on the ordering \sref{eq:bif:abc:ordering},
the wavespeeds can be shown to be non-negative.

\begin{lem}
\label{lem:tvw:main:ex}
Pick $(a,d) \in \Omega_-$.
Then there exists a constant $c \in \Real$
and a non-decreasing function $\Phi: \Real \to \Real^2$
that satisfy \sref{eq:twv:wave:mfde:comp}-\sref{eq:twv:wave:bnd:conds}.
This constant $c$ is unique,
while  the function $\Phi$ is unique up to translation if $c \neq 0$.
In the latter case we also have $\Phi'(\xi) > (0,0)$ for all $\xi \in \Real$.
\end{lem}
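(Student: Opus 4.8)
The plan is to place the two-component wave problem \sref{eq:twv:wave:mfde:comp}--\sref{eq:twv:wave:bnd:conds} inside the bistable framework of Chen, Guo and Wu \cite{CHENGUOWU2008} and then to import existence, uniqueness and monotonicity from their theory. The entire argument reduces to verifying two structural hypotheses: a comparison principle, and a clean bistable ordering of the equilibria inside the relevant order interval. First I would record the monotone structure. Since $d>0$, the coupling $d\mathcal{J}\Delta^+$ has nonnegative off-diagonal entries, so the LDE \sref{eq:mr:lde:main} --- viewed as a spatially $2$-periodic system --- is cooperative and obeys the comparison principle; in the wave formulation \sref{eq:twv:wave:mfde:comp} the right-hand side is correspondingly quasimonotone on order intervals. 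Next I would locate every root of $G(\cdot;a,d)$ in the componentwise order interval $\mathcal{B}=[(0,0),(\overline{u}_B,\overline{v}_B)]$. Using the ordering \sref{eq:bif:abc:ordering} from Proposition \ref{prp:st:uv:stb:ex}, together with $\overline{u}_B<\overline{u}_C<a<\overline{v}_B$, one checks directly that of the nine equilibria only three lie in $\mathcal{B}$: the endpoints $(0,0)$ and $(\overline{u}_B,\overline{v}_B)$, and the single interior root $(\overline{u}_A,\overline{v}_A)$. The monochromatic root $(a,a)$ and all swapped roots have a coordinate exceeding $\overline{u}_B$ or $\overline{v}_B$ and are therefore excluded.

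I would then certify the bistable dichotomy. The endpoint $(0,0)$ has Jacobian $D_{1,2}G(0,0;a,d)$ with eigenvalues $-a$ and $-a-4d$, both negative, so it is asymptotically stable; the endpoint $(\overline{u}_B,\overline{v}_B)$ is stable by Proposition \ref{prp:st:uv:stb:ex}(i) (two negative eigenvalues), while the interior root $(\overline{u}_A,\overline{v}_A)$ has one positive and one negative eigenvalue and is an unstable saddle. Thus on $\mathcal{B}$ we have exactly the configuration required in \cite{CHENGUOWU2008}: two stable ends separated by a single unstable equilibrium, with the comparison principle in force. With this in hand, their main existence theorem yields a constant $c\in\Real$ and a non-decreasing $\Phi:\Real\to\Real^2$ solving \sref{eq:twv:wave:mfde:comp}--\sref{eq:twv:wave:bnd:conds}, together with the uniqueness of $c$.

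For the final two assertions I restrict to $c\neq0$, where \sref{eq:twv:wave:mfde:comp} is a genuine functional differential equation and $\Phi$ is smooth (bootstrapping from $\Phi'=-c^{-1}[d\mathcal{J}\Delta^+\Phi+G(\Phi;a,d)]$). Uniqueness of $\Phi$ up to translation then follows from the sliding-comparison argument of \cite{CHENGUOWU2008} (alternatively, the Fredholm theory of \cite{HJHNEGDIF}): given two monotone fronts, one translates the second until it first touches the first and invokes the strong comparison principle to force coincidence. Strict monotonicity follows by differentiating \sref{eq:twv:wave:mfde:comp}: the derivative $\Psi=\Phi'$ solves the linear variational equation $-c\Psi'=d\mathcal{J}\Delta^+\Psi+D_{1,2}G(\Phi;a,d)\Psi$, is non-negative and not identically zero, so the strong maximum principle for this cooperative linear operator upgrades $\Psi\ge(0,0)$ to $\Psi>(0,0)$ at every $\xi$.

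I expect the genuine work to lie in the first two paragraphs --- the comparison principle and, above all, the exclusion of any extra equilibrium from $\mathcal{B}$ --- since these are precisely the hypotheses that make the abstract bistable theory applicable. Once they are confirmed, the existence, the uniqueness of $c$, the uniqueness of $\Phi$ up to translation, and the strict monotonicity are all quoted from \cite{CHENGUOWU2008,HJHNEGDIF} rather than reproved.
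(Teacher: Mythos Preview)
Your proposal is correct and follows exactly the route the paper takes: the paper's own proof is a single sentence citing \cite{CHENGUOWU2008}, with the verification of the bistable structure (stable endpoints, unstable intermediate equilibrium, comparison principle) already noted in the text immediately preceding the lemma. Your write-up simply makes explicit the hypothesis-checking that the paper leaves implicit, and the equilibrium count in the order interval $[(0,0),(\overline{u}_B,\overline{v}_B)]$ is correct via the ordering \sref{eq:bif:abc:ordering}.
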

\begin{proof}
These statements follow directly from the main results
in \cite{CHENGUOWU2008}.
\end{proof}

\begin{lem}
\label{lem:twv:speed:non:neg}
Pick $(a,d) \in \Omega_-$.
Then the constant $c$ defined in Lemma
\ref{lem:tvw:main:ex}
satisfies $c \ge 0$.
\end{lem}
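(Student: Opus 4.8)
The plan is to rule out $c<0$; since the value $c=0$ already satisfies the claim, I may assume $c\neq 0$, so that Lemma \ref{lem:tvw:main:ex} provides a strictly increasing profile $\Phi$ whose derivative decays exponentially at $\pm\infty$. This integrability is what legitimizes the manipulations below. First I would sum the two components of \sref{eq:twv:wave:mfde:comp}. Writing $S=\Phi_u+\Phi_v$ and using $[\mathcal{J}\Delta^+\Phi]_1+[\mathcal{J}\Delta^+\Phi]_2=\Delta^+S$ together with the identity $G_1+G_2=g(\Phi_u;a)+g(\Phi_v;a)$ recorded in \S\ref{sec:bif:geom}, one obtains the scalar equation
\begin{equation}
-cS'(\xi)=d\,\Delta^+S(\xi)+g\big(\Phi_u(\xi);a\big)+g\big(\Phi_v(\xi);a\big).
\end{equation}
Integrating over $\Real$ and invoking the telescoping identity $\int_{\Real}\Delta^+S=0$ — valid precisely because $S$ has finite limits and $g(\Phi_u)+g(\Phi_v)\to 0$ at both ends (recall $g(\overline{u}_B;a)=-g(\overline{v}_B;a)$) — yields
\begin{equation}
-c\,\big(\overline{u}_B+\overline{v}_B\big)=\int_{\Real}\Big[g\big(\Phi_u(\xi);a\big)+g\big(\Phi_v(\xi);a\big)\Big]\,\rmd\xi .
\end{equation}
Since $\overline{u}_B+\overline{v}_B>0$, it suffices to show this integral is non-positive.

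The ordering \sref{eq:bif:abc:ordering} now enters through the inequality $\overline{u}_B<a$: as $\Phi_u$ is non-decreasing with supremum $\overline{u}_B<a$, we have $0\le\Phi_u(\xi)<a$ and hence $g(\Phi_u(\xi);a)\le 0$ for every $\xi$. The term $g(\Phi_v;a)$, however, is positive wherever $\Phi_v>a$, so I cannot conclude pointwise. Instead I would control its contribution through the curves $v_{\pm}$ of \S\ref{sec:bif:geom}: the terminal state satisfies $\overline{v}_B=v_+(\overline{u}_B)$, and $g(\Phi_u;a)+g(\Phi_v;a)\le 0$ is equivalent to $\Phi_v\notin\big(v_-(\Phi_u),v_+(\Phi_u)\big)$. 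The aim is to show, using the monotonicity of $\Phi$ and the geometry of Lemmata \ref{lem:bif:setup:bs:props:vpm}–\ref{lem:bif:str:roots:BC}, that the orbit $\xi\mapsto(\Phi_u(\xi),\Phi_v(\xi))$ accumulates its positive contribution only marginally.

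I expect the sign of the $\Phi_v$-contribution to be the main obstacle. Near $-\infty$ the orbit lies below the branch $v_+(\Phi_u)$, so it must cross into the ``lens'' $\{g(u;a)+g(v;a)>0\}$ before reaching $\overline{v}_B=v_+(\overline{u}_B)$, and the net sign of the integral then reflects a genuine balance rather than a pointwise estimate. To close this robustly — in particular in the pinned regime, where the smooth-wave identity is unavailable and the value $c=0$ must be permitted — I would fall back on the comparison principle underlying Lemma \ref{lem:tvw:main:ex}. Assuming $c<0$, the profile $\Phi$ is itself a strict sub-solution of the standing problem, since $d\mathcal{J}\Delta^+\Phi+G(\Phi;a,d)=-c\Phi'>(0,0)$; I would then manufacture from the ordered equilibria a non-decreasing standing super-solution $\Psi$ with $\Psi(-\infty)=(0,0)$, $\Psi(+\infty)=(\overline{u}_B,\overline{v}_B)$ and $\Psi<(\overline{u}_B,\overline{v}_B)$ at every finite $\xi$. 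Sliding $\Phi$ beneath $\Psi$ and letting $t\to\infty$ in the LDE forces $(\overline{u}_B,\overline{v}_B)\le\Psi(\xi)<(\overline{u}_B,\overline{v}_B)$, the desired contradiction. Arranging that the super-solution inequality survives on the upper side of the front — exactly where a crude step profile fails because the off-diagonal coupling $\mathcal{J}$ overshoots — is the delicate point, and is where the strict inequalities in \sref{eq:bif:abc:ordering} are indispensable.
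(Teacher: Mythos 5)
Your proposal does not close; both routes stop exactly where the real work begins. In the integral route, the identity $-c\,(\overline{u}_B+\overline{v}_B)=\int_{\Real}\big[g(\Phi_u(\xi);a)+g(\Phi_v(\xi);a)\big]\,\rmd\xi$ is correct, but it is an exact reformulation of the claim rather than a reduction of it: the integral is non-positive if and only if $c\ge 0$, so ``showing the integral is non-positive'' is the whole problem. You correctly identify the obstacle --- $g(\Phi_v;a)>0$ wherever $\Phi_v>a$, which happens on a half-line near $+\infty$ --- but the sentence about the orbit accumulating ``its positive contribution only marginally'' is not an argument; no cancellation estimate is supplied. In the comparison route, the entire burden rests on the stationary super-solution $\Psi$ with $\Psi(-\infty)=(0,0)$, $\Psi(+\infty)=(\overline{u}_B,\overline{v}_B)$ and $\Psi<(\overline{u}_B,\overline{v}_B)$ pointwise, which you do not construct and explicitly flag as ``the delicate point''; moreover the sliding step needs $\Phi(\cdot-\vartheta)\le\Psi$ for some shift, which is itself nontrivial because both profiles share the limit $(\overline{u}_B,\overline{v}_B)$ at $+\infty$ while $\Psi$ must stay strictly below it.

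The paper's proof avoids all of this with a single pointwise evaluation of the $v$-equation. Assume $c\neq 0$, so that $\Phi$ is strictly increasing. Since $\Phi_u\le\overline{u}_B$ everywhere,
\begin{equation}
-c\Phi_v'(\xi)=d\big[\Phi_u(\xi+1)+\Phi_u(\xi-1)-2\Phi_v(\xi)\big]+g\big(\Phi_v(\xi);a\big)\le 2d\big[\overline{u}_B-\Phi_v(\xi)\big]+g\big(\Phi_v(\xi);a\big).
\end{equation}
Because $\Phi_v$ runs continuously from $0$ to $\overline{v}_B>a>\overline{u}_B$, there is a $\xi_*$ with $\Phi_v(\xi_*)=\overline{u}_B$; there the bracket vanishes and the right-hand side equals $g(\overline{u}_B;a)<0$, using $0<\overline{u}_B<a$ from \sref{eq:bif:abc:ordering}. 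Hence $-c\Phi_v'(\xi_*)<0$, and since $\Phi_v'>0$ this forces $c>0$. This is the device your sketch is missing: rather than controlling the sign of $g(\Phi_v;a)$ globally or along the whole orbit, one evaluates the equation on the level set where $\Phi_v$ equals the small root $\overline{u}_B$, where both the coupling term and the nonlinearity have a definite sign.
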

\begin{proof}
Suppose that $c \neq 0$. We estimate
\begin{equation}
\begin{array}{lcl}
-c \Phi'_v(\xi)
& = &
d [ \Phi_u(\xi + 1) + \Phi_u(\xi - 1) - 2 \Phi_v(\xi) ]
+ g (\Phi_v(\xi) ; a)
\\[0.2cm]
& \le &
  2d [\overline{u} - v(\xi)]
   + g(\Phi_v(\xi); a) .
\\[0.2cm]
\end{array}
\end{equation}
Since $0 < \overline{u}_B(a,d) < a < \overline{v}_B(a,d)$, there exists
$\xi_* \in \Real$ for which
$\Phi_v(\xi_*) = \overline{u}_B(a,d)$.
This yields
\begin{equation}
-c \Phi'_v(\xi_*)
 \le g( \overline{u} ; a ) < 0,
\end{equation}
from which we conclude $c > 0$.
\end{proof}

Writing $c(a,d)$ and $\Phi(a,d)$ for the wavespeed and waveprofile
defined in
Lemma \ref{lem:tvw:main:ex},
we introduce the set
\begin{equation}
\mathcal{T} = \{(a , d) \in \Omega_-: c(a,d) > 0 \},
\end{equation}
which corresponds to the set $\mathcal{T}_{\mathrm{low}}$ used in
\S\ref{sec:mr}.
%
In addition, for any $(a,d) \in \mathcal{T}$
we introduce the linear operators
\begin{equation}
\mathcal{L}_{a,d} : W^{1,\infty}(\Real; \Real^2) \to L^\infty(\Real ; \Real^2),
\qquad
\mathcal{L}^{\mathrm{adj}}_{a,d} : W^{1,\infty}(\Real; \Real^2) \to L^\infty(\Real ; \Real^2),
\end{equation}
that act as
\begin{equation}
\begin{array}{lcl}
\mathcal{L}_{a,d} \phi
& = & - c(a,d) \phi' - d \mathcal{J} \Delta^+ \phi - DG(\Phi(a,d) ; a, d) \phi,
\\[0.2cm]
\mathcal{L}^{\mathrm{adj}}_{a,d} \psi
& = &  c(a,d) \psi' - d \mathcal{J} \Delta^+ \psi - DG(\Phi(a,d) ; a, d) \psi.
\end{array}
\end{equation}
The results in \cite[{\S}8]{HJHNEGDIF} imply that
there exists $\Psi = \Psi(a,d) \in W^{1,\infty}(\Real; \Real^2)$ with $\Psi > (0, 0)$
for which we have the identities
\begin{equation}
\mathrm{Ker} \, \mathcal{L}_{a,d} = \mathrm{span} \{ \Phi'(a,d) \},
\qquad
\mathrm{Ker} \, \mathcal{L}^{\mathrm{adj}}_{a,d} = \mathrm{span} \{ \Psi(a,d) \}
\end{equation}
together with the normalization
\begin{equation}
\int_{-\infty}^\infty \langle \Psi(\xi), \Phi'(\xi) \rangle \,d  \xi = 1.
\end{equation}
In particular, \cite[Thm. A]{MPA} implies that
\begin{equation}
\label{eq:twv:char:range}
\mathrm{Range} \, \mathcal{L}_{a,d} = \{ f \in L^\infty(\Real;\Real^2):
  \int_{-\infty}^\infty \langle \Psi(\xi) , f(\xi) \rangle \, d \xi = 0 \}.
\end{equation}
These ingredients allow us to use the implicit function
theorem to show that the pair $(c,\Phi)$ depends smoothly
on the parameters $(a,d) \in \mathcal{T}$. In addition,
we obtain a sign on $\partial_a c$.

\begin{lem}
\label{lem:twv:smoothness}
The maps
\begin{equation}
\begin{array}{lclcl}
\mathcal{T} \ni (a, d) & \mapsto & c(a, d) & \in & (0, \infty) ,
\\[0.2cm]
\mathcal{T} \ni (a, d) & \mapsto & \Phi(a, d) & \in & BC^1(\Real ;\Real^2)
\end{array}
\end{equation}
are $C^\infty$-smooth. In addition, we have
\begin{equation}
\partial_a c(a,d) > 0
\end{equation}
for all $(a,d) \in \mathcal{T}$.
\end{lem}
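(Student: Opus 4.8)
The plan is to treat the two assertions in turn: first establish $C^\infty$-dependence by an implicit function theorem argument built on the Fredholm data recorded above, and then extract the sign of $\partial_a c$ from the associated solvability condition.

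For the smoothness I would work locally near a fixed $(a_0,d_0)\in\mathcal{T}$, choose a smooth reference profile $\Phi_*$ matching the asymptotic states $(0,0)$ and $(\overline{u}_B,\overline{v}_B)$, and write nearby solutions as $\Phi=\Phi_*+\phi$ with $\phi$ in an exponentially weighted space $X$ of functions decaying to $0$ at $\pm\infty$. This is legitimate because both limiting equilibria are hyperbolic: the trace/determinant conditions \sref{eq:mr:stab:uv:ovl} place the spectrum of the limiting linearizations off the imaginary axis. Setting
\begin{equation}
  \mathcal{F}(c,\phi;a,d) = -c(\Phi_*+\phi)' - d\mathcal{J}\Delta^+(\Phi_*+\phi) - G(\Phi_*+\phi;a,d),
\end{equation}
the equation $\mathcal{F}=0$ is equivalent to \sref{eq:twv:wave:mfde:comp}--\sref{eq:twv:wave:bnd:conds}, and since $G$ is polynomial $\mathcal{F}$ is $C^\infty$ into $L^\infty(\Real;\Real^2)$. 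Its partial derivative in $(c,\phi)$ at a solution acts as $(\dot c,\dot\phi)\mapsto -\dot c\,\Phi' + \mathcal{L}_{a,d}\dot\phi$. The normalization $\int_{-\infty}^\infty\langle\Psi,\Phi'\rangle\,d\xi = 1$ shows $\Phi'\notin\mathrm{Range}\,\mathcal{L}_{a,d}$, so by \sref{eq:twv:char:range} the scalar $\dot c$ supplies exactly the missing cokernel direction and this derivative is surjective onto $L^\infty$. Its kernel is spanned by $(0,\Phi')$, which I remove by appending a phase condition $\langle\chi,\phi\rangle=0$ for a fixed functional $\chi$ with $\langle\chi,\Phi'\rangle\neq 0$. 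The augmented map has invertible derivative, so the implicit function theorem produces a $C^\infty$ branch $(c,\phi)(a,d)$; the uniqueness in Lemma \ref{lem:tvw:main:ex} identifies it with $(c(a,d),\Phi(a,d)-\Phi_*)$, giving smoothness into $(0,\infty)$ and $BC^1(\Real;\Real^2)$.

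For the sign, I would differentiate the profile equation $-c\Phi'=d\mathcal{J}\Delta^+\Phi+G(\Phi;a,d)$ with respect to $a$, which is now justified by the smoothness just obtained. Collecting terms yields
\begin{equation}
  \mathcal{L}_{a,d}\big(\partial_a\Phi\big) = \partial_a c\,\Phi' + \partial_a G(\Phi;a,d).
\end{equation}
The left-hand side lies in $\mathrm{Range}\,\mathcal{L}_{a,d}$, so pairing with $\Psi$ and invoking \sref{eq:twv:char:range} together with the normalization gives
\begin{equation}
  \partial_a c(a,d) = -\int_{-\infty}^\infty \langle\Psi(\xi),\partial_a G(\Phi(\xi);a,d)\rangle\,d\xi,
\end{equation}
the integral converging because $\Psi$ decays exponentially at both ends.

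Finally I evaluate this. Since $\partial_a g(u;a)=-u(1-u)$, we have $\partial_a G(\Phi;a,d)=-\big(\Phi_u(1-\Phi_u),\Phi_v(1-\Phi_v)\big)$, so that
\begin{equation}
  \partial_a c(a,d) = \int_{-\infty}^\infty \Big[\Psi_u(\xi)\Phi_u(\xi)\big(1-\Phi_u(\xi)\big) + \Psi_v(\xi)\Phi_v(\xi)\big(1-\Phi_v(\xi)\big)\Big]\,d\xi.
\end{equation}
On $\mathcal{T}$ the profile is strictly increasing and obeys \sref{eq:twv:wave:bnd:conds}, so together with $0<\overline{u}_B<a<\overline{v}_B<1$ (cf. \sref{eq:bif:abc:ordering} and the fact that $(\overline{u}_B,\overline{v}_B)\in(0,1)^2$) this forces $\Phi_u(\xi)\in(0,\overline{u}_B)$ and $\Phi_v(\xi)\in(0,\overline{v}_B)$ for every finite $\xi$; in particular both components lie strictly inside $(0,1)$, so the two cubic factors are strictly positive. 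Combined with $\Psi>(0,0)$ the integrand is strictly positive for every $\xi\in\Real$, whence $\partial_a c>0$. I expect the only genuine obstacle to be the function-space bookkeeping in the implicit function theorem step — in particular confirming the exponential localization of $\Phi'$ and $\Psi$ that legitimizes both the Fredholm pairing and the convergence of the final integral — since once the differentiated identity and its solvability condition are in place the positivity is immediate.
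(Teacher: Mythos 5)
Your proposal is correct and follows essentially the same route as the paper: the paper delegates the smoothness to the Fredholm/implicit-function machinery of the cited references (which you spell out with the weighted-space and phase-condition setup), and then obtains $\partial_a c>0$ by exactly your computation — differentiating the profile equation in $a$, pairing with the adjoint eigenfunction $\Psi$ via the range characterization \sref{eq:twv:char:range}, and using $\partial_a g(u;a)=-u(1-u)<0$ on $(0,1)$ together with $\Psi>(0,0)$.
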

\begin{proof}
The $C^1$-smoothness of the maps $(c, \Phi)$ follows from
\cite[Thm. 2.3]{HJHNEGDIF}. On account of the smoothness of the
function $g$,
this can readily be extended to the desired $C^\infty$-smoothness
by using the ideas in \cite[{\S}8]{HJHNEGDIF}
to set up an implicit function argument along the lines of
\cite[Prop. 6.5]{MPB}.

Differentiating \sref{eq:twv:wave:mfde:comp}
with respect to $a$, we compute
\begin{equation}
- [\partial_a c ] \Phi' - c [\partial_a \Phi' ]
= d \mathcal{J} \Delta^+ [\partial_a \Phi ] + DG(\Phi; a, d) \partial_a \Phi  +
\partial_a G(\Phi;a,d),
\end{equation}
which gives
\begin{equation}
- [\partial_a c ] \Phi' + \mathcal{L}_{a,d} \partial_a \Phi = \partial_a G(\Phi ;a , d).
\end{equation}
Applying \sref{eq:twv:char:range}
and noting that $\partial_a g(u;a) = - u (1 - u) <0$
for all $u \in (0, 1)$,
we obtain
\begin{equation}
-\partial_a c = \int_{-\infty}^\infty \langle \partial_a G(\Phi(\xi);a,d) , \Psi(\xi) \rangle \, d \xi < 0,
\end{equation}
as desired.
\end{proof}

\begin{cor}
\label{cor:twv:pos:zone}
If $(a, d) \in \mathcal{T}$,
then also
$(a', d) \in \mathcal{T}$
for all $(a', d) \in \Omega_-$
with $a' \ge a$.
\end{cor}

\begin{proof}[Proof of Theorem \ref{thm:mr:twv:ex}]
The statements follow directly from
Lemmata \ref{lem:tvw:main:ex}-\ref{lem:twv:smoothness}.
\end{proof}

\subsection{Characterization of waves}
\label{sec:twv:char}

We now set out to derive conditions that guarantee either
$c(a,d)=0$ or $c(a,d) > 0$. To this end, we introduce the functions
\begin{equation}
u_{a,d}(v) = v - \frac{g(v; a)}{2d},
\qquad
v_{a,d}(u) = u - \frac{g(u;a)}{2d}
\end{equation}
and note that
\begin{equation}
\overline{u}_{\#}(a,d)
 = u_{a,d}\big( \overline{v}_{\#}(a, d) \big),
\qquad
\overline{v}_{\#}(a,d) =
 v_{a,d}\big( \overline{u}_{\#}(a,d) \big)
\end{equation}
for each $\# \in \{A , B , C \}$
and $(a,d) \in \Omega_-$.
The local extrema of these functions
are located at the critical points
\begin{equation}
\begin{array}{lcl}
\gamma_{c;\pm}(a,d)
  & = &
  \frac{1}{3}(a + 1)
\pm \frac{1}{3} \sqrt{a^2 - a + 1 - 6 d}
\\[0.2cm]
& = &
  u_{\mathrm{infl}}(a) \pm \sqrt{\frac{1}{3}(g'(u_{\mathrm{infl}}(a);a) - 2d )} .
\end{array}
\end{equation}
One can verify that the functions
$u_{a,d}$ and $v_{a,d}$ are strictly decreasing
on $[\gamma_{c;-}(a,d), \gamma_{c;+}(a,d)]$
and strictly increasing outside this interval.
The following ordering result exploits this
characterization and will allow us
to establish $c(a,d) = 0$ for a significant portion
of the parameter set $\Omega_-$.

\begin{lem}
\label{lem:twv:vcrit:ordering}
For any $(a,d) \in \overline{\Omega}_-$
we have the ordering
\begin{equation}
\label{eq:twv:ineqs:crit:v:b}
0 \le \gamma_{c;-}(a,d) \le a \le
   \gamma_{c;+}(a,d) \le \overline{v}_B(a,d).
\end{equation}
If $(a,d) \in \Omega_-$ then the inequalities
in \sref{eq:twv:ineqs:crit:v:b} are all strict.
\end{lem}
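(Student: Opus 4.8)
The plan is to read everything off the single observation that $\gamma_{c;\pm}(a,d)$ are exactly the two roots of the equation $g'(u;a) = 2d$, so that $g'(u;a) > 2d$ holds precisely on the open interval $(\gamma_{c;-}, \gamma_{c;+})$ and $g'(u;a) < 2d$ on its complement. First I would check that the discriminant $a^2 - a + 1 - 6d$ stays strictly positive throughout $\overline{\Omega}_-$: since $d \le d_-(a) < d_+(a) = \tfrac14 a(1-a)$ we have $6d < \tfrac32 a(1-a)$, and $a^2 - a + 1 - \tfrac32 a(1-a) = \tfrac12(5a^2 - 5a + 2)$ has negative discriminant, hence is positive. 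Thus $\gamma_{c;\pm}$ are real, distinct, continuous on $\overline{\Omega}_-$ and straddle $u_{\mathrm{infl}}(a)$. Because all four quantities in the chain are continuous on $\overline{\Omega}_-$, which is the closure of $\Omega_-$, it suffices to prove the strict inequalities on the open set $\Omega_-$; the non-strict versions on the closure then follow by passing to the limit.

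The two leftmost inequalities are elementary. The bound $\gamma_{c;-} \ge 0$ reduces, after squaring $a+1 \ge \sqrt{a^2-a+1-6d}$, to $3a + 6d \ge 0$, which is strict on $\Omega_-$. For $\gamma_{c;-} < a < \gamma_{c;+}$ I would note that $a$ lies in $(\gamma_{c;-}, \gamma_{c;+})$ if and only if $g'(a;a) > 2d$; and indeed $g'(a;a) = a(1-a) = 4 d_+(a)$, so that $2d < 2 d_+(a) = \tfrac12 g'(a;a) < g'(a;a)$, giving the strict inclusion with room to spare.

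The genuinely interesting inequality is the last one, $\gamma_{c;+} \le \overline{v}_B$, and here I would exploit the stability of the bichromatic equilibrium instead of fighting the transcendental description of $\overline{v}_B$ directly. Writing out
\[
D_{1,2}G(\overline{u}_B, \overline{v}_B; a,d) = \begin{pmatrix} g'(\overline{u}_B;a) - 2d & 2d \\ 2d & g'(\overline{v}_B;a) - 2d \end{pmatrix},
\]
the known sign conditions $\det D_{1,2}G > 0$ and $\mathrm{Tr}\, D_{1,2}G < 0$ valid on $\Omega_-$ force the diagonal entries $P = g'(\overline{u}_B;a) - 2d$ and $Q = g'(\overline{v}_B;a) - 2d$ to satisfy $PQ > 4d^2 > 0$ and $P + Q < 0$, whence $P < 0$ and $Q < 0$. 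In particular $g'(\overline{v}_B;a) < 2d$, so $\overline{v}_B$ lies outside the closed interval $[\gamma_{c;-}, \gamma_{c;+}]$. Since the middle inequality already gives $\overline{v}_B > a > \gamma_{c;-}$, the only surviving possibility is $\overline{v}_B > \gamma_{c;+}$, which is exactly the claim.

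The step I expect to be the main obstacle is precisely this last one. A frontal attack through the defining relation $\overline{v}_B = \overline{u}_B - g(\overline{u}_B;a)/(2d)$ and the branch curves $v_\pm$ is awkward, and since $v_\pm$ is only constructed for $a \le \tfrac12$ it would also require an extra symmetry reduction via Corollary \ref{cor:bif:1:min:a}. Routing through the eigenvalue signs of $D_{1,2}G$ sidesteps all of this and handles every $a \in (0,1)$ uniformly; the one point needing care is to combine $g'(\overline{v}_B;a) < 2d$ with the already-established lower bound $\overline{v}_B > \gamma_{c;-}$ in order to select the correct component of $\{g' < 2d\}$.
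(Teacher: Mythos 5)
Your proof is correct, and while the first three inequalities are handled essentially as in the paper (the paper's observation $u_{a,d}'(a) = 1 - g'(a;a)/(2d) \le -1$ is your inequality $g'(a;a) > 2d$ in disguise, and the continuity reduction to $\Omega_-$ is identical), your argument for the key inequality $\gamma_{c;+} \le \overline{v}_B$ takes a genuinely different route. The paper exploits that $u_{a,d}$ is strictly decreasing on $[a, \gamma_{c;+}]$ together with the ordering $\overline{u}_A < \overline{u}_B$ and $a < \overline{v}_A < \overline{v}_B$ from Proposition \ref{prp:st:uv:stb:ex}(iii): if $\overline{v}_B \le \gamma_{c;+}$ held, both $\overline{v}_A$ and $\overline{v}_B$ would lie in an interval on which $u_{a,d}$ decreases, forcing $\overline{u}_A > \overline{u}_B$, a contradiction. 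You instead read the inequality off the spectral stability of the $B$-equilibrium from Proposition \ref{prp:st:uv:stb:ex}(i): positive determinant and negative trace force both diagonal entries of $D_{1,2}G$ to be negative, so $g'(\overline{v}_B;a) < 2d$ and $\overline{v}_B$ must lie outside $[\gamma_{c;-},\gamma_{c;+}]$, after which the already-established bound $\overline{v}_B > a > \gamma_{c;-}$ selects the correct component. Both arguments lean on previously proven facts about the equilibrium branches; yours avoids any reference to the $A$-branch and handles all $a\in(0,1)$ in one stroke, at the price of invoking the stability data rather than only the ordering. A small bonus of your write-up is that you verify $\gamma_{c;-} \ge 0$ and the positivity of the discriminant explicitly, which the paper leaves implicit.
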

\begin{proof}
Let us first fix $(a,d) \in \Omega_-$.
We note that
$2d < 2 d_+(a) =  \frac{g'(a;a)}{2} \le \frac{g'(u_{\mathrm{infl}}(a);a)}{2}$,
which implies that $\gamma_{c;\pm}(a,d)$ are well-defined.
In addition, this allows us to compute
\begin{equation}
u_{a,d}'(a) = 1 - \frac{g'(a;a)}{2d} \le -1,
\end{equation}
which means that $a \in \big(\gamma_{c;-}(a,d) , \gamma_{c;+}(a,d) \big)$.

In particular,
the function
$v \mapsto u_{a,d}(v)$ is strictly decreasing
on $[a, \gamma_{c;+}(a,d)]$.
On account of the orderings
\begin{equation}
u_{a,d}\big(\overline{v}_A(a,d) \big) =  \overline{u}_A(a,d)
< \overline{u}_B(a,d) = u_{a,d}\big(\overline{v}_B(a,d) \big) ,
\qquad
\qquad
  a
< \overline{v}_A(a,d) < \overline{v}_B(a,d)
\end{equation}
we hence cannot have
$\overline{v}_B(a,d) \le \gamma_{c;+}(a)$.
The results for the general case
$(a,d) \in \overline{\Omega}_-$ now follow
by continuity.
\end{proof}

\begin{lem}
\label{lem:twv:d:star}
Consider any $(a, d) \in \Omega_-$
for which $d \le \frac{1}{8} (1-a)^2$.
Then we have $c(a,d) = 0$.
\end{lem}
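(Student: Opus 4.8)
The plan is to argue by contradiction: assume $c := c(a,d) > 0$ and derive a violation of the sign condition forced on the profile by its monotonicity. By Lemma~\ref{lem:tvw:main:ex}, the assumption $c > 0$ guarantees that the associated profile $\Phi = (\Phi_u, \Phi_v)$ is strictly increasing, so in particular $\Phi_v' > 0$ and $\Phi_u \ge 0$ everywhere, with $\Phi_v$ increasing continuously from $0$ to $\overline{v}_B(a,d)$. The second component of \eqref{eq:twv:wave:mfde:comp} then forces the left-hand side $-c\Phi_v'(\xi)$ to be strictly negative for every $\xi$, so that
\begin{equation}
d\big[\Phi_u(\xi-1) + \Phi_u(\xi+1) - 2\Phi_v(\xi)\big] + g\big(\Phi_v(\xi);a\big) < 0
\end{equation}
holds for all $\xi \in \Real$.

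First I would discard the coupling term using only $\Phi_u \ge 0$: since $\Phi_u(\xi \pm 1) \ge 0$, the bracketed diffusion expression is bounded below by $-2\Phi_v(\xi)$, so the displayed inequality collapses to the scalar condition $g\big(\Phi_v(\xi);a\big) < 2d\,\Phi_v(\xi)$ at every $\xi$. Writing $v = \Phi_v(\xi) > 0$, every value $v$ attained by $\Phi_v$ must therefore satisfy $\phi(v) < 0$, where $\phi(v) := (1-v)(v-a) - 2d = g(v;a)/v - 2d$. It thus suffices to exhibit one value $v_\ast$ in the range $(0, \overline{v}_B(a,d))$ of $\Phi_v$ for which $\phi(v_\ast) \ge 0$.

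The natural candidate is $v_\ast = \gamma_{c;+}(a,d)$. By Lemma~\ref{lem:twv:vcrit:ordering} the strict ordering $a < \gamma_{c;+}(a,d) < \overline{v}_B(a,d)$ holds on $\Omega_-$, so by continuity $\Phi_v$ does attain this value; moreover $\gamma_{c;+}$ satisfies $g'(\gamma_{c;+};a) = 2d$ by its very definition, which is exactly what makes the evaluation tractable. Substituting $2d = -3\gamma_{c;+}^2 + 2(1+a)\gamma_{c;+} - a$ into $\phi$ yields the clean identity $\phi(\gamma_{c;+}) = \gamma_{c;+}\big(2\gamma_{c;+} - (1+a)\big)$, and inserting $\gamma_{c;+} = \tfrac13(a+1) + \tfrac13\sqrt{a^2-a+1-6d}$ shows that $2\gamma_{c;+} - (1+a) = \tfrac13\big[2\sqrt{a^2-a+1-6d} - (1+a)\big] \ge 0$ precisely when $d \le \tfrac18(1-a)^2$. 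Hence $g(\gamma_{c;+};a) - 2d\,\gamma_{c;+} = \gamma_{c;+}^2\big(2\gamma_{c;+}-(1+a)\big) \ge 0$, which contradicts the strict inequality $g(\Phi_v;a) < 2d\,\Phi_v$ at the point where $\Phi_v = \gamma_{c;+}$. This forces $c \le 0$, and combining with Lemma~\ref{lem:twv:speed:non:neg} gives $c(a,d) = 0$.

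The two algebraic reductions (eliminating the diffusion term and simplifying $\phi(\gamma_{c;+})$) are routine; the only genuinely delicate point is the boundary case $d = \tfrac18(1-a)^2$, where $\phi(\gamma_{c;+})$ vanishes rather than being strictly positive. There the contradiction still goes through, because a genuinely travelling profile makes $-c\Phi_v'$ \emph{strictly} negative and hence forces the \emph{strict} inequality $g(\Phi_v;a) < 2d\,\Phi_v$, which is already incompatible with $g(\gamma_{c;+};a) - 2d\,\gamma_{c;+} = 0$. Identifying $\gamma_{c;+}$ as the correct test value—rather than, say, the maximiser $\tfrac{1+a}{2}$ of $\phi$, whose membership in the range of $\Phi_v$ is harder to certify—is what lets me invoke Lemma~\ref{lem:twv:vcrit:ordering} directly and keeps the argument self-contained.
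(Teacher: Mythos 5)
Your proof is correct and follows essentially the same route as the paper: assume $c>0$, use $\Phi_u\ge 0$ to reduce the second wave equation to the scalar inequality $g(\Phi_v;a)-2d\,\Phi_v<0$, and contradict it at the value $\Phi_v=\gamma_{c;+}(a,d)$, which lies in the range of $\Phi_v$ by Lemma \ref{lem:twv:vcrit:ordering}. The only (cosmetic) difference is that the paper first reduces to the boundary case $d=\tfrac18(1-a)^2$ via Corollary \ref{cor:twv:pos:zone}, where $u_{a,d}(\gamma_{c;+})$ vanishes exactly, whereas you verify the sign condition $g(\gamma_{c;+};a)-2d\,\gamma_{c;+}\ge 0$ directly for all $d\le\tfrac18(1-a)^2$.
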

\begin{proof}
Fix any $0 < a < 1$, write
$d_* = \frac{1}{8} (1-a)^2$
and suppose that $(a, d_*) \in \Omega_-$.
On account of Corollary \ref{cor:twv:pos:zone}
it suffices to show that $c(a, d_*) = 0$.
Assuming to the contrary that $c = c(a, d_*) > 0$,
we compute
\begin{equation}
\label{eq:twv:d:spd:zero:curve:ineq:phi:prime:v}
\begin{array}{lcl}
0 & > &
-c \Phi'_v(\xi)
\\[0.2cm]
& = &
d_* [ \Phi_u(\xi + 1) + \Phi_u(\xi - 1) - 2 \Phi_v(\xi) ]
+ g (\Phi_v(\xi) ; a)
\\[0.2cm]
& > &
- 2 d_* \Phi_v(\xi) + g(\Phi_v(\xi) ; a)
\\[0.2cm]
& = &
 - 2 d_* u_{a,d_*}\big( \Phi_v(\xi) \big) .
\end{array}
\end{equation}
Since $0 < \gamma_{c;+}(a , d_*) < \overline{v}_B(a , d_*)$,
there exists $\xi_* \in \Real$
for which $\Phi_v(\xi_*) = \gamma_{c;+}(a, d_*)$.
The key point is that
\begin{equation}
u_{a, d_*}\big( \gamma_{c;+}(a, d_*) \big) = 0,
\end{equation}
which allows us to obtain a contradiction
by picking $\xi = \xi_*$
in \sref{eq:twv:d:spd:zero:curve:ineq:phi:prime:v}.
\end{proof}


We now set out to derive a (non-sharp)
condition that guarantees $c(a,d) > 0$.
The strategy will be to rule out the existence
of standing bichromatic waves. Let us therefore
pick any $(a,d) \in \Omega_- \setminus \mathcal{T}$, which implies $c(a,d) = 0$.
Writing $ (\Phi_u, \Phi_v) = \Phi(a,d)$ for the corresponding profile, we introduce
the sequence
\begin{equation}
\label{eq:twv:def:standing:seq}
(u_i, v_i) = \big( \Phi_u( 2 i ), \Phi_v(2i + 1) \big),
\end{equation}
which satisfies the limits
\begin{equation}
\label{eq:twv:limits:stnd:wave}
\lim_{i \to - \infty} (u_i, v_i)
  = (0 , 0)
\qquad
\lim_{i \to + \infty} (u_i, v_i)
  = (\overline{u}_B(a, d) , \overline{v}_B(a,d) \big),
\end{equation}
together with the difference equation
\begin{equation}
\begin{array}{lcl}
0 & =& d \big[ v_i + v_{i-1} - 2 u_i \big] + g(u_i ; a),
\\[0.2cm]
0 & = & d \big[ u_{i+1} + u_i - 2 v_i \big] +  g(v_i; a).
\end{array}
\end{equation}
Applying a shift to the first equation, we obtain the implicit system
\begin{equation}
\begin{array}{lcl}
v_{i+1} & = &   2 \big[ u_{i+1} - \frac{g(u_{i+1}; a)}{2d} \big] -  v_i ,
\\[0.2cm]
u_{i+1} & = & 2 \big[ v_i - \frac{g(v_i ; a)}{2d} \big] - u_i ,
\\[0.2cm]
\end{array}
\end{equation}
which can be written as
\begin{equation}\label{e:two:reflection:system}
\begin{array}{lcl}
v_{i+1} & = & 2 v_{a,d}( u_{i+1} ) - v_i ,
\\[0.2cm]
u_{i+1} & = & 2 u_{a,d}( v_i ) - u_i .
\\[0.2cm]
\end{array}
\end{equation}

\begin{figure}
\centering
\begin{minipage}{0.6\textwidth}
\centering
\includegraphics[width=\textwidth]{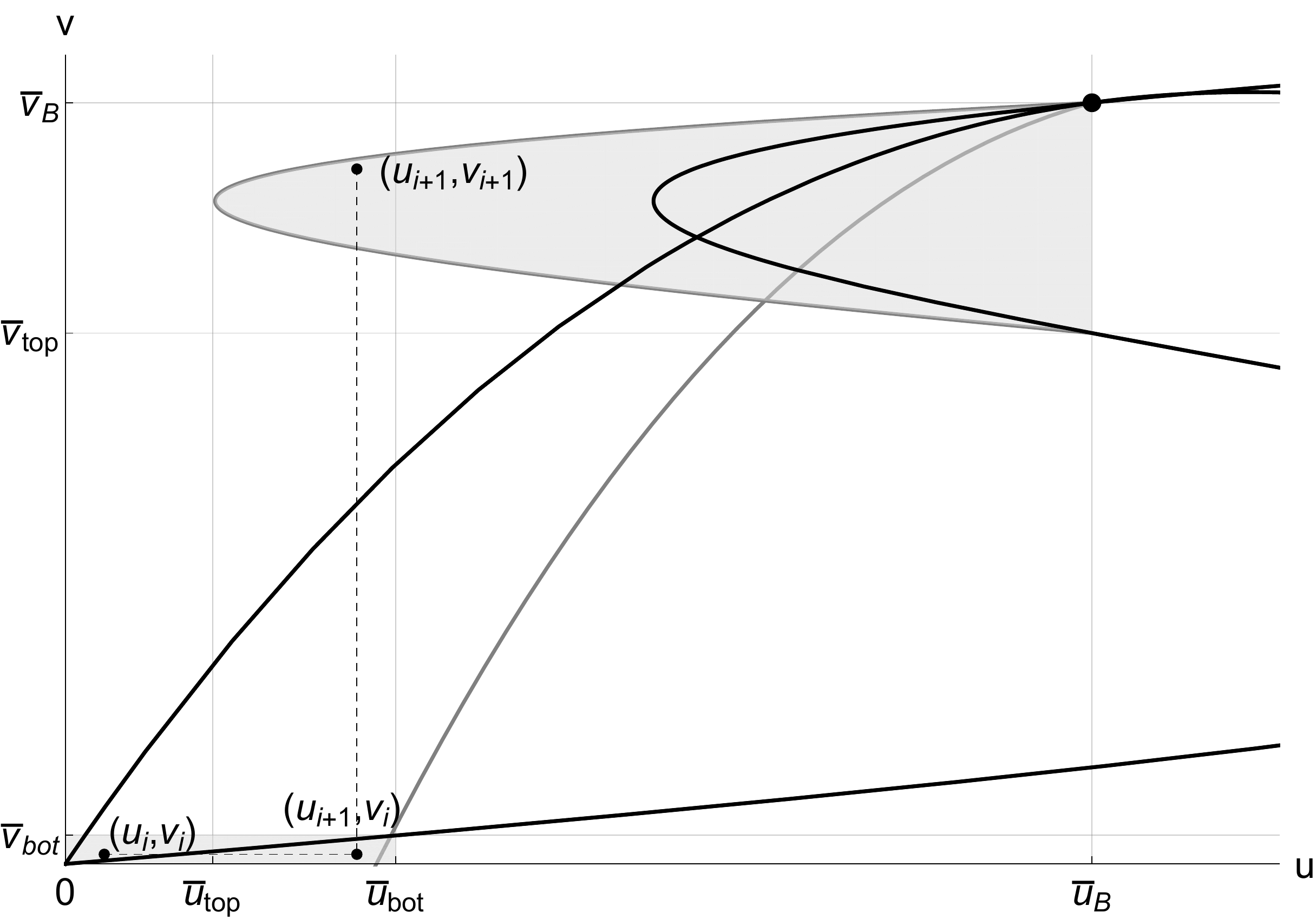}
\end{minipage}
\caption{Illustration of the two reflections described by the system
\eqref{e:two:reflection:system}. The values have been modified for illustrative purposes, since the real regions are minuscule.
}\label{f:two:reflection:illustration}
\end{figure}

In particular,
we can obtain $(u_{i+1} , v_{i+1})$
by first reflecting
$(u_{i}, v_i)$ horizontally
through the curve $u = u_{a,d}(v)$
and then vertically through the curve $v= v_{a,d}(u)$.
Based on this geometric intuition,
we set out to construct a rectangle
\begin{equation}
[\overline{u}_{\mathrm{top}}(a,d) , \overline{u}_B(a,d) ] \times [\overline{v}_{\mathrm{top}}(a,d), \overline{v}_B(a,d)]
\end{equation}
that must contain $(u_{i_0 + 1} , v_{i_0 + 1})$ for some critical $i_0$,
together with a rectangle
\begin{equation}
[0, \overline{u}_{\mathrm{bot}}(a,d) ] \times [0, \overline{v}_{\mathrm{bot}}(a,d)]
\end{equation}
that must contain the intermediate point $(u_{i_0 + 1} , v_{i_0} )$; see Figure \ref{f:two:reflection:illustration}.

\begin{lem}
There exist continuous functions
\begin{equation}
(\overline{v}_{\mathrm{bot}} , \overline{v}_{\mathrm{top}}):
 \Omega_{-} \to [0,1]^2
\end{equation}
that satisfy the inequalities
\begin{equation}
\label{eq:twv:def:ineqs:v:bot:top}
0 < \overline{v}_{\mathrm{bot}}(a,d) <
  \gamma_{c;-}(a,d)
  < \overline{v}_{\mathrm{top}}(a,d)
  <    \gamma_{c;+}(a,d)
  < \overline{v}_B(a,d)
\end{equation}
together with the identities
\begin{equation}
\label{eq:twv:def:v:bot:top:ids:for:ud}
u_{a,d} \big(\overline{v}_{\mathrm{bot}}(a,d) \big)
= u_{a,d}\big( \overline{v}_{\mathrm{top}}(a,d) \big)
  = \overline{u}_B(a,d)
\end{equation}
for each $(a,d) \in \Omega_-$.
Furthermore, these functions can be continuously extended
to $\overline{\Omega}_{-}$ in such a way
that \sref{eq:twv:def:v:bot:top:ids:for:ud} holds
whenever $d > 0$.
\end{lem}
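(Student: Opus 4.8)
The plan is to realize $\overline{v}_{\mathrm{bot}}$ and $\overline{v}_{\mathrm{top}}$ as the two \emph{extra} preimages of the value $\overline{u}_B(a,d)$ under the map $u_{a,d}$, distinct from $\overline{v}_B(a,d)$ itself. Recall from the discussion preceding Lemma \ref{lem:twv:vcrit:ordering} that $u_{a,d}$ is strictly increasing on $(-\infty, \gamma_{c;-}]$, strictly decreasing on $[\gamma_{c;-}, \gamma_{c;+}]$ and strictly increasing on $[\gamma_{c;+}, \infty)$, and that $u_{a,d}(0) = 0$ and $u_{a,d}(a) = a$ because $g(0;a) = g(a;a) = 0$. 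Writing $M = u_{a,d}(\gamma_{c;-})$ for the local maximum value and $m = u_{a,d}(\gamma_{c;+})$ for the local minimum value, the analytic heart of the lemma is the chain of strict inequalities
\begin{equation}
m < \overline{u}_B(a,d) < M,
\end{equation}
valid throughout $\overline{\Omega}_- \cap \{d > 0\}$. Once this is available, each of the two monotone branches $[0, \gamma_{c;-}]$ and $[\gamma_{c;-}, \gamma_{c;+}]$ is a strictly monotone continuous bijection onto an interval strictly containing $\overline{u}_B$; the intermediate value theorem then yields unique points $\overline{v}_{\mathrm{bot}} \in (0, \gamma_{c;-})$ and $\overline{v}_{\mathrm{top}} \in (\gamma_{c;-}, \gamma_{c;+})$ with $u_{a,d}(\overline{v}_{\mathrm{bot}}) = u_{a,d}(\overline{v}_{\mathrm{top}}) = \overline{u}_B$, which are exactly \sref{eq:twv:def:ineqs:v:bot:top}--\sref{eq:twv:def:v:bot:top:ids:for:ud}.

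For the upper bound $\overline{u}_B < M$ I would first show $\gamma_{c;-} < a < \gamma_{c;+}$ strictly on all of $\overline{\Omega}_- \cap \{d > 0\}$. A direct computation shows $\gamma_{c;\pm}(a,d) = a$ precisely when $d = \tfrac{1}{2}a(1-a) = 2 d_+(a)$, whereas every $(a,d) \in \overline{\Omega}_-$ satisfies $d \le d_-(a) < d_+(a) = \tfrac14 a(1-a)$ by Proposition \ref{prp:st:dpm:ex}; the same estimate keeps the discriminant positive, so $\gamma_{c;\pm}$ are real and distinct. Monotonicity of $u_{a,d}$ on $[\gamma_{c;-}, \gamma_{c;+}]$ together with $u_{a,d}(a) = a$ then gives $M > a > m$, and since $\overline{u}_B < a$ (strictly, by Proposition \ref{prp:st:uv:stb:ex}(iii) on $\Omega_-$, and on the boundary because $\overline{u}_B = a$ would force $\overline{v}_B = v_{a,d}(a) = a$, contradicting $\overline{v}_B \ge \gamma_{c;+} > a$), we conclude $\overline{u}_B < a < M$.

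The lower bound $\overline{u}_B > m$ is equivalent, via the monotonicity of $u_{a,d}$ on $[\gamma_{c;+}, \infty)$, to the strict inequality $\overline{v}_B > \gamma_{c;+}$. On the open set $\Omega_-$ this is exactly the last inequality in Lemma \ref{lem:twv:vcrit:ordering}; the main obstacle is to upgrade it to a strict inequality on the boundary curve $d = d_-(a)$, which for $d > 0$ is the only part of $\partial \Omega_-$ that is reached. Here I would exploit that $(\overline{u}_B, \overline{v}_B)$ is a saddle-node (or, at $a = \tfrac12$, cusp) collision point, so that $\det D_{1,2}G(\overline{u}_B, \overline{v}_B; a, d_-) = 0$. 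If instead $\overline{v}_B = \gamma_{c;+}$, then $g'(\overline{v}_B; a) = 2d$, and the explicit formula for $D_{1,2}G$ would give $\det D_{1,2}G = (g'(\overline{u}_B;a) - 2d)\cdot 0 - 4d^2 = -4d^2 \neq 0$, a contradiction; hence $\overline{v}_B > \gamma_{c;+}$ on the boundary as well.

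Finally, continuity of $\overline{v}_{\mathrm{bot}}$ and $\overline{v}_{\mathrm{top}}$ on $\overline{\Omega}_- \cap \{d>0\}$ would follow by noting that each is obtained by composing the continuous data $\gamma_{c;\pm}(a,d)$ and $\overline{u}_B(a,d)$ with the inverse of the restriction of the jointly continuous map $(v; a, d) \mapsto u_{a,d}(v)$ to one of the two monotone branches. The strict inequalities $m < \overline{u}_B < M$ keep the relevant preimages in the open interior of each branch, where $u_{a,d}'$ is nonzero, so these branch-inverses depend continuously on the parameters. This delivers the $C^0$ extension to $\overline{\Omega}_-$ for which \sref{eq:twv:def:v:bot:top:ids:for:ud} persists whenever $d > 0$, while the strict ordering \sref{eq:twv:def:ineqs:v:bot:top} holds on the interior $\Omega_-$.
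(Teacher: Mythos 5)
Your construction is the same as the paper's: both realize $\overline{v}_{\mathrm{bot}}$ and $\overline{v}_{\mathrm{top}}$ as the two extra preimages of $\overline{u}_B(a,d)$ under $u_{a,d}$ on the monotone branches $[0,\gamma_{c;-}]$ and $[\gamma_{c;-},\gamma_{c;+}]$, driven by the chain $u_{a,d}(\gamma_{c;+}) < \overline{u}_B < a = u_{a,d}(a) \le u_{a,d}(\gamma_{c;-})$ that Lemma \ref{lem:twv:vcrit:ordering} supplies. In fact you are more careful than the paper at the boundary $d = d_-(a)$: the paper dismisses that extension as ``standard continuity arguments,'' whereas your observation that $\overline{v}_B = \gamma_{c;+}$ would force $\det D_{1,2}G(\overline{u}_B,\overline{v}_B;a,d) = -4d^2 \neq 0$, contradicting the vanishing of this determinant at the saddle-node collision (Corollary \ref{cor:bif:char:roots:ovl:G} together with Lemma \ref{lem:bif:str:roots:determinant}), is a clean way to keep the strict separation $m < \overline{u}_B < M$ alive on $\overline{\Omega}_- \cap \{d>0\}$ and hence the continuity of the branch inverses there.

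The one genuine omission is the edge $d = 0$ of $\overline{\Omega}_-$, which the lemma's continuous extension must also cover even though the identities are only demanded for $d>0$. Your argument works exclusively on $\{d>0\}$ (where $u_{a,d}$ is defined), so you still need to exhibit the limits $\overline{v}_{\mathrm{bot}}(a,d) \to 0$ and $\overline{v}_{\mathrm{top}}(a,d) \to a$ as $d \downarrow 0$; the paper declares these values at $d=0$ explicitly. Both limits are quick — since $g < 0$ on $(0,a)$ one has $\overline{v}_{\mathrm{bot}} \le u_{a,d}(\overline{v}_{\mathrm{bot}}) = \overline{u}_B(a,d) \to \overline{u}_B(a,0) = 0$, and $g(\overline{v}_{\mathrm{top}};a) = 2d(\overline{v}_{\mathrm{top}} - \overline{u}_B) = O(d)$ forces $\overline{v}_{\mathrm{top}}$ into the unique zero $a$ of $g$ lying in $(u_{\mathrm{min}}(a), u_{\mathrm{max}}(a))$ — but they should be stated to complete the extension to all of $\overline{\Omega}_-$.
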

\begin{proof}
Pick any $(a,d) \in \Omega_{-}$.
On account of the identity
$u_{a,d}(0) = 0$ and the inequalities
\begin{equation}
u_{a,d}\big(\gamma_{c;+}(a,d) \big) < u_{a,d}\big( \overline{v}_B(a,d) \big)
=  \overline{u}_{B}(a,d)  < a = u_{a,d}( a )   ,
\end{equation}
the equation $u_{a,d}(v) = \overline{u}_B(a,d)$ has three
distinct solutions
on $(0, \overline{v}_B(a,d)]$.
Using the fact that $\overline{u}_B(a , 0) = 0$,
these solutions can be extended continuously to $d = 0$
by writing $\overline{v}_{\mathrm{bot}}(a, 0) = 0$
and
$\overline{v}_{\mathrm{top}}(a,0) = a$.
The extension to $d = d_-(a) > 0$ can be achieved
by standard continuity arguments.
\end{proof}

For any $(a,d) \in \overline{\Omega}_-$
with $d > 0$
we now define the constant
\begin{equation}
\label{eq:twv:def:u:top}
\overline{u}_{\mathrm{top}}(a, d) = 2 u_{a,d}\big( \gamma_{c;+}( a, d) \big) - \overline{u}_B(a,d) .
\end{equation}
We note that $\big(\overline{u}_{\mathrm{top}}(a,d) , \gamma_{c;+}(a,d) \big)$ can be seen as the
horizontal reflection of $\big( \overline{u}_B( a, d) , \gamma_{c;+}(a,d) \big)$
through the curve $u = u_{a,d}(v)$.

\begin{lem}
\label{lem:twv:stn:wave:crit:point}
Pick any $(a,d) \in \Omega_-$ for which
$c(a,d) = 0$ and consider the sequence $\{ (u_i, v_i) \}$
defined in \sref{eq:twv:def:standing:seq}.
Then there exists $i_0 \in \mathbb{Z}$ for which
\begin{equation}
\label{eq:lem:crit:point:top}
\big(\overline{u}_{\mathrm{top}}(a,d) , \overline{v}_{\mathrm{top}}(a,d) \big) \le
\big (u_{i_0 + 1} , v_{i_0 + 1} \big) \le
  \big(\overline{u}_B(a,d) , \overline{v}_B(a,d) \big),
\end{equation}
while
\begin{equation}
(0, 0) \le \big(u_{i_0} , v_{i_0} \big) \le \big(\overline{u}_B(a,d) , \overline{v}_{\mathrm{bot}}(a,d) \big).
\end{equation}
\end{lem}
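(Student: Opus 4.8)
The plan is to extract everything from the monotone structure of the standing-wave profile together with the two reflections recorded in \sref{e:two:reflection:system}. Since $c(a,d)=0$, Lemma~\ref{lem:tvw:main:ex} furnishes a non-decreasing profile $\Phi$, so both sequences $\{u_i\}$ and $\{v_i\}$ are non-decreasing; combined with the limits \sref{eq:twv:limits:stnd:wave} this gives $0\le u_i\le\overline{u}_B(a,d)$ and $0\le v_i\le\overline{v}_B(a,d)$ for every $i\in\Wholes$. I will use these bounds repeatedly, in particular the fact that $u_{i+1}\le\overline{u}_B$ holds for all $i$.

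The key step is to show that $v_i$ never lands in the open interval $(\overline{v}_{\mathrm{bot}},\overline{v}_{\mathrm{top}})$. By \sref{eq:twv:def:ineqs:v:bot:top} we have $\overline{v}_{\mathrm{bot}}<\gamma_{c;-}(a,d)<\overline{v}_{\mathrm{top}}$, so on this interval $u_{a,d}$ first increases and then decreases, while by \sref{eq:twv:def:v:bot:top:ids:for:ud} it equals $\overline{u}_B$ at both endpoints; hence $u_{a,d}(v)>\overline{u}_B$ for every interior $v$. If some $v_i$ were interior, the reflection $u_{i+1}=2u_{a,d}(v_i)-u_i>2\overline{u}_B-u_i\ge\overline{u}_B$ would contradict $u_{i+1}\le\overline{u}_B$. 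I then set $i_0=\max\{i:v_i\le\overline{v}_{\mathrm{bot}}\}$, which is finite because $v_i\to0$ as $i\to-\infty$ and $v_i\to\overline{v}_B>\overline{v}_{\mathrm{bot}}$ as $i\to+\infty$. By maximality $v_{i_0}\le\overline{v}_{\mathrm{bot}}<v_{i_0+1}$, and the exclusion of the forbidden interval upgrades this to $v_{i_0+1}\ge\overline{v}_{\mathrm{top}}$.

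With this choice of $i_0$, every inequality in the statement except $u_{i_0+1}\ge\overline{u}_{\mathrm{top}}$ is now immediate: $v_{i_0}\le\overline{v}_{\mathrm{bot}}$ holds by construction and $v_{i_0+1}\ge\overline{v}_{\mathrm{top}}$ was just proved, while the bounds $0\le u_{i_0}\le u_{i_0+1}\le\overline{u}_B$, $0\le v_{i_0}$ and $v_{i_0+1}\le\overline{v}_B$ all follow from monotonicity and the limits \sref{eq:twv:limits:stnd:wave}.

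The remaining bound $u_{i_0+1}\ge\overline{u}_{\mathrm{top}}$ is the delicate point. Writing $u_{i_0+1}=2u_{a,d}(v_{i_0})-u_{i_0}$, using $u_{i_0}\le\overline{u}_B$, and recalling the definition $\overline{u}_{\mathrm{top}}=2u_{a,d}(\gamma_{c;+})-\overline{u}_B$ from \sref{eq:twv:def:u:top}, one sees that it suffices to establish $u_{a,d}(v_{i_0})\ge u_{a,d}(\gamma_{c;+})$. A short computation gives $\partial_d u_{a,d}(\gamma_{c;+})=g(\gamma_{c;+};a)/(2d^2)>0$ (as $\gamma_{c;+}>a$ by Lemma~\ref{lem:twv:vcrit:ordering}), and the proof of Lemma~\ref{lem:twv:d:star} shows $u_{a,d}(\gamma_{c;+})=0$ precisely at $d=\tfrac18(1-a)^2$; hence $u_{a,d}(\gamma_{c;+})\le0$ exactly when $d\le\tfrac18(1-a)^2$. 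In that regime the claim is trivial, since then $\overline{u}_{\mathrm{top}}<0\le u_{i_0}\le u_{i_0+1}$. The genuine obstacle is the complementary regime $d>\tfrac18(1-a)^2$, where $u_{a,d}(\gamma_{c;+})>0$ and an a priori small $v_{i_0}$ could violate $u_{a,d}(v_{i_0})\ge u_{a,d}(\gamma_{c;+})$. Here the only available leverage is the jump $v_{i_0+1}\ge\overline{v}_{\mathrm{top}}$ together with the second reflection $v_{i_0+1}=2v_{a,d}(u_{i_0+1})-v_{i_0}$, which forces $v_{a,d}(u_{i_0+1})$ to be large and must be converted into a lower bound on $v_{i_0}$. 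I expect this coupling between the two reflections, rather than any single monotonicity estimate, to be the crux, and anticipate that the cleanest route is to show the rectangle $[\overline{u}_{\mathrm{top}},\overline{u}_B]\times[\overline{v}_{\mathrm{top}},\overline{v}_B]$ is forward invariant under the two-step map \sref{e:two:reflection:system}, so that the monotone orbit, converging to $(\overline{u}_B,\overline{v}_B)$, must enter it exactly at the index $i_0+1$.
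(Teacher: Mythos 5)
Your setup is sound and, up to the last step, matches the paper's argument: the monotonicity bounds, the exclusion of $v_i$ from $(\overline{v}_{\mathrm{bot}},\overline{v}_{\mathrm{top}})$ via $u_{a,d}(v_i)=\tfrac12(u_i+u_{i+1})\le\overline{u}_B$, and the choice $i_0=\max\{i:v_i\le\overline{v}_{\mathrm{bot}}\}$ all work. But the proof is genuinely incomplete: you explicitly leave the inequality $u_{i_0+1}\ge\overline{u}_{\mathrm{top}}$ unproved, and the route you sketch for it (forward invariance of the target rectangle, or extracting a lower bound on $v_{i_0}$ from the second reflection) is not needed and is the wrong place to look. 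The difficulty you run into comes from reading the reflection identity at index $i_0$, i.e. $u_{i_0+1}=2u_{a,d}(v_{i_0})-u_{i_0}$, which ties $u_{i_0+1}$ to the \emph{small} quantity $v_{i_0}\le\overline{v}_{\mathrm{bot}}$ and therefore cannot produce a useful lower bound.

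The missing step follows from the \emph{same} identity read one index later. Since $u_{j}=2u_{a,d}(v_{j})-u_{j+1}$ for every $j$, taking $j=i_0+1$ and using $u_{i_0+2}\le\overline{u}_B$ gives
\begin{equation*}
u_{i_0+1}=2u_{a,d}(v_{i_0+1})-u_{i_0+2}\ge 2u_{a,d}(v_{i_0+1})-\overline{u}_B(a,d).
\end{equation*}
You have already shown $v_{i_0+1}\in[\overline{v}_{\mathrm{top}}(a,d),\overline{v}_B(a,d)]$, and by \sref{eq:twv:def:ineqs:v:bot:top} this interval contains $\gamma_{c;+}(a,d)$, at which the function $u_{a,d}$ (decreasing up to $\gamma_{c;+}$, increasing beyond it) attains its minimum over that interval. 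Hence $u_{a,d}(v_{i_0+1})\ge u_{a,d}\big(\gamma_{c;+}(a,d)\big)$ and therefore $u_{i_0+1}\ge 2u_{a,d}\big(\gamma_{c;+}(a,d)\big)-\overline{u}_B(a,d)=\overline{u}_{\mathrm{top}}(a,d)$ by the definition \sref{eq:twv:def:u:top}. This is exactly what the paper does; it phrases the argument as a dichotomy valid at every index (each $(u_i,v_i)$ lies in one of two rectangles) and then locates the jump, but the content is the one-line estimate above. Your excursion into the regimes $d\lessgtr\tfrac18(1-a)^2$ is superfluous once this is in place.
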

\begin{proof}
%
For any $i \in \mathbb{Z}$ we have the inequalities
\begin{equation}
u_i \le u_{i+1} \le \overline{u}_B(a,d),
\end{equation}
which implies that we must have
$u_i \le u_{a,d}(v_i) \le u_{i+1}$.
In particular, we see that
\begin{equation}
 u_{a,d}(v_i) \le \overline{u}_B(a,d),
\end{equation}
which implies that
\begin{equation}
v_i \notin \Big( \overline{v}_{\mathrm{bot}}(a,d), \overline{v}_{\mathrm{top}}(a,d) \Big).
\end{equation}
In addition, we have
\begin{equation}
u_i = 2 u_{a,d}(v_i) - u_{i+1} \ge 2 u_{a,d}(v_i) - \overline{u}_B(a,d).
\end{equation}
In particular, for every $i$
we either have
\begin{equation}
\label{eq:twv:conn:option:a}
(0,0) \le (u_i, v_i) \le \big(\overline{u}_B(a,d), \overline{v}_{\mathrm{bot}}(a,d) \big)
\end{equation}
or
\begin{equation}
\label{eq:twv:conn:option:b}
\big(\overline{u}_{\mathrm{top}}(a,d), \overline{v}_{\mathrm{top}}(a,d) \big)
\le (u_i, v_i) \le
\big(\overline{u}_B(a,d), \overline{v}_{B}(a,d) \big) .
\end{equation}
The limits \sref{eq:twv:limits:stnd:wave} imply
that there exists $M \gg 1$ so that
\sref{eq:twv:conn:option:a}
holds for all $i \le - M$
and \sref{eq:twv:conn:option:b} holds
for all $i \ge M$. In particular, there
must be jump between these two sets.
\end{proof}

For any $(a,d)\in \overline{\Omega}_-$
with $d > 0$, the inequalities
\sref{eq:twv:def:ineqs:v:bot:top}
imply that the function
$u_{a,d}$ is strictly increasing
on $[0, \overline{v}_{\mathrm{bot}}(a,d)]$.
This allows us to define an inverse
\begin{equation}
u_{a,d}^{\mathrm{inv}}: [0, \overline{u}_B(a,d)]
 \to [0, \overline{v}_{\mathrm{bot}}(a,d)].
\end{equation}
In addition, we introduce the
function
\begin{equation}
\label{eq:twv:def:v:d:refl}
v_{a,d;r}(u) = 2 v_{a,d}(u) - \overline{v}_B(a,d),
\end{equation}
which can be interpreted
as the vertical reflection of the line $v = \overline{v}_B(a,d)$ through the curve
$v = v_{a,d}(u)$.

For any $(a,d)\in \overline{\Omega}_-$
with $d > 0$, these definitions
allow us to introduce the notation
\begin{equation}
\overline{u}_{\mathrm{bot}}(a,d) = \max\{
 u \in [0, \overline{u}_B(a,d) ] :
   v_{a,d;r}(u) = u_{a,d}^{\mathrm{inv}}(u)
\} .
\end{equation}
On account of the inequalities
\begin{equation}
v_{a,d;r}(0) < 0 = u_{a,d}^{\mathrm{inv}}(0)
  < u_{a,d}^{\mathrm{inv}}\big(\overline{u}_B(a,d)\big) = \overline{v}_{\mathrm{bot}}(a,d)
   < \overline{v}_B(a,d) = v_{a,d;r}\big( \overline{u}_B(a,d) \big),
\end{equation}
one can verify that $\overline{u}_{\mathrm{bot}}$ is well-defined and continous in $(a,d)$.

\begin{lem}
\label{lem:twv:bot:lt:top:trv}
Pick $(a,d) \in \Omega_-$
and assume that
\begin{equation}
\label{eq:twv:bot:sm:top}
\overline{u}_{\mathrm{bot}}(a,d) < \overline{u}_{\mathrm{top}}(a,d)
\end{equation}
holds. Then we have $c(a,d) > 0$.
\end{lem}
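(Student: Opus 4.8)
The plan is to argue by contradiction, ruling out the possibility that $c(a,d) = 0$. Since Lemma~\ref{lem:twv:speed:non:neg} already guarantees $c(a,d) \ge 0$, it suffices to show $c(a,d) \neq 0$. So I would assume $c(a,d) = 0$ and invoke Lemma~\ref{lem:tvw:main:ex} to obtain a non-decreasing standing profile, together with the associated sequence $\{(u_i,v_i)\}$ from \sref{eq:twv:def:standing:seq} and the two-reflection system \sref{e:two:reflection:system} that it satisfies. Writing $i_0$ for the critical index produced by Lemma~\ref{lem:twv:stn:wave:crit:point}, I would focus on the intermediate point $(u_{i_0+1}, v_{i_0})$ obtained halfway through the step from $(u_{i_0},v_{i_0})$ to $(u_{i_0+1},v_{i_0+1})$. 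The two rectangle inclusions of that lemma give $u_{i_0+1} \ge \overline{u}_{\mathrm{top}}(a,d)$, $v_{i_0} \le \overline{v}_{\mathrm{bot}}(a,d)$ and $v_{i_0+1} \le \overline{v}_B(a,d)$, while the monotonicity of the profile furnishes the crucial bound $u_{i_0} \le u_{i_0+1}$.

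The next step is to extract two competing estimates at the abscissa $u_{i_0+1}$. Using the first line of \sref{e:two:reflection:system} together with $v_{i_0+1} \le \overline{v}_B$ and the definition \sref{eq:twv:def:v:d:refl} of $v_{a,d;r}$, I obtain $v_{a,d;r}(u_{i_0+1}) \le v_{i_0}$. On the other hand, the standing hypothesis $\overline{u}_{\mathrm{bot}}(a,d) < \overline{u}_{\mathrm{top}}(a,d) \le u_{i_0+1} \le \overline{u}_B$ places $u_{i_0+1}$ strictly to the right of the largest crossing of $v_{a,d;r}$ and $u_{a,d}^{\mathrm{inv}}$. Evaluating at the endpoints, the relations $v_{a,d}(0)=0$ and $v_{a,d}(\overline{u}_B)=\overline{v}_B$ give $v_{a,d;r}(0) = -\overline{v}_B < 0 = u_{a,d}^{\mathrm{inv}}(0)$ while $v_{a,d;r}(\overline{u}_B) = \overline{v}_B > \overline{v}_{\mathrm{bot}} = u_{a,d}^{\mathrm{inv}}(\overline{u}_B)$. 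Since $\overline{u}_{\mathrm{bot}}$ is by definition the largest root of the continuous difference $v_{a,d;r} - u_{a,d}^{\mathrm{inv}}$, this difference is strictly positive on all of $(\overline{u}_{\mathrm{bot}}, \overline{u}_B]$, so in particular $v_{a,d;r}(u_{i_0+1}) > u_{a,d}^{\mathrm{inv}}(u_{i_0+1})$.

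Combining the two estimates yields $v_{i_0} > u_{a,d}^{\mathrm{inv}}(u_{i_0+1})$. Because both $v_{i_0}$ and $u_{a,d}^{\mathrm{inv}}(u_{i_0+1})$ lie in $[0, \overline{v}_{\mathrm{bot}}]$, where \sref{eq:twv:def:ineqs:v:bot:top} ensures that $u_{a,d}$ is strictly increasing, I may apply $u_{a,d}$ to preserve the strict inequality, obtaining $u_{a,d}(v_{i_0}) > u_{i_0+1}$. Feeding this into the second line of \sref{e:two:reflection:system} then gives $u_{i_0} = 2 u_{a,d}(v_{i_0}) - u_{i_0+1} > u_{i_0+1}$, which contradicts the monotonicity bound $u_{i_0} \le u_{i_0+1}$. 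Hence $c(a,d) \neq 0$, and combined with Lemma~\ref{lem:twv:speed:non:neg} the claim $c(a,d) > 0$ follows.

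I expect the only genuinely delicate point to be the sign analysis of $v_{a,d;r} - u_{a,d}^{\mathrm{inv}}$: one must confirm that $\overline{u}_{\mathrm{bot}}$, being the \emph{largest} root, is indeed followed by a sign-definite region up to $\overline{u}_B$, which rests on the boundary evaluations above together with continuity. Everything else amounts to bookkeeping with the reflection identities and the rectangle inclusions already established, and I would double-check that $u_{i_0+1}$ really lands in the domain $(\overline{u}_{\mathrm{bot}}, \overline{u}_B]$ where the inverse $u_{a,d}^{\mathrm{inv}}$ and the positivity are available.
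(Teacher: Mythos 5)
Your proof is correct and follows essentially the same route as the paper: both assume $c(a,d)=0$, apply the two-reflection system at the critical index $i_0$ from Lemma \ref{lem:twv:stn:wave:crit:point}, and play the bound $v_{a,d;r}(u_{i_0+1}) \le v_{i_0} \le \overline{v}_{\mathrm{bot}}$ against the definition of $\overline{u}_{\mathrm{bot}}$ as the largest crossing of $v_{a,d;r}$ and $u_{a,d}^{\mathrm{inv}}$. The only difference is where the contradiction is landed — the paper concludes $u_{i_0+1} \le \overline{u}_{\mathrm{bot}} < \overline{u}_{\mathrm{top}}$ directly, while you push the same inequalities one step further to contradict $u_{i_0} \le u_{i_0+1}$ — which is a logically equivalent rearrangement; your explicit sign analysis of $v_{a,d;r} - u_{a,d}^{\mathrm{inv}}$ on $(\overline{u}_{\mathrm{bot}}, \overline{u}_B]$ is a point the paper leaves implicit.
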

\begin{proof}
Suppose the contrary that $c(a,d) =0$
and consider the sequence $\{(u_i, v_i)\}$ defined
by \sref{eq:twv:def:standing:seq}, together with the
critical value $i_0 \in \mathbb{Z}$
that appears in Lemma \ref{lem:twv:stn:wave:crit:point}.
Since the sequence $\{u_i\}$ is non-decreasing,
we have $u_{i_0+1} \ge u_{i_0}$
which implies that
\begin{equation}
u_{i_0 +1} \ge u_{a,d}(v_{i_0}).
\end{equation}
Exploiting $0 \le v_{i_0} \le \overline{v}_{\mathrm{bot}}(a,d)$ this gives
\begin{equation}
\label{eq:lem:stn:non:exis:up:bnd:v}
0 \le v_{i_0} \le u_{a,d}^{\mathrm{inv}}\big( u_{i_0 + 1} \big).
\end{equation}
On the other hand, the inequality
$v_{i_0 + 1} \le \overline{v}_B(a,d)$
yields
\begin{equation}
v_{i_0} \ge v_{a,d;r}(u_{i_0 + 1} ).
\end{equation}
Combining this with \sref{eq:lem:stn:non:exis:up:bnd:v},
we see that
$u_{i_0 + 1} \le \overline{u}_{\mathrm{bot}}(a,d)$. 
However,
\sref{eq:lem:crit:point:top} implies that $u_{i_0 + 1} \ge \overline{u}_{\mathrm{top}}(a,d)$,
which yields the desired contradiction.
\end{proof}

\begin{figure}
\centering
\begin{minipage}{0.6\textwidth}
\centering
\includegraphics[width=\textwidth]{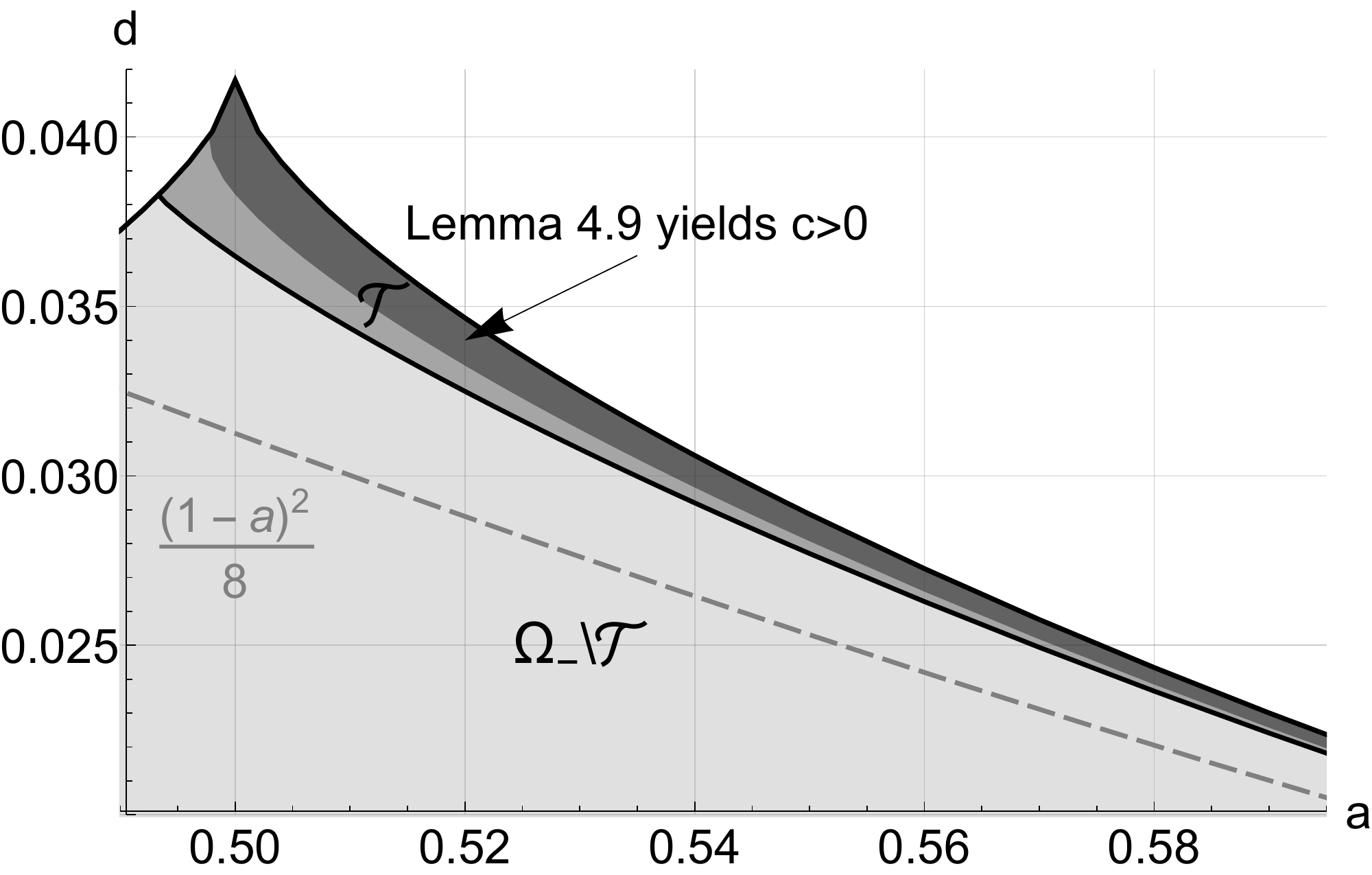}
\end{minipage}
\caption{
The darkest region contains all pairs $(a,d)$ where
the assumption $\overline{u}_{\mathrm{bot}}(a,d) < \overline{u}_{\mathrm{top}}(a,d)$
was verified numerically. We also include
the boundaries of the sets $\mathcal{T}$ and $\Omega_-$ as computed
numerically by the procedure described in \S\ref{sec:mr}.
}\label{f:two:reflection:works}
\end{figure}

\subsection{Verification of $\overline{u}_{\mathrm{top}} > \overline{u}_{\mathrm{bot}}$}
\label{sec:twv:expl}

In Figure \ref{f:two:reflection:works} we show where one may
numerically verify  that the scalar inequality \sref{eq:twv:bot:sm:top} holds,
which ensures that $c(a,d) > 0$. We also plot the curve $d = \frac{(1 - a)^2}{8}$,
below which we have established that $c(a,d) = 0$. Taken together, we feel that
these results cover a reasonable portion of the parameter space $\Omega_{-}$.

Our final task is to analytically verify (without resorting to any numerics) that $c(a,d) > 0$
near the cusp $(a,d) = \big(\frac{1}{2} , \frac{1}{24} \big)$
and the corner $(a,d) = (1, 0)$. As a preparation,
we construct a simplified but weaker version of the condition
$\overline{u}_{\mathrm{top}} > \overline{u}_{\mathrm{bot}}$ by exploiting
the monotonicity of $v_{a,d;r}$.

\begin{lem}
Pick $(a,d) \in \overline{\Omega}_-$ with $d > 0$.
Then the function $v_{a,d;r}$ is strictly increasing
on $[0, \overline{u}_B(a,d)]$.
\end{lem}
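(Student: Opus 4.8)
The plan is to reduce the monotonicity of $v_{a,d;r}$ to a single pointwise inequality at the right endpoint and then extract that inequality from the stability of the bichromatic equilibrium $B$. Since $v_{a,d;r}(u) = 2 v_{a,d}(u) - \overline{v}_B(a,d)$ and $v_{a,d}(u) = u - g(u;a)/(2d)$, I would first compute
\[
v_{a,d;r}'(u) = 2 v_{a,d}'(u) = 2\Big( 1 - \tfrac{g'(u;a)}{2d}\Big),
\]
so that strict monotonicity on $[0, \overline{u}_B(a,d)]$ is equivalent to $g'(u;a) < 2d$ for every $u \in [0, \overline{u}_B(a,d)]$. Because $g'(\cdot;a) < 2d$ holds exactly on $[0, \gamma_{c;-}(a,d)) \cup (\gamma_{c;+}(a,d), \infty)$, it suffices to establish the single inequality $\overline{u}_B(a,d) < \gamma_{c;-}(a,d)$, which then forces $[0,\overline{u}_B] \subseteq [0,\gamma_{c;-})$ and hence $v_{a,d;r}' > 0$ throughout.

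Next I would exploit the stability of $B$. By Proposition~\ref{prp:st:uv:stb:ex}(i) the matrix $D_{1,2}G(\overline{u}_B, \overline{v}_B; a, d)$ has two strictly negative eigenvalues on $\Omega_-$, and it is symmetric with $(1,1)$-entry $g'(\overline{u}_B;a) - 2d$ and off-diagonal entry $2d$. A symmetric matrix with two negative eigenvalues is negative definite, so its diagonal entries are strictly negative; in particular $g'(\overline{u}_B;a) - 2d < 0$. This places $\overline{u}_B$ outside the closed interval $[\gamma_{c;-}, \gamma_{c;+}]$. Since Lemma~\ref{lem:twv:vcrit:ordering} gives $a \le \gamma_{c;+}(a,d)$ while the ordering \sref{eq:bif:abc:ordering} gives $\overline{u}_B(a,d) < a$, the alternative $\overline{u}_B > \gamma_{c;+}$ is excluded, leaving $\overline{u}_B < \gamma_{c;-}$ as required. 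Combined with the reduction above, this yields $g'(u;a) < 2d$ on all of $[0,\overline{u}_B]$ and the conclusion for $(a,d) \in \Omega_-$.

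The main subtlety, and the step I would treat most carefully, is the boundary $d = d_-(a)$ of $\overline{\Omega}_-$, where $B$ collides with $C$ in a saddle-node, $\det D_{1,2}G = 0$, and the matrix is only negative \emph{semi}-definite. I would argue that the $(1,1)$-entry remains \emph{strictly} negative nonetheless: writing $A = D_{1,2}G$, the identity $A e_1 = \big(g'(\overline{u}_B;a) - 2d,\, 2d\big)^T$ shows $e_1 \notin \ker A$ because $2d \neq 0$, and for a negative semidefinite symmetric $A$ one has $e_1^T A e_1 = 0$ only when $e_1 \in \ker A$; hence $g'(\overline{u}_B;a) - 2d = A_{11} < 0$ again. (As a sanity check, at the cusp $(\tfrac12,\tfrac1{24})$ one has $\overline{u}_B = u_{\mathrm{min}}(\tfrac12)$ and so $g'(\overline{u}_B;a) = 0 < 2d$ directly.) With the boundary handled in this way, the inequality $g'(\overline{u}_B;a) < 2d$ persists on all of $\overline{\Omega}_-$ with $d > 0$, and the argument of the first two paragraphs completes the proof.
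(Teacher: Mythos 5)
Your proof is correct, and it takes a genuinely different route from the paper's. Both arguments ultimately reduce the claim to the single inequality $g'(\overline{u}_B;a) < 2d$ (equivalently $v_{a,d}'(\overline{u}_B) > 0$), which combined with $\overline{u}_B \le a \le \gamma_{c;+}$ forces $\overline{u}_B$ below $\gamma_{c;-}$ and hence $v_{a,d;r}' > 0$ on all of $[0,\overline{u}_B]$. Where you differ is in how that inequality is obtained. The paper argues geometrically: it first disposes of the case $\overline{u}_B \le u_{\mathrm{min}}(a)$ (where $g' \le 0 < 2d$ trivially), then for $0 < a < \frac{1}{2}$ invokes the tangency estimates $v_{a,d}'(\overline{u}_B) \ge v_+'(\overline{u}_B) > 0$ from Lemma \ref{lem:bif:str:roots:BC}, and finally handles $a \in (\frac{1}{2},1)$ by the reflection symmetry of Corollary \ref{cor:bif:1:min:a}, which shows $\overline{u}_B(a,d) < u_{\mathrm{min}}(a)$ there. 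You instead read the inequality off the spectrum of the symmetric Jacobian $D_{1,2}G$ at the stable equilibrium $B$: negative definiteness on $\Omega_-$ gives strictly negative diagonal entries, and your semidefiniteness argument on the saddle-node boundary $d = d_-(a)$ (using that $Ae_1$ has second component $2d \neq 0$, so $e_1 \notin \ker A$) correctly preserves the strict sign of the $(1,1)$-entry there. This buys a unified treatment with no case split on $a \lessgtr \frac{1}{2}$ and no appeal to the symmetry corollary or the curves $v_\pm$; the price is that it leans on the stability statement of Proposition \ref{prp:st:uv:stb:ex}(i), whereas the paper's proof works directly from the geometric ingredients. One cosmetic remark: strict monotonicity is not literally \emph{equivalent} to $g' < 2d$ on the whole interval (the derivative could vanish at isolated points, as indeed happens in the paper's version where only $\overline{u}_B \le \gamma_{c;-}$ is obtained), but you only use the sufficient direction, so nothing is affected.
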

\begin{proof}
We note first that
$v_{a,d}'(u) = 1 - \frac{g'(u ;a)}{2d} > 0$
for $u \in [0, u_{\mathrm{min}}(a)]$.
In particular, we only have to consider the case
$\overline{u}_B(a,d) > u_{\mathrm{min}}(a)$,
which cannot occur for $a = \frac{1}{2}$.

Let us first assume that $0 < a < \frac{1}{2}$.
Using \sref{eq:bif:str:ineqs:star:vs:plus}
or \sref{eq:lem:bif:str:roots:bc:tangency}
we may conclude that
\begin{equation}
v_{a,d}'\big( \overline{u}_B(a,d) \big) \ge
  v_+'\big(\overline{u}_B(a,d) \big) > 0 .
\end{equation}
Since $\overline{u}_B(a,d ) \le a \le \gamma_{c;+}(a,d)$
this implies that
$\overline{u}_B(a,d) \le \gamma_{c;-}(a,d)$.
In particular,
$v_{a,d}$ and hence $v_{a,d;r}$
are strictly increasing
on $[0, \overline{u}_B(a,d)]$.

It remains to consider $a \in (\frac{1}{2} , 1)$.
Since $\overline{v}_B(1 - a, d) > u_{\mathrm{max}}(1 - a)$,
we may use Corollary \ref{cor:bif:1:min:a}
to conclude
\begin{equation}
\overline{u}_B( a, d)  = 1 - \overline{v}_B(1 -a,d)
 < 1 - u_{\mathrm{max}}(1 - a)
= u_{\mathrm{min}}(a).
\end{equation}
\end{proof}

\begin{cor}
\label{cor:twv:crit:for:no:trv:wave}
Consider any $(a,d) \in \overline{\Omega}_-$ with $d > 0$
and suppose that
\begin{equation}
\label{eq:twv:simpl:crit}
v_{a,d;r}\big( \overline{u}_{\mathrm{top}}(a,d) \big) > \overline{v}_{\mathrm{bot}}(a,d).
\end{equation}
Then we have $\overline{u}_{\mathrm{top}}(a,d) > \overline{u}_{\mathrm{bot}}(a,d)$.
\end{cor}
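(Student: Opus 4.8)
The plan is to rule out any coincidence $v_{a,d;r}(u) = u_{a,d}^{\mathrm{inv}}(u)$ on the interval $[\overline{u}_{\mathrm{top}}(a,d), \overline{u}_B(a,d)]$. Since $\overline{u}_{\mathrm{bot}}(a,d)$ is by construction the \emph{largest} solution of that equation on $[0, \overline{u}_B(a,d)]$, showing there is no solution beyond $\overline{u}_{\mathrm{top}}$ immediately forces $\overline{u}_{\mathrm{bot}}(a,d) < \overline{u}_{\mathrm{top}}(a,d)$. Before doing so I would record that $\overline{u}_{\mathrm{bot}}(a,d)$ is genuinely well-defined: the chain of inequalities displayed just above its definition yields $v_{a,d;r}(0) < 0 = u_{a,d}^{\mathrm{inv}}(0)$ and $v_{a,d;r}\big(\overline{u}_B\big) = \overline{v}_B > \overline{v}_{\mathrm{bot}} = u_{a,d}^{\mathrm{inv}}\big(\overline{u}_B\big)$, so the difference of the two curves changes sign and the solution set is non-empty.

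Next I would verify that $\overline{u}_{\mathrm{top}}(a,d)$ actually lies inside $[0,\overline{u}_B(a,d)]$, the domain on which both $v_{a,d;r}$ and $u_{a,d}^{\mathrm{inv}}$ are defined and comparable; the upper bound is the relevant one. From Lemma \ref{lem:twv:vcrit:ordering} we have $\gamma_{c;+}(a,d) < \overline{v}_B(a,d)$, and since $u_{a,d}$ attains a local minimum at $\gamma_{c;+}$ and is strictly increasing to its right, this gives $u_{a,d}\big(\gamma_{c;+}\big) < u_{a,d}\big(\overline{v}_B\big) = \overline{u}_B$. Substituting into the definition $\overline{u}_{\mathrm{top}} = 2\,u_{a,d}\big(\gamma_{c;+}\big) - \overline{u}_B$ then yields $\overline{u}_{\mathrm{top}} < \overline{u}_B$, so the interval $[\overline{u}_{\mathrm{top}}, \overline{u}_B]$ is non-degenerate.

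The core of the argument combines two monotonicity facts. The lemma immediately preceding this corollary shows that $v_{a,d;r}$ is strictly increasing on $[0, \overline{u}_B]$, while $u_{a,d}^{\mathrm{inv}}$ maps $[0, \overline{u}_B]$ into $[0, \overline{v}_{\mathrm{bot}}]$ and is therefore bounded above by $\overline{v}_{\mathrm{bot}}$. Hence, for every $u \in [\overline{u}_{\mathrm{top}}, \overline{u}_B]$ the hypothesis \sref{eq:twv:simpl:crit} gives
\begin{equation}
v_{a,d;r}(u) \ge v_{a,d;r}\big( \overline{u}_{\mathrm{top}}(a,d)\big) > \overline{v}_{\mathrm{bot}}(a,d) \ge u_{a,d}^{\mathrm{inv}}(u),
\end{equation}
so the two curves cannot meet on this interval. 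Combined with the definition of $\overline{u}_{\mathrm{bot}}$ as a maximum, this produces the claimed inequality $\overline{u}_{\mathrm{bot}}(a,d) < \overline{u}_{\mathrm{top}}(a,d)$.

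I do not anticipate a substantial obstacle: once the monotonicity of $v_{a,d;r}$ is available (already supplied) and the range bound on $u_{a,d}^{\mathrm{inv}}$ is observed, the conclusion is essentially a one-line consequence of \sref{eq:twv:simpl:crit}. The only places I would take care are the bookkeeping that $\overline{u}_{\mathrm{top}}$ sits below $\overline{u}_B$, so that every quantity is evaluated within its natural domain, and the logical passage from ``no intersection on $[\overline{u}_{\mathrm{top}}, \overline{u}_B]$'' to ``the largest intersection point lies strictly below $\overline{u}_{\mathrm{top}}$'', which hinges precisely on $\overline{u}_{\mathrm{bot}}$ being the maximal rather than a generic root.
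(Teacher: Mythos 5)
Your argument is correct and coincides with the paper's own (very terse) proof: both rest on the uniform bound $u_{a,d}^{\mathrm{inv}} \le \overline{v}_{\mathrm{bot}}(a,d)$ and the strict monotonicity of $v_{a,d;r}$ from the preceding lemma, so that no root of $v_{a,d;r}(u) = u_{a,d}^{\mathrm{inv}}(u)$ can lie at or above $\overline{u}_{\mathrm{top}}(a,d)$, forcing the maximal root $\overline{u}_{\mathrm{bot}}(a,d)$ strictly below it. Your extra bookkeeping (non-emptiness of the root set and $\overline{u}_{\mathrm{top}} < \overline{u}_B$) is a welcome but inessential elaboration of the same one-line argument.
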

\begin{proof}
This follows from the
uniform bound $u_{a,d}^{\mathrm{inv}} \le \overline{v}_{\mathrm{bot}}(a, d)$
and the fact
that $v_{a,d;r}$ is strictly increasing.
\end{proof}

We now set out to verify the explicit condition
\sref{eq:twv:simpl:crit} for the boundary points $\big(a, d_-(a)\big)$
with $a \sim 1$ and $a = \frac{1}{2}$. Using the continuity of
$\overline{u}_{\mathrm{top}}$ and $\overline{u}_{\mathrm{bot}}$, this means that
$c(a,d) > 0$ for
all $(a,d) \in \Omega_-$ that are sufficiently close to these critical boundary points.

\begin{lem}
We have the expansions
\begin{equation}
\label{eq:twv:ex:uv:b}
\begin{array}{lcl}
\overline{u}_B\big(a , d_-(a) \big)
 & = & \frac{1}{4} ( 1 - a)^2 + \frac{1}{8} (1 - a)^3
  + O \big( ( 1 - a)^4 \big) ,
\\[0.2cm]
\overline{v}_B\big(a , d_-(a) \big)
 & = & 1 -\frac{1}{2} ( 1 - a)
   + O \big( ( 1 - a)^4 \big)
\\[0.2cm]
\end{array}
\end{equation}
as $a \uparrow 1$.
\end{lem}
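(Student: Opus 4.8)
The plan is to reduce the statement, via the reflection symmetry, to the local analysis already carried out near the corner $(a,d) = (0,0)$ in Proposition \ref{prp:bif:exp:a:zero}. Writing $\tilde{a} = 1 - a$, which tends to $0$ as $a \uparrow 1$, Corollary \ref{cor:bif:1:min:a} supplies the identities
\begin{equation}
\overline{u}_B(a, d) = 1 - \overline{v}_B(\tilde{a}, d),
\qquad
\overline{v}_B(a, d) = 1 - \overline{u}_B(\tilde{a}, d),
\end{equation}
while Proposition \ref{prp:st:dpm:ex}(i) gives $d_-(a) = d_-(\tilde{a})$. It hence suffices to obtain expansions for $\overline{u}_B\big(\tilde{a}, d_-(\tilde{a})\big)$ and $\overline{v}_B\big(\tilde{a}, d_-(\tilde{a})\big)$ as $\tilde{a} \downarrow 0$ and then substitute.

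First I would identify the collision point of the $B$ and $C$ branches with the saddle-node located in Proposition \ref{prp:bif:exp:a:zero}. Recall that $H(u,v;a,d) = G(u, 1+v; a, d)$, so a zero $(u,v)$ of $H$ corresponds to the zero $(u, 1+v)$ of $G$. By Proposition \ref{prp:st:uv:stb:ex}(ii) the branches $(\overline{u}_B, \overline{v}_B)$ and $(\overline{u}_C, \overline{v}_C)$ limit to $(0,1)$ respectively $(\tilde{a},1)$ as $d \downarrow 0$; in the $H$-coordinates these are $(0,0)$ and $(\tilde{a},0)$, both of which lie inside the neighbourhood $\{\abs{u}+\abs{v} < \epsilon\}$ for $\tilde{a}$ small. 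Consequently the two local solutions of $H = 0$ furnished by Proposition \ref{prp:bif:exp:a:zero}(i) are precisely these two branches, and since they merge at $d = d_c(\tilde{a})$ whereas the $B$ and $C$ branches merge at $d = d_-(\tilde{a})$ by Proposition \ref{prp:st:uv:stb:ex}(iv), we conclude $d_c(\tilde{a}) = d_-(\tilde{a})$ and that the merged root equals $\big(u_c(\tilde{a}), v_c(\tilde{a})\big)$ in $H$-coordinates. Translating back to $G$ this reads
\begin{equation}
\overline{u}_B\big(\tilde{a}, d_-(\tilde{a})\big) = u_c(\tilde{a}),
\qquad
\overline{v}_B\big(\tilde{a}, d_-(\tilde{a})\big) = 1 + v_c(\tilde{a}).
\end{equation}

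Finally I would insert the estimates from Proposition \ref{prp:bif:exp:a:zero}(iv), namely $u_c(\tilde{a}) = \frac{1}{2}\tilde{a} + O(\tilde{a}^4)$ and $1 + v_c(\tilde{a}) = 1 - \frac{1}{4}\tilde{a}^2 - \frac{1}{8}\tilde{a}^3 + O(\tilde{a}^4)$, and apply the reflection identities above with $\tilde{a} = 1-a$ to arrive at
\begin{equation}
\overline{u}_B\big(a, d_-(a)\big) = \tfrac{1}{4}(1-a)^2 + \tfrac{1}{8}(1-a)^3 + O\big((1-a)^4\big),
\qquad
\overline{v}_B\big(a, d_-(a)\big) = 1 - \tfrac{1}{2}(1-a) + O\big((1-a)^4\big),
\end{equation}
as claimed. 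The only genuinely delicate step is the bookkeeping that matches the $B$ and $C$ branches to the two local solutions of $H = 0$ and thereby identifies $d_c$ with $d_-$ near the corner; once this correspondence is secured, the expansions follow by direct substitution of the estimates already proved.
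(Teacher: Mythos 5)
Your proposal is correct and follows essentially the same route as the paper: reflect via Corollary \ref{cor:bif:1:min:a} and the symmetry $d_-(a)=d_-(1-a)$, then read off the expansions from Proposition \ref{prp:bif:exp:a:zero}. The only difference is that you make explicit the bookkeeping the paper leaves implicit — identifying the merged $B$/$C$ root at $d_-(\tilde a)$ with the saddle-node point $\big(u_c(\tilde a),1+v_c(\tilde a)\big)$ and $d_c(\tilde a)$ with $d_-(\tilde a)$ — which is a correct and welcome clarification rather than a departure.
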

\begin{proof}
Exploiting
Corollary \ref{cor:bif:1:min:a}
together with the
symmetry $d(a) = d(1 - a)$,
we obtain
\begin{equation}
\overline{u}_{B}\big(a, d_-(a)\big)
= 1- \overline{v}_B\big(1 - a, d_-(1-a) \big),
\qquad
\overline{v}_{B}(a , d_-(a))
 = 1 - \overline{u}_B\big(1 - a , d_-(1-a) \big).
\end{equation}
The desired expansions
hence follow from Proposition
\ref{prp:bif:exp:a:zero}.
\end{proof}

\begin{lem}
We have the expansions
\begin{equation}
\begin{array}{lcl}
\overline{u}_{\mathrm{top}} \big( a, d_-(a) \big)
 & = & \frac{1}{4} ( a - 1)^2 + O \big( (1 - a)^3 \big) ,
\\[0.2cm]
v_{a,d_-(a); r}\Big( \overline{u}_{\mathrm{top}}\big(a, d_-(a) \big)  \Big)
 & = & 1 + O \big( 1 - a \big)
\end{array}
\end{equation}
as $ a \uparrow 1$.
\end{lem}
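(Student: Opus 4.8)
The plan is to set $\epsilon = 1 - a$ and expand everything in powers of $\epsilon$ as $\epsilon \downarrow 0$. The only external inputs are the expansion of $d_-$ near the corner and the expansions \sref{eq:twv:ex:uv:b} of $(\overline{u}_B, \overline{v}_B)$ from the preceding lemma. First I would use the symmetry $d_-(a) = d_-(1-a)$ together with part (v) of Proposition \ref{prp:st:dpm:ex} to record
\[
d_-(a) = \tfrac{1}{8}\epsilon^2 + \tfrac{1}{32}\epsilon^4 + O(\epsilon^5), \qquad \epsilon = 1 - a \downarrow 0 .
\]
The crucial point, visible already here, is that I will need $d_-$ to order $\epsilon^4$ rather than merely to order $\epsilon^2$.

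For the first expansion I would exploit the fact, established inside the proof of Lemma \ref{lem:twv:d:star}, that at the threshold $d_* = \tfrac{1}{8}(1-a)^2$ one has the exact identity $u_{a,d_*}\big(\gamma_{c;+}(a,d_*)\big) = 0$; indeed for this $d_*$ the discriminant $a^2 - a + 1 - 6 d_*$ is the perfect square $(1 - \tfrac{1}{2}\epsilon)^2$. The cleanest way to capture the deviation at $d_-(a)$ is an envelope identity: writing $F(d) = u_{a,d}\big(\gamma_{c;+}(a,d)\big)$ and recalling that $\gamma_{c;+}(a,d)$ is by definition a critical point of $v \mapsto u_{a,d}(v)$, the chain-rule term drops out and
\[
F'(d) = \partial_d u_{a,d}(v)\big|_{v = \gamma_{c;+}(a,d)} = \frac{g\big(\gamma_{c;+}(a,d);a\big)}{2 d^2} .
\]
On the short interval $[d_*, d_-(a)]$ one has $d \sim \tfrac{1}{8}\epsilon^2$ and $\gamma_{c;+} = 1 - \tfrac{1}{2}\epsilon + O(\epsilon^2)$, so that $g(\gamma_{c;+};a) = \tfrac{1}{4}\epsilon^2 + O(\epsilon^3)$ and hence $F'(d) = 8\epsilon^{-2}\big(1 + O(\epsilon)\big)$ uniformly. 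Since $F(d_*) = 0$ and $d_-(a) - d_* = \tfrac{1}{32}\epsilon^4 + O(\epsilon^5)$, integration gives $F(d_-(a)) = \tfrac{1}{4}\epsilon^2 + O(\epsilon^3)$. Feeding this into the definition \sref{eq:twv:def:u:top}, namely $\overline{u}_{\mathrm{top}} = 2 F(d_-(a)) - \overline{u}_B$, and subtracting $\overline{u}_B(a,d_-(a)) = \tfrac{1}{4}\epsilon^2 + \tfrac{1}{8}\epsilon^3 + O(\epsilon^4)$ from \sref{eq:twv:ex:uv:b} then yields $\overline{u}_{\mathrm{top}}(a,d_-(a)) = \tfrac{1}{4}\epsilon^2 + O(\epsilon^3)$, which is the first claim. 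Alternatively one may expand $\gamma_{c;+}$ and $g/2d$ directly, each to order $\epsilon^2$, and observe the cancellation of their $1 - \tfrac{1}{2}\epsilon$ leading parts.

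The second expansion needs only leading order and is routine. Since the argument $\overline{u}_{\mathrm{top}}(a,d_-(a)) = O(\epsilon^2)$ is small while $a \to 1$, I would expand $g$ at the small argument $u = \overline{u}_{\mathrm{top}}$: the factors satisfy $1 - u \to 1$ and $u - a \to -1$, so $g(u;a) = -u + O(\epsilon^3) = -\tfrac{1}{4}\epsilon^2 + O(\epsilon^3)$ and therefore $g(u;a)/2 d_-(a) = -1 + O(\epsilon)$. Hence $v_{a,d_-(a)}(\overline{u}_{\mathrm{top}}) = u - g/2d = 1 + O(\epsilon)$, and inserting this into the definition \sref{eq:twv:def:v:d:refl} of $v_{a,d;r}$ together with $\overline{v}_B(a,d_-(a)) = 1 - \tfrac{1}{2}\epsilon + O(\epsilon^4)$ from \sref{eq:twv:ex:uv:b} gives $v_{a,d_-(a);r}(\overline{u}_{\mathrm{top}}) = 2\big(1 + O(\epsilon)\big) - \big(1 - \tfrac{1}{2}\epsilon\big) + O(\epsilon^4) = 1 + O(\epsilon)$.

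The one genuinely delicate point is the first expansion: $u_{a,d}(\gamma_{c;+})$ is an $O(\epsilon^2)$ quantity obtained only after the $1 - \tfrac{1}{2}\epsilon$ leading behaviour cancels, so it is entirely a second-order effect of the gap $d_-(a) - \tfrac{1}{8}(1-a)^2 = \tfrac{1}{32}\epsilon^4 + O(\epsilon^5)$. This is why the $\epsilon^4$ coefficient of $d_-$ is indispensable, and the envelope identity is what makes the mechanism transparent; everything else is stable Taylor expansion.
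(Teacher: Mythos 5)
Your proof is correct and follows essentially the same route as the paper, which simply substitutes the expansions of $d_-$, $\overline{u}_B$ and $\overline{v}_B$ into the definitions \sref{eq:twv:def:u:top} and \sref{eq:twv:def:v:d:refl}; your envelope identity $F'(d) = g(\gamma_{c;+}(a,d);a)/(2d^2)$ is a tidy way to organize the cancellation of the $1-\tfrac{1}{2}\epsilon$ leading parts, and you correctly identify that the quartic coefficient $\tfrac{1}{32}$ of $d_-$ is what produces the $\tfrac{1}{4}\epsilon^2$ coefficient. No gaps.
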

\begin{proof}
These expansions can be found by
substition of \sref{eq:twv:ex:uv:b}
into the definitions \sref{eq:twv:def:u:top}
and \sref{eq:twv:def:v:d:refl}.
\end{proof}

On account of
the identity
$\gamma_{c;-}(1, 0) = \frac{1}{3}$
and the inequality
$\overline{v}_{\mathrm{bot}}(a,d) \le \gamma_{c;-}(a,d)$,
we see that Corollary \ref{cor:twv:crit:for:no:trv:wave}
implies that
\begin{equation}
\label{eq:twv:conclu:top:bot:a:one}
\overline{u}_{\mathrm{top}}\big(a,d_-(a) \big) > \overline{u}_{\mathrm{bot}}\big(a,d_-(a) \big)
\end{equation}
whenever $1 - a > 0$ is sufficiently small.

\begin{lem}
\label{lem:twv:cusp:ineq}
The inequality \sref{eq:twv:simpl:crit}
holds for $(a, d) = (\frac{1}{2} , \frac{1}{24})$.
\end{lem}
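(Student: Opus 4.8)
The plan is to evaluate every quantity appearing in \sref{eq:twv:simpl:crit} explicitly at the cusp and to reduce the claim to a single inequality between two surds. First I would exploit the reflection symmetry of $g(\cdot;\tfrac12)$ about $u=\tfrac12$: writing $u=\tfrac12+x$ one checks that $g(\tfrac12+x;\tfrac12)=\tfrac{x}{4}-x^3$, so that at $d=\tfrac1{24}$ (where $\tfrac1{2d}=12$) the fundamental map collapses to the clean cubic
\[ v_{a,d}\big(\tfrac12+x\big)=\tfrac12-2x+12x^3. \]
The critical points are $\gamma_{c;\pm}(\tfrac12,\tfrac1{24})=\tfrac12\pm\tfrac{\sqrt2}{6}$, and from \sref{eq:bif:ids:for:cusp} the endpoints of the branch are $\overline u_B=\tfrac12-\tfrac{\sqrt3}{6}$ and $\overline v_B=\tfrac12+\tfrac{\sqrt3}{6}$.

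Next I would substitute into the definitions \sref{eq:twv:def:u:top} and \sref{eq:twv:def:v:d:refl}. Using $v_{a,d}(\gamma_{c;+})=\tfrac12-\tfrac{2\sqrt2}{9}$ gives
\[ \overline u_{\mathrm{top}}=2\,v_{a,d}(\gamma_{c;+})-\overline u_B=\tfrac12+\tfrac{\sqrt3}{6}-\tfrac{4\sqrt2}{9}, \]
which lies in $[0,\overline u_B]$, so the reflection construction is valid. Feeding this value back through the cubic and forming $v_{a,d;r}(\overline u_{\mathrm{top}})=2\,v_{a,d}(\overline u_{\mathrm{top}})-\overline v_B$ yields, after expanding the cube of $\tfrac{\sqrt3}{6}-\tfrac{4\sqrt2}{9}$ (this is the only laborious but entirely routine step), the exact value
\[ v_{a,d;r}\big(\overline u_{\mathrm{top}}\big)=\tfrac12+\tfrac{229}{54}\sqrt3-\tfrac{1240}{243}\sqrt2. \]

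Finally, rather than compute $\overline v_{\mathrm{bot}}$ directly I would use the uniform bound $\overline v_{\mathrm{bot}}\le\gamma_{c;-}=\tfrac12-\tfrac{\sqrt2}{6}$ furnished by \sref{eq:twv:def:ineqs:v:bot:top} (which holds strictly on $\Omega_-$ and hence holds, at worst non-strictly, at the boundary point $(\tfrac12,\tfrac1{24})\in\overline{\Omega}_-$ by continuity). It therefore suffices to prove $v_{a,d;r}(\overline u_{\mathrm{top}})>\gamma_{c;-}$, which after clearing denominators collapses to the surd inequality
\[ 2061\sqrt3>2399\sqrt2. \]
Squaring both (positive) sides reduces this to $3\cdot2061^2>2\cdot2399^2$, i.e.\ $12743163>11510402$, which is true; this closes the argument. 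The only genuine pitfall is arithmetic bookkeeping in the cubic expansion above and in justifying the boundary bound $\overline v_{\mathrm{bot}}\le\gamma_{c;-}$, but there is no conceptual obstacle: the margin in \sref{eq:twv:simpl:crit} is in fact large (the left side is $\approx 0.629$ while the right side is $\le\gamma_{c;-}\approx0.264$), so even crude rational enclosures of $\sqrt2$ and $\sqrt3$ would suffice in place of the squaring step.
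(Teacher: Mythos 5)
Your proposal is correct and follows essentially the same route as the paper: explicit evaluation of $\gamma_{c;\pm}$, $\overline{u}_{\mathrm{top}}$ and $v_{a,d;r}(\overline{u}_{\mathrm{top}})$ at the cusp using \sref{eq:bif:ids:for:cusp}, arriving at the identical exact value $\tfrac12+\tfrac{229}{54}\sqrt3-\tfrac{1240}{243}\sqrt2$, and then invoking the bound $\overline{v}_{\mathrm{bot}}\le\gamma_{c;-}$ from \sref{eq:twv:def:ineqs:v:bot:top}. The only (immaterial) difference is that the paper inserts the intermediate comparison $v_{a,d;r}(\overline{u}_{\mathrm{top}})>a_{\mathrm{cp}}\ge\gamma_{c;-}$ rather than comparing with $\gamma_{c;-}$ directly.
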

\begin{proof}
Writing $(a_{\mathrm{cp}}, d_{\mathrm{cp}}) = (\frac{1}{2} , \frac{1}{24})$,
we can explicitly compute
\begin{equation}
\gamma_{c;+}(a_{\mathrm{cp}},d_{\mathrm{cp}}) = \frac{1}{2} + \frac{1}{6} \sqrt{2} ,
\end{equation}
which together with
the expressions
\sref{eq:bif:ids:for:cusp} and \sref{eq:twv:def:u:top}
yields
\begin{equation}
\overline{u}_{\mathrm{top}}(a_{\mathrm{cp}},d_{\mathrm{cp}}) = \frac{1}{2}-\frac{4}{9}\sqrt{2}
  +\frac{1}{6}\sqrt{3} .
\end{equation}
Using \sref{eq:twv:def:v:d:refl} we obtain
\begin{equation}
v_{a_{\mathrm{cp}},d_{\mathrm{cp}};r}\big(\overline{u}_{\mathrm{top}}(a_{\mathrm{cp}},d_{\mathrm{cp}}) \big)
= \frac{1}{2}
  -\frac{1240}{243}\sqrt{2}
   +\frac{229}{54}\sqrt{3}
\sim 0.6286.
\end{equation}
In particular, we have
\begin{equation}
v_{a_{\mathrm{cp}},d_{\mathrm{cp}};r}
  \big(\overline{u}_{\mathrm{top}}(a_{\mathrm{cp}},d_{\mathrm{cp}}) \big)  > a_{\mathrm{cp}} \ge
 \gamma_{c;-}(a_{\mathrm{cp}},d_{\mathrm{cp}}) \ge
 \overline{v}_{\mathrm{bot}}(a_{\mathrm{cp}},d_{\mathrm{cp}}) ,
\end{equation}
as desired.
\end{proof}

\begin{proof}[Proof of Theorem \ref{thm:mr:twv:sets:t}]
Item (i), (ii) and (iii)
follow from Lemma \ref{lem:twv:d:star},
 Corollary \ref{cor:twv:pos:zone}
and Lemma \ref{lem:twv:cusp:ineq} respectively.
Item (iv) follows from Lemma \ref{lem:twv:vcrit:ordering},
together with Lemma \ref{lem:twv:bot:lt:top:trv}
and the continuity of the functions $\overline{u}_{\mathrm{bot}}$
and $\overline{u}_{\mathrm{top}}$. Indeed, the expression
\sref{eq:mr:expr:for:Gamma} can be rewritten as
\begin{equation}
\Gamma(a) =
  \overline{u}_{\mathrm{top}}\big(a, d_-(a) \big)
  - \overline{u}_{\mathrm{bot}}\big(a , d_-(a) \big).
\end{equation}
Finally, item (v) follows directly from \sref{eq:twv:conclu:top:bot:a:one}.
\end{proof}

\bibliographystyle{klunumHJ}
\bibliography{ref}

\end{document}